\documentclass[12pt]{article}
\usepackage{amsmath,mathdots}
\usepackage{amssymb}
\usepackage{latexsym}
\usepackage{amsthm}
\usepackage{amsmath,amssymb,amsfonts,amsthm}
\usepackage{mdframed}
\usepackage{graphicx}
\usepackage{stmaryrd}
\usepackage{gensymb}
\usepackage{bm}
\usepackage{tikz}  
\usepackage{longtable}
\usepackage{stmaryrd}
\usepackage{multirow}
\usepackage{float}
\usepackage{array}
\usepackage{graphicx}
\usepackage{tikz}
\usetikzlibrary{matrix,arrows}
\usetikzlibrary{positioning}
\usetikzlibrary{fit}
\usetikzlibrary{patterns}

\usepackage[colorlinks,
linkcolor=blue,
anchorcolor=blue,
citecolor=blue
]{hyperref}
\usepackage[margin=2.9cm]{geometry}
\usepackage[normalem]{ulem}

\newtheorem{thm}{Theorem}[section]

\newtheorem{lem}[thm]{Lemma}

\newtheorem{conj}{Conjecture}

\usepackage{graphicx}
\usepackage{tikz}
\usetikzlibrary{matrix,arrows}
\usetikzlibrary{positioning}
\usetikzlibrary{fit}
\usetikzlibrary{patterns}

\newcommand{\pattern}[4]{										
	\raisebox{0.6ex}{
		\begin{tikzpicture}[scale=0.35, baseline=(current bounding box.center), #1]
		\foreach \x/\y in {#4}		\fill[gray!50] (\x,\y) rectangle +(1,1);
		\draw (0.01,0.01) grid (#2+0.99,#2+0.99);
		\foreach \x/\y in {#3}		\filldraw (\x,\y) circle (6pt);
		\end{tikzpicture}}
}

\newcommand{\boks}[2]{({#1, #2})}   

\definecolor{red}{rgb}{1,0,0}
{}

 \allowdisplaybreaks

\begin{document}

\begin{center}
{\large Equidistribution of mesh patterns of short length}
\end{center}

\begin{center}
Xinyu Su$^{a}$, Sergey Kitaev$^{b}$ and Jiahao Zhang$^{c}$
\\[6pt]

$^{a,c}$College of Mathematical Sciences \& Institute of Mathematics and Interdisciplinary Sciences, Tianjin Normal University, \\ Tianjin  300387, P. R. China\\[6pt]

$^{b}$Department of Mathematics and Statistics, University of Strathclyde, \\ 26 Richmond Street, Glasgow G1 1XH, UK\\[6pt]

Email:  $^{a}${\tt  suxinyu2220@163.com},
           $^{b}${\tt sergey.kitaev@strath.ac.uk},
           $^{c}${\tt jiahaozhang1215@163.com}
\end{center}

\noindent\textbf{Abstract.}

We study the equidistribution of mesh patterns of length~2. We show that the number of equidistribution equivalence classes lies between 105 and 108, and conjecture that it is exactly 105. As a consequence, we obtain an upper bound of 49 Wilf-classes, improving the previously known bound of 56 due to Hilmarsson et al., and reducing the problem to three remaining conjectural equivalences (with the actual number conjectured to be 46).

Our approach combines bijective constructions, generating functions, recurrence relations, and structural symmetries. We establish several new equidistribution results, including four previously unknown distribution classes, connect numerous patterns to known distributions in the literature, and resolve seven open pattern-avoidance enumeration problems posed by Hilmarsson et al.

This work provides a near-complete classification of mesh patterns of length~2 and unifies several previously isolated results within a coherent framework. \\

\noindent {\bf Keywords:}  mesh pattern, distribution, avoidance, bijection, generating function \\

\noindent {\bf AMS Subject Classifications:}  05A05, 05A15.

\section{Introduction}\label{intro}

Denote by $S_n$ the set of permutations of $[n] := \{1,2,\ldots,n\}$, and let $\varepsilon$ be the empty permutation. We call permutations of length $n$ $n$-permutations. A \emph{pattern} is a permutation. A permutation $\pi = \pi_1 \pi_2 \cdots \pi_n \in S_n$ avoids a pattern $p = p_1 p_2 \cdots p_k \in S_k$ if there is no subsequence $\pi_{i_1} \pi_{i_2} \cdots \pi_{i_k}$ such that $\pi_{i_j} < \pi_{i_m}$ if and only if $p_j < p_m$. For example, the permutation $32154$ avoids the pattern $231$.

The notion of a \textit{mesh pattern}, generalizing several classes of patterns, was introduced by Br\"and\'en and Claesson \cite{BrCl} to provide explicit expansions for certain permutation statistics as, possibly infinite, linear combinations of (classical) permutation patterns. 
A pair $(\tau,R)$, where $\tau$ is a permutation of length $k$ and $R$ is a subset of $[0,k] \times [0,k]$, where
$[0,k]$ denotes the interval of the integers from $0$ to $k$, is a
mesh pattern of length $k$.
Let $\boks{i}{j}$ denote the box whose corners have coordinates $(i,j), (i,j+1),
(i+1,j+1)$, and $(i+1,j)$. Let the horizontal lines represent the values,  and the vertical lines denote the positions in the pattern. Mesh patterns can be drawn by shading the boxes in $R$. The picture 
\[
\pattern{scale=0.8}{3}{1/1,2/3,3/2}{0/0,1/2, 2/1,2/3,3/0,3/1}
\]
represents the mesh pattern with $\tau=132$ and $R = \{\boks{0}{0},\boks{1}{2},\boks{2}{1},\boks{2}{3},\boks{3}{0},\boks{3}{1}\}$. 

A subsequence $\pi' = \pi_{i_1}\pi_{i_2}\cdots \pi_{i_k}$ of a permutation $\pi = \pi_1\pi_2 \cdots \pi_n$ is an occurrence of a mesh pattern $(\tau,R)$ if (a) $\pi'$ is order-isomorphic to $\tau$, and (b) the shaded squares specified by $R$ contain no elements of $\pi$ other than those in $\pi'$.  

Let $S_n(p)$ denote the set of $p$-avoiding $n$-permutations, and let $s_n(p) = |S_n(p)|$. For two patterns $p_1$ and $p_2$, if $s_n(p_1) = s_n(p_2)$ for all $n \ge 0$, then $p_1$ and $p_2$ are \emph{Wilf-equivalent}. Wilf-equivalence classes are also referred to here as \emph{Wilf-classes}. Define $s_{n,k}(p)$ as the number of $n$-permutations containing exactly $k$ occurrences of a pattern $p$. If $s_{n,k}(p_1) = s_{n,k}(p_2)$ for all $n,k \ge 0$, then $p_1$ and $p_2$ are \emph{equidistributed} (or \emph{equivalent}), which we denote by $p_1 \sim p_2$. The patterns $p_1$ and $p_2$ are said to be {\em jointly equidistributed} if, for all $n \ge 0$, the number of $n$-permutations with $k$ occurrences of $p_1$ and $\ell$ occurrences of $p_2$ is equal to the number of $n$-permutations with $\ell$ occurrences of $p_1$ and $k$ occurrences of $p_2$.

Hilmarsson et al.~\cite{Hilmarsson2015Wilf} initiated a systematic study of Wilf-equivalences and the enumeration of permutations avoiding mesh patterns of length 2. As a result of their work, they proved that there are at most 56 Wilf-classes for these patterns and conjectured that there are in fact only 46 such classes. In Table~\ref{tab-Wilf-conjectures}, we present the conjectured Wilf-equivalences from \cite{Hilmarsson2015Wilf}. As a result of our study, we strengthen the results of Hilmarsson et al.\ by proving the following theorem and conjecturing that all previously conjectured Wilf-equivalences are indeed equidistributions of patterns.
 
\begin{thm}
The number of Wilf-equivalence classes for mesh patterns of length~$2$ is at most $49$.
\end{thm}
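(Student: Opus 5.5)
The bound will be obtained by merging classes of the partition of Hilmarsson et al. From \cite{Hilmarsson2015Wilf} we take their partition of the $2\cdot 2^{9}=1024$ mesh patterns of length~2 into at most 56 candidate Wilf-classes. That partition already incorporates the symmetries (reverse, complement, inverse), the Shading Lemma, and the Wilf-equivalences they proved. Since equidistribution implies Wilf-equivalence (take $k=0$ in $s_{n,k}(p_1)=s_{n,k}(p_2)$), each equidistribution we establish between patterns lying in two different classes of their partition merges those two classes. The whole task is therefore to prove enough equidistributions, or at least Wilf-equivalences, to merge classes until at most 49 remain. Equivalently, we must settle seven of the ten conjectured Wilf-equivalences listed in Table~\ref{tab-Wilf-conjectures}, each of which merges two of the 56 classes.

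\textbf{Step 1.} For each conjectured equivalence in Table~\ref{tab-Wilf-conjectures}, pick convenient representatives $p_1,p_2$ inside the two classes. We are free to apply the $D_4$ symmetries and shading moves that preserve the distribution of occurrences, not merely avoidance, to get a simple form. Then decide which tool fits the pair. Patterns with the region around an element shaded, such as boxes $(0,0),(1,1)$ or whole rows or columns, correspond to statistics like left-to-right minima or descents with restrictions. For these we will set up recurrences by removing the largest element $n$ or the last element $\pi_n$ and track the change in the number of occurrences. From these we derive bivariate generating functions $\sum_{n,k}s_{n,k}(p)x^nq^k$, and show the two functions coincide.

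\textbf{Step 2.} Where generating functions are awkward, we will instead construct bijections $\phi:S_n\to S_n$ with $\mathrm{occ}_{p_1}(\pi)=\mathrm{occ}_{p_2}(\phi(\pi))$. Natural candidates are decompositions around $n$ or $1$ (for example $\pi=\sigma n\tau$), followed by reversal or complementation of one block. Another option is the Foata-type fundamental transformation, which exchanges left-to-right maxima structures with cycle structures. We also plan to identify several distributions with known sequences in the literature, such as Stirling numbers and Eulerian-type distributions. This lets us conclude $p_1\sim p_2$ by showing both match the same known distribution, without needing a direct bijection.

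\textbf{Step 3.} Tally the merges: check that at least seven distinct pairs of classes from the 56 are joined, and that no merge is counted twice, giving $56-7=49$.

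The main obstacle will be the pairs where the shaded regions of the two patterns interact differently with the permutation structure. In those cases occurrences are not local, and a simple first-letter or largest-letter recurrence does not close. For these I would first try a refined recurrence with an extra catalytic variable, for instance the number of left-to-right minima or the position of $n$, and solve it by the kernel method. Only if that fails would I search for a bijection, guided by matching the refined statistics computed for small $n$.
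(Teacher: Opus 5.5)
The overall approach matches the paper's: start from the bound of $56$ in \cite{Hilmarsson2015Wilf}, use the fact that equidistribution (indeed any Wilf-equivalence) between patterns in two of those classes merges them, and obtain seven of the ten merges recorded in Table~\ref{tab-Wilf-conjectures}, where the three-pattern Classes~47 and~56 count as two merges each. The gap is that your proposal contains nothing beyond this framework. You never name the seven merges. Steps~1 and~2 are a list of candidate techniques (recurrences on $n$ or $\pi_n$, bivariate generating functions, block reversals, Foata's fundamental transformation, the kernel method), with no argument carried out for any specific pair. Step~3 then tallies merges that have been neither proved nor cited. The entire content of the theorem is the existence of those seven Wilf-equivalences, so what you have is a research plan, not a proof.

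The plan also cannot be applied uniformly to all ten candidates. Three of them, Classes~54, 69 and~71 of Table~\ref{tab-Wilf-conjectures}, are still open (69 and~71 were already conjectured in \cite{KZ2019}), so you must know in advance which seven are attainable and justify each one. The paper does this as follows.

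Five merges come from the literature: Classes~47 (two merges) and~70 (one merge) from \cite{HanZeng2021}, and Class~56 (two merges) from \cite{KZ2019}. The other two are proved in the paper.

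\emph{Class~36} (Theorem~\ref{thm:class36}). A permutation containing pattern~(55) is split at the occurrence $ab$ (necessarily $b=a+1$) with $a$ smallest. This gives an avoiding block formed by the entries below $a$, and an arbitrary nonempty permutation formed by $b$ and the entries above it. The result is $F(x)=A(x)+xA(x)(F(x)-1)$, the same avoidance generating function as for pattern~(56), whose distribution is known from \cite{KZ2019}.

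\emph{Class~46} (Theorem~\ref{thm:class46}). An involution treats the right-to-left maxima $r_\ell,\dots,r_1$ in turn. Whenever the entry immediately to the left of $r_i$ forms with $r_i$ an occurrence of exactly one of the two patterns, it cyclically permutes the values of the entries lying to the left of $r_i$ and below it, so that this becomes an occurrence of the other pattern.

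Only with these seven merges established does $56-7=49$ follow.
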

 
 \begin{proof} 
From~\cite{Hilmarsson2015Wilf}, the number of Wilf-classes is known to be at most 56. Using our numbering of pattern classes, some of the conjectured Wilf-classes in \cite{Hilmarsson2015Wilf} are Classes 36, 46, 47, 56, and 70. These classes have now been proved to be equidistributed (a stronger result than Wilf-equivalence): Classes 36 and 46 are established in Theorems~\ref{thm:class36} and~\ref{thm:class46} of this paper, Classes 47 and 70 in \cite{HanZeng2021}, and Class 56 in \cite{KZ2019}. Since only three pairs of patterns with conjectured Wilf-equivalences (in fact, conjectured equidistributions) remain, namely, Classes 54, 69, and 71 in Table~\ref{tab-Wilf-conjectures}, the number of Wilf-classes is at most~49. 
\end{proof}

\begin{table}[!t]
    \renewcommand{\arraystretch}{1.6}
    \centering
    \begin{tabular}{|c|c|c||c|c|c|}
        \hline
        \footnotesize{nr.} & \footnotesize{repr. $p$} & \footnotesize{ref.} & \footnotesize{nr.} & \footnotesize{repr. $p$} & \footnotesize{ref.} \\
        \hline

        \multirow{3}{*}{\footnotesize{47}} & $\pattern{scale=0.5}{2}{1/1,2/2}{0/0,0/1,1/2,2/1,2/2}~(48)$ & \multirow{3}*{\shortstack{\cite{HanZeng2021} \\ proved \\ equidistribution}} & 
        \multirow{2}{*}{\footnotesize{69}} & $\pattern{scale=0.5}{2}{1/1,2/2}{0/1,1/1,1/2,2/0}~(57)$ & \multirow{2}*{\shortstack{\cite{KZ2019} \\ conjectured \\ equidistribution}} \\ 
        \cline{2-2} \cline{5-5} 

        & $\pattern{scale=0.5}{2}{1/1,2/2}{0/0,0/1,1/1,1/2,2/0}~(49)$ & & & 
        $\pattern{scale=0.5}{2}{1/1,2/2}{0/1,1/0,1/1,2/2}~(58)$ & \\ 
        \cline{2-2} \cline{4-6} 

        & $\pattern{scale=0.5}{2}{1/1,2/2}{0/0,0/1,1/1,1/2,2/2}~(50)$ & & 
        \multirow{2}{*}{\footnotesize{46}} & $\pattern{scale=0.5}{2}{1/1,2/2}{0/0,0/1,1/1,1/2,2/1}~(59)$ & \multirow{2}*{\shortstack{Thm~\ref{thm:class46} \\ proved \\ equidistribution}} \\ 
        \cline{1-3} \cline{5-5} 

        \multirow{2}{*}{\footnotesize{54}} & $\pattern{scale=0.5}{2}{1/1,2/2}{0/0,0/1,1/1,2/0,2/2}~(51)$ & \multirow{2}*{\shortstack{conjectured \\ equidistribution}} & & 
        $\pattern{scale=0.5}{2}{1/1,2/2}{0/0,0/1,1/1,2/1,2/2}~(60)$ & \\ 
        \cline{2-2} \cline{4-6} 

        & $\pattern{scale=0.5}{2}{1/1,2/2}{0/1,0/2,1/1,2/0,2/2}~(52)$ & & 
        \multirow{2}{*}{\footnotesize{71}} & $\pattern{scale=0.5}{2}{1/1,2/2}{0/0,0/1,1/2,2/0}~(61)$ & \multirow{2}*{\shortstack{\cite{KZ2019} \\ conjectured \\  equidistribution}} \\ 
        \cline{1-3} \cline{5-5} 

        \multirow{2}{*}{\footnotesize{70}} & $\pattern{scale=0.5}{2}{1/1,2/2}{0/0,0/1,1/2,2/1}~(53)$ & \multirow{2}*{\shortstack{\cite{HanZeng2021} \\ proved \\ equidistribution}} & & 
        $\pattern{scale=0.5}{2}{1/1,2/2}{0/0,0/1,1/0,2/2}~(62)$ & \\ 
        \cline{2-2} \cline{4-6} 

        & $\pattern{scale=0.5}{2}{1/1,2/2}{0/0,0/1,1/1,2/2}~(54)$ & & 
        \multirow{3}{*}{\footnotesize{56}} & $\pattern{scale=0.5}{2}{1/1,2/2}{0/0,0/1,1/2,2/0,2/1}~(63)$ & \multirow{3}*{\shortstack{\cite{KZ2019} \\ proved \\  equidistribution}} \\ 
        \cline{1-3} \cline{5-5} 

        \multirow{2}{*}{\footnotesize{36}} & $\pattern{scale=0.5}{2}{1/1,2/2}{0/0,0/1,1/1,1/2,2/0,2/1}~(55)$ & \multirow{2}*{\shortstack{Thm~\ref{thm:class36} \\ proved \\ equidistribution}} & & 
        $\pattern{scale=0.5}{2}{1/1,2/2}{0/1,0/2,1/1,1/2,2/0}~(64)$ & \\ 
        \cline{2-2} \cline{5-5}

        & $\pattern{scale=0.5}{2}{1/1,2/2}{0/0,0/1,1/1,1/2,2/1,2/2}~(56)$ & & & 
        $\pattern{scale=0.5}{2}{1/1,2/2}{0/0,0/1,1/0,1/1,2/2}~(65)$ & \\ 
        \hline
    \end{tabular}
\caption{Conjectured Wilf-equivalences in Tables 11 and 12 of \cite{Hilmarsson2015Wilf}, which have now been proved as equidistributions or remain conjectured equidistributions. Next to each pattern in the table, we give the original number of the pattern class in \cite{Hilmarsson2015Wilf}.}    \label{tab-Wilf-conjectures}
\end{table}

Additionally, as consequences of our enumerative results in this paper, we solve seven open pattern-avoidance enumeration problems from \cite{Hilmarsson2015Wilf}. In particular, Theorems~\ref{thm:class36}, \ref{thm:class47}, and \ref{thm:class54} resolve patterns labelled in \cite{Hilmarsson2015Wilf} as follows: patterns 55 and 56 (corresponding to our Class~36), patterns 48--50 (Class~47), and patterns 51 and 52 (Class~54).

The results in \cite{Hilmarsson2015Wilf} were extended to a systematic study of distributions of mesh patterns of length~2 by Kitaev and Zhang in \cite{KZ2019}. In particular, the authors of \cite{KZ2019} obtained 27 distribution results for these patterns, thereby resolving some of the avoidance problems posed in \cite{Hilmarsson2015Wilf}. Some of the equidistribution conjectures from \cite{KZ2019} were settled by Han and Zeng in \cite{HanZeng2021}, while two cases remain open. We summarize the known equidistribution results in Table~\ref{tab-known}. The study of distributions of patterns of longer lengths can be found in \cite{FFKL2025,KZZ,LK2025,LZ2026,LZ2026-2}, with the main focus on the equidistribution of mesh patterns of length~3.

As the main result of this paper, we provide a nearly complete classification of equidistribution equivalence classes for mesh patterns of length~2, showing that the number of these classes is at most 108 and at least 105. We conjecture that this number is in fact 105. Therefore, apart from two known equidistribution conjectures, we propose one additional conjecture (see Table~\ref{tab-conject}). Resolving these three conjectures would settle the open problem that the number of Wilf-classes for these patterns is 46; see Table~\ref{tab-Wilf-conjectures}.

To obtain these results, we construct four involved bijections, use several simpler bijections and symmetries, apply generating functions and recurrence relations, and exploit known results from the literature. Our new enumerative contributions include four previously unknown distributions (Classes 15, 25, 73, and 80), as well as non-trivial connections between many patterns and known distributions from \cite{KZ2019,KZZ}.

The paper is organized as follows. In Section~\ref{prelim-sec}, we provide definitions, preliminary observations, and notation, together with known enumerative results, some of which are used later in the paper, while others are included to make the exposition self-contained; the known enumerative results are summarized in Table~\ref{tab:patterns_ref}. In Section~\ref{sec-trivial}, we present straightforward equidistributions, which are summarized in Table~\ref{tab-easy}. Equidistributions explained via generating functions and recurrence relations are discussed in Sections~\ref{gf-sec} and~\ref{recurrence-sec}, respectively. In Section~\ref{bijections-sec}, we present four involved bijections explaining additional equidistributions. 

In Section~\ref{avoiders-enum-sec}, we enumerate avoidance classes corresponding to some equivalence classes arising from trivial bijections; these are presented in Table~\ref{tab-remaining} in Appendix~\ref{trivial-appendix}. This section also contains two additional enumerative results not related to Table~\ref{tab-remaining}. Finally, in Section~\ref{conclud-sec}, we provide concluding remarks, and in Appendix~\ref{all-distr-appendix}, we present the distribution of each class of patterns on $n$-permutations for $1 \leq n \leq 7$. This appendix both justifies the lower bound of 105 equivalence classes and provides data for researchers wishing to further investigate distributions of mesh patterns of length~2.

\section{Preliminaries}\label{prelim-sec}

For a permutation $\pi = \pi_1 \pi_2 \cdots \pi_n \in S_n$, the entry $\pi_i$ is called a \emph{left-to-right maximum} (resp., \emph{left-to-right minimum}) if $\pi_i > \pi_j$ (resp., $\pi_i < \pi_j$) for all $j < i$. Similarly, $\pi_i$ is called a \emph{right-to-left maximum} (resp., \emph{right-to-left minimum}) if $\pi_i > \pi_j$ (resp., $\pi_i < \pi_j$) for all $j > i$. In particular, $\pi_1$ is both a left-to-right maximum and a left-to-right minimum, while $\pi_n$ is both a right-to-left maximum and a right-to-left minimum. Let
\[
F(x) := \sum_{n \ge 0} n! \, x^n.
\]
This function appears frequently in our enumerative results. Throughout the paper, we use $A(x)$ (resp., $F(x,q)$) to denote the ordinary generating function for permutations avoiding the pattern in question (resp., the generating function for the distribution of occurrences of the pattern). Hence, if $p(\pi)$ denotes the number of occurrences of a pattern $p$ in a permutation $\pi$, then
$$F(x,q)=\sum_{n\geq 0}x^n\sum_{\pi\in S_n}q^{p(\pi)},$$
and $A(x)=F(x,0)$. We abbreviate ``generating function'' as g.f.

We establish many equidistribution results using three fundamental symmetry operations on mesh patterns: \emph{complement} (vertical reflection), \emph{reverse} (horizontal reflection), and \emph{inverse} (reflection in the line $y=x$). For a mesh pattern $p=(\tau,R)$ of length $m$, define
\[
\begin{aligned}
p^c &= (\tau^c, R^c), 
& \tau^c(i) &= m+1-\tau(i), 
& R^c &= \{(x,\,m-y) : (x,y)\in R\},\\[2mm]
p^r &= (\tau^r, R^r), 
& \tau^r(i) &= m+1-\tau(m+1-i), 
& R^r &= \{(m-x,\,y) : (x,y)\in R\},\\[2mm]
p^{-1} &= (\tau^{-1}, R^{-1}), 
& \tau^{-1}(i) &= j \text{ where } \tau(j)=i, 
& R^{-1} &= \{(y,\,x) : (x,y)\in R\}.
\end{aligned}
\]

It is easy to see that if a pattern $p_1$ is obtained from a pattern $p_2$ by applying any of these three operations, then $p_1 \sim p_2$. In Tables~\ref{tab-known}, \ref{tab-easy}, \ref{tab-no-easy}, \ref{tab-conject}, and \ref{tab-remaining}, equivalences arising from these operations are indicated by placing the corresponding patterns in the same cell. We refer to such equivalences as \emph{trivial} and do not discuss them further.

\begin{table}[!t]
    \renewcommand{\arraystretch}{1.4}
    \begin{center} 
       \begin{tabular}{|c|c|c|c||c|c|c|c|}
    \hline  
    \multicolumn{1}{|c|}{\footnotesize{nr.}} & 
    \multicolumn{1}{c|}{\footnotesize{trivial}} & 
    \multicolumn{1}{c|}{\footnotesize{trivial}} & 
    \multicolumn{1}{c||}{\footnotesize{ref.}} & 
    \multicolumn{1}{c|}{\footnotesize{nr.}} & 
    \multicolumn{1}{c|}{\footnotesize{trivial}} & 
    \multicolumn{1}{c|}{\footnotesize{trivial}} & 
    \multicolumn{1}{c|}{\footnotesize{ref.}} \\
    \hline

            \footnotesize{7} 
            & \hspace{-4mm} $\pattern{scale=0.5}{2}{1/1,2/2}{0/2,2/2,2/1} 
            \pattern{scale=0.5}{2}{1/1,2/2}{0/2,0/0,1/0} 
            \pattern{scale=0.5}{2}{1/1,2/2}{1/2,2/2,2/0} 
            \pattern{scale=0.5}{2}{1/1,2/2}{0/1,0/0,2/0}$
            &  \hspace{-4mm} $\pattern{scale=0.5}{2}{1/1,2/2}{1/2,2/2,2/1} 
            \pattern{scale=0.5}{2}{1/1,2/2}{0/1,0/0,1/0}$
            & \cite{HanZeng2021}
            & \footnotesize{61} 
            & \hspace{-4mm}  $\pattern{scale=0.5}{2}{1/1,2/2}{0/2,1/2,0/1,1/1,2/1} 
            \pattern{scale=0.5}{2}{1/1,2/2}{0/2,1/2,0/1,1/1,1/0} 
            \pattern{scale=0.5}{2}{1/1,2/2}{1/2,2/1,1/1,1/0,2/0} 
            \pattern{scale=0.5}{2}{1/1,2/2}{0/1,1/1,2/1,1/0,2/0}$
            & \hspace{-4mm} $\pattern{scale=0.5}{2}{1/1,2/2}{1/2,0/1,1/1,2/1,1/0}$
            & \cite{KZ2019} \\
            \hline

            \multirow{2}{*}{\footnotesize{47}} 
            & \hspace{-4mm}  $\pattern{scale=0.5}{2}{1/1,2/2}{1/2,2/2,0/1,1/1,0/0} 
            \pattern{scale=0.5}{2}{1/1,2/2}{2/2,1/1,2/1,0/0,1/0}$
            & \hspace{-4mm}  $\pattern{scale=0.5}{2}{1/1,2/2}{1/2,2/2,0/1,2/1,0/0} 
            \pattern{scale=0.5}{2}{1/1,2/2}{1/2,2/2,0/1,0/0,1/0} 
            \pattern{scale=0.5}{2}{1/1,2/2}{1/2,2/2,2/1,0/0,1/0} 
            \pattern{scale=0.5}{2}{1/1,2/2}{2/2,0/1,2/1,0/0,1/0}$
            &\cite{HanZeng2021}
            & \footnotesize{67} 
            & \hspace{-4mm}  $\pattern{scale=0.5}{2}{1/1,2/2}{1/2,2/2,0/1,2/1} 
            \pattern{scale=0.5}{2}{1/1,2/2}{1/2,2/2,2/1,1/0} 
            \pattern{scale=0.5}{2}{1/1,2/2}{1/2,0/1,0/0,1/0} 
            \pattern{scale=0.5}{2}{1/1,2/2}{0/1,2/1,0/0,1/0}$
            & \hspace{-4mm}  $\pattern{scale=0.5}{2}{1/1,2/2}{0/2,2/2,2/1,1/1} 
            \pattern{scale=0.5}{2}{1/1,2/2}{0/2,1/1,0/0,1/0} 
            \pattern{scale=0.5}{2}{1/1,2/2}{1/2,2/2,1/1,2/0} 
            \pattern{scale=0.5}{2}{1/1,2/2}{0/1,1/1,0/0,2/0}$
            & \cite{HanZeng2021} \\
            \cline{5-8}

            & \hspace{-4mm}  $\pattern{scale=0.5}{2}{1/1,2/2}{0/2,2/2,1/1,2/1,1/0} 
            \pattern{scale=0.5}{2}{1/1,2/2}{0/2,1/1,2/1,0/0,1/0} 
            \pattern{scale=0.5}{2}{1/1,2/2}{1/2,2/2,0/1,1/1,2/0} 
            \pattern{scale=0.5}{2}{1/1,2/2}{1/2,0/1,1/1,0/0,2/0}$
            & 
            & \cite{KZ2019}
            & \multirow{2}{*}{\footnotesize{68}}
            & \hspace{-4mm}  $\pattern{scale=0.5}{2}{1/1,2/2}{0/2,1/2,0/1,1/1} 
            \pattern{scale=0.5}{2}{1/1,2/2}{1/1,2/1,1/0,2/0}$
            & \hspace{-4mm}  $\pattern{scale=0.5}{2}{1/1,2/2}{1/2,2/2,1/1,2/1} 
            \pattern{scale=0.5}{2}{1/1,2/2}{0/1,1/1,0/0,1/0}$
            & \multirow{2}{*}{\cite{KZ2019}} \\
            \cline{1-4}

            \footnotesize{48} 
            & \hspace{-4mm}  $\pattern{scale=0.5}{2}{1/1,2/2}{0/2,2/2,0/1,1/1,2/1} 
            \pattern{scale=0.5}{2}{1/1,2/2}{0/2,1/2,1/1,0/0,1/0} 
            \pattern{scale=0.5}{2}{1/1,2/2}{0/1,1/1,2/1,0/0,2/0} 
            \pattern{scale=0.5}{2}{1/1,2/2}{1/2,2/2,1/1,1/0,2/0}$
            & \hspace{-4mm}  $\pattern{scale=0.5}{2}{1/1,2/2}{1/2,2/2,0/1,1/1,2/1} 
            \pattern{scale=0.5}{2}{1/1,2/2}{1/2,2/2,1/1,2/1,1/0} 
            \pattern{scale=0.5}{2}{1/1,2/2}{1/2,0/1,1/1,0/0,1/0} 
            \pattern{scale=0.5}{2}{1/1,2/2}{0/1,1/1,2/1,0/0,1/0}$
            & \cite{HanZeng2021}
            & 
            & \hspace{-4mm}  $\pattern{scale=0.5}{2}{1/1,2/2}{1/2,0/1,1/1,2/1} 
            \pattern{scale=0.5}{2}{1/1,2/2}{1/2,0/1,1/1,1/0} 
            \pattern{scale=0.5}{2}{1/1,2/2}{1/2,1/1,2/1,1/0} 
            \pattern{scale=0.5}{2}{1/1,2/2}{0/1,1/1,2/1,1/0}$
            & 
            & \\
            \hline

            \multirow{2}{*}{\footnotesize{56}} 
            & \hspace{-4mm}  $\pattern{scale=0.5}{2}{1/1,2/2}{1/2,2/2,1/1,2/1,0/0} 
            \pattern{scale=0.5}{2}{1/1,2/2}{2/2,0/1,1/1,0/0,1/0}$
            & \hspace{-4mm}  $\pattern{scale=0.5}{2}{1/1,2/2}{0/2,2/2,0/1,2/1,1/0} 
            \pattern{scale=0.5}{2}{1/1,2/2}{0/2,1/2,2/1,0/0,1/0} 
            \pattern{scale=0.5}{2}{1/1,2/2}{1/2,0/1,2/1,0/0,2/0} 
            \pattern{scale=0.5}{2}{1/1,2/2}{1/2,2/2,0/1,1/0,2/0}$
            & \multirow{1}{*}{\cite{KZ2019}}
            & \footnotesize{70} 
            & \hspace{-4mm}  $\pattern{scale=0.5}{2}{1/1,2/2}{1/2,2/2,1/1,0/0} 
            \pattern{scale=0.5}{2}{1/1,2/2}{2/2,0/1,1/1,0/0} 
            \pattern{scale=0.5}{2}{1/1,2/2}{2/2,1/1,2/1,0/0} 
            \pattern{scale=0.5}{2}{1/1,2/2}{2/2,1/1,0/0,1/0}$
            & \hspace{-4mm}  $\pattern{scale=0.5}{2}{1/1,2/2}{1/2,0/1,2/1,0/0} 
            \pattern{scale=0.5}{2}{1/1,2/2}{1/2,2/2,0/1,1/0} 
            \pattern{scale=0.5}{2}{1/1,2/2}{2/2,0/1,2/1,1/0} 
            \pattern{scale=0.5}{2}{1/1,2/2}{1/2,2/1,0/0,1/0}$
            & \cite{HanZeng2021} \\
            \cline{5-8}

            & \hspace{-4mm}  $\pattern{scale=0.5}{2}{1/1,2/2}{0/2,1/2,0/1,1/1,2/0} 
            \pattern{scale=0.5}{2}{1/1,2/2}{0/2,1/1,2/1,1/0,2/0}$ 
            & 
            & \cite{LK2025}
            & 
            & 
            & 
            & \\
            \hline

        \end{tabular}
    \end{center} 
    \vspace{-0.5cm}
    \caption{Known equidistributions.}
    \label{tab-known}
\end{table}

Throughout the paper, we frequently use the insertion of elements into an $(n-1)$-permutation $\pi' = \pi'_1 \cdots \pi'_{n-1}$ to obtain an $n$-permutation $\pi = \pi_1 \cdots \pi_n$. The insertion of the new largest element is straightforward. Inserting the new smallest element amounts to replacing each $\pi'_i$ by $\pi'_i + 1$ for $1 \le i \le n-1$. More generally, when inserting a new element $x$ either on the left or on the right, we increase by $1$ all entries of $\pi'$ that are greater than or equal to $x$.

\begin{table}[!t]
\footnotesize
    \renewcommand{\arraystretch}{1.5}
    \begin{center} 
        \begin{tabular}{|c|l|l|l|l|c|}
            \hline  
                nr. & \hfil trivial & \hfil trivial & \hfil trivial & \hfil trivial & ref. \\
            \hline  
            
            \footnotesize{1} 
            & $\pattern{scale=0.5}{2}{1/1,2/2}{0/2,1/2,2/2,0/1,2/1,0/0,1/0} 
            \pattern{scale=0.5}{2}{1/1,2/2}{1/2,2/2,0/1,2/1,0/0,1/0,2/0}$ 
            & $\pattern{scale=0.5}{2}{1/1,2/2}{0/2,1/2,2/2,1/1,2/1,0/0,1/0} 
            \pattern{scale=0.5}{2}{1/1,2/2}{0/2,2/2,0/1,1/1,2/1,0/0,1/0} 
            \pattern{scale=0.5}{2}{1/1,2/2}{1/2,2/2,0/1,1/1,0/0,2/1,2/0} 
            \pattern{scale=0.5}{2}{1/1,2/2}{1/2,2/2,0/1,1/1,0/0,1/0,2/0}$
            & $\pattern{scale=0.5}{2}{1/1,2/2}{0/2,1/2,2/2,1/1,0/1,0/0,2/0} 
            \pattern{scale=0.5}{2}{1/1,2/2}{0/2,2/2,1/1,2/1,0/0,1/0,2/0}$
            & $\pattern{scale=0.5}{2}{1/1,2/2}{0/2,1/2,0/1,1/1,2/1,0/0,2/0} 
            \pattern{scale=0.5}{2}{1/1,2/2}{0/2,1/2,1/1,0/1,2/2,1/0,2/0} 
            \pattern{scale=0.5}{2}{1/1,2/2}{0/2,2/2,0/1,1/1,2/1,1/0,2/0} 
            \pattern{scale=0.5}{2}{1/1,2/2}{0/2,1/2,1/1,2/1,0/0,1/0,2/0}$
            & Thm~\ref{thm:class1} \\[0.5mm]
            \hline

            \multirow{2}{*}{\footnotesize{2}} 
            & $\pattern{scale=0.5}{2}{1/1,2/2}{0/2,1/2,2/2,0/1,1/1,2/1,0/0} 
            \pattern{scale=0.5}{2}{1/1,2/2}{0/2,1/2,2/2,0/1,1/1,0/0,1/0} 
            \pattern{scale=0.5}{2}{1/1,2/2}{1/2,2/2,1/1,2/1,0/0,1/0,2/0} 
            \pattern{scale=0.5}{2}{1/1,2/2}{2/2,0/1,1/1,2/1,0/0,1/0,2/0}$
            & $\pattern{scale=0.5}{2}{1/1,2/2}{0/2,1/2,2/2,0/1,1/1,2/1,1/0} 
            \pattern{scale=0.5}{2}{1/1,2/2}{0/2,1/2,0/1,1/1,2/1,0/0,1/0} 
            \pattern{scale=0.5}{2}{1/1,2/2}{1/2,2/2,0/1,1/1,2/1,1/0,2/0} 
            \pattern{scale=0.5}{2}{1/1,2/2}{1/2,0/1,1/1,2/1,0/0,1/0,2/0}$
            & $\pattern{scale=0.5}{2}{1/1,2/2}{0/2,1/2,2/2,0/1,1/1,2/1,2/0} 
            \pattern{scale=0.5}{2}{1/1,2/2}{0/2,1/2,2/2,1/1,2/1,1/0,2/0} 
            \pattern{scale=0.5}{2}{1/1,2/2}{0/2,1/2,0/1,1/1,0/0,1/0,2/0} 
            \pattern{scale=0.5}{2}{1/1,2/2}{0/2,0/1,1/1,2/1,0/0,1/0,2/0}$
            & $\pattern{scale=0.5}{2}{1/1,2/2}{0/2,1/2,2/2,0/1,2/1,0/0,2/0} 
            \pattern{scale=0.5}{2}{1/1,2/2}{0/2,1/2,2/2,0/1,0/0,1/0,2/0} 
            \pattern{scale=0.5}{2}{1/1,2/2}{0/2,1/2,2/2,2/1,0/0,1/0,2/0} 
            \pattern{scale=0.5}{2}{1/1,2/2}{0/2,2/2,0/1,2/1,0/0,1/0,2/0}$
            & \multirow{2}{*}{Thm~\ref{thm:class2}} \\[0.5mm]
            \cline{2-2} 
            & $\pattern{scale=0.5}{2}{1/1,2/2}{0/2,2/2,0/1,1/1,2/1,0/0,2/0} 
            \pattern{scale=0.5}{2}{1/1,2/2}{0/2,1/2,2/2,1/1,0/0,1/0,2/0}$
            & & & & \\[0.5mm]
            \hline
            
            \footnotesize{25} 
            & $\pattern{scale=0.5}{2}{1/1,2/2}{0/2,1/2,2/2,2/1} 
            \pattern{scale=0.5}{2}{1/1,2/2}{0/2,0/1,0/0,1/0} 
            \pattern{scale=0.5}{2}{1/1,2/2}{1/2,2/2,2/1,2/0} 
            \pattern{scale=0.5}{2}{1/1,2/2}{0/1,0/0,1/0,2/0}$
            & $\pattern{scale=0.5}{2}{1/1,2/2}{0/2,1/2,2/2,2/0} 
            \pattern{scale=0.5}{2}{1/1,2/2}{0/2,2/2,2/1,2/0} 
            \pattern{scale=0.5}{2}{1/1,2/2}{0/2,0/1,0/0,2/0} 
            \pattern{scale=0.5}{2}{1/1,2/2}{0/2,0/0,1/0,2/0}$
            & & & Thm~\ref{thm:class25} \\[0.5mm]
            \hline

            \footnotesize{31} 
            & $\pattern{scale=0.5}{2}{1/1,2/2}{0/2,1/2,2/2,2/1,0/0,1/0} 
            \pattern{scale=0.5}{2}{1/1,2/2}{0/2,2/2,0/1,2/1,0/0,1/0} 
            \pattern{scale=0.5}{2}{1/1,2/2}{1/2,2/2,0/1,2/1,0/0,2/0} 
            \pattern{scale=0.5}{2}{1/1,2/2}{1/2,2/2,0/1,0/0,1/0,2/0}$
            & $\pattern{scale=0.5}{2}{1/1,2/2}{0/2,1/2,2/2,0/1,1/1,2/0} 
            \pattern{scale=0.5}{2}{1/1,2/2}{0/2,1/2,0/1,1/1,0/0,2/0} 
            \pattern{scale=0.5}{2}{1/1,2/2}{0/2,2/2,1/1,2/1,1/0,2/0} 
            \pattern{scale=0.5}{2}{1/1,2/2}{0/2,1/1,2/1,0/0,1/0,2/0}$
            & & & Thm~\ref{thm:class31} \\[0.5mm]
            \hline

            \multirow{2}{*}{\footnotesize{32}} 
            & $\pattern{scale=0.5}{2}{1/1,2/2}{0/2,1/2,2/2,0/1,1/1,0/0} 
            \pattern{scale=0.5}{2}{1/1,2/2}{2/2,1/1,2/1,0/0,1/0,2/0}$
            & $\pattern{scale=0.5}{2}{1/1,2/2}{0/2,1/2,2/2,0/1,2/1,0/0} 
            \pattern{scale=0.5}{2}{1/1,2/2}{0/2,1/2,2/2,0/1,0/0,1/0} 
            \pattern{scale=0.5}{2}{1/1,2/2}{2/2,1/2,2/1,0/0,1/0,2/0} 
            \pattern{scale=0.5}{2}{1/1,2/2}{2/2,0/1,2/1,0/0,1/0,2/0}$
            & $\pattern{scale=0.5}{2}{1/1,2/2}{0/2,1/2,0/1,1/1,2/1,0/0} 
            \pattern{scale=0.5}{2}{1/1,2/2}{0/2,1/2,2/2,0/1,1/1,1/0} 
            \pattern{scale=0.5}{2}{1/1,2/2}{2/2,0/1,1/1,2/1,1/0,2/0} 
            \pattern{scale=0.5}{2}{1/1,2/2}{1/2,1/1,2/1,0/0,1/0,2/0}$
            & $\pattern{scale=0.5}{2}{1/1,2/2}{0/2,2/2,0/1,1/1,2/1,0/0} 
            \pattern{scale=0.5}{2}{1/1,2/2}{0/2,1/2,2/2,1/1,0/0,1/0} 
            \pattern{scale=0.5}{2}{1/1,2/2}{2/2,0/1,1/1,2/1,0/0,2/0} 
            \pattern{scale=0.5}{2}{1/1,2/2}{1/2,2/2,1/1,0/0,1/0,2/0}$
            & \multirow{2}{*}{Thm~\ref{thm:class32}} \\[0.5mm]
            \cline{2-4}
            & $\pattern{scale=0.5}{2}{1/1,2/2}{0/2,1/2,2/2,1/1,2/1,1/0} 
            \pattern{scale=0.5}{2}{1/1,2/2}{0/2,0/1,1/1,2/1,0/0,1/0} 
            \pattern{scale=0.5}{2}{1/1,2/2}{1/2,2/2,0/1,1/1,2/1,2/0} 
            \pattern{scale=0.5}{2}{1/1,2/2}{1/2,0/1,1/1,0/0,1/0,2/0}$
            & $\pattern{scale=0.5}{2}{1/1,2/2}{0/2,2/2,0/1,1/1,2/1,2/0} 
            \pattern{scale=0.5}{2}{1/1,2/2}{0/2,0/1,1/1,2/1,0/0,2/0} 
            \pattern{scale=0.5}{2}{1/1,2/2}{0/2,1/2,2/2,1/1,1/0,2/0} 
            \pattern{scale=0.5}{2}{1/1,2/2}{0/2,1/2,1/1,0/0,1/0,2/0}$
            & $\pattern{scale=0.5}{2}{1/1,2/2}{0/2,1/2,2/2,0/1,0/0,2/0} 
            \pattern{scale=0.5}{2}{1/1,2/2}{0/2,2/2,2/1,0/0,1/0,2/0}$
            & & \\[0.5mm]
            \hline

            \footnotesize{33} 
            & $\pattern{scale=0.5}{2}{1/1,2/2}{0/2,1/2,2/2,0/1,1/1,2/1} 
            \pattern{scale=0.5}{2}{1/1,2/2}{0/2,1/2,0/1,1/1,0/0,1/0} 
            \pattern{scale=0.5}{2}{1/1,2/2}{1/2,2/2,1/1,2/1,1/0,2/0} 
            \pattern{scale=0.5}{2}{1/1,2/2}{0/1,1/1,2/1,0/0,1/0,2/0}$
            & $\pattern{scale=0.5}{2}{1/1,2/2}{0/2,2/2,0/1,2/1,0/0,2/0} 
            \pattern{scale=0.5}{2}{1/1,2/2}{0/2,1/2,2/2,0/0,1/0,2/0}$
            & & & Thm~\ref{thm:class33} \\[0.5mm]
            \hline

            \footnotesize{45} 
            & $\pattern{scale=0.5}{2}{1/1,2/2}{0/2,1/2,2/2,0/1,1/1} 
            \pattern{scale=0.5}{2}{1/1,2/2}{0/2,1/2,0/1,1/1,0/0} 
            \pattern{scale=0.5}{2}{1/1,2/2}{2/2,1/1,2/1,1/0,2/0} 
            \pattern{scale=0.5}{2}{1/1,2/2}{1/1,2/1,0/0,1/0,2/0}$
            & $\pattern{scale=0.5}{2}{1/1,2/2}{0/2,1/2,2/2,0/1,0/0} 
            \pattern{scale=0.5}{2}{1/1,2/2}{2/2,2/1,0/0,1/0,2/0}$
            & $\pattern{scale=0.5}{2}{1/1,2/2}{0/2,2/2,0/1,2/1,0/0} 
            \pattern{scale=0.5}{2}{1/1,2/2}{0/2,1/2,2/2,0/0,1/0} 
            \pattern{scale=0.5}{2}{1/1,2/2}{2/2,0/1,2/1,0/0,2/0} 
            \pattern{scale=0.5}{2}{1/1,2/2}{1/2,2/2,0/0,1/0,2/0}$
            & $\pattern{scale=0.5}{2}{1/1,2/2}{0/2,0/1,1/1,2/1,0/0} 
            \pattern{scale=0.5}{2}{1/1,2/2}{0/2,1/2,2/2,1/1,1/0} 
            \pattern{scale=0.5}{2}{1/1,2/2}{2/2,0/1,1/1,2/1,2/0} 
            \pattern{scale=0.5}{2}{1/1,2/2}{1/2,1/1,0/0,1/0,2/0}$
            & Thm~\ref{thm:class45} \\[0.5mm]
            \hline

            \footnotesize{53} 
            & $\pattern{scale=0.5}{2}{1/1,2/2}{0/2,1/2,2/2,0/1,1/1,2/1,0/0,1/0} 
            \pattern{scale=0.5}{2}{1/1,2/2}{1/2,2/2,0/1,1/1,2/1,0/0,1/0,2/0}$
            & $\pattern{scale=0.5}{2}{1/1,2/2}{0/2,1/2,2/2,0/1,1/1,2/1,0/0,2/0} 
            \pattern{scale=0.5}{2}{1/1,2/2}{0/2,1/2,2/2,0/1,1/1,0/0,1/0,2/0} 
            \pattern{scale=0.5}{2}{1/1,2/2}{0/2,1/2,2/2,2/1,0/0,1/0,2/0,1/1} 
            \pattern{scale=0.5}{2}{1/1,2/2}{0/2,2/2,0/1,2/1,0/0,1/0,2/0,1/1}$
            & $\pattern{scale=0.5}{2}{1/1,2/2}{0/2,1/2,2/2,0/1,1/1,2/1,1/0,2/0} 
            \pattern{scale=0.5}{2}{1/1,2/2}{0/2,1/2,2/2,0/1,1/1,2/1,0/0,1/0}$
            & $\pattern{scale=0.5}{2}{1/1,2/2}{0/2,2/2,0/1,2/1,0/0,1/0,2/0,1/2}$
            & Thm~\ref{thm:class53} \\[0.5mm]
            \hline

            \footnotesize{60} 
            & $\pattern{scale=0.5}{2}{1/1,2/2}{0/2,1/2,2/2,1/1,0/0} 
            \pattern{scale=0.5}{2}{1/1,2/2}{0/2,2/2,0/1,1/1,0/0} 
            \pattern{scale=0.5}{2}{1/1,2/2}{2/2,1/1,2/1,0/0,2/0} 
            \pattern{scale=0.5}{2}{1/1,2/2}{2/2,1/1,0/0,1/0,2/0}$
            & $\pattern{scale=0.5}{2}{1/1,2/2}{0/2,1/2,0/1,2/1,0/0} 
            \pattern{scale=0.5}{2}{1/1,2/2}{0/2,1/2,2/2,0/1,1/0} 
            \pattern{scale=0.5}{2}{1/1,2/2}{2/2,0/1,2/1,1/0,2/0} 
            \pattern{scale=0.5}{2}{1/1,2/2}{1/2,2/1,0/0,1/0,2/0}$
            & & & Thm~\ref{thm:class60} \\[0.5mm]
            \hline

    \footnotesize{62} 
    & $\pattern{scale=0.5}{2}{1/1,2/2}{0/2,1/2,2/2,2/1,0/0} 
    \pattern{scale=0.5}{2}{1/1,2/2}{0/2,2/2,0/1,0/0,1/0} 
    \pattern{scale=0.5}{2}{1/1,2/2}{1/2,2/2,2/1,0/0,2/0} 
    \pattern{scale=0.5}{2}{1/1,2/2}{2/2,0/1,0/0,1/0,2/0}$
    & $\pattern{scale=0.5}{2}{1/1,2/2}{0/2,1/2,2/2,2/1,1/0} 
    \pattern{scale=0.5}{2}{1/1,2/2}{0/2,0/1,2/1,0/0,1/0} 
    \pattern{scale=0.5}{2}{1/1,2/2}{1/2,2/2,0/1,2/1,2/0} 
    \pattern{scale=0.5}{2}{1/1,2/2}{1/2,0/1,0/0,1/0,2/0}$
    & $\pattern{scale=0.5}{2}{1/1,2/2}{0/2,1/2,2/2,0/1,2/0} 
    \pattern{scale=0.5}{2}{1/1,2/2}{0/2,1/2,0/1,0/0,2/0} 
    \pattern{scale=0.5}{2}{1/1,2/2}{0/2,2/2,2/1,1/0,2/0} 
    \pattern{scale=0.5}{2}{1/1,2/2}{0/2,2/1,0/0,1/0,2/0}$
    & $\pattern{scale=0.5}{2}{1/1,2/2}{0/2,1/2,2/2,1/1,2/0} 
    \pattern{scale=0.5}{2}{1/1,2/2}{0/2,2/2,1/1,2/1,2/0} 
    \pattern{scale=0.5}{2}{1/1,2/2}{0/2,0/1,1/1,0/0,2/0} 
    \pattern{scale=0.5}{2}{1/1,2/2}{0/2,1/1,0/0,1/0,2/0}$
    & Thm~\ref{thm:class62} \\[0.5mm]
    \hline

    \footnotesize{63} 
    & $\pattern{scale=0.5}{2}{1/1,2/2}{0/2,1/2,2/2,0/1,2/1} 
    \pattern{scale=0.5}{2}{1/1,2/2}{0/2,1/2,0/1,0/0,1/0} 
    \pattern{scale=0.5}{2}{1/1,2/2}{1/2,2/2,2/1,1/0,2/0} 
    \pattern{scale=0.5}{2}{1/1,2/2}{0/1,2/1,0/0,1/0,2/0}$
    & $\pattern{scale=0.5}{2}{1/1,2/2}{0/2,1/2,2/2,1/1,2/1} 
    \pattern{scale=0.5}{2}{1/1,2/2}{0/2,0/1,1/1,0/0,1/0} 
    \pattern{scale=0.5}{2}{1/1,2/2}{1/2,2/2,1/1,2/1,2/0} 
    \pattern{scale=0.5}{2}{1/1,2/2}{0/1,1/1,0/0,1/0,2/0}$
    & $\pattern{scale=0.5}{2}{1/1,2/2}{0/2,2/2,0/1,2/1,2/0} 
    \pattern{scale=0.5}{2}{1/1,2/2}{0/2,0/1,2/1,0/0,2/0} 
    \pattern{scale=0.5}{2}{1/1,2/2}{0/2,1/2,2/2,1/0,2/0} 
    \pattern{scale=0.5}{2}{1/1,2/2}{0/2,1/2,0/0,1/0,2/0}$
    & $\pattern{scale=0.5}{2}{1/1,2/2}{0/2,1/2,2/2,0/0,2/0} 
    \pattern{scale=0.5}{2}{1/1,2/2}{0/2,2/2,0/1,0/0,2/0} 
    \pattern{scale=0.5}{2}{1/1,2/2}{0/2,2/2,2/1,0/0,2/0} 
    \pattern{scale=0.5}{2}{1/1,2/2}{0/2,2/2,0/0,1/0,2/0}$
    & Thm~\ref{thm:class63} \\[0.5mm]
    \hline

            \multirow{3}{*}{\footnotesize{68}}
            & $\pattern{scale=0.5}{2}{1/1,2/2}{0/2,1/2,2/2,0/1} 
            \pattern{scale=0.5}{2}{1/1,2/2}{0/2,1/2,0/1,0/0} 
            \pattern{scale=0.5}{2}{1/1,2/2}{2/2,2/1,1/0,2/0} 
            \pattern{scale=0.5}{2}{1/1,2/2}{2/1,0/0,1/0,2/0}$
            & $\pattern{scale=0.5}{2}{1/1,2/2}{0/2,1/2,2/2,1/1} 
            \pattern{scale=0.5}{2}{1/1,2/2}{0/2,0/1,1/1,0/0} 
            \pattern{scale=0.5}{2}{1/1,2/2}{2/2,1/1,2/1,2/0} 
            \pattern{scale=0.5}{2}{1/1,2/2}{1/1,0/0,1/0,2/0}$
            & $\pattern{scale=0.5}{2}{1/1,2/2}{0/2,1/2,0/1,1/1} 
            \pattern{scale=0.5}{2}{1/1,2/2}{1/1,2/1,1/0,2/0}$
            & $\pattern{scale=0.5}{2}{1/1,2/2}{0/2,2/2,0/1,2/1} 
            \pattern{scale=0.5}{2}{1/1,2/2}{0/2,1/2,0/0,1/0} 
            \pattern{scale=0.5}{2}{1/1,2/2}{0/1,2/1,0/0,2/0} 
            \pattern{scale=0.5}{2}{1/1,2/2}{1/2,2/2,1/0,2/0}$
            & \multirow{3}{*}{Thm~\ref{thm:class68}} \\[0.5mm]
            \cline{2-5} 
            & $\pattern{scale=0.5}{2}{1/1,2/2}{1/2,2/2,1/1,2/1} 
            \pattern{scale=0.5}{2}{1/1,2/2}{0/1,1/1,0/0,1/0}$
            & $\pattern{scale=0.5}{2}{1/1,2/2}{0/2,0/1,1/1,2/1} 
            \pattern{scale=0.5}{2}{1/1,2/2}{0/2,1/2,1/1,1/0} 
            \pattern{scale=0.5}{2}{1/1,2/2}{0/1,1/1,2/1,2/0} 
            \pattern{scale=0.5}{2}{1/1,2/2}{1/2,1/1,1/0,2/0}$
            & $\pattern{scale=0.5}{2}{1/1,2/2}{1/2,0/1,1/1,2/1} 
            \pattern{scale=0.5}{2}{1/1,2/2}{1/2,0/1,1/1,1/0} 
            \pattern{scale=0.5}{2}{1/1,2/2}{1/2,1/1,2/1,1/0} 
            \pattern{scale=0.5}{2}{1/1,2/2}{0/1,1/1,2/1,1/0}$
            & $\pattern{scale=0.5}{2}{1/1,2/2}{0/2,1/2,2/2,0/0} 
            \pattern{scale=0.5}{2}{1/1,2/2}{0/2,2/2,0/1,0/0} 
            \pattern{scale=0.5}{2}{1/1,2/2}{2/2,2/1,0/0,2/0} 
            \pattern{scale=0.5}{2}{1/1,2/2}{2/2,0/0,1/0,2/0}$
            & \\[0.5mm]
            \cline{2-2} 
            & $\pattern{scale=0.5}{2}{1/1,2/2}{0/2,0/1,2/1,0/0} 
            \pattern{scale=0.5}{2}{1/1,2/2}{0/2,1/2,2/2,1/0} 
            \pattern{scale=0.5}{2}{1/1,2/2}{2/2,0/1,2/1,2/0} 
            \pattern{scale=0.5}{2}{1/1,2/2}{1/2,0/0,1/0,2/0}$
            & & & & \\[0.5mm]
            \hline

            \footnotesize{77} 
            & $\pattern{scale=0.5}{2}{1/1,2/2}{0/2,1/2,2/2,1/1,2/1,0/0,2/0} 
            \pattern{scale=0.5}{2}{1/1,2/2}{0/2,2/2,0/1,1/1,0/0,1/0,2/0}$
            & $\pattern{scale=0.5}{2}{1/1,2/2}{0/2,1/2,2/2,0/1,2/1,1/0,2/0} 
            \pattern{scale=0.5}{2}{1/1,2/2}{0/2,1/2,0/1,2/1,0/0,1/0,2/0}$
            & & & Thm~\ref{thm:class77} \\[0.5mm]
            \hline

            \footnotesize{78} 
            & $\pattern{scale=0.5}{2}{1/1,2/2}{0/2,1/2,2/2,0/1,2/1,2/0} 
            \pattern{scale=0.5}{2}{1/1,2/2}{0/2,1/2,2/2,2/1,1/0,2/0} 
            \pattern{scale=0.5}{2}{1/1,2/2}{0/2,1/2,0/1,0/0,1/0,2/0} 
            \pattern{scale=0.5}{2}{1/1,2/2}{0/2,0/1,2/1,0/0,1/0,2/0}$
            & $\pattern{scale=0.5}{2}{1/1,2/2}{0/2,1/2,2/2,1/1,2/1,2/0} 
            \pattern{scale=0.5}{2}{1/1,2/2}{0/2,0/1,1/1,0/0,1/0,2/0}$
            & $\pattern{scale=0.5}{2}{1/1,2/2}{0/2,1/2,2/2,2/1,0/0,2/0} 
            \pattern{scale=0.5}{2}{1/1,2/2}{0/2,2/2,0/1,0/0,1/0,2/0}$
            & & Thm~\ref{thm:class78} \\[0.5mm]
            \hline

            \multirow{2}{*}{\footnotesize{80}}
            & $\pattern{scale=0.5}{2}{1/1,2/2}{0/2,1/2,2/2,1/1,2/1,0/0} 
            \pattern{scale=0.5}{2}{1/1,2/2}{0/2,2/2,0/1,1/1,0/0,1/0} 
            \pattern{scale=0.5}{2}{1/1,2/2}{1/2,2/2,1/1,2/1,0/0,2/0} 
            \pattern{scale=0.5}{2}{1/1,2/2}{2/2,0/1,1/1,0/0,1/0,2/0}$
            & $\pattern{scale=0.5}{2}{1/1,2/2}{0/2,1/2,2/2,0/1,2/1,1/0} 
            \pattern{scale=0.5}{2}{1/1,2/2}{0/2,1/2,0/1,2/1,0/0,1/0} 
            \pattern{scale=0.5}{2}{1/1,2/2}{1/2,2/2,0/1,2/1,1/0,2/0} 
            \pattern{scale=0.5}{2}{1/1,2/2}{1/2,0/1,2/1,0/0,1/0,2/0}$
            & $\pattern{scale=0.5}{2}{1/1,2/2}{0/2,1/2,0/1,1/1,2/1,2/0} 
            \pattern{scale=0.5}{2}{1/1,2/2}{0/2,1/2,0/1,1/1,1/0,2/0} 
            \pattern{scale=0.5}{2}{1/1,2/2}{0/2,1/2,1/1,2/1,1/0,2/0} 
            \pattern{scale=0.5}{2}{1/1,2/2}{0/2,0/1,1/1,2/1,1/0,2/0}$
            & $\pattern{scale=0.5}{2}{1/1,2/2}{0/2,1/2,2/2,1/1,0/0,2/0} 
            \pattern{scale=0.5}{2}{1/1,2/2}{0/2,2/2,0/1,1/1,0/0,2/0} 
            \pattern{scale=0.5}{2}{1/1,2/2}{0/2,2/2,1/1,2/1,0/0,2/0} 
            \pattern{scale=0.5}{2}{1/1,2/2}{0/2,2/2,1/1,0/0,1/0,2/0}$
            & \multirow{2}{*}{Thm~\ref{thm:class80}} \\[0.5mm]
            \cline{2-2} 
            & $\pattern{scale=0.5}{2}{1/1,2/2}{0/2,1/2,0/1,2/1,0/0,2/0} 
            \pattern{scale=0.5}{2}{1/1,2/2}{0/2,1/2,2/2,0/1,1/0,2/0} 
            \pattern{scale=0.5}{2}{1/1,2/2}{0/2,2/2,0/1,2/1,1/0,2/0} 
            \pattern{scale=0.5}{2}{1/1,2/2}{0/2,1/2,2/1,0/0,1/0,2/0}$
            & & & & \\[0.5mm]
            \hline

        \end{tabular}
    \end{center} 
    \vspace{-0.5cm}
    \caption{Easy explainable equidistributions.}
    \label{tab-easy}
\end{table}

\begin{table}[!t]
\footnotesize
    \renewcommand{\arraystretch}{1.5}
    \begin{center} 
        \begin{tabular}{|c|l|l|l|c|}
            \hline  
                nr. & \hfil trivial & \hfil trivial & \hfil trivial & ref. \\
            \hline  

            \footnotesize{15} 
            & $\pattern{scale=0.5}{2}{1/1,2/2}{1/2,2/2} 
            \pattern{scale=0.5}{2}{1/1,2/2}{2/2,2/1} 
            \pattern{scale=0.5}{2}{1/1,2/2}{0/1,0/0} 
            \pattern{scale=0.5}{2}{1/1,2/2}{0/0,1/0}$
            & $\pattern{scale=0.5}{2}{1/1,2/2}{1/2,1/1} 
            \pattern{scale=0.5}{2}{1/1,2/2}{0/1,1/1} 
            \pattern{scale=0.5}{2}{1/1,2/2}{1/1,2/1} 
            \pattern{scale=0.5}{2}{1/1,2/2}{1/1,1/0}$
            & & Thm~\ref{thm:class15} \\[0.5mm]
            \hline

            \footnotesize{36} 
            & $\pattern{scale=0.5}{2}{1/1,2/2}{1/2,2/2,0/1,1/1,2/1,0/0} 
            \pattern{scale=0.5}{2}{1/1,2/2}{1/2,2/2,0/1,1/1,0/0,1/0} 
            \pattern{scale=0.5}{2}{1/1,2/2}{1/2,2/2,1/1,2/1,0/0,1/0} 
            \pattern{scale=0.5}{2}{1/1,2/2}{2/2,0/1,1/1,2/1,0/0,1/0}$
            & $\pattern{scale=0.5}{2}{1/1,2/2}{0/2,2/2,0/1,1/1,2/1,1/0} 
            \pattern{scale=0.5}{2}{1/1,2/2}{0/2,1/2,1/1,2/1,0/0,1/0} 
            \pattern{scale=0.5}{2}{1/1,2/2}{1/2,0/1,1/1,2/1,0/0,2/0} 
            \pattern{scale=0.5}{2}{1/1,2/2}{1/2,2/2,0/1,1/1,1/0,2/0}$
            & & Thm~\ref{thm:class36} \\[0.5mm]
            \hline

            \footnotesize{46} 
            & $\pattern{scale=0.5}{2}{1/1,2/2}{1/2,0/1,1/1,2/1,0/0} 
            \pattern{scale=0.5}{2}{1/1,2/2}{1/2,2/2,0/1,1/1,1/0} 
            \pattern{scale=0.5}{2}{1/1,2/2}{2/2,0/1,1/1,2/1,1/0} 
            \pattern{scale=0.5}{2}{1/1,2/2}{1/2,1/1,2/1,0/0,1/0}$
            & $\pattern{scale=0.5}{2}{1/1,2/2}{2/2,0/1,1/1,2/1,0/0} 
            \pattern{scale=0.5}{2}{1/1,2/2}{1/2,2/2,1/1,0/0,1/0}$
            & & Thm~\ref{thm:class46} \\[0.5mm]
            \hline

            \footnotesize{49} 
            & $\pattern{scale=0.5}{2}{1/1,2/2}{1/2,2/2,0/1,1/1} 
            \pattern{scale=0.5}{2}{1/1,2/2}{1/2,0/1,1/1,0/0} 
            \pattern{scale=0.5}{2}{1/1,2/2}{2/2,1/1,2/1,1/0} 
            \pattern{scale=0.5}{2}{1/1,2/2}{1/1,2/1,0/0,1/0}$
            & $\pattern{scale=0.5}{2}{1/1,2/2}{2/2,0/1,2/1,0/0} 
            \pattern{scale=0.5}{2}{1/1,2/2}{1/2,2/2,0/0,1/0}$
            & & Thm~\ref{thm:class49} \\[0.5mm]
            \hline

            \footnotesize{73} 
            & $\pattern{scale=0.5}{2}{1/1,2/2}{1/2,2/2,1/1} 
            \pattern{scale=0.5}{2}{1/1,2/2}{2/2,1/1,2/1} 
            \pattern{scale=0.5}{2}{1/1,2/2}{0/1,1/1,0/0} 
            \pattern{scale=0.5}{2}{1/1,2/2}{1/1,0/0,1/0}$
            & $\pattern{scale=0.5}{2}{1/1,2/2}{2/2,0/1,2/1} 
            \pattern{scale=0.5}{2}{1/1,2/2}{0/1,2/1,0/0} 
            \pattern{scale=0.5}{2}{1/1,2/2}{1/2,2/2,1/0} 
            \pattern{scale=0.5}{2}{1/1,2/2}{1/2,0/0,1/0}$
            & & Thm~\ref{thm:class73} \\[0.5mm]
            \hline

            \footnotesize{75} 
            & $\pattern{scale=0.5}{2}{1/1,2/2}{1/2,2/2,0/1} 
            \pattern{scale=0.5}{2}{1/1,2/2}{1/2,0/1,0/0} 
            \pattern{scale=0.5}{2}{1/1,2/2}{2/2,2/1,1/0} 
            \pattern{scale=0.5}{2}{1/1,2/2}{2/1,1/0,0/0}$
            & $\pattern{scale=0.5}{2}{1/1,2/2}{2/2,0/1,1/1} 
            \pattern{scale=0.5}{2}{1/1,2/2}{1/2,1/1,0/0} 
            \pattern{scale=0.5}{2}{1/1,2/2}{1/1,2/1,0/0} 
            \pattern{scale=0.5}{2}{1/1,2/2}{2/2,1/1,1/0}$
            & $\pattern{scale=0.5}{2}{1/1,2/2}{1/2,2/2,0/0} 
            \pattern{scale=0.5}{2}{1/1,2/2}{2/2,0/1,0/0} 
            \pattern{scale=0.5}{2}{1/1,2/2}{2/2,2/1,0/0} 
            \pattern{scale=0.5}{2}{1/1,2/2}{2/2,0/0,1/0}$
            & Thm~\ref{thm:class75} \\[0.5mm]
            \hline

        \end{tabular}
    \end{center} 
    \vspace{-0.5cm}
\caption{Other equidistribution results proved in this paper.}\label{tab-no-easy}
\end{table}

\begin{table}[!t]
    \renewcommand{\arraystretch}{1.5}
    \begin{center} 
        \begin{tabular}{|c|l|l||c|l|l|}
            \hline  
            \multicolumn{1}{|c|}{\footnotesize{nr.}} & \multicolumn{1}{c|}{\footnotesize{trivial}} & \multicolumn{1}{c||}{\footnotesize{trivial}} & \multicolumn{1}{c|}{\footnotesize{nr.}} & \multicolumn{1}{c|}{\footnotesize{trivial}} & \multicolumn{1}{c|}{\footnotesize{trivial}} \\
            \hline  

            \footnotesize{54} 
            & $\pattern{scale=0.5}{2}{1/1,2/2}{0/2,2/2,1/1,2/1,0/0} 
            \pattern{scale=0.5}{2}{1/1,2/2}{0/2,2/2,1/1,0/0,1/0} 
            \pattern{scale=0.5}{2}{1/1,2/2}{1/2,2/2,1/1,0/0,2/0} 
            \pattern{scale=0.5}{2}{1/1,2/2}{2/2,0/1,1/1,0/0,2/0}$
            & $\pattern{scale=0.5}{2}{1/1,2/2}{0/2,2/2,0/1,1/1,2/0} 
            \pattern{scale=0.5}{2}{1/1,2/2}{0/2,1/2,1/1,0/0,2/0} 
            \pattern{scale=0.5}{2}{1/1,2/2}{0/2,1/1,2/1,0/0,2/0} 
            \pattern{scale=0.5}{2}{1/1,2/2}{0/2,2/2,1/1,1/0,2/0}$
            & \footnotesize{69} 
            & $\pattern{scale=0.5}{2}{1/1,2/2}{1/2,1/1,2/1,0/0} 
            \pattern{scale=0.5}{2}{1/1,2/2}{2/2,0/1,1/1,1/0}$
            & $\pattern{scale=0.5}{2}{1/1,2/2}{0/2,1/1,2/1,1/0} 
            \pattern{scale=0.5}{2}{1/1,2/2}{1/2,0/1,1/1,2/0}$ \\
            \hline

            \footnotesize{71} 
            & $\pattern{scale=0.5}{2}{1/1,2/2}{1/2,2/2,2/1,0/0} 
            \pattern{scale=0.5}{2}{1/1,2/2}{2/2,0/1,0/0,1/0}$
            & $\pattern{scale=0.5}{2}{1/1,2/2}{0/2,2/2,2/1,1/0} 
            \pattern{scale=0.5}{2}{1/1,2/2}{0/2,2/1,0/0,2/0} 
            \pattern{scale=0.5}{2}{1/1,2/2}{1/2,2/2,0/1,2/0} 
            \pattern{scale=0.5}{2}{1/1,2/2}{1/2,0/1,0/0,2/0}$
            & 
            & 
            & \\
            \hline
            
        \end{tabular}
    \end{center} 
    \vspace{-0.5cm}
\caption{Conjectured equidistributions. The equidistributions in Classes 69 and 71 were conjectured in \cite{KZ2019}.}
    \label{tab-conject}
\end{table}

\begin{table}[!t]
    \centering
    \scriptsize 
    \renewcommand{\arraystretch}{1.5} 
    \setlength{\tabcolsep}{3pt} 
    
    \begin{center}
    
    \begin{tabular}{
    |>{\centering\arraybackslash}p{0.4cm}|
    >{\centering\arraybackslash}p{0.8cm}|
    >{\centering\arraybackslash}p{1.5cm}||
    >{\centering\arraybackslash}p{0.4cm}|
    >{\centering\arraybackslash}p{0.8cm}|
    >{\centering\arraybackslash}p{1.5cm}||
    >{\centering\arraybackslash}p{0.4cm}|
    >{\centering\arraybackslash}p{0.8cm}|
    >{\centering\arraybackslash}p{1.5cm}||
    >{\centering\arraybackslash}p{0.4cm}|
    >{\centering\arraybackslash}p{0.8cm}|
    >{\centering\arraybackslash}p{1.5cm}|
    }
    \hline
    nr. & enum. & ref. & nr. & enum. & ref. & nr. & enum. & ref. & nr. & enum. & ref. \\ \hline
    
    1  & D & Thm~\ref{thm:class1} & 2  & D & Thm~\ref{thm:class2} & 3  & D & \cite{KZ2019} & 4  & A & Thm~\ref{thm:avoidance-(n-1)!} \\ \hline
    5  & ? &  & 6  & A & Thm~\ref{thm:avoidance-(n-1)!}  & 7  & A & Thm~\ref{thm:avoidance-(n-1)!}  & 8  & D & \cite{KZ2019}\\ \hline
    9  & A & Thm~\ref{thm:avoidance-(n-1)!}  & 10 & A & Thm~\ref{thm:avoidance-(n-1)!}  & 11 & A & Thm~\ref{thm:avoidance-(n-1)!}  & 12 & A & Thm~\ref{thm:avoidance-(n-1)!}    \\ \hline
    13 & ? &   & 14 & ? &  & 15 & D & Thm~\ref{thm:class15} & 16 & A & Thm~\ref{thm:avoidance-1} \\ \hline
    17 & A & Thm~\ref{thm:avoidance-1}  & 18 & A & Thm~\ref{thm:avoidance-1} & 19 & A & Thm~\ref{thm:avoidance-19-21-80-97-103} & 20 & D & \cite{KZ2019}\\ \hline
    21 & A & Thm~\ref{thm:avoidance-19-21-80-97-103} & 22 & ? &  & 23 & ? & & 24 & A & Thm~\ref{thm:avoidance-24-61-86-105} \\ \hline
    25 & D & Thm~\ref{thm:class25} & 26 & ? & & 27 & ? & & 28 & A & Thm~\ref{thm:avoidance-(n!/2)} \\ \hline
    29 & ? &  & 30 & ? & & 31 & D & Thm~\ref{thm:class31} & 32 & D & Thm~\ref{thm:class32} \\ \hline
    33 & D & Thm~\ref{thm:class33} & 34 & D & \cite{KZZ} & 35 & D & \cite{KZ2019}& 36 & D & Thm~\ref{thm:class36} \\ \hline
    37 & D & \cite{KZ2019} & 38 & A & Thm~\ref{thm:avoidance-1}  & 39 & A & Thm~\ref{thm:avoidance-1}  & 40 & ? & \\ \hline
    41 & ? &  & 42 & A & Thm~\ref{thm:avoidance-(n-1)!}  & 43 & A & Thm~\ref{thm:avoidance-(n-1)!}  & 44 & D & \cite{KZ2019} \\ \hline
    45 & D & Thm~\ref{thm:class45} & 46 & A & Thm~\ref{thm:class46} & 47 & A & Thm~\ref{thm:class47} & 48 & A & Thm~\ref{thm:avoidance-48-67}  \\ \hline
    49 & D & Thm~\ref{thm:class49} & 50 & A & Thm~\ref{thm:avoidance-1}  & 51 & A & Thm~\ref{thm:avoidance-1} & 52 & D & \cite{KZ2019} \\ \hline
    53 & D & \cite{KZ2019} & 54 & A & Thm~\ref{thm:class54} & 55 & ? &  & 56 & A & Thm~\ref{thm-pat-56}\\ \hline
    57 & ? &  & 58 & ? &  & 59 & ? &  & 60 & ? & \\ \hline
    61 & D & Thm~\ref{dis-patterns-61} & 62 & A & Thm~\ref{thm:class62}& 63 & D & Thm~\ref{thm:class63} & 64 & A & Thm~\ref{thm:avoidance-(n-1)!}   \\ \hline
    65 & A & Thm~\ref{thm:avoidance-(n-1)!}  & 66 & A & Thm~\ref{thm:avoidance-(n-1)!}  & 67 & A & Thm~\ref{thm:avoidance-48-67} & 68 & D & Thm~\ref{thm:class68} \\ \hline
    69 & ? &  & 70 & ? & & 71 & ? &  & 72 & A & Thm~\ref{thm:avoidance-1}  \\ \hline
    73 & D & Thm~\ref{thm:class73} & 74 & D & Thm~\ref{dis-pattern-74} & 75 & A & Thm~\ref{thm:avoidance-1}  & 76 & D & \cite{KZ2019} \\ \hline
    77 & D & \cite{KZ2019} & 78 & D & \cite{KZZ}& 79 & D & \cite{KZ2019}& 80 & D & Thm~\ref{thm:class80}\\ \hline
    81 & D & \cite{KZ2019}& 82 & ? &  & 83 & ? &  & 84 & A & Thm~\ref{thm:avoidance-32-62-84} \\ \hline
    85 & A & Thm~\ref{thm:avoidance-(n!/2)} & 86 & A & Thm~\ref{thm:avoidance-24-61-86-105} & 87 & D & \cite{StanleyEC1} & 88 & ? &  \\ \hline
    89 & A & Thm~\ref{thm:avoidance-(n-1)!}  & 90 & A & Thm~\ref{thm:avoidance-(n-1)!}  & 91 & ? &  & 92 & A & Thm~\ref{thm:avoidance-(n-1)!}  \\ \hline
    93 & A & Thm~\ref{thm:avoidance-(n-1)!}  & 94 & A & Thm~\ref{thm:avoidance-1} & 95 & A & Thm~\ref{thm:avoidance-1}  & 96 & D & \cite{KZ2019} \\ \hline
    97 & A & Thm~\ref{thm:avoidance-19-21-80-97-103} & 98 & ? &  & 99 & ? &  & 100& A & Thm~\ref{thm:avoidance-(n-1)!}  \\ \hline
    101& A & Thm~\ref{thm:avoidance-(n-1)!}  & 102& A & Thm~\ref{thm:avoidance-1}  & 103& D & \cite{KZ2019} & 104& A & Thm~\ref{thm:avoidance-(n!/2)} \\ \hline
    105& A & Thm~\ref{thm:avoidance-24-61-86-105} &    &   &   &    &   &   &    &   &   \\ \hline
    \end{tabular}
\caption{Known enumerative results for all equivalence classes. ``A'' denotes known avoidance enumeration, and ``D'' denotes known distribution. For the reader's convenience, non-trivial known distributions from the literature are also presented in this paper. We note that, by the Shading Lemma in~\cite{Hilmarsson2015Wilf}, the following sets of classes have identical avoidance sets: $\{5,99\}$, $\{23,58\}$, $\{29,41,88\}$, $\{30,60\}$, and $\{59,82\}$. Hence, the table presents 19 open avoidance problems and 72 open distribution problems.}\label{tab:patterns_ref}

\end{center}\end{table}

The following theorem is used frequently in our paper. 

\begin{thm}[\cite{KZ2019}]\label{short} For the pattern $\pattern{scale=0.6}{1}{1/1}{0/1,1/0}$ (equivalently, $\pattern{scale=0.6}{1}{1/1}{0/0,1/1}$), we have
$$A(x)=\frac{F(x)}{1+xF(x)};\quad F(x,q)=\frac{F(x)}{1+x(1-q)F(x)}.$$
\end{thm}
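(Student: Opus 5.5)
The plan is to identify occurrences of this pattern as a simple permutation statistic and then use a unique decomposition. In an occurrence of $\pattern{scale=0.6}{1}{1/1}{0/1,1/0}$ at an entry $\pi_i$, the shaded box $\boks{0}{1}$ says no entry to the left of $\pi_i$ is larger than it. The shaded box $\boks{1}{0}$ says no entry to the right of $\pi_i$ is smaller than it. So $\pi_i$ is simultaneously a left-to-right maximum and a right-to-left minimum. Equivalently, all entries before position $i$ are smaller than $\pi_i$ and all entries after are larger. This forces $\pi_i=i$ with $\{\pi_1,\dots,\pi_{i-1}\}=[i-1]$; call such an entry a \emph{splitting point}.

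Let $A(x)$ be the g.f. for permutations with no splitting point, i.e.\ the avoiders. Given $\pi$ with splitting points at positions $i_1<\dots<i_k$, cut $\pi$ at these positions. This writes $\pi$ as $\sigma_0\oplus 1\oplus\sigma_1\oplus 1\oplus\cdots\oplus 1\oplus\sigma_k$, where each $\sigma_j$ is a (possibly empty) permutation, suitably standardized. The key check is that, for a direct sum, an entry of a block $\sigma_j$ is a splitting point of $\pi$ if and only if it is a splitting point of $\sigma_j$. This holds because everything in earlier blocks is smaller and everything in later blocks is larger. Hence each $\sigma_j$ must avoid the pattern, and conversely every tuple $(\sigma_0,\dots,\sigma_k)$ of avoiders produces a permutation with exactly $k$ occurrences. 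This gives a bijection, and therefore
$$F(x,q)=\sum_{k\ge 0} A(x)^{k+1}(qx)^k=\frac{A(x)}{1-qxA(x)}.$$

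Setting $q=1$ gives $F(x)=A/(1-xA)$. Solving for $A$ yields $A=F/(1+xF)$, the first formula. Substituting this back gives
$$\frac{A}{1-qxA}=\frac{F}{1+xF-qxF},$$
which is the second formula. For the equivalent pattern $\pattern{scale=0.6}{1}{1/1}{0/0,1/1}$ nothing further is needed, since it is the complement of the first and so is trivially equidistributed with it.

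The only step needing care is the uniqueness and correctness of the decomposition, namely that splitting points of a direct sum are exactly the splitting points of its summands together with the inserted singletons. Once this is checked, the rest is routine algebra.
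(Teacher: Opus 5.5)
Your proof is correct: occurrences of the pattern are exactly the entries $\pi_i=i$ with $\{\pi_1,\dots,\pi_{i-1}\}=[i-1]$, and cutting at all of them gives a direct-sum bijection with $(k+1)$-tuples of avoiders, so $F(x,q)=A(x)/(1-qxA(x))$ and both formulas follow. The paper gives no proof here and only cites \cite{KZ2019}; your decomposition is the unrolled form of the first-occurrence recursion $F(x,q)=A(x)+qxA(x)F(x,q)$, which is the style of argument the paper uses in Theorems~\ref{thm:class36} and~\ref{thm:class80}, so it is essentially the same approach.
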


The following enumerative results are known.

\begin{thm}[\cite{KZ2019}]\label{thm-pat-56}
For the patterns in Class 56 (represented by $\pattern{scale = 0.6}{2}{1/1,2/2}{0/1,1/2,0/0,2/1,2/0}$, $\pattern{scale = 0.6}{2}{1/1,2/2}{0/1,1/2,2/0,0/2,1/1}$, $\pattern{scale = 0.6}{2}{1/1,2/2}{0/1,1/0,0/0,1/1,2/2}$), 
	\begin{align*}
	A(x) = \frac{2F(x)-1}{F(x)};
	\ \ \ \ \ 
	F(x,q) = \frac{(2-q)F(x)+q-1}{(1-q)F(x)+q}.
	\end{align*}
\end{thm}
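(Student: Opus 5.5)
The plan is to rewrite the claimed generating function in terms of indecomposable permutations and then prove the resulting product formula by a structural decomposition. Let $I(x)$ be the g.f.\ of nonempty indecomposable permutations, i.e.\ those with no proper prefix $\pi_1\cdots\pi_i$ equal to $\{1,\ldots,i\}$. Every permutation is uniquely a direct sum of indecomposables, so $F(x)=1/(1-I(x))$. Substituting this into the stated formulas gives
$$\frac{(2-q)F(x)+q-1}{(1-q)F(x)+q}=\frac{1+(1-q)I(x)}{1-qI(x)}=1+\frac{I(x)}{1-qI(x)},$$
and in particular $A(x)=1+I(x)=(2F(x)-1)/F(x)$. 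So it suffices to show that, for one representative of Class~56, the pattern is equidistributed on $S_n$ ($n\ge1$) with the statistic ``number of components minus one''. The other representatives then follow because the class is closed under the trivial symmetries.

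I would take the representative $\pattern{scale = 0.6}{2}{1/1,2/2}{0/1,1/0,0/0,1/1,2/2}$. An occurrence $(a,b)$ forces the whole region to the left of and below $b$ to contain only $a$, and the region to the right of and above $b$ to be empty. Hence occurrences are in bijection with right-to-left maxima $b$ having exactly one element to their south-west. I will not try to match this with the component statistic pointwise, since $123$ has no occurrences but three components; only the distribution can agree. Instead I would decompose $\pi$ at the entry $1$, or, after applying a symmetry, at the smallest element among those south-west of the right-to-left maxima. The aim is to show that $\pi\mapsto$(statistic) satisfies the same ``sequence of blocks'' structure: a nonempty permutation splits uniquely into an ordered list of $k\ge1$ blocks, each block counted by $I(x)$, and exactly $k-1$ of them create an occurrence.

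Concretely, I would split according to whether the first entry is $1$. I would then track how the right-to-left maxima with exactly one south-west entry arise when the permutation is cut at successive left-to-right minima. The goal is a functional equation for $F(x,q)$ in terms of $F(x)$, to be checked against the target $1+I/(1-qI)$.

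The main obstacle is this middle step: finding a decomposition in which the occurrences of the chosen representative factor cleanly. If the direct g.f.\ attempt becomes messy, I would switch to the representative $\pattern{scale = 0.6}{2}{1/1,2/2}{0/1,1/2,0/0,2/1,2/0}$. Its occurrences are the position intervals starting at $a<b$ and ending at $b$ whose value set is exactly $\{1,\ldots,b\}$. These are prefix-closed objects very close to the cut points defining components, and I would try to build a recursive bijection to the component statistic, checking it against the data for $n\le 4$ before writing it out.
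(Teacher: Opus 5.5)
The paper gives no proof of this statement; it quotes it from \cite{KZ2019}. Your proposal therefore has to stand on its own, and it has two genuine gaps.

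\textbf{The three representatives are not symmetric to each other.} Your algebraic reduction is correct: with $F=1/(1-I)$ the claimed series equals $1+I/(1-qI)$. So the theorem says each pattern of the class is distributed on $S_n$, $n\ge1$, like ``number of components minus one''. The error is the sentence that the other representatives follow because the class is closed under the trivial symmetries. The only trivial symmetries preserving the underlying permutation $12$ are the inverse (transposing the shading), reverse-complement (rotating it by $180^\circ$), and their composite. The three listed patterns lie in three different orbits of this group:
the shading $\{(0,0),(0,1),(1,0),(1,1),(2,2)\}$ is transpose-invariant, so its orbit is just it and its rotation;
the rotation sends $\{(0,1),(0,2),(1,1),(1,2),(2,0)\}$ to its transpose, so that orbit also has two elements;
$\{(0,0),(0,1),(1,2),(2,0),(2,1)\}$ has an orbit of size four.
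That is exactly why three representatives are named; compare the separate cells for Class~56 in Table~\ref{tab-known}. A complete argument for one representative would give the formula for only two or four of the eight patterns. Each orbit needs its own argument or a nontrivial link to another.

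\textbf{The decisive step is not carried out.} For both representatives you actually try, the statistic is not pointwise the component count. ``Split according to whether the first entry is $1$'' and ``track right-to-left maxima when cutting at left-to-right minima'' do not yet produce any equation.

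The representative you did not consider is immediate. In $\pattern{scale = 0.6}{2}{1/1,2/2}{0/1,1/2,2/0,0/2,1/1}$ the shading forces every entry left of $b$ other than $a$ to lie below $a$, and every entry right of $b$ to lie above $a$. Hence $\pi_1\cdots\pi_a$ is a permutation of $\{1,\dots,a\}$ and $b=\pi_{a+1}$. Occurrences are exactly the cut points, and your reduction finishes this orbit.

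For the other two orbits, mark an occurrence instead of decomposing at the entry $1$.
\begin{itemize}
\item For $\pattern{scale = 0.6}{2}{1/1,2/2}{0/1,1/0,0/0,1/1,2/2}$, an occurrence $(a,b)$ forces $\pi=LbR$. Here $L$ consists of $a$ and all entries larger than $b$, and $R$ consists of the entries of $\{1,\dots,b-1\}\setminus\{a\}$. The standardizations of $L$ and of $aR$ run over all pairs of nonempty permutations. The other occurrences of $\pi$ are exactly the occurrences in these two permutations: the entry coming from $a$ has nothing south-west of it in either, so it never plays the role of $b$.
\item For your interval representative, keep the pattern of the segment carrying $\{1,\dots,b-1\}$, and collapse the segment carrying $\{1,\dots,b\}$ to a single smallest entry. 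The same additivity holds.
\end{itemize}
Either way you get $\partial_q F(x,q)=(F(x,q)-1)^2$. Together with $F(x,1)=F(x)$, this determines $F(x,q)$ coefficient by coefficient, and the stated formula is its solution.
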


\begin{thm}[\cite{KZ2019}]\label{dis-patterns-61}
	For any pattern $p$ in Class 61 (represented by $\pattern{scale = 0.6}{2}{1/1,2/2}{0/1,1/1,1/2,1/0,1/2,2/1}$ and  $\pattern{scale = 0.6}{2}{1/1,2/2}{0/1,0/2,1/0,1/1,1/2}$), 
	\begin{align*}
	s_{n,k}(p) = & \, s_{n-1,k-1}(p)+ (k+1)s_{n-1,k+1}(p) + (n-k-1)s_{n-1,k}(p) 
	\end{align*}
	with  the initial conditions	$s_{1, 0}(p) = 1$, $s_{2, 0}(p) = 1$, and $s_{2, 1}(p) = 1$.
\end{thm}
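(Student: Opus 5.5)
Both patterns in Class~61 are placed in the same cell of the table, so their equidistribution is treated as already established (by a trivial symmetry or by \cite{KZ2019}). I will therefore prove the recurrence for the representative $p=\pattern{scale = 0.6}{2}{1/1,2/2}{0/1,1/1,1/2,1/0,1/2,2/1}$ only.

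First I read off what an occurrence of $p$ is. Column~$1$ is completely shaded, so the two entries must be in adjacent positions $i,i+1$. Row~$1$ is completely shaded, so their values must be adjacent, with $\pi_{i+1}=\pi_i+1$. Thus $p(\pi)$ is the number of \emph{successions} of $\pi$.

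The plan is to build every $n$-permutation uniquely from an $(n-1)$-permutation $\pi'$ by inserting the new largest element $n$ into one of the $n$ slots of $\pi'$. Suppose $\pi'$ has $j$ successions. I will sort the $n$ slots into three types.

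\begin{itemize}
\item The slot immediately after $n-1$. Inserting $n$ here creates the succession $(n-1,n)$. It destroys none, because the entry that followed $n-1$ in $\pi'$ is at most $n-2$, so $n-1$ did not start a succession. The result has $j+1$ successions.
\item The $j$ slots lying strictly inside a succession $(a,a+1)$ of $\pi'$. None of these is the slot after $n-1$, since $a+1\le n-1$ forces $a\neq n-1$. Inserting $n$ here breaks exactly that succession and creates none, because $n$ cannot follow $a\le n-2$ as a succession and nothing follows $n$ as a succession. The result has $j-1$ successions.
\item The remaining $n-1-j$ slots. Inserting $n$ there leaves the number of successions at $j$.
\end{itemize}

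Counting the $n$-permutations with $k$ successions by their preimage $\pi'$ then gives three contributions:
\begin{itemize}
\item $s_{n-1,k-1}(p)$, from $j=k-1$ and the single creating slot;
\item $(k+1)s_{n-1,k+1}(p)$, from $j=k+1$ and the $k+1$ breaking slots;
\item $(n-1-k)s_{n-1,k}(p)$, from $j=k$ and the neutral slots.
\end{itemize}
This is exactly the stated recurrence. The initial values are immediate: $1$ and $12$ give $s_{1,0}=1$ and $s_{2,1}=1$, and $21$ gives $s_{2,0}=1$.

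The only delicate step is confirming that the three slot types are disjoint and exhaustive, that is, that the slot after $n-1$ is never inside a succession and never neutral. The argument above settles this. The main obstacle would arise only if the second representative were not trivially equivalent to the first. That pattern counts ascents $\pi_i<\pi_{i+1}$ with $\pi_i$ a left-to-right maximum. In that case I would try first the analogous insertion with the new smallest element at a suitable place, checking that it again splits the slots into one creating slot, $k+1$ destroying slots and $n-1-k$ neutral slots.
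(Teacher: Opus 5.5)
Your argument for the succession pattern (middle row and middle column shaded) is correct and complete. Inserting $n$ gives one creating slot (after $n-1$), $j$ breaking slots and $n-1-j$ neutral slots, which yields the recurrence and the initial values.

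The gap is your opening claim that the two representatives lie in the same cell of Table~\ref{tab-known} and are therefore trivially equivalent. They do not. The succession pattern occupies a cell by itself. The second representative (shaded boxes $(0,1),(0,2),(1,0),(1,1),(1,2)$) sits in a different cell together with its three symmetric images. No symmetry can relate them: the succession pattern's shading is the central cross, which every symmetry of the square fixes, while the other pattern shades the corner box $(0,2)$, and corners go to corners. So the equidistribution inside Class~61 is non-trivial. It is part of what the quoted result from \cite{KZ2019} asserts, so invoking it here is circular. As written, the recurrence for the second representative is unproved; your last sentence is only a plan.

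The plan does go through, with the front as the ``suitable place.'' An occurrence of the second pattern is an index $i$ with $\pi_i$ a left-to-right maximum and $\pi_i<\pi_{i+1}$. Insert a new minimum $1$ into $\pi'\in S_{n-1}$ having $j$ occurrences, raising the other values by one. Since $1$ is smaller than everything, every old entry keeps its left-to-right-maximum status. All old adjacencies survive except the one that $1$ splits.
\begin{itemize}
\item At the front, $1$ is a left-to-right maximum followed by a larger entry, so exactly one occurrence is created.
\item In any other slot, $1$ is not a left-to-right maximum and forms a descent with its left neighbour, so nothing is created.
\item An occurrence is destroyed exactly when $1$ splits one of the $j$ occurring pairs.
\end{itemize}
This gives $1$, $j$ and $n-1-j$ slots of the three kinds, hence the same recurrence. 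The initial values also agree: $12$ has one occurrence and $21$ none. Adding this closes the gap and also proves the equidistribution within Class~61.
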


\begin{thm}[\cite{KZ2019}]\label{dis-patterns-37}
	The pattern $p=\pattern{scale=0.6}{2}{1/1,2/2}{0/1,1/2,0/0,1/0,1/1,2/1}$ in Class $37$ satisfies  
	\begin{align*}
		s_{n,k}(p) = & \, (k+1)s_{n-1,k+1}(p)+(n-k)s_{n-1,k}(p)  -s_{n-2,k}(p)+s_{n-2,k-1}(p)
\end{align*}
	with  the initial conditions	$s_{1, 0}(p) = 1$, $s_{2, 0}(p) = 1$, and $s_{2, 1}(p) = 1$.
\end{thm}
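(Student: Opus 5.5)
We prove the recurrence by inserting the largest element $n$ into an $(n-1)$-permutation $\pi'$. Each $\pi\in S_n$ arises exactly once this way, from $\pi'$ obtained by deleting $n$ from $\pi$. The argument rests on the geometric meaning of an occurrence of $p$; we keep track of how the number of occurrences changes for each of the $n$ insertion slots.

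\textbf{Step 1: what an occurrence is.} Since all boxes in column $1$ are shaded, an occurrence $\pi_i\pi_j$ of $p$ has $j=i+1$. So it is an adjacent ascent $\pi_i<\pi_{i+1}$. The shaded boxes $\boks{0}{0},\boks{0}{1}$ say that every entry to the left of position $i$ is larger than $\pi_{i+1}$. The shaded box $\boks{2}{1}$ says that no entry to the right of position $i+1$ has value strictly between $\pi_i$ and $\pi_{i+1}$.

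\textbf{Step 2: effect of inserting $n$.} Take an occurrence $\pi'_i\pi'_{i+1}$ of $\pi'$ and insert $n$ anywhere other than between its two entries.
\begin{itemize}
\item If $n$ lands to the left of the ascent, the left condition still holds, because $n>\pi'_{i+1}$.
\item If $n$ lands to the right of the ascent, the condition on $\boks{2}{1}$ still holds, because $n$ is not between $\pi'_i$ and $\pi'_{i+1}$.
\end{itemize}
So the only occurrence destroyed is the one whose two entries $n$ separates, if there is one.

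Next we find which new occurrences can involve $n$. The entry $n$ cannot play the role of the $1$, since it has no larger entry after it. If $n$ plays the role of the $2$, nothing may stand to the left of the $1$, because every entry there would have to exceed $n$. Hence $n$ must be in position $2$. The box $\boks{2}{1}$ then forces no later entry to have value in $(\pi_1,n)$, that is, $\pi_1=n-1$. So a new occurrence is created exactly when $n$ is inserted in position $2$ and $\pi'_1=n-1$.

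\textbf{Step 3: counting.} If $\pi'$ has $m$ occurrences, then exactly $m$ slots each decrease the count by one. These are the slots lying inside the occurrences, and they are distinct because occurrences are adjacent pairs.

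Without the special case, this would give $(k+1)s_{n-1,k+1}(p)+(n-k)s_{n-1,k}(p)$. We must correct for the slot at position $2$ when $\pi'_1=n-1$.
\begin{itemize}
\item In this case $\pi'_1\pi'_2$ is never an occurrence in $\pi'$, because $\pi'_2<n-1$.
\item So this slot was counted as preserving the count, whereas it actually increases it by one.
\end{itemize}

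The correction is therefore $-t_{n-1,k}+t_{n-1,k-1}$, where $t_{n-1,k}$ is the number of $\pi'\in S_{n-1}$ with $\pi'_1=n-1$ and exactly $k$ occurrences.

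\textbf{Step 4: the key claim.} We claim that $t_{n-1,k}=s_{n-2,k}(p)$, via deleting the initial $n-1$. The entry $n-1$ in front takes part in no occurrence. Removing it does not affect any other occurrence: it only sat to the left, where it exceeded everything, and it did not lie to the right of anything. Conversely, prepending a new maximum preserves the number of occurrences by the same reasoning.

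This yields
$$s_{n,k}(p) = (k+1)s_{n-1,k+1}(p)+(n-k)s_{n-1,k}(p)-s_{n-2,k}(p)+s_{n-2,k-1}(p).$$
The initial values $s_{1,0}(p)=1$ and $s_{2,0}(p)=s_{2,1}(p)=1$ are checked directly: $12$ is an occurrence and $21$ is not.

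The main obstacle is the bookkeeping in Step 2, namely verifying that inserting $n$ neither creates nor destroys occurrences except in the two described ways. This comes down to the careful shading analysis of Step 1.
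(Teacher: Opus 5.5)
Your proof is correct. The paper does not prove this statement itself; it quotes it from \cite{KZ2019}. Your argument by inserting the maximum $n$ is a complete derivation:
\begin{itemize}
\item the $m$ slots inside the adjacent-ascent occurrences each destroy exactly one occurrence;
\item the only creating slot is position $2$ when $\pi'_1=n-1$, and this is never a destroying slot;
\item deleting a leading $n-1$ gives $t_{n-1,k}=s_{n-2,k}(p)$.
\end{itemize}
Together these yield exactly the four terms of the recurrence. The derivation is in fact valid already for $n\ge 2$ with $s_{0,0}(p)=1$.

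The one step you leave implicit is that inserting $n$ cannot create a new occurrence among the old entries. This holds for two reasons. Two entries adjacent in $\pi$ stay adjacent in $\pi'$. Deleting $n$ can only make the shaded regions emptier. So every occurrence in $\pi$ not using $n$ is already an occurrence in $\pi'$. This deserves a sentence, but it is not a gap.
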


\begin{thm}[\cite{KZ2019}]\label{dis-patterns-81}
	The pattern $p=\pattern{scale=0.6}{2}{1/1,2/2}{0/1,1/2,1/0,1/1,2/1,0/2}$ in Class $81$ satisfies 
	\begin{align*}
	s_{n,k}(p) & =   (k + 1) s_{n - 1, k + 1}(p) + (n - k - 1) s_{n - 1, k}(p)   + s_{n - 1, k - 1}(p)  \notag \\[5pt]
	& \quad + (k + 1) s_{n - 2, k + 1}(p) + (n - 2 k - 2) s_{n - 2, k}(p) - (n - k - 1) s_{n - 2, k - 1}(p)
	\end{align*}
with  the initial conditions	$s_{1, 0}(p) = 1$, $s_{2, 0}(p) = 1$, and $s_{2, 1}(p) = 1$.
\end{thm}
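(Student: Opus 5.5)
Note on the pattern. The pattern $p$ consists of the points $(1,1),(2,2)$. Its whole middle column and whole middle row are shaded, and so is the box $\boks{0}{2}$. So an occurrence of $p$ in $\pi$ is a pair of adjacent entries $\pi_i\pi_{i+1}$ with $\pi_{i+1}=\pi_i+1$ such that every entry to the left of $\pi_i$ is smaller than $\pi_i$. In other words, an occurrence is an ascending succession $a(a+1)$ whose first letter is a left-to-right maximum; then $a+1$ is a left-to-right maximum as well. I would prove the recurrence by relating $n$-permutations with $k$ occurrences to $(n-1)$- and $(n-2)$-permutations, in the spirit of the proofs of Theorems~\ref{dis-patterns-61} and~\ref{dis-patterns-37}. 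The main tool is the standard "merge a succession" operation: replace a chosen succession $a(a+1)$ by the single letter $a$ and standardize. Conversely, "split" a letter $a$ into $a(a+1)$, increasing by $1$ all entries larger than $a$.

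\textbf{Step 1: the first-order terms.} Start from $\pi'\in S_{n-1}$ and split one letter $a$ into $a(a+1)$; there are $n-1$ choices of $a$. If $a$ is a left-to-right maximum, splitting creates a new occurrence. If $\pi'$ already had an occurrence $a(a+1)$, splitting $a$ turns it into $a(a+1)(a+2)$; this keeps that occurrence and creates one more. Splitting a letter that is not a left-to-right maximum creates nothing. Every occurrence and every left-to-right-maximum condition is otherwise preserved, because the relative order of the other letters does not change. This gives a map from pairs $(\pi',a)$ onto permutations $\pi\in S_n$ that contain at least one ascending succession. The inverse merges the \emph{last} succession of $\pi$; it has to be normalized so that the map is bijective.

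Reading this the other way, start from $\pi\in S_n$ with $k$ occurrences and merge the first letter of one of its occurrences.
\begin{itemize}
\item There are $k$ such merges, each producing a $\pi'$ with $k-1$ occurrences. Counted over $\pi'$, this accounts for the term $(k+1)s_{n-1,k+1}$ and the term $s_{n-1,k-1}$.
\item The remaining $n-k-1$ gaps account for the term $(n-k-1)s_{n-1,k}$: here one inserts a letter that creates no occurrence, namely an insertion at a gap not immediately after a left-to-right-maximum succession.
\end{itemize}
I would fix a canonical choice of the removed letter, for instance the letter $1$ or the entry adjacent to it. Then each $\pi$ arises a well-defined number of times and the first-order terms match the first line of the recurrence.

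\textbf{Step 2: the correction terms.} The naive insertion overcounts or undercounts in exactly those configurations where the removed letter is itself part of a succession whose merging changes the left-to-right-maximum status of a neighbour. A typical case is a pattern $b\,a\,(a+1)$ in which deleting $a$ makes $b,a+1$ adjacent. These configurations are described by an $(n-2)$-permutation together with a marked gap. I would enumerate them case by case according to whether the two deleted letters form an occurrence, are a non-maximal succession, or straddle an occurrence. This is where the terms
\[(k+1)s_{n-2,k+1},\qquad (n-2k-2)s_{n-2,k},\qquad -(n-k-1)s_{n-2,k-1}\]
should come from, via inclusion--exclusion. The coefficient $n-2k-2$ suggests the count $(n-1)-(k+1)-k$: gaps not adjacent to any occurrence.

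\textbf{Main obstacle and checks.} Step 2 is the delicate part: getting exact coefficients and signs for the $(n-2)$-terms, especially the negative term. Before writing the case analysis, I would fix the canonical removal rule and compute the defect $s_{n,k}-(\text{first line})$ for $n\le 5$ by brute force, and then match each correction term to an explicit family of configurations. If the direct insertion argument becomes too tangled, my fallback is to derive the recurrence from a functional equation for $F(x,q)$. Decompose $\pi$ by its left-to-right maxima; the occurrences are successions among consecutive left-to-right maxima where the maximum value increases by exactly $1$ between adjacent positions. Then extract coefficients and check the result against the initial conditions $s_{1,0}=s_{2,0}=s_{2,1}=1$.
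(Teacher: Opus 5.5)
\textbf{There is a genuine gap: neither line of the recurrence is actually derived.} The paper gives no proof of this statement; it quotes it from Kitaev and Zhang. So I compare your plan with what a working argument needs.

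Your reading of the pattern is correct: an occurrence is an adjacent pair $a(a+1)$ with $a$ a left-to-right maximum. The split/merge machinery of Step~1, however, does not produce the first line.
\begin{itemize}
\item Splitting letters of permutations in $S_{n-1}$ reaches a given $\pi$ once for every ascending succession of $\pi$, whether or not it starts at a left-to-right maximum.
\item The number of splits of $\pi'$ that create an occurrence equals the number of left-to-right maxima of $\pi'$.
\item Neither quantity is recorded by $s_{n-1,\cdot}$, and ``merging the last succession'' does not remove this dependence.
\item Your accounting of terms is inconsistent: merging one of the $k$ occurrences of $\pi$ yields a $\pi'$ with $k-1$ occurrences, so it cannot be the source of $(k+1)s_{n-1,k+1}$.
\end{itemize}
Step~2 only asserts that the $(n-2)$-terms ``should'' come from inclusion--exclusion over configurations like $b\,a\,(a+1)$. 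No configurations are counted, and the sign of $-(n-k-1)s_{n-2,k-1}$ is left unexplained. The fallback via a functional equation is also only a plan.

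\textbf{The missing idea is the one you mention in passing and then drop: insert the new minimum $1$.} Inserting $1$ into $\pi'\in S_{n-1}$ has three effects.
\begin{itemize}
\item It changes no other entry's left-to-right-maximum status.
\item It keeps every occurrence of $\pi'$ except the one it lands inside, if any.
\item The only possible new occurrence is $1\,2$, with $1$ in the first position.
\end{itemize}
Let $t_{n,k}$ be the number of $\pi\in S_n$ with $\pi_1=1$ and $k$ occurrences. If $\pi'$ has $k'$ occurrences, it has $k'$ breaking gaps and $n-1-k'$ neutral non-initial gaps. Insertions at the front produce exactly the permutations counted by $t_{n,k}$. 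Hence
\[
s_{n,k}=(k+1)s_{n-1,k+1}+(n-k-1)s_{n-1,k}+t_{n,k}.
\]
Deleting the leading $1$ gives $t_{n,k}=s_{n-1,k}-t_{n-1,k}+t_{n-1,k-1}$, because $1\,2$ is an occurrence iff the rest starts with its minimum. Substituting the displayed identity at length $n-1$ for $t_{n-1,k}$ and $t_{n-1,k-1}$ yields
\[
t_{n,k}=s_{n-1,k-1}+(k+1)s_{n-2,k+1}+(n-2k-2)s_{n-2,k}-(n-k-1)s_{n-2,k-1}.
\]
This is exactly the stated recurrence for $n\ge 3$.

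So the $s_{n-1,k-1}$ term is not a naive first-order insertion count. The whole second line is not an inclusion--exclusion correction over local configurations either: it is what remains after eliminating the auxiliary count of permutations beginning with $1$, and the negative term comes from that elimination.
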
	

A {\em descent} in a permutation $\pi = \pi_1 \cdots \pi_n$ is a position $i$ such that $\pi_i > \pi_{i+1}$. The distribution of descents is clearly equivalent to the distribution of the pattern
$\pattern{scale=0.6}{2}{1/1,2/2}{1/0,1/1,1/2}$,
and is given by the {\em Eulerian numbers}. We state this classical result in the following theorem.

\begin{thm}[\cite{StanleyEC1}]\label{dis-pattern-74}
For $n \geq 1$ and $0 \leq k \leq n-1$, the number of permutations in $S_n$ with exactly $k$ occurrences of
any pattern $p$ in Class $74$ is given by
\[
s_{n,k}(p)
=
\sum_{j=0}^{k+1}
(-1)^j
\binom{n+1}{j}
(k+1-j)^n.
\]
\end{thm}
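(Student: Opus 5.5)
The plan is to reduce the statement to the classical count of permutations by descents, and then derive the explicit formula by the standard inclusion--exclusion (or ``barred permutation'') argument.

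\textbf{Step 1: the pattern counts descents.} Take the representative $p=\pattern{scale=0.6}{2}{1/1,2/2}{1/0,1/1,1/2}$, whose pattern part is $12$ with the whole middle column shaded. An occurrence is a pair of entries $\pi_i<\pi_j$ with $i<j$ such that no other entry has position strictly between $i$ and $j$. Hence $j=i+1$, and occurrences are exactly the ascents of $\pi$. Applying the complement maps ascents to descents bijectively, so the number of occurrences of $p$ is equidistributed with $\operatorname{des}$. All other patterns in Class~74 are obtained from $p$ by the trivial symmetries (complement, reverse, inverse), so they have the same distribution. It therefore suffices to show that the Eulerian number $A(n,k)=|\{\pi\in S_n:\operatorname{des}\pi=k\}|$ equals the stated alternating sum.

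\textbf{Step 2: the counting identity.} I would prove
\[
\sum_{k}A(n,k)\binom{m+n-1-k}{n}=m^n \qquad (m\ge 1)
\]
(equivalently, Worpitzky's identity). Count functions $f:[n]\to[m]$ by grouping them according to the unique permutation $\pi$ that lists $[n]$ by increasing $f$-value, breaking ties by increasing index. Then $f(\pi_1)\le\cdots\le f(\pi_n)$, with strict inequality forced at each descent of $\pi$. Subtracting $1$ at each forced strict step turns this into choosing a weakly increasing sequence in $[m-\operatorname{des}\pi]$, and there are $\binom{m-\operatorname{des}\pi+n-1}{n}$ such sequences.

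\textbf{Step 3: generating-function form and inversion.} Summing Step~2 over $m$ gives
\[
\sum_{m\ge0}m^nt^m=\frac{\sum_k A(n,k)t^{k+1}}{(1-t)^{n+1}}.
\]
Multiplying through by $(1-t)^{n+1}=\sum_j(-1)^j\binom{n+1}{j}t^j$ and extracting the coefficient of $t^{k+1}$ gives
\[
A(n,k)=\sum_{j=0}^{k+1}(-1)^j\binom{n+1}{j}(k+1-j)^n,
\]
which is the claimed formula.

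The only step with real content is the bijective count in Step~2, specifically checking that the tie-breaking rule makes $\pi$ unique and forces strict inequality exactly at descents. Everything else is routine coefficient extraction. Since the result is classical, one may alternatively simply cite \cite{StanleyEC1} once Step~1 has identified the statistic.
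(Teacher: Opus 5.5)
Your proposal is correct and follows the paper's route. The paper likewise identifies occurrences of the pattern with the shaded middle column as ascents at adjacent positions, hence equidistributed with descents (and with the inverse pattern by symmetry), and then cites the classical Eulerian-number formula from Stanley; your Steps 2--3 (Worpitzky's identity via the standardization bijection, then coefficient extraction from $\sum_{m} m^n t^m$) add a correct, self-contained proof of that cited result.
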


\section{Easy explainable equidistributions}\label{sec-trivial}

In this section, we explain the equidistributions presented in Table~\ref{tab-easy} and discuss the corresponding enumerative results. We note that the proof of the following lemma in fact establishes the joint equidistribution of the patterns in question.

\begin{lem}\label{two-equidistr-lemmas} There are two classes of equidistributed mesh patterns, shown in Figure~\ref{two-equidistr-fig}, where each of $x_1,\dots,x_6$ takes one of two values: shaded or unshaded.\end{lem}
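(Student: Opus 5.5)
The plan is to give an explicit bijection $\phi:S_n\to S_n$ that moves occurrences of one pattern in each class of Figure~\ref{two-equidistr-fig} to occurrences of the other, uniformly in the choice of $x_1,\dots,x_6$. I cannot see the figure here, so I am guessing its shape: in each class the two patterns share a fully shaded middle row (or, after applying inverse, a fully shaded middle column). One pattern has underlying permutation $12$ and the other $21$, and $x_1,\dots,x_6$ label the same six remaining boxes in both. With a shaded middle row, an occurrence is a pair of entries with consecutive values $k,k+1$. The pattern $12$ records that $k$ is to the left of $k+1$, and $21$ records the opposite order. Each $x_i$ imposes an emptiness condition on a region described by position (left of both, between, right of both) and by value (below $k$, above $k+1$).

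\textbf{Step 1.} Translate occurrences into this language. An occurrence of either pattern is a value $k$ together with the relative order of $k$ and $k+1$. The six optional boxes are conditions only on entries whose values lie outside $\{k,k+1\}$. The shaded middle row forbids anything strictly between the two values, so no further constraint arises there.

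\textbf{Step 2.} Define the local move: swap the positions of the values $k$ and $k+1$. This turns the pair from $12$ into $21$ and leaves every other entry in place. The regions ``left of both / between / right of both'' are therefore unchanged, as are ``below $k$ / above $k+1$''. Hence the condition imposed by each $x_i$ is preserved verbatim, for every choice of shadings.

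\textbf{Step 3.} Make this global. I would partition $\{1,\dots,n\}$ into maximal chains of consecutive values, that is, intervals $[a,b]$ such that for each $a\le k<b$ the pair $(k,k+1)$ satisfies all conditions coming from the shaded $x_i$. Consecutive pairs inside one chain interact, because swapping $k,k+1$ changes the data for the pair $(k+1,k+2)$. So inside each chain I would apply an involution, for example the complement of the values restricted to the chain, keeping the set of positions fixed. This exchanges ascending and descending consecutive pairs in the chain, and I would then check that it preserves the chain decomposition itself. That check should follow from the observation in Step~2 that the region conditions only see entries outside the pair.

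Because the map exchanges the two counts at once, and it is an involution, it gives joint equidistribution, as remarked before the lemma. The second class then follows by applying the inverse symmetry, which turns a shaded middle row into a shaded middle column and consecutive values into adjacent positions.

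\textbf{Main obstacle.} The hard part is Step~3: showing that the global involution preserves the set of ``active'' consecutive pairs when chains are long, since the shaded $x_i$ may refer to entries that are themselves moved within the same chain. If the chain-complement fails, my fallback is to argue via the inverse permutation, where occurrences become adjacent positions and the needed map is a reversal of each maximal block of positions.
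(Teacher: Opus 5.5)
\textbf{The guess about Figure~\ref{two-equidistr-fig} is wrong, so the proposal proves a different statement.} In both pairs of the figure, the two patterns have the same underlying permutation $12$. The fully shaded row is the \emph{top} row $\{(0,2),(1,2),(2,2)\}$, not the middle row. Also, the $x_i$ are \emph{permuted} between the two patterns rather than sitting in the same boxes:
\begin{itemize}
\item In the left pair, the two unshaded boxes in each column are interchanged ($x_1\leftrightarrow x_2$, $x_3\leftrightarrow x_4$, $x_5\leftrightarrow x_6$).
\item In the right pair, only columns $0$ and $1$ carry variables, and these two columns are interchanged. This pair is not obtained from the left one by the inverse symmetry.
\end{itemize}

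Because the top row is shaded, the larger entry of any occurrence must be $n$. So an occurrence is an entry $a$ to the left of $n$, subject to emptiness conditions on the regions cut out by $a$ and $n$. The paper's proof is then one line per class:
\begin{itemize}
\item Replace every entry $i<n$ by $n-i$ and keep $n$ in place. This fixes all positions and swaps the value bands ``below $a$'' and ``between $a$ and $n$'' (rows $0$ and $1$), giving the left equivalence.
\item Reverse the prefix to the left of $n$. This fixes all values and swaps the position bands ``left of $a$'' and ``between $a$ and $n$'' (columns $0$ and $1$), giving the right equivalence.
\end{itemize}
Both maps send occurrences to occurrences, which also gives joint equidistribution.

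\textbf{The statement you did set out to prove is false.} So your Step~3 cannot be completed, and the ``main obstacle'' you flag is a real obstruction. Shade only the box $(0,0)$ in addition to the middle row, and count on $S_3$:
\begin{itemize}
\item Your $12$-pattern requires consecutive values with $k$ left of $k+1$ and nothing to the left of $k$ below $k$. It occurs exactly once in each of $123,132,213,231,312$ and not in $321$, giving $5q+1$.
\item Your $21$-pattern requires $k+1$ left of $k$ and nothing to the left of $k+1$ below $k$. It occurs twice in $321$, once in each of $213,231,312$, and not in $123,132$, giving $q^2+3q+2$.
\end{itemize}
These are Classes~50 and~68 in Appendix~\ref{all-distr-appendix}. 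Hence no involution that works uniformly in the $x_i$ can exist. Your single-pair swap in Step~2 is correct, but the interaction between overlapping consecutive-value pairs cannot be repaired by any global rearrangement. Passing to inverses, as in your fallback, only restates the same false claim.
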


\begin{proof} Let $\pi \in S_n$. Define $\pi' \in S_n$ to be the permutation obtained from $\pi$ by replacing each entry $i$ with $n+1-i$ for $i \in [n-1]$ (that is, we keep the position of $n$ fixed and complement the remaining elements). Clearly, this map establishes the left equivalence in Figure~\ref{two-equidistr-fig}, as it exchanges occurrences of the corresponding mesh patterns.

The right equivalence in Figure~\ref{two-equidistr-fig} follows by reversing the elements of $\pi$ to the left of~$n$.\end{proof}

\begin{figure}[!t]
\begin{center}

\begin{tabular}{ccc}

\begin{tikzpicture}[scale=0.5]
  \tikzset{
    grid/.style={ draw, step=1cm, black!100, thick },
    graycell/.style={ fill=gray!50, draw=none, minimum width=1cm, minimum height=1cm, anchor=south west }
  }

  \begin{scope}
    \fill[graycell] (0,2) rectangle (1,3);
    \fill[graycell] (1,2) rectangle (2,3);
    \fill[graycell] (2,2) rectangle (3,3);

    \draw[grid] (0,0) grid (3,3);

    \node at (0.5, 1.5) {$x_1$};
    \node at (0.5, 0.5) {$x_2$};
    \node at (1.5, 1.5) {$x_3$};
    \node at (1.5, 0.5) {$x_4$};
    \node at (2.5, 1.5) {$x_5$};
    \node at (2.5, 0.5) {$x_6$};

    \filldraw (1,1) circle (4pt);
    \filldraw (2,2) circle (4pt);
  \end{scope}

  \begin{scope}[xshift=5cm]
    \fill[graycell] (0,2) rectangle (1,3);
    \fill[graycell] (1,2) rectangle (2,3);
    \fill[graycell] (2,2) rectangle (3,3);

    \draw[grid] (0,0) grid (3,3);

    \node at (0.5, 1.5) {$x_2$};
    \node at (0.5, 0.5) {$x_1$};
    \node at (1.5, 1.5) {$x_4$};
    \node at (1.5, 0.5) {$x_3$};
    \node at (2.5, 1.5) {$x_6$};
    \node at (2.5, 0.5) {$x_5$};

    \filldraw (1,1) circle (4pt);
    \filldraw (2,2) circle (4pt);
  \end{scope}

  \node at (4,1.5) {$\sim$};

\end{tikzpicture}

&

\ \ \ \ \ \

&

\begin{tikzpicture}[scale=0.5]
  \tikzset{
    grid/.style={ draw, step=1cm, black!100, thick },
    graycell/.style={ fill=gray!50, draw=none, minimum width=1cm, minimum height=1cm, anchor=south west }
  }

  \begin{scope}
    \fill[graycell] (0,2) rectangle (1,3);
    \fill[graycell] (1,2) rectangle (2,3);
    \fill[graycell] (2,2) rectangle (3,3);

    \draw[grid] (0,0) grid (3,3);

    \node at (0.5, 1.5) {$x_1$};
    \node at (0.5, 0.5) {$x_2$};
    \node at (1.5, 1.5) {$x_3$};
    \node at (1.5, 0.5) {$x_4$};
    \node at (2.5, 1.5) {};
    \node at (2.5, 0.5) {};

    \filldraw (1,1) circle (4pt);
    \filldraw (2,2) circle (4pt);
  \end{scope}

  \begin{scope}[xshift=5cm]
    \fill[graycell] (0,2) rectangle (1,3);
    \fill[graycell] (1,2) rectangle (2,3);
    \fill[graycell] (2,2) rectangle (3,3);

    \draw[grid] (0,0) grid (3,3);

    \node at (0.5, 1.5) {$x_3$};
    \node at (0.5, 0.5) {$x_4$};
    \node at (1.5, 1.5) {$x_1$};
    \node at (1.5, 0.5) {$x_2$};
    \node at (2.5, 1.5) {};
    \node at (2.5, 0.5) {};

    \filldraw (1,1) circle (4pt);
    \filldraw (2,2) circle (4pt);
  \end{scope}

  \node at (4,1.5) {$\sim$};

\end{tikzpicture}

\end{tabular}
\caption{Equidistributions in Lemma~\ref{two-equidistr-lemmas}.}\label{two-equidistr-fig}
\end{center}
\end{figure}

\begin{thm}\label{thm:class1} The patterns in Class $1$ are equivalent. The distribution on $S_n$, for $n\geq 1$, is given by $$n!-\sum_{i=1}^{n-1}(i-1)!(n-i-1)!+q\sum_{i=1}^{n-1}(i-1)!(n-i-1)!.$$   \end{thm}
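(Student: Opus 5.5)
The plan is to show directly that every pattern $p$ in Class~$1$ occurs at most once in any permutation. We then show that the permutations containing $p$ are counted by $\sum_{i=1}^{n-1}(i-1)!(n-i-1)!$. Both facts together give the stated distribution, namely $n!$ minus this sum at $q^0$ and the sum itself at $q^1$, and hence the equivalence. By the trivial symmetries (complement, reverse, inverse), it suffices to treat one representative from each of the four ``trivial'' cells of Table~\ref{tab-easy}. We could also use Lemma~\ref{two-equidistr-lemmas} to link some cells to each other, since several Class~1 patterns differ only in the way described in Figure~\ref{two-equidistr-fig}. Even so, a direct count for each representative is simple enough that I would do that and use the lemma only as a cross-check.

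\textbf{First representative.} Take the first pattern, with $\tau=12$ and every box shaded except $\boks{1}{1}$ and $\boks{2}{0}$. Let $a<b$ be an occurrence. Because column $0$ is fully shaded, $a=\pi_1$. Because row $2$ is fully shaded, $b=n$. Shading of $\boks{1}{0}$ and $\boks{1}{2}$ forces every entry strictly between $a$ and $n$ in position to have value in $(a,n)$. Shading of $\boks{2}{1}$ and $\boks{2}{2}$ forces every entry after $n$ to be less than $a$. Since no entry precedes $a$, the values $a+1,\dots,n-1$ must all lie between $a$ and $n$, and the values $1,\dots,a-1$ must all lie after $n$. So the permutation is
\[
a,\ \sigma,\ n,\ \rho,
\]
where $\sigma$ is any arrangement of $\{a+1,\dots,n-1\}$ and $\rho$ is any arrangement of $\{1,\dots,a-1\}$. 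In particular the occurrence, if it exists, is unique, since $a=\pi_1$ and $b=n$ are determined. Writing $i=a$ and summing over $1\le i\le n-1$ gives $\sum_{i}(i-1)!(n-i-1)!$ containing permutations, each with exactly one occurrence.

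\textbf{Remaining cells.} I would run the same analysis for one pattern from each other cell. Each has seven shaded boxes, so in each case a full row or column (or the shading of one of the two points' quadrants) pins one element to an extreme position or value, which again gives uniqueness of the occurrence. The remaining shaded boxes then force a decomposition of $\pi$ into two independently arranged blocks of sizes $i-1$ and $n-i-1$ around two fixed entries. The main obstacle is the cells in which neither point is forced to be $\pi_1$, $\pi_n$, $1$ or $n$ by an entirely shaded row or column. There, uniqueness must come from a combination of shadings, for instance one point being forced to be adjacent to the other in both position and value. In those cases I would first identify which of the two points is determined, for example as the leftmost entry of a fixed type or as the maximum of a prescribed region. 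I would then argue that the unshaded boxes admit exactly the block structure of sizes $(i-1)$ and $(n-i-1)$. Alternatively, I would map the pattern to the first representative via the position-preserving complement or the partial reversal from the proof of Lemma~\ref{two-equidistr-lemmas}.
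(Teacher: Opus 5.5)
Your route is sound but differs from the paper's. The paper counts nothing for Class~$1$. It applies Lemma~\ref{two-equidistr-lemmas} (complement all entries other than $n$; reverse the prefix before $n$) to one pattern from each of the four cells, which links the cells by bijections exchanging occurrences. It then imports the distribution of a single pattern from \cite[Thm 2.8]{KZ2019}. Your direct count is self-contained and explains the formula from the structure of the containers. The cost is a case check per cell, and you get equidistribution only from equality of container counts. That is enough here, since each pattern occurs at most once, whereas the lemma gives the stronger joint statement.

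As written, however, only the first cell is carried out, and your contingency plan for the others is wrong. The obstacle you anticipate never arises. In all twelve Class~$1$ patterns the two unshaded boxes lie in distinct rows and distinct columns. So there is exactly one fully shaded row and one fully shaded column, and they are never row~$1$ and column~$1$ together. Hence one point is always pinned to $\pi_1$, $\pi_n$, $1$ or $n$. The other full line then fixes the second point: either by another extreme condition, or by making it adjacent to the first point in position or in value. This gives uniqueness.

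Your proposed fallback, getting uniqueness from adjacency in both position and value, would fail. If the unshaded boxes are $(0,0)$ and $(2,2)$ (the Class~$76$ pattern), only row~$1$ and column~$1$ are full, and $123$ contains two occurrences. Fortunately no Class~$1$ pattern is of this type.

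To finish, take the representatives with unshaded boxes $\{(0,1),(2,0)\}$, $\{(1,0),(2,1)\}$ and $\{(1,0),(2,2)\}$. The containers are, respectively:
\begin{itemize}
\item[(i)] $\pi=\sigma\,a\,n\,\rho$, with $\sigma$ on $\{a+1,\dots,n-1\}$ and $\rho$ on $\{1,\dots,a-1\}$;
\item[(ii)] $\pi=a\,\sigma\,n\,\rho$, with $\sigma$ on $\{1,\dots,a-1\}$ and $\rho$ on $\{a+1,\dots,n-1\}$;
\item[(iii)] $\pi=a\,\sigma\,(a+1)\,\rho$, with $\sigma$ on $\{1,\dots,a-1\}$ and $\rho$ on $\{a+2,\dots,n\}$.
\end{itemize}
Each contributes $(a-1)!(n-a-1)!$ for $1\le a\le n-1$, which completes your argument.
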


\begin{proof}
The equivalence follows from an application of Lemma~\ref{two-equidistr-lemmas} to the patterns $\pattern{scale = 0.6}{2}{1/1,2/2}{0/0,0/1,0/2,1/0,1/2,2/1,2/2}$, $\pattern{scale = 0.6}{2}{1/1,2/2}{0/0,0/2,1/1,1/0,1/2,2/1,2/2}$, $\pattern{scale = 0.6}{2}{1/1,2/2}{0/0,0/1,0/2,1/1,1/2,2/0,2/2}$, $\pattern{scale = 0.6}{2}{1/1,2/2}{0/1,0/2,1/0,1/1,1/2,2/0,2/2}$. The distribution of the pattern $\pattern{scale = 0.6}{2}{1/1,2/2}{0/0,0/1,0/2,1/1,1/2,2/0,2/1}$ was determined in~\cite[Thm 2.8]{KZ2019}.
\end{proof}

\begin{thm}\label{thm:class2} The patterns in Class $2$ are equivalent. The distribution on $S_n$, for $n\geq 1$, is given by $(n-1)!+q(n!-(n-1)!)$. 
\end{thm}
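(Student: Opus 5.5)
The proof has two parts: the equivalence of the patterns in Class~$2$, and the distribution formula. For the equivalence I will proceed as in Theorem~\ref{thm:class1}. Every pattern in Class~$2$ has exactly two unshaded boxes. After applying complement, reverse and inverse, I can assume the top row is fully shaded, so the pattern matches one of the configurations in Figure~\ref{two-equidistr-fig}. I will then check which pairs of cells in the table are joined by the left map of Lemma~\ref{two-equidistr-lemmas}, which swaps the two lower rows column by column. I will also check which pairs are joined by the right map, which swaps the first two columns when the third column is fully shaded. Each trivial cell in the table, including the two-pattern cell in the second line of Class~$2$, should be linked to another cell by one of these two moves or by a composition of them with symmetries. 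This is a finite check on the shading sets.

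For the distribution, the first step is to show that no permutation contains more than one occurrence of a pattern in Class~$2$. Seven of the nine boxes are shaded, so nearly all other entries of $\pi$ are excluded from the pattern's region. In particular both rows above the lower point are fully shaded. Hence the larger point must be $n$ and the smaller point must be $n-1$, so an occurrence, if it exists, is unique. Consequently the distribution has the form $a_n+q\,b_n$ with $a_n+b_n=n!$.

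The remaining step is to show that $a_n=(n-1)!$, that is, to identify exactly $(n-1)!$ permutations with no occurrence. I will fix a convenient representative of Class~$2$. I will then read off from its two unshaded boxes the single condition, on the relative position of $n-1$ and $n$ and on the entries around them, that decides whether the occurrence exists. I will count the permutations that fail this condition by deleting or inserting one distinguished element: deleting $n$ (or inserting it in its forced place) should give a bijection between the avoiders in $S_n$ and $S_{n-1}$. Subtracting from $n!$ then gives the coefficient $n!-(n-1)!$ of $q$, matching the stated formula $(n-1)!+q\,(n!-(n-1)!)$. Once the representative is settled, the formula transfers to the whole class by the equivalences from the first part.

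I expect the main obstacle to be this last counting step. It requires choosing the representative so that its occurrence condition has the stated complement count, and carefully checking the boundary cases $n=1,2$. For $n=1$ the formula gives $1$, which is consistent with the absence of any pair. I will verify $n=2,3$ by hand against the data in Appendix~\ref{all-distr-appendix} before writing out the bijection.
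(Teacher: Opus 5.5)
\textbf{Main gap: the counting step targets the wrong set.} The formula you set out to prove is false for $n\ge 3$. For every pattern in Class~2, the permutations that \emph{contain} the pattern number $(n-1)!$, not the avoiders. Take the representative with unshaded boxes $(0,0)$ and $(1,0)$. The fully shaded rows $1$ and $2$ force $b=n$ and $a=n-1$. The shaded box $(2,0)$ forbids any entry to the right of $n$. So for $n\ge 2$, $\pi$ contains the pattern, exactly once, if and only if $\pi_n=n$.

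The true distribution is therefore $(n!-(n-1)!)+q\,(n-1)!$ for $n\ge 2$, and $1$ for $n=1$. At $n=3$ this gives $2q+4$, which is what Appendix~\ref{all-distr-appendix} lists, whereas the printed formula gives $2+4q$. The statement has its two coefficients interchanged; the two versions agree only for $n\le 2$. The paper's own proof actually establishes the corrected version: it identifies the $(n-1)!$ permutations ending in $n$ as those that \emph{contain} the pattern.

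Consequently, your proposed bijection between avoiders in $S_n$ and avoiders in $S_{n-1}$ cannot exist for $n\ge 3$, by cardinality. Deleting $n$ is the right tool, but it maps the containers (permutations ending in $n$) onto $S_{n-1}$, not the avoiders. Your uniqueness argument also overreaches. Row~$1$ is not fully shaded for every normalized member of the class: with unshaded boxes $(1,1)$ and $(1,0)$, an occurrence is $\pi_1\pi_n$ with $\pi_n=n$ and $\pi_1$ arbitrary. Argue uniqueness for your representative and transfer it via the equivalence.

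\textbf{The equivalence part.} Using only Lemma~\ref{two-equidistr-lemmas} and symmetries differs from the paper. The paper links two groups of cells by the lemma and then bridges them with an extra bijection $(n-1)AnB\mapsto A(n-1)Bn$. Your route can be made to work and even makes that bridge redundant:
\begin{enumerate}
\item the left map sends the pattern with unshaded $(1,0),(2,0)$ to the one with unshaded $(1,1),(2,1)$;
\item that pattern is the $180^\circ$ rotation of the one with unshaded $(0,1),(1,1)$;
\item the left map sends the latter to the pattern with unshaded $(0,0),(1,0)$.
\end{enumerate}
However, you must apply the right map with an arbitrary but fixed shading of the third column. Its proof (reversing the entries to the left of $n$) permits this, and it is how the paper uses it. Under your restriction ``when the third column is fully shaded'', your finite check fails. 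The second cell of the first line of Class~2 and the two-pattern cell on the second line have normal forms with unshaded $(0,0),(2,0)$ and $(0,1),(2,1)$. The left map swaps these two with each other, but nothing connects them to any other cell.
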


\begin{proof} Applying Lemma~\ref{two-equidistr-lemmas} to the patterns 
$\pattern{scale = 0.6}{2}{1/1,2/2}{0/0,0/1,0/2,1/1,1/2,2/1,2/2}$, 
$\pattern{scale = 0.6}{2}{1/1,2/2}{0/1,0/2,1/1,1/0,1/2,2/1,2/2}$, and 
$\pattern{scale = 0.6}{2}{1/1,2/2}{0/0,0/2,1/0,1/1,1/2,2/0,2/2}$, 
as well as to the patterns
$\pattern{scale = 0.6}{2}{1/1,2/2}{0/1,0/2,1/1,1/2,2/0,2/1,2/2}$ and
$\pattern{scale = 0.6}{2}{1/1,2/2}{0/0,0/2,1/0,1/2,2/0,2/1,2/2}$, it only remains to show that 
$\pattern{scale = 0.6}{2}{1/1,2/2}{0/0,0/1,0/2,1/1,1/2,2/1,2/2}\sim\pattern{scale = 0.6}{2}{1/1,2/2}{0/1,0/2,1/1,1/2,2/0,2/1,2/2}$. This equivalence follows from a simple bijection on $S_n$ that sends permutations of the form $(n-1)A n B$ with a unique occurrence of the first pattern to permutations of the form $A(n-1)B n$ with a unique occurrence of the second pattern. Finally, $\pi \in S_n$ contains a unique occurrence of the pattern $\pattern{scale = 0.6}{2}{1/1,2/2}{0/1,0/2,1/1,1/2,2/0,2/1,2/2}$ if and only if $\pi$ ends with $n$, which yields $(n-1)!$ such permutations.
\end{proof}

\begin{thm}\label{thm:class25}
The patterns in Class $25$ are equivalent. For a pattern $p$ in this class, $s_{n,n-1}(p)=(n-1)!$ and, for $n\geq 2$ and $0\leq k\leq n-2$, 
\begin{equation}\label{thm:class25-distr}
s_{n,k}(p)=\frac{n!}{(k+1)(k+2)}.
\end{equation}
\end{thm}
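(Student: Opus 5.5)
The plan is to prove the equivalence with Lemma~\ref{two-equidistr-lemmas}, and then compute the distribution by describing exactly which pairs are occurrences and counting via a tail sum.

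\emph{Equivalence.} Take $p_1$ to be the representative with the top row and box $\boks{2}{1}$ shaded. Take $p_2$ to be the one with the top row and box $\boks{2}{0}$ shaded. These are the two patterns in the left picture of Figure~\ref{two-equidistr-fig}, with $x_5$ shaded and $x_1,\dots,x_4,x_6$ unshaded (respectively $x_6$ shaded and the others unshaded). So Lemma~\ref{two-equidistr-lemmas} gives $p_1\sim p_2$. Every other pattern in Class~$25$ is a trivial symmetry of $p_1$ or $p_2$, so the whole class is equivalent. It therefore suffices to find the distribution of $p_1$.

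\emph{Reformulation.} Since the whole top row is shaded, the larger element $b$ of any occurrence must be $n$. The smaller element $a$ must lie to the left of $n$. Shading $\boks{2}{1}$ forbids any entry to the right of $n$ with value between $a$ and $n$. Hence, for $\pi\in S_n$, the number of occurrences of $p_1$ is
\[
k(\pi)=\#\{a \text{ to the left of } n : a>\text{every entry to the right of } n\}.
\]

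\emph{Counting.} The key observation is that, for $1\le j\le n-1$, we have $k(\pi)\ge j$ if and only if the $j$ values $n-1,n-2,\dots,n-j$ all lie to the left of $n$.

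\begin{itemize}
\item If these values all lie left of $n$, each of them exceeds everything right of $n$, so $k(\pi)\ge j$.
\item Conversely, suppose some value $n-i$ with $i\le j$ lies to the right of $n$. Then every counted $a$ is larger than $n-i$, so it lies among $n-1,\dots,n-i+1$. This gives $k(\pi)\le i-1<j$.
\end{itemize}

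By symmetry among the relative orders of the positions of $n, n-1,\dots,n-j$, exactly a fraction $1/(j+1)$ of all permutations have $n$ to the right of the other $j$ values. Hence
\[
\#\{\pi\in S_n: k(\pi)\ge j\}=\frac{n!}{j+1},
\]
which also holds trivially for $j=0$.

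Taking differences, for $0\le k\le n-2$ we get
\[
s_{n,k}(p)=\frac{n!}{k+1}-\frac{n!}{k+2}=\frac{n!}{(k+1)(k+2)},
\]
which is~\eqref{thm:class25-distr}. For the top value, $s_{n,n-1}(p)=n!/n=(n-1)!$; these are exactly the permutations ending with $n$.

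The only step needing care is the equivalence ``$k(\pi)\ge j$ iff the top $j$ non-maximal values precede $n$''. This is the step I expect to be the main obstacle, and the two-direction argument above handles it directly. The rest is routine.
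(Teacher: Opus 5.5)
Your proof is correct. The equivalence step is the same as the paper's: Lemma~\ref{two-equidistr-lemmas} with only $x_5$ shaded sends $p_1$ to the right-hand picture, in which box $(2,0)$ is shaded, and that pattern is $p_2$. Your reformulation of the number of occurrences also matches the paper's.

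Where you differ is the counting. The paper fixes $k$ and uses a block decomposition. $A$ is the set of the $k$ counted entries, $B$ is the $i$ remaining entries to the left of $n$, and $C$ is the entries to the right of $n$, with maximum $a=n-k-1$. This gives
\[
s_{n,k}(p)=k!\,(n-k-2)!\sum_{i=0}^{n-k-2}\binom{i+k}{i}(n-k-1-i).
\]
The paper then passes to $n!/((k+1)(k+2))$ by unspecified ``algebraic manipulations''. These amount to two applications of the hockey-stick identity, which give $\sum_{i}\binom{i+k}{i}(n-k-1-i)=\binom{n}{k+2}$.

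Your tail count $\#\{\pi: k(\pi)\ge j\}=n!/(j+1)$ avoids that summation entirely. It also explains the closed form: $s_{n,k}(p)/n!$ is the probability that, in the relative order of the positions of $n,n-1,\dots,n-k-1$, the value $n-k-1$ comes last and $n$ comes second to last.

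The paper's route gives an explicit structural description of the permutations with exactly $k$ occurrences, in the same decomposition style it uses elsewhere, but it leaves the simplification unshown. Your route is shorter and fully self-contained.
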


\begin{proof}
The equivalence of the patterns in Class 25 follows from Lemma~\ref{two-equidistr-lemmas} applied to the patterns $\pattern{scale = 0.6}{2}{1/1,2/2}{0/2,1/2,2/1,2/2}$ and $\pattern{scale = 0.6}{2}{1/1,2/2}{0/2,1/2,2/0,2/2}$. Let $p=\pattern{scale = 0.6}{2}{1/1,2/2}{0/2,1/2,2/1,2/2}$.

\begin{figure}[!t]
\begin{center}
	\begin{tikzpicture}[scale=1, baseline=(current bounding box.center)]
	\foreach \x/\y in {0/2,1/2,2/1,2/2}		    
	\fill[gray!50] (\x,\y) rectangle +(1,1);
	\draw (0.01,0.01) grid (2+0.99,2+0.99);
	\filldraw (2.5,1) circle (2.5pt) node[left] {\small{$a$}};
	\filldraw (2,2) circle (2.5pt) node[above left] {\small{$b$}};

    \tikzset{
    grid/.style={ draw, step=1cm, black!100, thin },
    graycell/.style={ fill=gray!50, draw=none, minimum width=1cm, minimum height=1cm, anchor=south west }
  }

   \draw[grid] (0,0) grid (3,3);

  \draw[thick] (0.1, 1.15) rectangle (1.9, 1.9);
  \node at (1.0, 1.5) {$A$};
  \draw[thick] (0.1, 0.1) rectangle (1.9, 0.85);
  \node at (1.0, 0.5) {$B$};
  \draw[thick] (2.1, 0.1) rectangle (2.9, 0.85);
  \node at (2.5, 0.5) {$C$};

\end{tikzpicture}
\end{center}
\caption{Related to the proof of Theorem~\ref{thm:class25}.}\label{thm:class25-fig}
\end{figure}

Referring to Figure~\ref{thm:class25-fig}, let $\pi\in S_n$ and let $b$ be its largest element. 
Assume that there are $k$ elements in block $A$ and $i$ elements in block $B$, where $0\le i \le n-k-2$; the remaining elements lie in block $C$.

If $C=\varepsilon$, then $\pi$ clearly contains $n-1$ occurrences of $p$, and there are $(n-1)!$ such permutations. Hence $s_{n,n-1}=(n-1)!$. 
On the other hand, if $C\neq \varepsilon$, let $a=n-k-1$ be the largest element in $C$. We note that $a$ can be inserted into $C$ in $n-k-1-i$ ways, and there are $\binom{i+k}{i}$ ways to choose the positions of the $k$ elements of $A$ in $\pi$. 
Furthermore, the number of ways to arrange the elements in $B$ and $C$ is $(n-k-2)!$, while the number of ways to arrange the $k$ elements of $A$ in their fixed positions is $k!$. 
Thus, for $0\le k \le n-2$,
\[
s_{n,k}(p)=k!(n-k-2)!\sum_{i=0}^{n-k-2} \binom{i+k}{i}\,(n-k-1-i).
\]
After some algebraic manipulations, we obtain~\eqref{thm:class25-distr}.
\end{proof}

\begin{thm}\label{thm:class31}
The patterns in Class $31$ are equivalent. The distribution on $S_n$, for $n\geq 1$, is given by 
$$n!-\sum_{i=0}^{n-2}i!(n-i-1)!+q\sum_{i=0}^{n-2}i!(n-i-1)!.$$\end{thm}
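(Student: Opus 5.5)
The plan is to reduce everything to a single representative using Lemma~\ref{two-equidistr-lemmas} together with the trivial symmetries, and then to count occurrences of that representative directly. For the equivalence, take $p=\pattern{scale = 0.6}{2}{1/1,2/2}{0/2,1/2,2/2,2/1,0/0,1/0}$ from the first cell of Class~31. Its top row is entirely shaded, so the left equivalence of Lemma~\ref{two-equidistr-lemmas} applies; it swaps the shading of rows $0$ and $1$ in each column. For $p$ this sends $(0,0)\mapsto(0,1)$, $(1,0)\mapsto(1,1)$ and $(2,1)\mapsto(2,0)$. The result is $\pattern{scale = 0.6}{2}{1/1,2/2}{0/2,1/2,2/2,0/1,1/1,2/0}$, which is the first pattern of the second cell. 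Since all patterns within a cell are trivially equivalent, this shows that the whole class is equidistributed. I would double-check this by redoing the box bookkeeping, since that is the only place an error could occur.

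For the distribution, I would analyse occurrences of $p$ in $\pi\in S_n$ directly.
\begin{itemize}
\item Because the whole top row is shaded, the larger element $b$ of an occurrence must be $n$.
\item Write $\pi=AnC$. The smaller element $a$ lies in $A$.
\item The shaded boxes $(0,0)$ and $(1,0)$ force every element of $A$ to be at least $a$, so $a=\min A$.
\item The shaded box $(2,1)$ forces every element of $C$ to be smaller than $a$.
\end{itemize}
Hence $\pi$ contains at most one occurrence of $p$. It contains exactly one precisely when $A\neq\varepsilon$ and every entry of $A$ exceeds every entry of $C$. The unshaded boxes $(0,1)$, $(1,1)$ and $(2,0)$ impose no further constraints, consistent with this description.

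To count these permutations, fix $|A|=i$ with $1\le i\le n-1$. The values in $A$ are then forced to be $\{n-i,\dots,n-1\}$, and they can be arranged in $i!$ ways. The values in $C$ are $\{1,\dots,n-1-i\}$, arranged in $(n-1-i)!$ ways. This gives $\sum_{i=1}^{n-1}i!(n-1-i)!$ permutations with one occurrence. Substituting $i\mapsto n-1-i$ turns this into $\sum_{i=0}^{n-2}i!(n-i-1)!$. All remaining permutations have zero occurrences, which yields the stated polynomial. The main (mild) obstacle is the verification that the lemma maps the first cell to the second with the correct shading; the enumeration itself is routine.
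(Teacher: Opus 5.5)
Your proposal is correct and follows the paper's argument for the equivalence: Lemma~\ref{two-equidistr-lemmas} applied to the first pattern of each cell, and your row-swap bookkeeping is right. Your direct count is also sound ($b=n$, $a=\min A$, $C$ entirely below $A$, giving $\sum_{i=1}^{n-1}i!(n-1-i)!$); the only difference is that the paper cites \cite[Thm 2.7]{KZ2019} for the distribution, so your derivation makes the proof self-contained.
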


\begin{proof}
The equivalence of the patterns in Class 31 follows from Lemma~\ref{two-equidistr-lemmas} applied to the patterns $\pattern{scale = 0.6}{2}{1/1,2/2}{0/0,0/2,1/0,1/2,2/1,2/2}$ and $\pattern{scale = 0.6}{2}{1/1,2/2}{0/1,0/2,1/1,1/2,2/0,2/2}$. The distribution is found in \cite[Thm 2.7]{KZ2019}.
\end{proof}

\begin{thm}\label{thm:class32}
The patterns in Class $32$ are equivalent. The distribution on $S_n$, for $n\geq 1$, is given by 
$$n!-\sum_{i=1}^{n-1}\frac{(n-1)!}{i}+q\sum_{i=1}^{n-1}\frac{(n-1)!}{i}.$$\end{thm}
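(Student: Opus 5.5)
The plan follows the pattern of Theorems~\ref{thm:class1}, \ref{thm:class2} and~\ref{thm:class31}. Every pattern in Class~32 has the whole top row shaded, possibly after a trivial symmetry. The element playing the role of $2$ in an occurrence must therefore be $n$, so each pattern fits the template of Figure~\ref{two-equidistr-fig}.

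First, I will connect the seven trivial cells of Class~32 by applying Lemma~\ref{two-equidistr-lemmas}. The left map of the lemma swaps the two lower rows: it exchanges $x_1\leftrightarrow x_2$, $x_3\leftrightarrow x_4$ and $x_5\leftrightarrow x_6$. The right map swaps the two left columns when the right column is unshaded. Starting from the representative $p=\pattern{scale=0.6}{2}{1/1,2/2}{0/0,0/1,0/2,1/1,1/2,2/2}$, I will list for each remaining cell a representative whose top row is fully shaded and check that it is reached from $p$ by one of these two moves, or by a composition of them with complement, reverse or inverse. If some cell is not reached this way, I will fall back on a direct argument, as for Class~2. In every pattern of this class, the shading forces each permutation to contain at most one occurrence. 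So it suffices to show that the number of permutations containing an occurrence is the same for every pattern, and this can be done either by a simple bijection or by the count below. I expect this matching of the seven cells to be the main (bookkeeping) obstacle.

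Second, I will compute the distribution for $p$. In an occurrence of $p$, the element playing the role of $2$ is $n$, because the top row is shaded. Since the boxes $(0,0),(0,1),(0,2)$ are shaded, nothing lies to the left of the element playing $1$, so that element is $\pi_1$ and lies to the left of $n$. Since the boxes $(1,1),(1,2)$ are shaded, every entry strictly between $\pi_1$ and $n$ is smaller than $\pi_1$. The box $(2,2)$ adds no further condition, because nothing exceeds $n$. Thus $\pi$ contains at most one occurrence of $p$. It contains one exactly when, writing $n$ in position $i+1$ with $1\le i\le n-1$, the entry $\pi_1$ is the maximum of $\pi_1\cdots\pi_i$.

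Third, I will count these permutations. For a fixed $i$, there are $(n-1)!$ permutations with $n$ in position $i+1$. By symmetry, exactly a $1/i$ fraction of them have $\pi_1=\max\{\pi_1,\dots,\pi_i\}$. Summing over $i$ gives $\sum_{i=1}^{n-1}(n-1)!/i$ permutations with exactly one occurrence. All other permutations have no occurrence, which yields the stated polynomial $n!-\sum_{i=1}^{n-1}\frac{(n-1)!}{i}+q\sum_{i=1}^{n-1}\frac{(n-1)!}{i}$.
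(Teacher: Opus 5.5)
Your count for $\pattern{scale=0.6}{2}{1/1,2/2}{0/0,0/1,0/2,1/1,1/2,2/2}$ is correct. It also takes a different route from the paper: the paper does not count anything, but imports the distribution of the Class~32 pattern $\pattern{scale=0.6}{2}{1/1,2/2}{0/0,0/1,0/2,1/2,2/0,2/2}$ from \cite[Thm 2.6]{KZ2019}.

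The gap is in the equivalence step. You leave it as a plan, and with the column swap restricted to an unshaded right column, as you state it, the plan does not close. Label the lower boxes $x_1=(0,1)$, $x_2=(0,0)$, $x_3=(1,1)$, $x_4=(1,0)$, $x_5=(2,1)$, $x_6=(2,0)$. In the order of Table~\ref{tab-easy}, the full-top-row representatives of the seven cells are:
\begin{itemize}
\item first: $(1,1,1,0,0,0)$;
\item second: $(1,1,0,0,1,0)$ and $(1,1,0,1,0,0)$;
\item third: $(1,0,1,1,0,0)$;
\item fourth: $(0,1,1,1,0,0)$;
\item fifth: $(0,0,1,1,1,0)$;
\item sixth: $(0,0,1,1,0,1)$;
\item seventh: $(1,1,0,0,0,1)$.
\end{itemize}
The row swap links first--second, third--fourth, seventh--second and fifth--sixth. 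Your restricted column swap only links first--third and second--fourth. So the cells of $\pattern{scale=0.6}{2}{1/1,2/2}{0/2,1/2,2/2,1/1,2/1,1/0}$ and $\pattern{scale=0.6}{2}{1/1,2/2}{0/2,1/2,2/2,1/1,1/0,2/0}$ form a separate component and are never reached. Your fallback is not carried out either: the count you give covers only $p$, and the ``simple bijection'' is never specified. Your claim that every pattern has at most one occurrence is right, since each representative has a fully shaded left or middle column.

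The missing observation is that reversing the entries to the left of $n$ changes neither the value of the first element of an occurrence nor the set of entries to the right of $n$. Hence the column swap is valid for any fixed shading of $x_5,x_6$. This is how the paper uses Lemma~\ref{two-equidistr-lemmas}: in the proof of Theorem~\ref{thm:class1}, all four patterns it is applied to have a shaded box in the right column. With this, $(1,1,0,0,1,0)\mapsto(0,0,1,1,1,0)$ and $(1,1,0,0,0,1)\mapsto(0,0,1,1,0,1)$ connect all seven cells.

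Alternatively, finish your fallback explicitly. An occurrence of $\pattern{scale=0.6}{2}{1/1,2/2}{0/2,1/2,2/2,1/1,2/1,1/0}$ is $\pi_i n$ with $n$ in position $i+1$ and every entry after $n$ smaller than $\pi_i$. This holds for $(n-1)!/(n-i)$ permutations, and summing over $i$ gives $\sum_{j=1}^{n-1}(n-1)!/j$ again. The other pattern is handled identically with ``larger'' in place of ``smaller''.
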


\begin{proof}
The equivalence of the patterns in Class 32 follows from Lemma~\ref{two-equidistr-lemmas}. The distribution of the pattern $\pattern{scale = 0.6}{2}{1/1,2/2}{0/0,0/1,0/2,1/2,2/0,2/2}$ is found in \cite[Thm 2.6]{KZ2019}.
\end{proof}

\begin{thm}\label{thm:class33}
The patterns in Class~$33$ are equivalent. Moreover, for a pattern $p$ in this class, 
$s_{0,0} = s_0(1,0) = 1$, and for $n \ge 2$, $s_{n,0} = n!/2$ and $s_{n,1} = n!/2$. 
Equivalently,
\[
F(x,q) = \frac{(1-q)(1+x)}{2} + \frac{1+q}{2} F(x).
\]
\end{thm}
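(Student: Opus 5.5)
The plan is to show directly that every pattern in Class~$33$ has at most one occurrence in any permutation. Then, for each pattern, I will identify the unique possible occurrence and check that it is present in exactly half of the $n$-permutations when $n\ge 2$. Since the two cells of the class are each closed under the trivial symmetries (complement, reverse, inverse), it suffices to treat one representative from each cell: $p_1=\pattern{scale=0.6}{2}{1/1,2/2}{0/1,0/2,1/1,1/2,2/1,2/2}$ and $p_2=\pattern{scale=0.6}{2}{1/1,2/2}{0/0,0/1,0/2,2/0,2/1,2/2}$.

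For $p_1$, rows $1$ and $2$ are entirely shaded. An occurrence $\pi_i\pi_j$ with $\pi_i<\pi_j$ and $i<j$ therefore requires that no entry other than $\pi_j$ has value greater than $\pi_i$. This forces $\pi_j=n$ and $\pi_i=n-1$. Conversely, if $n-1$ appears to the left of $n$, then the pair is an occurrence, because the only entry above $n-1$ is $n$ itself. Hence $p_1(\pi)=1$ if $n-1$ precedes $n$ and $p_1(\pi)=0$ otherwise. The involution swapping the values $n-1$ and $n$ shows that each case accounts for $n!/2$ permutations when $n\ge2$.

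For $p_2$, columns $0$ and $2$ are entirely shaded. This means there is no entry to the left of the first element of the occurrence and none to the right of the second. So the only candidate is the pair $\pi_1\pi_n$, and it is an occurrence exactly when $\pi_1<\pi_n$. Reversal followed by complement preserves the relation between first and last entries, so it does not help here; instead I use the involution exchanging the values $\pi_1$ and $\pi_n$. This again splits $S_n$, for $n\ge2$, into two halves of size $n!/2$ with $0$ and $1$ occurrence respectively.

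Thus all patterns in the class share the distribution $s_{n,0}=s_{n,1}=n!/2$ for $n\ge 2$, together with $s_{0,0}=s_{1,0}=1$. It remains to verify the generating function by checking coefficients of $\frac{(1-q)(1+x)}{2}+\frac{1+q}{2}F(x)$:
\begin{itemize}
\item at $x^0$ and $x^1$ the coefficient is $\frac{1-q}{2}+\frac{1+q}{2}=1$;
\item at $x^n$ for $n\ge2$ the coefficient is $\frac{1+q}{2}n!$.
\end{itemize}
There is no real obstacle. The only care needed is confirming that the trivial symmetries generate each cell and handling the small cases $n\le1$ separately.
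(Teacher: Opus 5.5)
Your proof is correct and is essentially the paper's argument. For the rows-shaded representative you identify the unique possible occurrence as $n-1$ preceding $n$, exactly as the paper does; for the columns-shaded representative you give the short direct argument (the only candidate occurrence is $\pi_1\pi_n$, and swapping the first and last entries splits $S_n$ into two halves) where the paper simply cites \cite[Thm 2.2]{KZ2019}.
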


\begin{proof} The distribution for the pattern $\pattern{scale = 0.6}{2}{1/1,2/2}{0/0,0/1,0/2,2/0,2/1,2/2}$ is given in  \cite[Thm 2.2]{KZ2019}. On the other hand, the pattern $\pattern{scale=0.6}{2}{1/1,2/2}{0/2,1/2,2/2,0/1,1/1,2/1}$ appears in a permutation (only once) if and only if the element $n-1$ preceeds the element $n$, which gives the same distribution.  So, $\pattern{scale = 0.6}{2}{1/1,2/2}{0/0,0/1,0/2,2/0,2/1,2/2}\sim\pattern{scale=0.6}{2}{1/1,2/2}{0/2,1/2,2/2,0/1,1/1,2/1}$.
\end{proof}

\begin{thm}\label{thm:class45}
The patterns in Class~$45$ are equivalent. The distribution on $S_n$, for $n \geq 1$, is given by 
$(n-1)! + q(n-1)(n-1)!$.
\end{thm}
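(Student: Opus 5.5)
The plan is to avoid bijections entirely. For one representative in each trivial-symmetry cell of Class~45, I will show that a permutation $\pi\in S_n$ contains at most one occurrence of the pattern. I will then show that it contains exactly one occurrence unless a single entry is forced into the first position. That exceptional event (``$n$ is first'' or ``$\pi_1=n$'') has $(n-1)!$ permutations. Each pattern therefore has distribution $(n-1)!+q(n!-(n-1)!)=(n-1)!+q(n-1)(n-1)!$, which gives both the equivalence and the formula. Patterns in the same cell of Table~\ref{tab-easy} are related by complement, reverse and inverse, so they need no further treatment.

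\textbf{First cell}, representative $p_1=\pattern{scale=0.5}{2}{1/1,2/2}{0/2,1/2,2/2,0/1,1/1}$, with an occurrence $ab$ where $a<b$ and $a$ precedes $b$.
\begin{itemize}
\item The shaded top row forces $b=n$.
\item Boxes $(0,1)$ and $(1,1)$ say that no entry to the left of $n$ lies between $a$ and $n$. Hence $a$ must be the maximum of the entries preceding $n$.
\item So there is exactly one occurrence when $n$ is not first, and none when $n$ is first.
\end{itemize}

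\textbf{Second cell}, representative $p_2=\pattern{scale=0.5}{2}{1/1,2/2}{0/2,1/2,2/2,0/1,0/0}$.
\begin{itemize}
\item Again $b=n$.
\item The whole leftmost column is shaded, so $a=\pi_1$.
\item An occurrence exists, and is unique, precisely when $n$ is not first.
\end{itemize}

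\textbf{Third cell}, representative $p_3=\pattern{scale=0.5}{2}{1/1,2/2}{0/2,2/2,0/1,2/1,0/0}$.
\begin{itemize}
\item The shaded leftmost column gives $a=\pi_1$.
\item Boxes $(2,1)$ and $(2,2)$ force every entry to the right of $b$ to be smaller than $a$.
\item So $b$ must be the last entry of $\pi$ exceeding $\pi_1$. Such an entry exists, and is unique, iff $\pi_1\neq n$.
\end{itemize}

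\textbf{Fourth cell}, representative $p_4=\pattern{scale=0.5}{2}{1/1,2/2}{0/2,0/1,1/1,2/1,0/0}$.
\begin{itemize}
\item Again $a=\pi_1$.
\item The shaded middle row says that no entry has value strictly between $a$ and $b$, so $b=\pi_1+1$.
\item $b$ automatically lies to the right of $a$, so there is exactly one occurrence iff $\pi_1\neq n$.
\end{itemize}

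In all four cases the number of permutations with no occurrence is $(n-1)!$, and every other permutation has exactly one occurrence. This gives the stated distribution; for $n=1$ it correctly reduces to $1$.

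There is no real obstacle here. The only point needing care is reading each mesh correctly, in particular that in $p_3$ the unshaded box $(1,2)$ allows $b\neq n$ while uniqueness still holds. I will also check that I have taken the right representative from each of the four cells of Table~\ref{tab-easy}.
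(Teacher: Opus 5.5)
Your proof is correct and follows the paper's approach: each pattern occurs at most once, and it is avoided exactly when $\pi_1=n$, which gives $(n-1)!$ avoiders. The only difference is that the paper analyzes just your $p_1$ and $p_2$ directly and obtains the other two cells from them via the bijections of Lemma~\ref{two-equidistr-lemmas}, whereas you check a representative of every cell by hand, which is equally short and makes the lemma unnecessary here.
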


\begin{proof}
The equivalence of the patterns 
$\pattern{scale = 0.6}{2}{1/1,2/2}{0/0,0/1,0/2,1/2,2/2}$ and 
$\pattern{scale = 0.6}{2}{1/1,2/2}{0/2,1/0,1/1,1/2,2/2}$, as well as 
$\pattern{scale = 0.6}{2}{1/1,2/2}{0/1,0/2,1/1,1/2,2/2}$ and 
$\pattern{scale = 0.6}{2}{1/1,2/2}{0/0,0/2,1/0,1/2,2/2}$, 
follows from Lemma~\ref{two-equidistr-lemmas}. The pattern 
$\pattern{scale = 0.6}{2}{1/1,2/2}{0/0,0/1,0/2,1/2,2/2}$ 
can occur at most once, and any permutation starting with the largest element avoids this pattern, which gives the desired enumeration. The same holds for the pattern 
$\pattern{scale = 0.6}{2}{1/1,2/2}{0/1,0/2,1/1,1/2,2/2}$; in particular, the unique occurrence is formed by the largest element and the largest element to its left. This completes the proof.
\end{proof}

\begin{thm}\label{thm:class53}
The patterns in Class~$53$ are equivalent. Moreover, for a pattern $p$ in this class, 
$s_{0,0} = s_0(1,0) = 1$, and for $n \ge 2$, $s_{n,0} = n!-(n-2)!$ and $s_{n,1} = (n-2)!$.
\end{thm}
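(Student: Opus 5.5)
The plan is to prove the equivalence and the distribution at once, by showing directly that every pattern in Class~$53$ occurs at most once and that exactly $(n-2)!$ permutations of length $n\ge 2$ contain it. Every pattern in this class has underlying permutation $12$ and exactly eight of the nine boxes shaded. So each pattern is determined by its single unshaded box $\boks{i}{j}$ with $i,j\in\{0,1,2\}$. Reading off the cells of Table~\ref{tab-easy}, the unshaded boxes occurring are, up to the trivial symmetries, the corner boxes, the edge-middle boxes and the central box $\boks{1}{1}$.

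The key step is the following observation. Suppose $\pi_a\pi_b$ (with $a<b$, $\pi_a<\pi_b$) is an occurrence of such a pattern in $\pi\in S_n$. Then all other $n-2$ entries of $\pi$ lie in the one unshaded region, which is a rectangle determined by the horizontal and vertical lines through $\pi_a$ and $\pi_b$. This forces both the positions and the values of $\pi_a$ and $\pi_b$. For instance:
\begin{itemize}
\item for $\boks{2}{0}$ we get $\pi=(n-1)\,n\,\sigma$ with $\sigma$ a permutation of $[n-2]$;
\item for $\boks{1}{1}$ we get $\pi=1\,\sigma\,n$ with $\sigma$ a permutation of $\{2,\dots,n-1\}$;
\item for $\boks{0}{0}$ we get $\pi=\sigma\,(n-1)\,n$;
\item for $\boks{1}{0}$ we get $\pi=(n-1)\,\sigma\,n$ with $\sigma$ on $[n-2]$;
\item for $\boks{0}{1}$ we get $\pi=\sigma\,1\,n$;
\end{itemize}
and similarly for the remaining boxes. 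In each case the pair $(\pi_a,\pi_b)$ is uniquely determined by $\pi$, as a pair of specified extreme values or specified extreme positions. Hence there is at most one occurrence. Conversely, every choice of $\sigma$ in the displayed shape gives an occurrence, because the block $\sigma$ lies entirely inside the unshaded box.

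It follows that, for $n\ge 2$, exactly $(n-2)!$ permutations have one occurrence and the remaining $n!-(n-2)!$ have none, independently of which box is unshaded. This proves simultaneously that all patterns in Class~$53$ are equidistributed and gives the stated values of $s_{n,0}$ and $s_{n,1}$. For $n\le 1$ there is no occurrence, which gives the initial values.

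The only real work, and the main obstacle, is the case-by-case verification that, for each of the (up to symmetry) four kinds of unshaded box (corner, two edge types, centre), the region really forces the shape claimed. In particular, for the edge and centre boxes I must check that the two pattern points are pinned to the first/last positions or to the values $1$, $n-1$, $n$ as listed, and that no second occurrence can arise. I would handle this by checking one representative of each trivial-symmetry class (one pattern per cell of Table~\ref{tab-easy}) and appealing to complement, reverse and inverse for the rest.
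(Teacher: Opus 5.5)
Your proof is correct, but it follows a different route from the paper. The paper gets the equivalence from Lemma~\ref{two-equidistr-lemmas}: with the top row shaded, swapping rows $0$ and $1$ links each of the other three symmetry cells of Class~53 to the cell whose unshaded box is an edge-middle box. It then quotes the distribution for one representative from \cite[Thm 2.5]{KZ2019}. You instead prove equidistribution and the distribution together by a self-contained count.

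Your argument is in fact uniform and needs no case analysis or symmetries. If $(i,j)$ is the unique unshaded box, all $n-2$ other entries lie in it. The column index $i=0,1,2$ then forces the occurrence to occupy the last two, the first and last, or the first two positions. The row index $j=0,1,2$ forces its values to be $\{n-1,n\}$, $\{1,n\}$ or $\{1,2\}$. Hence there is at most one occurrence, and exactly $(n-2)!$ permutations contain it. The ``main obstacle'' you anticipate therefore does not really arise.

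The paper's route is shorter on the page. It also reuses a lemma that handles many classes of Table~\ref{tab-easy} at once and gives joint equidistribution. Yours removes the dependence on the external reference and describes the containing permutations explicitly.

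One small slip: under the symmetries fixing $12$ (reverse-complement and inverse), the nine boxes split into four classes:
\begin{itemize}
\item the centre;
\item one class of four edge-middle boxes;
\item the corners $\{(0,0),(2,2)\}$;
\item the corners $\{(0,2),(2,0)\}$.
\end{itemize}
So there are two corner types rather than two edge types. Your listed cases already include both corner types, so nothing is affected.
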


\begin{proof}
The equivalence of the patterns in Class 53 follows from Lemma~\ref{two-equidistr-lemmas}.  The distribution is given by \cite[Thm 2.5]{KZ2019}.
\end{proof}

\begin{thm}\label{thm:class60} The patterns in Class $60$ are equivalent. \end{thm}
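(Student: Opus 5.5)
The plan is to reduce Class~60 to a single application of the left equivalence in Lemma~\ref{two-equidistr-lemmas}. Within each cell of Table~\ref{tab-easy} the patterns are related by the trivial symmetries (complement, reverse, inverse), so it suffices to pick one representative from each of the two cells and show these two representatives are equidistributed.

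\emph{Choosing representatives.} Both representatives should have the whole top row $\boks{0}{2},\boks{1}{2},\boks{2}{2}$ shaded, as in Figure~\ref{two-equidistr-fig}.
\begin{itemize}
\item From the first cell take $p_1=\pattern{scale=0.6}{2}{1/1,2/2}{0/2,1/2,2/2,1/1,0/0}$, which has the top row shaded.
\item From the second cell take $p_2=\pattern{scale=0.6}{2}{1/1,2/2}{0/2,1/2,2/2,0/1,1/0}$, which also has the top row shaded.
\end{itemize}
The cell placement of $p_2$ is what makes it interchangeable with the other patterns in the second cell. As a sanity check, applying the trivial symmetries to $p_2$ should yield the listed pattern $\pattern{scale=0.6}{2}{1/1,2/2}{0/2,1/2,0/1,2/1,0/0}$.

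\emph{Reading off the variables.} In the notation of Figure~\ref{two-equidistr-fig}, $x_1=\boks{0}{1}$, $x_2=\boks{0}{0}$, $x_3=\boks{1}{1}$, $x_4=\boks{1}{0}$, $x_5=\boks{2}{1}$ and $x_6=\boks{2}{0}$.
\begin{itemize}
\item $p_1$ corresponds to $x_2=x_3=$ shaded and $x_1=x_4=x_5=x_6=$ unshaded.
\item The left equivalence of the lemma swaps $x_1\leftrightarrow x_2$, $x_3\leftrightarrow x_4$ and $x_5\leftrightarrow x_6$.
\item Under this swap, $p_1$ is sent to the pattern with $x_1,x_4$ shaded and all other variables unshaded, which is exactly $p_2$.
\end{itemize}

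\emph{Underlying bijection.} For completeness I would recall the map behind the lemma: fix the position of $n$ and complement all other entries. This exchanges occurrences of $p_1$ and $p_2$ one-to-one, so it in fact gives joint equidistribution.

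The only real obstacle is bookkeeping. One must check that the listed trivial variants genuinely include a top-row-shaded representative from each cell; otherwise a preliminary reverse, complement or inverse is needed to move the fully shaded row to the top before invoking the lemma. No enumeration is needed here, since the statement only asserts equivalence.
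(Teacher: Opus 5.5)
Your proposal is correct and is essentially the paper's proof: the paper applies the left equivalence of Lemma~\ref{two-equidistr-lemmas} to exactly your two representatives $p_1$ and $p_2$. Your reading of the variables, which exchanges boxes $(i,0)$ and $(i,1)$ via the ``fix $n$, complement the remaining entries'' bijection, is accurate. The other listed patterns in each cell are indeed trivial images of $p_1$ and $p_2$.
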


\begin{proof}
The equivalence follows from Lemma~\ref{two-equidistr-lemmas} applied to  the patterns
$\pattern{scale = 0.6}{2}{1/1,2/2}{0/0,0/2,1/1,1/2,2/2}$ and 
$\pattern{scale = 0.6}{2}{1/1,2/2}{0/1,0/2,1/0,1/0,1/2,2/2}$. 
\end{proof}

\begin{thm}\label{thm:class62} The patterns in Class $62$ are equivalent. Also, for $n\geq 1$, there are $(n-1)!(n-H_{n-1})$ $n$-permutations avoiding any pattern in this class, where $H_n = \sum_{k=1}^{n} \frac{1}{k}$ is the $n$-th Harmonic number. \end{thm}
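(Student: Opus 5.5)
The plan has two parts: derive the equivalences from Lemma~\ref{two-equidistr-lemmas}, then enumerate the avoiders of one convenient representative.

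\textbf{Equivalence.} Every pattern in Class~62 has the whole top row $\boks{0}{2},\boks{1}{2},\boks{2}{2}$ shaded, so Lemma~\ref{two-equidistr-lemmas} applies to each of them. I will label the bottom two rows as in Figure~\ref{two-equidistr-fig}. The left equivalence swaps rows $1$ and $0$ (that is, $x_1\leftrightarrow x_2$, $x_3\leftrightarrow x_4$, $x_5\leftrightarrow x_6$). The right equivalence swaps columns $0$ and $1$ in those rows and keeps column $2$ fixed. Starting from $p_1=\pattern{scale=0.6}{2}{1/1,2/2}{0/2,1/2,2/2,2/1,0/0}$:
\begin{itemize}
\item the row swap sends $p_1$ to $\pattern{scale=0.6}{2}{1/1,2/2}{0/2,1/2,2/2,0/1,2/0}$, the representative of the third cell;
\item the column swap sends $p_1$ to $p_2=\pattern{scale=0.6}{2}{1/1,2/2}{0/2,1/2,2/2,2/1,1/0}$, the representative of the second cell;
\item the row swap sends $p_2$ to $\pattern{scale=0.6}{2}{1/1,2/2}{0/2,1/2,2/2,1/1,2/0}$, the representative of the fourth cell.
\end{itemize}
Together with the trivial equivalences inside each cell, this links all four cells, so the patterns in Class~62 are equivalent.

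\textbf{Avoidance count.} I will count $n$-permutations avoiding $p=p_1$. In an occurrence, the second element must be $n$, because the top row is shaded. Write $\pi=LnR$. An occurrence is an entry $a$ in $L$ such that:
\begin{itemize}
\item nothing to the left of $a$ is smaller than $a$, since $\boks{0}{0}$ is shaded; so $a$ is a left-to-right minimum;
\item nothing in $R$ lies between $a$ and $n$ in value, since $\boks{2}{1}$ is shaded; so every element of $R$ is smaller than $a$.
\end{itemize}
The left-to-right minima of $L$ decrease, and the largest of them is $\pi_1$. Hence $\pi$ avoids $p$ if and only if either $L=\varepsilon$, or $R\neq\varepsilon$ and $\pi_1<\max R$.

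The case $L=\varepsilon$, i.e.\ $\pi_1=n$, gives $(n-1)!$ permutations. For the remaining case, remove $n$ to obtain $\sigma\in S_{n-1}$ together with a cut $m=|L|$, where $1\le m\le n-2$.

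\textbf{Counting valid cuts.} A cut is \emph{bad} exactly when $\sigma_1$ exceeds all $r=n-1-m$ entries of the suffix. For fixed $r$, the fraction of $\sigma\in S_{n-1}$ with this property is $1/(r+1)$. Summing over $r=1,\dots,n-2$, the average number of bad cuts per $\sigma$ is $H_{n-1}-1$. So the average number of good cuts is $n-2-(H_{n-1}-1)=n-1-H_{n-1}$. The total number of avoiders is therefore
\[
(n-1)!+(n-1)!\,(n-1-H_{n-1})=(n-1)!\,(n-H_{n-1}).
\]

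The only delicate step is the characterization of occurrences: checking that the shaded boxes force $a$ to be a left-to-right minimum dominating all of $R$, so that $\pi_1$ alone decides avoidance. After that, the averaging argument is routine. I will also check small cases, e.g.\ $n=2$ gives $1\cdot(2-1)=1$, as expected.
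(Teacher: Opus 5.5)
Your proof is correct, and its skeleton matches the paper's. You use the same three applications of Lemma~\ref{two-equidistr-lemmas} to link the four cells. Your reading of the right-hand equivalence, with column~$2$ arbitrary but fixed, is exactly what the lemma's proof gives: reversing the entries left of $n$ does not touch anything to the right of $n$. It is also how the paper uses the lemma here, since all four patterns have a shaded box in column~$2$. The count rests on the same observation as the paper's: $\pi$ contains $p_1$ if and only if $\pi_1\neq n$ and every entry to the right of $n$ is smaller than $\pi_1$.

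The only difference is how the final count is organized. The paper counts containers by conditioning on the value of $\pi_1$. If $i$ entries other than $n$ exceed $\pi_1$, these and $n$ must fill $i+1$ of the last $n-1$ positions with $n$ placed last. This gives $i!(n-i-2)!\binom{n-1}{i+1}=(n-1)!/(i+1)$ containers, and summing over $i$ yields $(n-1)!H_{n-1}$; the paper leaves this simplification implicit. You instead count avoiders by conditioning on the position of $n$ and averaging over $\sigma$, so the harmonic number appears directly as $\sum_r 1/(r+1)$.

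These are two ways of double counting the same set of containers. Yours makes the appearance of $H_{n-1}$ immediate. The paper's gives an explicit product count for each fixed first entry.
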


\begin{proof}
The equivalence follows from Lemma~\ref{two-equidistr-lemmas} applied to  
$\pattern{scale=0.6}{2}{1/1,2/2}{0/2,1/2,2/2,2/1,0/0}$,  $\pattern{scale=0.6}{2}{1/1,2/2}{0/2,1/2,2/2,2/1,1/0}$,  $\pattern{scale=0.6}{2}{1/1,2/2}{0/2,1/2,2/2,0/1,2/0}$, and $\pattern{scale=0.6}{2}{1/1,2/2}{0/2,1/2,2/2,1/1,2/0}$. 

Next, we enumerate the avoiders of
$\pattern{scale=0.6}{2}{1/1,2/2}{0/2,1/2,2/2,2/1,0/0}$.
Let $\pi = \pi_1 \cdots \pi_n \in S_n$.
If $\pi_1 = n$, then $\pi$ cannot contain an occurrence of
$\pattern{scale=0.6}{2}{1/1,2/2}{0/2,1/2,2/2,2/1,0/0}$.
On the other hand, if $\pi_1 \neq n$ and $\pi$ contains an occurrence of the pattern, then it is easy to see that $\pi_1 n$ must form such an occurrence. In this case, the elements $\pi_1+1, \pi_1+2, \ldots, n-1$ must all lie between $\pi_1$ and $n$ in $\pi$. The number of choices for such a permutation is $\sum_{i=0}^{n-2} i!(n-i-2)! {n-1 \choose i+1}$,
where $(n-i-2)!$ counts the number of ways to arrange the elements smaller than $\pi_1$, and the binomial coefficient counts the choices of positions for the elements larger than $\pi_1$ in $\pi$. Therefore, the number of $n$-permutations with $\pi_1 \neq n$ that avoid
$\pattern{scale=0.6}{2}{1/1,2/2}{0/2,1/2,2/2,2/1,0/0}$
is
$$
n! - (n-1)! - \sum_{i=0}^{n-2} i!(n-i-2)! {n-1 \choose i+1}.
$$
Adding the $(n-1)!$ avoiders with $\pi_1 = n$, we obtain the desired result.
\end{proof}

\begin{thm}\label{thm:class63} The patterns in Class $63$ are equivalent. For $n\geq 2$, there are $n!/2$ $n$-permutations avoiding any pattern in this class.\end{thm}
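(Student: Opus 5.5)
The plan is to reduce Class~63 to two pattern pairs linked by Lemma~\ref{two-equidistr-lemmas}, connect those pairs with one extra prefix-reversal bijection, and then count avoiders of a single representative directly.

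First I will apply the left equivalence of Lemma~\ref{two-equidistr-lemmas}. All four cells of Class~63 contain a pattern whose top row $\boks{0}{2},\boks{1}{2},\boks{2}{2}$ is fully shaded, so the lemma applies directly.
\begin{itemize}
\item The first-column pattern $p_A=\pattern{scale=0.5}{2}{1/1,2/2}{0/2,1/2,2/2,0/1,2/1}$ has $x_1,x_5$ shaded. The lemma sends it to the pattern with $x_2,x_6$ shaded, namely $\pattern{scale=0.5}{2}{1/1,2/2}{0/2,1/2,2/2,0/0,2/0}$, which lies in the fourth cell.
\item The second-column pattern $p_B=\pattern{scale=0.5}{2}{1/1,2/2}{0/2,1/2,2/2,1/1,2/1}$ has $x_3,x_5$ shaded. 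The lemma sends it to $\pattern{scale=0.5}{2}{1/1,2/2}{0/2,1/2,2/2,1/0,2/0}$, which lies in the third cell.
\end{itemize}
Since trivial equivalences are already accounted for, it remains to show $p_A\sim p_B$.

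For $p_A\sim p_B$ I will use a variant of the right bijection in the proof of Lemma~\ref{two-equidistr-lemmas}, even though box $\boks{2}{1}$ is shaded here. Because the top row is shaded, every occurrence of either pattern has the form $(a,n)$ with $a$ to the left of $n$.
\begin{itemize}
\item $(a,n)$ is an occurrence of $p_A$ if and only if no value in $(a,n)$ lies to the right of $n$, and all such values lie to the right of $a$ within the prefix before $n$.
\item $(a,n)$ is an occurrence of $p_B$ if and only if no value in $(a,n)$ lies to the right of $n$, and all such values lie to the left of $a$ within the prefix.
\end{itemize}
Reversing the prefix of $\pi$ preceding $n$ keeps the suffix fixed, so the condition coming from $\boks{2}{1}$ is unchanged. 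It also swaps ``right of $a$ in the prefix'' with ``left of $a$ in the prefix''. Hence this reversal is an involution on $S_n$ that maps occurrences of $p_A$ bijectively to occurrences of $p_B$, which gives the equidistribution (in fact joint equidistribution).

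For the enumeration I will count avoiders of $p_A$. If $n-1$ precedes $n$, then $(n-1,n)$ is an occurrence, since the value interval $(n-1,n)$ contains no entries. If $n-1$ follows $n$, then every candidate $a<n-1$ fails: $n-1$ is a value in $(a,n)$ lying to the right of $n$, which box $\boks{2}{1}$ forbids. Thus $\pi$ avoids $p_A$ if and only if $n$ precedes $n-1$. For $n\ge 2$ this gives $n!/2$ avoiders.

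The only delicate point is checking carefully that prefix reversal preserves the shaded-box conditions exactly. The argument relies on box $\boks{2}{1}$ being shaded in both $p_A$ and $p_B$, so that the two conditions differ only inside the prefix; once that is verified, the rest is routine.
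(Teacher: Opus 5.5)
Your proof is correct and follows the paper's route. The left equivalence of Lemma~\ref{two-equidistr-lemmas} links the first and fourth cells and the second and third cells, while reversing the prefix before $n$ is exactly the lemma's right equivalence; the paper applies it with box $(2,1)$ shaded, so your direct check just makes that explicit. The only cosmetic difference is that you count avoiders of $p_A$ rather than the paper's representative $p_B$, and you reach the same characterization: $n-1$ lies to the right of $n$.
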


\begin{proof}
The equivalence follows from Lemma~\ref{two-equidistr-lemmas} applied to  
$\pattern{scale=0.6}{2}{1/1,2/2}{0/2,1/2,2/2,0/1,2/1}$, 
$\pattern{scale=0.6}{2}{1/1,2/2}{0/2,1/2,2/2,1/1,2/1}$, 
$\pattern{scale=0.6}{2}{1/1,2/2}{0/2,1/2,2/2,1/0,2/0}$, and 
$\pattern{scale=0.6}{2}{1/1,2/2}{0/2,1/2,2/2,0/0,2/0}$. 

Clearly, an $n$-permutation avoids the pattern 
$\pattern{scale=0.6}{2}{1/1,2/2}{0/2,1/2,2/2,1/1,2/1}$ if and only if the element $n-1$ appears to the right of $n$. Hence, there are $n!/2$ permutations of length $n$ that avoid any pattern in this class.
\end{proof}

Only part of the proof of the next theorem admits an “easily explainable equidistribution”; nevertheless, we present the proof in full in this section rather than splitting it into separate parts. Also note that Theorem~\ref{thm:class15} below gives yet another connection to the unsigned Stirling number of the first kind.

\begin{thm}\label{thm:class68} The patterns in Class $68$ are equivalent. For any pattern $p$ in the class, 
\begin{equation}\label{class68-rec}
s_{n,k}(p)=s_{n-1,k-1}(p)+(n-1)s_{n-1,k}(p)
\end{equation}
with the initial conditions $s_{n,0}(p)=(n-1)!$ for $n\geq 1$ and $s_{0,0}(p)=1$, which shows that $s_{n,k}(p)=C(n,k+1)$, the unsigned Stirling number of the first kind. The g.f. for $s_{n,k}(p)$ is given by
$$\sum_{k=0}^{n-1}s_{n,k}(p)x^k=\prod_{i=1}^{n-1}(x+i).$$ 
\end{thm}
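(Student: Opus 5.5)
The plan is to split Class~68 into a few groups, reduce each group to one representative, and show that every representative satisfies the recurrence~\eqref{class68-rec}. Within a cell of Table~\ref{tab-easy} the patterns are trivially equivalent, so one representative per cell suffices. Several cells can then be merged using Lemma~\ref{two-equidistr-lemmas}. These are the cells whose patterns have the whole top row $\boks{0}{2},\boks{1}{2},\boks{2}{2}$ shaded, for example $\pattern{scale=0.5}{2}{1/1,2/2}{0/2,1/2,2/2,0/1}$, $\pattern{scale=0.5}{2}{1/1,2/2}{0/2,1/2,2/2,1/1}$ and $\pattern{scale=0.5}{2}{1/1,2/2}{0/2,1/2,2/2,0/0}$. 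For these the lemma permutes the entries $x_1,\dots,x_6$ of the lower two rows. For the remaining cells I will argue directly.

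For a top-row-shaded representative, take $p=\pattern{scale=0.5}{2}{1/1,2/2}{0/2,1/2,2/2,0/1}$. In an occurrence $ab$ of $p$, the shaded top row forces $b=n$. The shaded box $\boks{0}{1}$ forces $a$ to have no larger element to its left, so $a$ is a left-to-right maximum. Since $a$ precedes $n$, the occurrences of $p$ are exactly the pairs $(a,n)$ with $a$ a left-to-right maximum other than $n$ itself. Hence the number of occurrences is the number of left-to-right maxima minus one. This statistic satisfies~\eqref{class68-rec} by the standard argument of inserting the smallest element~$1$ into an $(n-1)$-permutation. Placing $1$ in front creates a new left-to-right maximum, which accounts for $s_{n-1,k-1}$. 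The other $n-1$ positions create none and leave the existing maxima intact, which accounts for $(n-1)s_{n-1,k}$.

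For the cells without a full shaded row, take $\pattern{scale=0.5}{2}{1/1,2/2}{1/2,2/2,1/1,2/1}$ as a first example. Here an occurrence is an element $a$ that has exactly one larger element to its right, together with that element. The proof again goes by inserting $1$ into an $(n-1)$-permutation. Since $1$ is smaller than everything, it never changes the set of larger elements to the right of any old entry, so existing occurrences are preserved. The new element~$1$ starts an occurrence exactly when it is inserted immediately before the last entry, giving $s_{n-1,k-1}$. The remaining $n-1$ positions add nothing, giving $(n-1)s_{n-1,k}$.

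For $\pattern{scale=0.5}{2}{1/1,2/2}{1/2,0/1,1/1,2/1}$, $\pattern{scale=0.5}{2}{1/1,2/2}{0/2,0/1,1/1,2/1}$, $\pattern{scale=0.5}{2}{1/1,2/2}{0/2,1/2,0/1,1/1}$ and $\pattern{scale=0.5}{2}{1/1,2/2}{0/2,0/1,2/1,0/0}$, I would first apply a symmetry. The aim is to choose it so that inserting an extreme element, either the smallest or the largest depending on the shading, cannot destroy old occurrences and creates a new one at exactly one insertion site. For instance, with the whole middle row $\boks{0}{1},\boks{1}{1},\boks{2}{1}$ shaded, $a$ and $b$ must be consecutive in value, and inserting a new element at one end should behave as above.

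Finally, the initial values come for free. Setting $k=0$ in~\eqref{class68-rec} gives $s_{n,0}=(n-1)s_{n-1,0}$, so $s_{n,0}=(n-1)!$. The recurrence and initial values coincide with those of the unsigned Stirling numbers $C(n,k+1)$, which yields the product formula $\prod_{i=1}^{n-1}(x+i)$. The main obstacle I expect is this last group of cells. For each one I must find the right extreme element to insert and check that the count of occurrences changes by exactly $0$ or $1$, with exactly one insertion site giving the increment. If a direct insertion fails for one of them, I would fall back on an explicit bijection with a left-to-right-maxima statistic.
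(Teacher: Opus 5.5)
Your strategy is the paper's: merge the cells that contain a pattern with the whole top row shaded using Lemma~\ref{two-equidistr-lemmas}, then verify the recurrence for the remaining cells by inserting a new minimum. The two arguments you actually carry out are correct. The first counts left-to-right maxima minus one for $\pattern{scale=0.5}{2}{1/1,2/2}{0/2,1/2,2/2,0/1}$. The second counts elements with exactly one larger element to their right for $\pattern{scale=0.5}{2}{1/1,2/2}{1/2,2/2,1/1,2/1}$; this replaces the paper's citation of \cite{KZ2019}.

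The proposal is nevertheless incomplete in two concrete ways. First, Class~68 has nine cells, and your list omits the one containing $\pattern{scale=0.5}{2}{1/1,2/2}{0/2,2/2,0/1,2/1}$, $\pattern{scale=0.5}{2}{1/1,2/2}{0/2,1/2,0/0,1/0}$, $\pattern{scale=0.5}{2}{1/1,2/2}{0/1,2/1,0/0,2/0}$ and $\pattern{scale=0.5}{2}{1/1,2/2}{1/2,2/2,1/0,2/0}$. None of these has a fully shaded row or column, so the Lemma cannot reach this cell and it needs its own argument. Second, for the cells you do list, you only say that a suitable insertion "should" work. That verification is exactly what carries the proof.

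Both points close quickly.
\begin{itemize}
\item In $\pattern{scale=0.5}{2}{1/1,2/2}{0/2,2/2,0/1,2/1}$, $a$ must be a left-to-right maximum and $b$ the rightmost element exceeding $a$.
\item In $\pattern{scale=0.5}{2}{1/1,2/2}{0/2,1/2,0/1,1/1}$, $b$ is a left-to-right maximum other than $\pi_1$ and $a$ is the preceding left-to-right maximum.
\item In $\pattern{scale=0.5}{2}{1/1,2/2}{0/2,0/1,1/1,2/1}$, $b=a+1$ and $a\neq n$ is a left-to-right maximum.
\end{itemize}
All three therefore count left-to-right maxima minus one, and your top-row insertion argument (put $1$ in front) applies verbatim. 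The paper treats the first and third by the same front insertion and cites \cite{LK2025} for the second.

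For $\pattern{scale=0.5}{2}{1/1,2/2}{1/2,0/1,1/1,2/1}$, an occurrence is an element $a$ followed later by $a+1$, with everything between them smaller than $a$. Inserting a new minimum never destroys such an occurrence. It creates exactly one new occurrence, and only when it is placed immediately before the old minimum. That site is generally not an end of the permutation, so your remark about inserting "at one end" must mean an extreme value, not an extreme position.

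Finally, $\pattern{scale=0.5}{2}{1/1,2/2}{0/2,0/1,2/1,0/0}$ needs no separate treatment. Its cell contains $\pattern{scale=0.5}{2}{1/1,2/2}{0/2,1/2,2/2,1/0}$, which the Lemma already merges with your top-row group.
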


\begin{proof}
The equivalence of $\pattern{scale = 0.6}{2}{1/1,2/2}{0/0,0/1,1/0,1/1}$ and $\pattern{scale = 0.6}{2}{1/1,2/2}{0/1,1/1,1/2,2/1}$, together with the enumeration~(\ref{class68-rec}), is established in \cite[Thm.~4.1]{KZ2019}. The equivalence of $\pattern{scale = 0.6}{2}{1/1,2/2}{0/0,0/1,1/0,1/1}$ and $\pattern{scale = 0.6}{2}{1/1,2/2}{0/1,0/2,1/1,1/2}$, together with the enumeration~(\ref{class68-rec}), is established in \cite[Thm.~2.4]{LK2025}. Moreover, the equivalence of  the patterns $\pattern{scale = 0.6}{2}{1/1,2/2}{0/0,0/2,1/2,2/2}$, 
$\pattern{scale = 0.6}{2}{1/1,2/2}{0/1,0/2,1/2,2/2}$,
$\pattern{scale = 0.6}{2}{1/1,2/2}{1/1,0/2,1/2,2/2}$, and
$\pattern{scale = 0.6}{2}{1/1,2/2}{1/0,0/2,1/2,2/2}$  follows from Lemma~\ref{two-equidistr-lemmas}. 

We complete the proof by showing that the distribution of the patterns
$\pattern{scale = 0.6}{2}{1/1,2/2}{0/2,1/1,1/2,2/2}$,
$\pattern{scale = 0.6}{2}{1/1,2/2}{0/1,0/2,2/1,2/2}$, and
$\pattern{scale = 0.6}{2}{1/1,2/2}{0/1,0/2,1/1,2/1}$
is given by~(\ref{class68-rec}). In what follows, let $\pi=\pi_1\cdots \pi_n \in S_n$ and $\pi' \in S_{n-1}$.\\[-3mm]

\noindent
{\bf The pattern $\pattern{scale = 0.6}{2}{1/1,2/2}{0/2,1/1,1/2,2/2}$.}   If $\pi_1 \neq n$, then $\pi_1 n$ forms an occurrence of the pattern, whereas if $\pi_1 = n$, then $\pi$ avoids the pattern. It follows that $s_{n,0}(p) = (n-1)!$. Now construct $\pi$ by inserting the new minimum element into $\pi'$. Clearly, the number of occurrences of $p$ in $\pi'$ cannot decrease. Moreover, the only insertion that increases the number of occurrences of $p$ (by exactly one) is placing the new minimum element immediately to the left of the maximum element in $\pi'$ (this new occurrence is formed by the elements 1 and $n$ in $\pi$). The recurrence~(\ref{class68-rec}) now follows. \\[-3mm]

\noindent
{\bf The pattern $\pattern{scale = 0.6}{2}{1/1,2/2}{0/1,0/2,2/1,2/2}$.}  Clearly, if $\pi_1 = n$, then $\pi$ avoids $p$. Suppose $\pi_1 \neq n$, and let $x \neq n$ be a left-to-right maximum. If $x = n-1$, then $xn$ is an occurrence of $p$ in $\pi$; otherwise, $xy$ is an occurrence of $p$, where $y$ is the rightmost element of $\pi$ such that $y > x$. It follows that $s_{n,0}(p) = (n-1)!$.  Now construct $\pi$ by inserting the new minimum element into $\pi'$. Clearly, the number of occurrences of $p$ in $\pi'$ cannot decrease. Moreover, the only insertion that increases the number of occurrences of $p$ (by exactly one) is placing the new minimum element in the leftmost position (this new occurrence is formed by the leftmost and rightmost elements in $\pi$). The recurrence~(\ref{class68-rec}) now follows.\\[-3mm]

\noindent
{\bf The pattern $\pattern{scale = 0.6}{2}{1/1,2/2}{0/1,0/2,1/1,2/1}$.}  Clearly, if $\pi_1 = n$, then $\pi$ avoids $p$. Suppose $\pi_1 \neq n$ then $\pi_1(\pi_1+1)$ is an occurrence of $p$ in $\pi$.  It follows that $s_{n,0}(p) = (n-1)!$. Now construct $\pi$ by inserting the new minimum element into $\pi'$. Clearly, the number of occurrences of $p$ in $\pi'$ cannot decrease. Moreover, the only insertion that increases the number of occurrences of $p$ (by exactly one) is placing the new minimum element in the leftmost position (this new occurrence is formed by the elements 1 and 2 in $\pi$). The recurrence~(\ref{class68-rec}) now follows.\\[-3mm]

The proof is completed.
\end{proof}

\begin{thm}\label{thm:class77}
The patterns in Class $77$ are equivalent. Moreover, 
    \[
    A(x) = (1-x)F(x) + \frac{xF(x)}{1+xF(x)},
    \]
    \[
    F(x,q) = (1-x)F(x) + \frac{xF(x)}{1+x(1-q)F(x)}.
    \]
\end{thm}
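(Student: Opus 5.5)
The plan is to reduce every pattern in Class~$77$ to one of the two length-$1$ mesh patterns of Theorem~\ref{short}. Up to the trivial symmetries (complement, reverse, inverse), it is enough to treat one representative from each of the two cells in Table~\ref{tab-easy}. I take
$p_1=\pattern{scale=0.6}{2}{1/1,2/2}{0/2,1/2,2/2,1/1,2/1,0/0,2/0}$ and $p_2=\pattern{scale=0.6}{2}{1/1,2/2}{0/2,1/2,2/2,0/1,2/1,1/0,2/0}$.
Both have the whole top row $\boks{0}{2},\boks{1}{2},\boks{2}{2}$ shaded and the whole right column $\boks{2}{0},\boks{2}{1},\boks{2}{2}$ shaded. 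They differ only in the lower-left $2\times 2$ block. In $p_1$ the boxes $\boks{0}{0}$ and $\boks{1}{1}$ are shaded. In $p_2$ the boxes $\boks{0}{1}$ and $\boks{1}{0}$ are shaded.

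\textbf{Step 1: structure of an occurrence.} Let $\pi\in S_n$ and suppose $\pi_i\pi_j$ is an occurrence of $p_1$ or $p_2$. The top row is shaded, so no entry of $\pi$ lies above $\pi_j$; hence $\pi_j=n$. The right column is shaded, so no entry lies to the right of $\pi_j$; hence $j=n$. Consequently, if $\pi$ does not end with $n$, it has no occurrences.

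If $\pi=\pi'n$, the remaining conditions involve only $\pi'$ and the point $\pi_i$. They say exactly that $\pi_i$ is an occurrence in $\pi'$ of the length-$1$ pattern $\pattern{scale=0.6}{1}{1/1}{0/0,1/1}$ in the case of $p_1$, and of $\pattern{scale=0.6}{1}{1/1}{0/1,1/0}$ in the case of $p_2$. The one thing to check is that the lower-left block of the length-$2$ pattern, namely the region left of $n$ and below $n$, coincides with the whole grid of the length-$1$ pattern applied to $\pi'$. This holds because everything in $\pi'$ lies to the left of and below $n$.

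\textbf{Step 2: equidistribution and generating function.} Step~1 gives $p_1(\pi'n)=a(\pi')$ and $p_2(\pi'n)=b(\pi')$, where $a$ and $b$ count occurrences of the two length-$1$ patterns. Theorem~\ref{short} says $a$ and $b$ are equidistributed. Hence $p_1\sim p_2$, and all of Class~$77$ is equivalent.

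For the distribution, I split $S_n$ into two parts. The permutations not ending in $n$ number $n!-(n-1)!$ for $n\ge1$, plus the empty permutation; each contributes $q^0$. Summing these gives $F(x)-xF(x)=(1-x)F(x)$. The permutations ending in $n$ contribute $x$ times the distribution generating function of Theorem~\ref{short}, that is, $\frac{xF(x)}{1+x(1-q)F(x)}$. Adding the two parts yields the stated $F(x,q)$, and setting $q=0$ gives $A(x)$.

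The only delicate point, and the main obstacle, is the bookkeeping in Step~1: confirming that the shaded lower-left boxes translate exactly to the shading of the length-$1$ patterns on $\pi'$. I will check this by listing which entries of $\pi'$ fall into each of the four boxes relative to $\pi_i$. I also need to confirm the small cases $n=0,1$: the empty permutation and the permutation $1$ both have no occurrence, and they are correctly counted by the $(1-x)F(x)$ term.
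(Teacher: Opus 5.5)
Your proof is correct, but it takes a somewhat different route from the paper's.

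The paper gets the equivalence from Lemma~\ref{two-equidistr-lemmas}: it complements all entries of $\pi$ other than $n$ and keeps $n$ in place, which swaps the two lower rows of the pattern. For the distribution it simply cites \cite[Thm 3.2]{KZ2019}.

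You instead use the full shading of both the top row and the right column to force every occurrence to end at $\pi_n=n$. You then identify the lower-left $2\times 2$ block with the two length-$1$ patterns of Theorem~\ref{short} acting on $\pi'$. From that theorem you read off both the equivalence and the formula $F(x,q)=(1-x)F(x)+xF(x)/(1+x(1-q)F(x))$.

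Underneath, your equivalence is the same bijection as the paper's. On permutations ending in $n$, the lemma's map is exactly complementation of $\pi'$, and the two length-$1$ patterns are complements of each other. So the real gain of your version is that the distribution is derived in a self-contained way rather than taken from an external citation. In exchange, the paper's lemma is a general tool. It also covers the many classes where the right column is not fully shaded and $n$ need not be last, which your reduction does not.

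One small slip in your closing remark: the permutation $1$ is not counted by $(1-x)F(x)$, whose coefficient of $x^1$ is $1!-0!=0$. It ends in $n=1$ and is counted by the second term with $\pi'=\varepsilon$, contributing $x\cdot 1$. Your actual decomposition in Step~2 already handles this correctly.
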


\begin{proof}
The equivalence of the patterns in Class 77 follows from Lemma~\ref{two-equidistr-lemmas} applied to the patterns $\pattern{scale = 0.6}{2}{1/1,2/2}{0/0,0/2,1/1,1/2,2/0,2/1,2/2}$ and $\pattern{scale = 0.6}{2}{1/1,2/2}{0/1,0/2,1/0,1/2,2/0,2/1,2/2}$. The distribution of $\pattern{scale = 0.6}{2}{1/1,2/2}{0/1,0/2,1/0,1/2,2/0,2/1,2/2}$ is found in \cite[Thm 3.2]{KZ2019}.
\end{proof}

\begin{thm}\label{thm:class78}
The patterns in Class~$78$ are equivalent. Moreover,
\begin{align*}		
A(x) & =(1-x)F(x)+x, \\
F(x,q) & =(1-x)F(x)+x+x\sum_{n=1}^{\infty}\prod_{i=0}^{n-1}(q+i)x^n.
\end{align*}
\end{thm}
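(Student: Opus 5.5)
The plan is to get the equivalence of the Class~78 patterns from Lemma~\ref{two-equidistr-lemmas}, and then compute the distribution directly for one convenient representative. All patterns in the class have the top row $\boks{0}{2},\boks{1}{2},\boks{2}{2}$ shaded, so the lemma applies to them.

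\textbf{Equivalence.} The left equivalence of Figure~\ref{two-equidistr-fig} swaps the shading of the pairs $(\boks{0}{1},\boks{0}{0})$, $(\boks{1}{1},\boks{1}{0})$ and $(\boks{2}{1},\boks{2}{0})$.
\begin{itemize}
\item Applied to the pattern with shaded boxes $\boks{0}{1},\boks{2}{1},\boks{2}{0}$ besides the top row, it gives the pattern with shaded boxes $\boks{0}{0},\boks{2}{0},\boks{2}{1}$. This links the first and third cells of Class~78 in Table~\ref{tab-easy}.
\item Applied to the pattern with shaded boxes $\boks{1}{1},\boks{2}{1},\boks{2}{0}$, it gives the pattern with shaded boxes $\boks{1}{0},\boks{2}{0},\boks{2}{1}$. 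This pattern lies in the first cell, which links the second cell to it.
\end{itemize}
Together with the trivial equivalences inside each cell, all patterns of Class~78 are therefore equivalent. Alternatively, the distribution of a representative can be quoted from \cite{KZZ}.

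\textbf{Distribution.} To stay self-contained, I would compute it for $p$ with shaded boxes the top row together with $\boks{1}{1},\boks{2}{1},\boks{2}{0}$. Let $ab$ be an occurrence of $p$ in $\pi\in S_n$.
\begin{itemize}
\item Since the top row is shaded, $b=n$.
\item Since $\boks{2}{0}$ and $\boks{2}{1}$ are shaded, no element lies to the right of $n$, so $\pi_n=n$.
\item Since $\boks{1}{1}$ is shaded, no element between $a$ and $n$ in position is larger than $a$. That is, $a$ is a right-to-left maximum of $\pi_1\cdots\pi_{n-1}$.
\end{itemize}
Conversely, each such $a$ gives an occurrence. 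Hence, if $\pi$ does not end with $n$, it has $0$ occurrences. If $\pi=\pi'n$, the number of occurrences equals the number of right-to-left maxima of $\pi'\in S_{n-1}$ (for $n\geq 2$).

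The number of right-to-left maxima is distributed by the unsigned Stirling numbers of the first kind, $\sum_{\pi'\in S_{m}}q^{\mathrm{rlmax}(\pi')}=\prod_{i=0}^{m-1}(q+i)$. This follows, e.g., by inserting the new minimum, or by the same argument as in Theorem~\ref{thm:class68}.

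\textbf{Generating function.} Summing over $n$:
\begin{itemize}
\item the empty permutation contributes $1$;
\item $n=1$ contributes $x$, since the single element is not an occurrence;
\item for $n\geq 2$, the permutations not ending in $n$ contribute $n!-(n-1)!$, which gives the term $(1-x)F(x)$ after accounting for $n=0,1$;
\item the permutations ending in $n$ contribute $x\cdot x^{m}\prod_{i=0}^{m-1}(q+i)$ with $m=n-1\geq1$.
\end{itemize}
This yields the stated $F(x,q)$, and setting $q=0$ kills the product, giving $A(x)=(1-x)F(x)+x$.

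The only delicate step is the bookkeeping for $n\le 1$ in matching the constant terms of $(1-x)F(x)+x$. This amounts to checking that $(1-x)F(x)$ has coefficients $1,0,n!-(n-1)!$, so that adding $x$ supplies the $n=1$ term. I expect no real obstacle there.
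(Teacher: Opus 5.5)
Your proof is correct, and for the equivalence it is the paper's argument. The paper applies Lemma~\ref{two-equidistr-lemmas} to the representatives of the three cells of Class~78, and both of your row-swapping applications check out.

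The difference is in the distribution. The paper does not derive it; it quotes \cite[Cor.~2.5]{KZZ} for the same representative, namely the top row together with $(1,1),(2,1),(2,0)$. You instead prove it directly:
\begin{itemize}
\item an occurrence forces $\pi=\pi'n$;
\item occurrences of $p$ in $\pi'n$ correspond bijectively to right-to-left maxima of $\pi'$;
\item the classical Stirling polynomial $\prod_{i=0}^{m-1}(q+i)$ then gives the count.
\end{itemize}
Your route makes the theorem self-contained and explains the shape of the formula. The term $(1-x)F(x)$ counts the empty permutation and the permutations of length $n\ge 2$ not ending in $n$. The added $x$ accounts for $n=1$, and the remaining sum is the right-to-left-maxima polynomial of $\pi'$. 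The paper's route is shorter but relies on the external result. Your bookkeeping for small $n$ is also right: $(1-x)F(x)$ has coefficients $1,0,n!-(n-1)!$, and your formula matches the tabulated data for Class~78.
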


\begin{proof} The equivalence of the patterns in Class 78 follows from Lemma~\ref{two-equidistr-lemmas} applied to the patterns $\pattern{scale=0.6}{2}{1/1,2/2}{0/2,1/2,2/2,0/1,2/1,2/0}$, $\pattern{scale=0.6}{2}{1/1,2/2}{0/2,1/2,2/2,1/1,2/1,2/0}$, and $\pattern{scale=0.6}{2}{1/1,2/2}{0/2,1/2,2/2,2/1,0/0,2/0}$. The distribution of  $\pattern{scale=0.6}{2}{1/1,2/2}{0/2,1/2,2/2,1/1,2/1,2/0}$ is given by~\cite[Cor 2.5]{KZZ}.  
\end{proof}

Only part of the proof of the next theorem has an “easily explainable equidistribution”.

\begin{thm}\label{thm:class80}
The patterns in Class $80$ are equivalent. Moreover, 
\[
A(x) = \frac{(1+x)F(x)}{1+xF(x)}, \quad F(x,q) = \frac{(1 + x(1-q))F(x)}{1+x(1-q)F(x)}.
\]
\end{thm}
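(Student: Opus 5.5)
We split the proof into two parts: (i) the equivalence of all patterns listed in Class~80, and (ii) the computation of $A(x)$ and $F(x,q)$ for a single representative. Note that $A(x)=F(x,0)$ follows at once from the formula for $F(x,q)$, so only $F(x,q)$ needs to be derived.

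\textbf{Equivalence.} Every pattern in the first, second and fourth cells of Table~\ref{tab-easy} for Class~80 has, after a suitable trivial symmetry, the whole top row $(0,2),(1,2),(2,2)$ shaded. So we first bring each pattern into that normal form and then apply Lemma~\ref{two-equidistr-lemmas}. The right equivalence of the lemma swaps columns $0$ and $1$ in rows $0,1$. It sends $\pattern{scale=0.5}{2}{1/1,2/2}{0/2,1/2,2/2,1/1,2/1,0/0}$ to $\pattern{scale=0.5}{2}{1/1,2/2}{0/2,1/2,2/2,0/1,2/1,1/0}$, which merges the first two cells. The left equivalence swaps rows $0$ and $1$ in all columns simultaneously. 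We will combine it with reverse/complement/inverse (keeping the top row shaded) to reach the fourth cell and the second-row pattern $\pattern{scale=0.5}{2}{1/1,2/2}{0/2,1/2,0/1,2/1,0/0,2/0}$.

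The third cell, e.g.\ $\pattern{scale=0.5}{2}{1/1,2/2}{0/2,1/2,0/1,1/1,2/1,2/0}$, has no fully shaded top row. This is the ``not easily explainable'' part. For it we do not look for a bijection; instead we compute its distribution directly and check that it coincides with the formula in the statement. Since the middle column is shaded except $(1,0)$, in an occurrence $ab$ the entries $a$ and $b$ must be close, which should make a direct decomposition feasible.

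\textbf{Distribution for the representative.} We take $p=\pattern{scale=0.5}{2}{1/1,2/2}{0/2,1/2,2/2,1/1,2/1,0/0}$. The shaded top row forces $b=n$. The other shaded boxes say:
\begin{itemize}
\item by $(0,0)$, the entry $a$ lies left of $n$ and nothing smaller than $a$ is to its left, so $a$ is a left-to-right minimum;
\item by $(1,1),(2,1)$, nothing to the right of $a$ other than $n$ has value in $(a,n)$.
\end{itemize}
Removing $n$ gives $\pi'\in S_{n-1}$. In $\pi'$, the occurrences correspond exactly to entries $a$ that are both a left-to-right minimum and a right-to-left maximum of $\pi'$ (an occurrence of the length-1 pattern of Theorem~\ref{short}) and that lie to the left of the position where $n$ is inserted.

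Such entries cut $\pi'$ into a sequence of blocks, and this is the decomposition underlying Theorem~\ref{short}, with g.f. $G=F/(1+x(1-q)F)$. Inserting $n$ into a gap inside or just after a block, and marking by $q$ only the special points to its left, we expect the ``prefix/suffix'' factors to telescope. The target identity to verify is
\[
F(x,q)=(1+x(1-q))\,G(x,q).
\]
The first term $G$ should account for inserting $n$ at the very end. The correction $x(1-q)G$ should come from comparing with insertion directly after the last special point.

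The main obstacle is this bookkeeping of where $n$ sits relative to the special points. I would first try to prove the recurrence $s_{n,k}=$ (sum over insertion positions) by conditioning on the rightmost special point left of $n$, and then translate it into the functional equation above. If this becomes messy, a fallback is to compute the distribution of the third-cell pattern directly and match both to the formula.
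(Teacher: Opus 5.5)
Your trivial-equivalence step is correct and is what the paper does. No extra symmetries are needed: the column swap of Lemma~\ref{two-equidistr-lemmas} takes the cell-1 pattern (top row plus $(0,0),(1,1),(2,1)$) to cell~2, and the row swap takes cell~1 to the second-row cell and cell~2 to cell~4.

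The genuine gap is the third cell, which is the only non-trivial part of the claim that the patterns in Class~80 are equivalent. You say you will ``compute its distribution directly'' but give no decomposition. The missing idea is a characterization of occurrences. For your representative (shaded boxes $(0,1),(0,2),(1,1),(1,2),(2,0),(2,1)$, the inverse of the pattern the paper treats), the relevant feature is the fully shaded middle row, not the middle column: it forces $b=a+1$. The remaining shading forces every entry left of $b$ to be smaller than $b$ and every entry right of $b$ to be larger. Conversely, every entry $b$ that is both a left-to-right maximum and a right-to-left minimum, and is not $\pi_1$, gives exactly one occurrence $(b-1)b$. So the statistic counts such entries outside position~$1$. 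Write $\pi=X_0\oplus 1\oplus X_1\oplus\cdots\oplus 1\oplus X_k$, with each $X_i$ avoiding the length-$1$ pattern of Theorem~\ref{short} (g.f.\ $A_1=F/(1+xF)$). This gives $F(x,q)=A_1+\sum_{k\ge 1}(xA_1)^k\bigl((A_1-1)q^k+q^{k-1}\bigr)=\frac{(1+x(1-q))A_1}{1-xqA_1}$, which is the stated formula. The paper gets the same result differently: it selects the occurrence with the largest top element. The part to its right then avoids the length-$1$ pattern, the part to its left is an arbitrary non-empty permutation, and one obtains $F(x,q)=A(x)+xq(F(x,q)-1)\frac{F(x)}{1+xF(x)}$, exactly as for the other representative.

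For the representative you chose, your reduction is correct: occurrences correspond to entries of $\pi'$ that are left-to-right minima and right-to-left maxima and lie left of $n$. However, the computation is not carried out, and your heuristic is off. Inserting $n$ at the end contributes $xG(x,q)$, not $G(x,q)$, so $F=(1+x(1-q))G$ does not split the way you describe. Your insertion route does close once you count gaps. If $\pi'=X_0a_1X_1\cdots a_kX_k$ with the $X_i$ avoiders, exactly $|X_i|+1$ gaps have $i$ special points to their left. Hence $F(x,q)=1+\frac{x(xA_1)'}{(1-xA_1)(1-xqA_1)}$. Using $(xA_1)'=(xF)'/(1+xF)^2$ and $x(xF)'=F-1$, this equals $1+\frac{F-1}{1+x(1-q)F}$, the claimed formula. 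This is a genuinely different derivation from the paper's, which works with the cell-2 pattern and the largest-$a$ decomposition and needs no derivative identity. As submitted, though, neither this computation nor the third-cell enumeration is present, so the proposal establishes only the trivial equivalences.
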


\begin{figure}
\begin{center}

\begin{tabular}{ccc}

\begin{tikzpicture}[scale=0.7]
  \tikzset{
    grid/.style={ draw, step=1cm, black!100, thin },
    graycell/.style={ fill=gray!50, draw=none, minimum width=1cm, minimum height=1cm, anchor=south west }
  }

  \fill[graycell] (0,2) rectangle (1,3);
  \fill[graycell] (1,2) rectangle (2,3);
  \fill[graycell] (2,2) rectangle (3,3);
  \fill[graycell] (2,1) rectangle (3,2);
  \fill[graycell] (0,1) rectangle (1,2);
  \fill[graycell] (1,0) rectangle (2,1);

  \draw[grid] (0,0) grid (3,3);

  \draw[thick] (1.1, 1.1) rectangle (1.9, 1.9);
  \draw[thick] (2.1, 0.1) rectangle (2.9, 0.9);
  \draw[thick] (0.1, 0.1) rectangle (0.9, 0.9);

  \node[anchor=center] at (1.5, 1.5) {$A$};
  \node[anchor=center] at (0.5, 0.5) {$B$};
  \node[anchor=center] at (2.5, 0.5) {$C$};

  \filldraw[black] (1,1) circle (2.5pt);
  \node[anchor=west] at (0.9,0.7) {\footnotesize{$a$}};
  \filldraw[black] (2,2) circle (2.5pt);
  \node[anchor=west] at (1.9,1.7) {\footnotesize{$b$}};
\end{tikzpicture}

&

\ \ \ 

&

\begin{tikzpicture}[scale=0.7]
  \tikzset{
    grid/.style={ draw, step=1cm, black!100, thin },
    graycell/.style={ fill=gray!50, draw=none, minimum width=1cm, minimum height=1cm, anchor=south west }
  }

  \fill[graycell] (0,2) rectangle (1,3);
  \fill[graycell] (1,2) rectangle (2,3);
  \fill[graycell] (1,1) rectangle (2,2);
  \fill[graycell] (2,1) rectangle (3,2);
  \fill[graycell] (2,0) rectangle (3,1);
  \fill[graycell] (1,0) rectangle (2,1);

  \draw[grid] (0,0) grid (3,3);

  \draw[thick] (2.1, 2.1) rectangle (2.9, 2.9);
  \node at (2.5, 2.5) {$A$};
  \draw[thick] (0.1, 1.1) rectangle (0.9, 1.9);
  \node at (0.5, 1.5) {$B$};
  \draw[thick] (0.1, 0.1) rectangle (0.9, 0.9);
  \node at (0.5, 0.5) {$C$};

  \filldraw[black] (1,1) circle (2.5pt);
  \node[anchor=west] at (0.9,0.7) {\footnotesize{$b$}};
  \filldraw[black] (2,2) circle (2.5pt);
  \node[anchor=west] at (1.9,1.7) {\footnotesize{$a$}};
\end{tikzpicture}

\end{tabular}

\caption{Related to the proof of Theorem~\ref{thm:class80}.}\label{thm:class80-fig}
\end{center}
\end{figure}

\begin{proof}
The equivalence of the patterns 
$\pattern{scale = 0.6}{2}{1/1,2/2}{0/1,0/2,1/0,1/2,2/0,2/2}$, 
$\pattern{scale = 0.6}{2}{1/1,2/2}{0/0,0/2,1/1,1/2,2/1,2/2}$,
$\pattern{scale = 0.6}{2}{1/1,2/2}{0/1,0/2,1/0,1/2,2/1,2/2}$, and
$\pattern{scale = 0.6}{2}{1/1,2/2}{0/0,0/2,1/1,1/2,2/0,2/2}$ follows from Lemma~\ref{two-equidistr-lemmas}. The equivalence of these patterns with  the pattern $\pattern{scale = 0.6}{2}{1/1,2/2}{0/2,1/0,1/1,1/2,2/0,2/1}$ follows from the same enumeration given below. In what follows, let $\pi=\pi_1\cdots \pi_n \in S_n$ and $\pi' \in S_{n-1}$.\\[-3mm]

\noindent
{\bf The pattern $\hspace{-1mm}\pattern{scale = 0.6}{2}{1/1,2/2}{0/1,0/2,1/0,1/2,2/1,2/2}$.}  We claim that
\begin{equation}\label{class80:A(x)-rec}
F(x)=A(x) + x(F(x)-1)\frac{F(x)}{1+xF(x)}.
\end{equation}
Indeed, each permutation (counted by $F(x)$ on the LHS) either avoids the pattern (and hence is counted by $A(x)$) or contains it. Among all occurrences $ab$ of the pattern in $\pi$ (where $b$ is the largest element), select the one with the largest possible $a$, as shown in the left-hand picture in Figure~\ref{thm:class80-fig}. Then the block $A$ must avoid the pattern $\pattern{scale=0.6}{1}{1/1}{0/1,1/0}$ (whose enumeration is given by Theorem~\ref{short}), while $BbC$ can be any non-empty permutation counted by $F(x)-1$. Note that $\pi$ is uniquely determined by $A$ and $BbC$, and the element $a$ contributes a factor of $x$. This yields~(\ref{class80:A(x)-rec}) and the formula for $A(x)$.  For the distribution, we have
\begin{equation}\label{class80:F(x,q)-rec}
F(x,q) = A(x) + xq\,(F(x,q)-1)\frac{F(x)}{1+xF(x)},
\end{equation}
whose derivation follows the same lines as in the case of avoidance. We note that each occurrence of the pattern in $BbC$ is also an occurrence in $\pi$, and the factor of $q$ corresponds to the distinguished occurrence $ab$ of the pattern. The formula for $F(x,q)$ follows from (\ref{class80:F(x,q)-rec}). \\[-3mm]

\noindent
{\bf The pattern $\pattern{scale = 0.6}{2}{1/1,2/2}{0/2,1/0,1/1,1/2,2/0,2/1}$.} This case is similar to the one already considered, and the recurrence relations~(\ref{class80:A(x)-rec}) and~(\ref{class80:F(x,q)-rec}) can be derived using the structure shown in the right-hand picture in Figure~\ref{thm:class80-fig}, where the corresponding blocks and elements are labeled by the same letters (in particular, $a$ is chosen to be as large as possible). We omit further details.
\end{proof}

\section{Equidistributions by generating functions}\label{gf-sec}

\begin{thm}\label{thm:class36}
The patterns in Class $36$ are equivalent. Moreover, 
\begin{equation}\label{formulas-class36}
A(x) = \frac{F(x)}{1+x(F(x)-1)}, \quad F(x,q) = \frac{F(x)}{1+(1-q)x(F(x)-1)}.
\end{equation}
\end{thm}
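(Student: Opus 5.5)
We aim to prove \eqref{formulas-class36} for one representative of each of the two trivial-equivalence cells of Class~36 in Table~\ref{tab-no-easy}. Since both cells then have the same $F(x,q)$, the equivalence of all patterns in the class follows, and the formula for $A(x)$ is the specialization $q=0$.

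We work with the marked-occurrence form of the distribution. Setting $q=1+t$, the claim becomes
\[
F(x,1+t)=\frac{F(x)}{1-t\,x(F(x)-1)}=\sum_{k\ge0}t^k\bigl(x(F(x)-1)\bigr)^kF(x).
\]
The left-hand side counts pairs $(\pi,S)$, where $S$ is a set of occurrences of the pattern in $\pi$ and $t$ marks $|S|$. So it suffices to give, for each representative, a bijection between pairs $(\pi,S)$ with $|S|=k$ and sequences consisting of $k$ pieces, each a single element together with a nonempty permutation (weight $x(F(x)-1)$), followed by an arbitrary permutation. The final formulas then follow by substituting back $t=q-1$.

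\textbf{The representative $p_1$ with shaded boxes $\{(0,0),(0,1),(1,1),(1,2),(2,1),(2,2)\}$.} Reading off the shaded boxes, $ab$ is an occurrence exactly when the following hold:
\begin{itemize}
\item every element to the left of $a$ exceeds $b$;
\item every element between $a$ and $b$, and every element to the right of $b$, is smaller than $a$.
\end{itemize}
Since no element can then have a value strictly between $a$ and $b$, we get $b=a+1$. Hence occurrences correspond exactly to skew-decompositions $\pi=L\ominus\tau$ in which the two largest entries of $\tau$ are its first entry $a$ and some later entry $a+1$. Writing $\tau=a\,\tau'$, where $\tau'$ is a nonempty permutation containing the maximum $a+1$ of its values, each such $\tau$ corresponds to an element $a$ followed by an arbitrary nonempty permutation. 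This is exactly a piece of weight $x(F(x)-1)$.

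Given a set $S$ of $k$ marked occurrences, we cut $\pi$ at the skew-decomposition points determined by the marked $a$'s, ordered from left to right. The top part $L$ before the first cut is arbitrary (weight $F(x)$), and each subsequent segment starts with its $a$. We then need to check that each segment between two consecutive cuts, read as $a$ followed by the remainder, is an arbitrary element-plus-nonempty-permutation. The point to verify is that the condition on $\tau$ depends only on $a$ being above everything after it and $a+1$ appearing somewhere after $a$. Nested markings just refine the skew-decomposition, so the conditions for different marked occurrences are independent. The order of the factors ($F(x)$ first rather than last) is immaterial.

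\textbf{The representative $p_2$ with shaded boxes $\{(0,1),(0,2),(1,1),(1,0),(2,1),(2,2)\}$.} We will carry out the analogous characterization:
\begin{itemize}
\item nothing to the left of $a$ lies above $a$;
\item between $a$ and $b$ every element lies above $b$;
\item to the right of $b$ every element lies below $a$;
\item no value lies strictly between $a$ and $b$ except in the region where it is allowed.
\end{itemize}
From this we will derive a comparable decomposition of $\pi$ into independent segments attached to marked occurrences, each again of weight $x(F(x)-1)$, with a leftover arbitrary permutation.

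\textbf{Main obstacle.} We expect the main obstacle to be $p_2$. Its occurrences are not obviously given by skew or direct sum splittings, so marked occurrences may overlap in a more complicated way. If a clean cutting bijection is not apparent, the fallback is to derive a functional equation for $F(x,q)$ directly. We would fix the occurrence with extremal $a$, as in the proof of Theorem~\ref{thm:class80}, show that the remaining blocks are either free (weight $F(x)$) or again carry the distribution, and solve the resulting linear equation for $F(x,q)$.
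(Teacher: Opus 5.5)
Your argument for $p_1$ is correct, and it takes a genuinely different route from the paper, which simply quotes the known distribution of this pattern. With $q=1+t$, the marked first entries cut $\pi$ into $L\ominus\tau_1\ominus\cdots\ominus\tau_k$. Each marked condition depends only on its own segment: $|\tau_i|\ge 2$ and $\tau_i$ begins with its second-largest entry. This gives $F(x)\,(x(F(x)-1))^k$.

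The gap is $p_2$. You only announce a ``comparable decomposition'' and then flag it as the main obstacle. Yet this is the half of the theorem with non-trivial content. The formula for the first cell is already known, and the equivalence of the two cells is exactly the claim that the second cell has the same distribution. As written, nothing is proved about the second cell. Your fourth bullet is also wrong. In $p_2$ the whole middle row $(0,1),(1,1),(2,1)$ is shaded, so $b=a+1$ with no exceptional region.

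The gap can be closed within your own framework.
\begin{itemize}
\item For $p_2$, $a(a+1)$ is an occurrence if and only if the entries of value at least $a$ occupy a factor of $\pi$ that begins with $a$ and ends with $a+1$. Everything outside this factor lies below $a$, and everything strictly inside lies above $a+1$. So the structure is an inflation, not a skew or direct sum.
\item For marked first entries $a_1>\cdots>a_k$, these factors are strictly nested.
\item Collapse each factor to a single maximal entry. This leaves an arbitrary innermost permutation, of weight $F(x)$.
\item Each marked occurrence contributes its pair $a_i,a_i+1$ (weight $x^2$). It also contributes the layer surrounding its factor: a nonempty permutation whose maximum is a placeholder for that factor (weight $(F(x)-1)/x$). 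Each mark therefore carries $x(F(x)-1)$.
\item The construction is reversible, and every such datum gives valid marked occurrences. Hence again $F(x,1+t)=F(x)/(1-tx(F(x)-1))$.
\end{itemize}

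Do not rely on your fallback as phrased. In an extremal-occurrence decomposition, the block that supposedly ``carries the distribution'' can contain an occurrence formed by its extremal entry and its value-neighbour. That occurrence is destroyed in $\pi$, so a correction term is needed.

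The paper's own relation $F(x,q)=A(x)+xqA(x)(F(x,q)-1)$ for its second-cell representative has exactly this defect. For $\pi=123$ it assigns weight $q\cdot q$, the second $q$ coming from the occurrence $23$ of the block $BbC$. But $\pi$ has only one occurrence, because $1$ lies in the shaded box $(0,0)$ relative to $23$. The relation therefore gives $q^2+2q+3$ at $n=3$ instead of the true $3q+3$, although the stated closed form is correct. Once completed, your marked-occurrence argument avoids this problem entirely.
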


\begin{proof}
The distribution of $\pattern{scale=0.6}{2}{1/1,2/2}{0/0,0/1,1/1,1/2,2/1,2/2}$ is given in~\cite[Theorem~3.9]{KZ2019} and is stated in~(\ref{formulas-class36}). We show that the distribution of $\pattern{scale = 0.6}{2}{1/1,2/2}{0/0,0/1,1/1,1/2,2/0,2/1}$ is the same. We claim that
\begin{equation}\label{class36:A(x)-rec}
F(x)=A(x) + xA(x)(F(x)-1).
\end{equation}
Indeed, each permutation (counted by $F(x)$ on the LHS) either avoids the pattern (and hence is counted by $A(x)$) or contains it. Among all occurrences $ab$ of the pattern in $\pi$ ($b=a+1$), select the one with the smallest possible $a$, as shown in Figure~\ref{thm:class36-fig}. Then the block $A$ must avoid $\pattern{scale = 0.6}{2}{1/1,2/2}{0/0,0/1,1/1,1/2,2/0,2/1}$, while $BbC$ can be any non-empty permutation counted by $F(x)-1$. Note that $\pi$ is uniquely determined by $A$ and $BbC$, and the element $a$ contributes a factor of $x$. This yields~(\ref{class36:A(x)-rec}), and the formula for $A(x)$ given in (\ref{formulas-class36}).  For the distribution, we have
\begin{equation}\label{class36:F(x,q)-rec}
F(x,q) = A(x) + xqA(x)(F(x,q)-1),
\end{equation}
whose derivation follows the same lines as in the case of avoidance. We note that each occurrence of the pattern in $BbC$ is also an occurrence in $\pi$, and the factor of $q$ corresponds to the occurrence $ab$ of the pattern. The formula for $F(x,q)$ follows from (\ref{class36:F(x,q)-rec}), and it is given by (\ref{formulas-class36}). 
\end{proof}

\begin{figure}
\begin{center}
\begin{tikzpicture}[scale=0.7]
  \tikzset{
    grid/.style={ draw, step=1cm, black!100, thin },
    graycell/.style={ fill=gray!50, draw=none, minimum width=1cm, minimum height=1cm, anchor=south west }
  }

  \fill[graycell] (1,2) rectangle (2,3);
  \fill[graycell] (0,1) rectangle (1,2);
  \fill[graycell] (1,1) rectangle (2,2);
  \fill[graycell] (2,1) rectangle (3,2);
  \fill[graycell] (2,0) rectangle (3,1);
  \fill[graycell] (0,0) rectangle (1,1);

  \draw[grid] (0,0) grid (3,3);

  \draw[thick] (0.1, 2.1) rectangle (0.9, 2.9);
  \node at (0.5, 2.5) {$B$};
  \draw[thick] (2.1, 2.1) rectangle (2.9, 2.9);
  \node at (2.5, 2.5) {$C$};
  \draw[thick] (1.1, 0.1) rectangle (1.9, 0.9);
  \node at (1.5, 0.5) {$A$};

  \filldraw[black] (1,1) circle (2.5pt);
  \node[anchor=west] at (0.9,1.2) {\footnotesize{$a$}};
  \filldraw[black] (2,2) circle (2.5pt);
  \node[anchor=west] at (1.9,1.7) {\footnotesize{$b$}};
\end{tikzpicture}
\end{center}
\caption{Related to the proof of Theorem~\ref{thm:class36}.}\label{thm:class36-fig}
\end{figure}

\section{Equidistributions by recurrence relations}\label{recurrence-sec}

The following theorem provides another connection to the unsigned Stirling numbers of the first kind (such a connection was already observed in Theorem~\ref{thm:class68}).

\begin{thm}\label{thm:class15} The patterns in Class $15$ are equivalent. For any pattern $p$ in the class, 
\begin{equation}\label{class15-rec}
s_{n,k}(p)=s_{n-1,k}(p)+(n-1)s_{n-1,k-1}(p)
\end{equation}
with the initial conditions $s_{0,0}(p)=s_{1,0}=1$ and $s_{1,1}(p)=0$, which shows that $s_{n,k}(p)=C(n,n-k)$, the unsigned Stirling number of the first kind. The bivariate exponential g.f. for $s_{n,k}(p)$ is given by
$$\sum_{n\geq 0}\sum_{k\geq 0}\frac{x^n}{n!}q^{k} = (1-xq)^{-1/q}.$$ 
\end{thm}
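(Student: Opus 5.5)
The plan is to show that for both non-trivially related representatives of Class~$15$, namely $p_1=\pattern{scale=0.6}{2}{1/1,2/2}{1/2,2/2}$ and $p_2=\pattern{scale=0.6}{2}{1/1,2/2}{1/2,1/1}$, the number of occurrences in $\pi\in S_n$ equals $n-\mathrm{rlmax}(\pi)$, where $\mathrm{rlmax}(\pi)$ is the number of right-to-left maxima of $\pi$. The remaining patterns in the class are trivially equivalent to one of these two via complement, reverse and inverse. Since $n-\mathrm{rlmax}$ has a known distribution, this gives both the equivalence and the enumeration at once.

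\emph{Occurrences of $p_1$.} An occurrence $ab$ of $p_1$ has $a<b$ with $a$ to the left of $b$. The shaded boxes $(1,2)$ and $(2,2)$ forbid any element to the right of $a$ that is larger than $b$. Hence $b$ must be the maximum of the elements to the right of $a$, and this maximum must exceed $a$. So every $a$ that is not a right-to-left maximum lies in exactly one occurrence, while right-to-left maxima lie in none (as the smaller element).

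\emph{Occurrences of $p_2$.} Here the shaded boxes $(1,1)$ and $(1,2)$ require every element strictly between $a$ and $b$ in position to be smaller than $a$. So $b$ must be the first element to the right of $a$ that exceeds $a$. Such $b$ exists, and is unique, exactly when $a$ is not a right-to-left maximum. Thus both counts equal $n-\mathrm{rlmax}(\pi)$.

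To derive recurrence~(\ref{class15-rec}), I would build $\pi\in S_n$ from $\pi'\in S_{n-1}$ by inserting a new smallest element (shifting all entries of $\pi'$ up by one). The new $1$ is a right-to-left maximum only when it is placed in the last position. In that case the statistic $n-\mathrm{rlmax}$ is unchanged, because $n$ and $\mathrm{rlmax}$ both increase by one. In each of the other $n-1$ positions the $1$ is not a right-to-left maximum, so the statistic increases by one. Inserting a minimum does not change the right-to-left-maximum status of any old element. This yields $s_{n,k}=s_{n-1,k}+(n-1)s_{n-1,k-1}$, and the initial conditions are immediate.

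Consequently $\sum_k s_{n,k}q^k=\prod_{i=0}^{n-1}(1+iq)$, so $s_{n,k}=C(n,n-k)$. Summing with $x^n/n!$ and using the binomial series $\prod_{i=0}^{n-1}(1+iq)=q^n\,(1/q)^{\overline{n}}$ gives $\sum_n \frac{x^n}{n!}\prod_{i=0}^{n-1}(1+iq)=(1-qx)^{-1/q}$. The only delicate point is the careful reading of the shaded boxes to identify the unique partner $b$. Once that reading is established, everything else is routine.
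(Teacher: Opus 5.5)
Your proof is correct, but it takes a somewhat different route from the paper. The paper works dynamically and never names the statistic. For each of the two representatives separately (shaded boxes $(1,2),(2,2)$ and shaded boxes $(1,2),(1,1)$), it argues that inserting a new minimum into $\pi'\in S_{n-1}$ destroys no existing occurrence. It creates no occurrence when the new entry is placed last, and exactly one otherwise. That new occurrence pairs the new minimum with the nearest right-to-left maximum to its right for the first pattern, and with its right neighbour for the second. This gives the recurrence directly.

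You instead give a static characterization. Every non-right-to-left maximum $a$ has a unique partner $b$: the maximum of the entries to its right for the first pattern, and the first larger entry to its right for the second. Hence both patterns have exactly $n-\mathrm{rlmax}(\pi)$ occurrences in every $\pi$, and the recurrence is then the standard insertion recurrence for this statistic.

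Your route proves more. The two patterns coincide pointwise as statistics, so the identity map already witnesses joint equidistribution. The Stirling-number connection reduces to the classical distribution of right-to-left maxima, which the paper itself uses in the Class~47 and Class~54 proofs, so you could cite it instead of deriving the recurrence. You also avoid the separate check that inserting the minimum never destroys an existing occurrence. The paper's insertion argument is slightly quicker to write per pattern. It implicitly establishes the same pointwise identity, since the effect of each insertion depends only on the insertion position, but it never states it.
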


\begin{proof}
It easy to see that for both patterns, $\pattern{scale=0.6}{2}{1/1,2/2}{1/2,2/2}$ and  $\pattern{scale=0.6}{2}{1/1,2/2}{1/2,1/1}$, decreasing permutations are the only one avoiding them. 

Now, for the pattern  $\pattern{scale=0.6}{2}{1/1,2/2}{1/2,2/2}$, inserting the new minimal element to create a larger permutation never destroys any existing occurrence of the pattern, does not change the number of occurrences if inserted at the end, and introduces precisely one new occurrence of the pattern (formed by the new minimum element and the closest right-to-left maximum to its right) if inserted in a position other than the rightmost one. This explains (\ref{class15-rec}).

However, for the pattern $\pattern{scale=0.6}{2}{1/1,2/2}{1/2,1/1}$, exactly the same occurs when inserting the new minimum element, except that the new occurrence of the pattern is formed by the new minimum element and the element next to it on its right. Thus,  $\pattern{scale=0.6}{2}{1/1,2/2}{1/2,2/2}\sim\pattern{scale=0.6}{2}{1/1,2/2}{1/2,1/1}$, and the g.f.\ can be obtained using standard algebraic manipulations.
\end{proof}

\section{Equidistributions by bijections}\label{bijections-sec}

In this section, we present four bijections. Each bijection $f$ is in fact an involution, that is, $f(f(\pi)) = \pi$, and proves a stronger statement than mere equidistribution, namely the joint equidistribution of the patterns involved, since occurrences of the patterns in question are mapped directly to one another.


\begin{thm}\label{thm:class46} The patterns in Class $46$ are equivalent.
\end{thm}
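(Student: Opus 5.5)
As announced at the start of this section, the plan is to construct an involution $f$ on $S_n$ that maps occurrences of $p_1=\pattern{scale=0.6}{2}{1/1,2/2}{1/2,0/1,1/1,2/1,0/0}$ bijectively onto occurrences of $p_2=\pattern{scale=0.6}{2}{1/1,2/2}{2/2,0/1,1/1,2/1,0/0}$. This would give joint equidistribution, and hence $p_1\sim p_2$; the remaining patterns of Class~$46$ are trivial images of these two.

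\textbf{Step 1: restate the two patterns.} In both patterns the whole middle row is shaded, so an occurrence is a pair $a,\,a+1$ with $a$ to the left of $a+1$. Box $\boks{0}{0}$ is also shaded in both, so $a$ is a left-to-right minimum. Write $\pi=U\,a\,X\,(a+1)\,Y$.
\begin{itemize}
\item For $p_1$, box $\boks{1}{2}$ is shaded, so every entry of $X$ is smaller than $a$.
\item For $p_2$, box $\boks{2}{2}$ is shaded, so every entry of $Y$ is smaller than $a$.
\end{itemize}
The two conditions are therefore mirror images of each other around the element $a+1$. The naive map is to swap $X$ and $Y$. It preserves the left-to-right-minimum property of $a$ and turns a $p_1$-occurrence at $a$ into a $p_2$-occurrence at $a$. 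However, it cannot be applied to all occurrences at once.

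\textbf{Step 2: encode occurrences by a global decomposition.} Write $\pi=m_1w_1m_2w_2\cdots m_kw_k$, where $m_1>\cdots>m_k$ are the left-to-right minima and each $w_i$ has all entries larger than $m_i$. Take a candidate $a=m_i$ with $a+1\in w_j$, $j\ge i$.
\begin{itemize}
\item The $p_1$ condition forces: $a+1$ is the first letter of $w_i$ when $j=i$; and when $j>i$, $w_i,\dots,w_{j-1}$ are empty and $a+1$ is the first letter of $w_j$.
\item The $p_2$ condition forces: $a+1$ is the last letter of $w_j$, with $w_{j+1},\dots,w_k$ containing nothing larger than $a$.
\end{itemize}
I would translate both conditions into statements about the words $w_j$: a ``first letter'' condition for $p_1$ and a ``last letter plus tail'' condition for $p_2$. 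Then I would look for an involution that acts inside each block $w_j$, for instance by a suitable reversal or rotation. Such a map fixes the set of left-to-right minima and exchanges first-letter events with last-letter events.

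\textbf{Step 3: verify.} The map is an involution if it is built blockwise from an involution on words. The main work is to check that it sends $p_1$-occurrences exactly onto $p_2$-occurrences, with no spurious ones created or destroyed. The delicate part is the tail condition for $p_2$: every later block must contain nothing between $a+1$ and $n$. This is not a local condition on $w_j$. I expect it to be the main obstacle, because a purely blockwise reversal will not respect it.

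\textbf{Fallback for the obstacle.} Use a recursive construction instead: remove the largest element $n$, apply $f$ to the shorter permutation, and reinsert $n$ at the position dictated by a matched choice of insertion slots. Beforehand I would check that inserting $n$ changes the number of occurrences of $p_1$ and of $p_2$ in the same way. If this yields the same insertion recurrence, in the spirit of the proofs of Theorems~\ref{thm:class68} and~\ref{thm:class15}, then equidistribution follows directly even without an explicit involution. I would also compare the data in Appendix~\ref{all-distr-appendix} for small $n$ to guide the choice.
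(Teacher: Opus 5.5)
Your Step~1 is correct, and $p_1,p_2$ are a legitimate pair to work with. They are the images of the two patterns used in the paper under reverse-complement composed with inverse, so proving $p_1\sim p_2$ suffices.

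The gap is that no bijection is ever constructed. The encoding in Step~2, on which your blockwise map rests, is false when $j>i$. Entries of $w_{i+1},w_{i+2},\dots$ only have to exceed $m_{i+1},m_{i+2},\dots$, so they may be smaller than $a=m_i$, and such entries are allowed in both $X$ and $Y$. For instance:
\begin{itemize}
\item in $\pi=3124$ the pair $34$ is an occurrence of $p_1$, although $4$ is not the first letter of $w_2=24$;
\item in $\pi=3142$ the pair $34$ is an occurrence of $p_2$, although $4$ is not the last letter of $w_2=42$.
\end{itemize}
So a first-letter/last-letter exchange inside the factors $w_j$ would match the wrong events, quite apart from the non-locality you anticipate.

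The fallback does not close the gap either. When $n$ is inserted, an occurrence $(a,a+1)$ of $p_1$ with $a+1<n$ is destroyed exactly when $n$ lands strictly between $a$ and $a+1$. An occurrence of $p_2$ is destroyed whenever $n$ lands anywhere to the right of $a+1$. The number of destroying slots is therefore not determined by $n$ and $k$: compare $\pi'=213$ with $\pi'=231$ for $p_1$ (two slots versus one), and $\pi'=132$ with $\pi'=231$ for $p_2$ (one versus two). Hence the insertion argument of Theorems~\ref{thm:class68} and~\ref{thm:class15} gives no recurrence in $n$ and $k$ alone. You would need a refined statistic and a matching of insertion slots, which the proposal does not supply.

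The missing idea is to replace the factor $w_j$ by the subsequence $T_a$ of all entries lying to the right of $a$ and larger than $a$. No value lies strictly between $a$ and $a+1$, so $a\,(a+1)$ is an occurrence of $p_1$ if and only if $a+1$ is the leftmost entry of $T_a$. It is an occurrence of $p_2$ if and only if $a+1$ is the rightmost entry of $T_a$. Your first-letter/last-letter intuition is right, but it applies to $T_a$.

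Transported to your coordinates, the paper's map becomes the following. (The paper states it for patterns whose first element sits immediately left of a right-to-left maximum $r_i$, and it cyclically shifts the values of the set $S_i$ of entries left of and smaller than $r_i$.) At each left-to-right minimum $a$ where exactly one of the two patterns occurs, rotate the subsequence $T_a$ by one place within its own set of positions. This carries $a+1$ from the first position to the last, or back.

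The substance of the proof is checking that such a rotation changes nothing else:
\begin{itemize}
\item No entry of $T_a$ is a left-to-right minimum, before or after the rotation.
\item For a left-to-right minimum $c>a$ we have $a+1<c$. So $T_c\cap T_a$ avoids $a+1$ and keeps its relative order, while the entries of $T_c$ outside $T_a$ lie between $c$ and $a$ in position and do not move. Hence the left-to-right order of $T_c$ is preserved.
\item For $c<a$, every entry of $T_a$ exceeds $c$. So the set of positions occupied by $T_c$ is preserved, and $c+1\le a$ stays put.
\end{itemize}
Each step flips only the status of its own minimum, so the steps can be undone one at a time in reverse order. This gives the required bijection exchanging occurrences of $p_1$ and $p_2$.
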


\begin{proof}
We will explain bijectively that  $\pattern{scale=0.6}{2}{1/1,2/2}{1/2,2/2,0/1,1/1,1/0} \sim \hspace{-1mm}\pattern{scale=0.6}{2}{1/1,2/2}{1/2,2/2,1/1,0/0,1/0}$.

Let $\pi = \pi_1 \pi_2 \cdots \pi_n \in S_n$, and let $r_1 > r_2 > \cdots > r_\ell$ be the sequence of right-to-left maxima of $\pi$. Let $R = \{r_1, r_2, \dots, r_\ell\}$. Note that if $ab$ is an occurrence of either pattern, then $b = r_i$ for some $i$, and $a$ is immediately to the left of $r_i$. Also, each $r_i$ can be involved in at most one occurrence of $\pattern{scale=0.6}{2}{1/1,2/2}{1/2,2/2,0/1,1/1,1/0}$ or $\hspace{-1mm}\pattern{scale=0.6}{2}{1/1,2/2}{1/2,2/2,1/1,0/0,1/0}$.

The transformation $f: \mathcal{S}_n \to \mathcal{S}_n$ is obtained by considering $r_i$, for $i = \ell, \ell - 1, \ldots, 1$, one at a time. For each $r_i$, let $x_i$ be the element immediately to the left of $r_i$, if it exists. If $x_i \in R$, or if $r_i = \pi_1$, we do nothing. Otherwise, we define the set $S_i$ as:
\[
S_i := \{ y \mid y \text{ is to the left of } r_i, \text{ and } y < r_i \}.
\]
Note that by definition, $x_i \in S_i$. Let the elements of $S_i$ be ordered as $a_1 < a_2 < \cdots < a_j$, where $j = |S_i| \geq 1$. We perform the following steps:

\begin{itemize}
    \item  If $x_i r_i$ is not an occurrence of $\pattern{scale=0.6}{2}{1/1,2/2}{1/2,2/2,0/1,1/1,1/0}$ or $\hspace{-1mm}\pattern{scale=0.6}{2}{1/1,2/2}{1/2,2/2,1/1,0/0,1/0}$, or if it is involved in occurrences of both these patterns, we leave $\pi$ unchanged and proceed to $r_{i-1}$.
    
    \item If $x_ir_i$ is an occurrence of $\pattern{scale=0.6}{2}{1/1,2/2}{1/2,2/2,0/1,1/1,1/0}$ but not of $\hspace{-1mm}\pattern{scale=0.6}{2}{1/1,2/2}{1/2,2/2,1/1,0/0,1/0}$, then $a_j=x_i$, and we modify the elements of $S_i$ in $\pi$ by the following cyclic shift:
    \[
    x_i \to a_1, \quad a_1 \to a_2, \quad a_2 \to a_3, \quad \dots, \quad a_{j-2} \to a_{j-1},\quad a_{j-1} \to x_i.
    \]
After this shift, $a_1 r_i$ becomes an occurrence of the pattern $\hspace{-1mm}\pattern{scale=0.6}{2}{1/1,2/2}{1/2,2/2,1/1,0/0,1/0}$, and we lost the occurrence $x_ir_i$ of the pattern $\pattern{scale=0.6}{2}{1/1,2/2}{1/2,2/2,0/1,1/1,1/0}$ in $\pi$. 

    \item If $x_ir_i$ is an occurrence of $\hspace{-1mm}\pattern{scale=0.6}{2}{1/1,2/2}{1/2,2/2,1/1,0/0,1/0}$ but not $\pattern{scale=0.6}{2}{1/1,2/2}{1/2,2/2,0/1,1/1,1/0}$, then $a_1=x_i$, and we modify the elements of $S_i$ in $\pi$ by the substitution:
    \[
    x_i \to a_j, \quad a_j \to a_{j-1}, \quad a_{j-1} \to a_{j-2},  \quad \dots, \quad  a_{2} \to a_{1}, \quad a_1 \to x_i
    \]
This transformation maps the occurrence $x_ir_i$ of $\hspace{-1mm}\pattern{scale=0.6}{2}{1/1,2/2}{1/2,2/2,1/1,0/0,1/0}$ to an occurrence $a_jr_i$ of $\pattern{scale=0.6}{2}{1/1,2/2}{1/2,2/2,0/1,1/1,1/0}$. 
\end{itemize}

\begin{figure}[t]  
\begin{center}  

\begin{tabular}{ccc}
\begin{tikzpicture}[scale=0.25] 
\tikzset{    
  grid/.style={ draw, step=1cm, black!100, very thin }, 
  cell/.style={ draw, anchor=center, text centered },  
  graycell/.style={ fill=gray!50, draw=none, minimum width=1cm, minimum height=1cm, anchor=south west } 
}  

\draw[grid] (0,0) grid (9,9);

\foreach \x in {1,...,8} \node[anchor=north,font=\tiny] at (\x,-0.2) {\x};
\foreach \y in {1,...,8} \node[anchor=east,font=\tiny] at (-0.2,\y) {\y};

\filldraw[black] (1,2) circle (8pt);
\filldraw[black] (2,3) circle (8pt);
\filldraw[black] (3,4) circle (8pt);
\filldraw[black] (4,7) circle (8pt);
\filldraw[black] (5,1) circle (8pt);
\draw[thick] (6,8) circle (8pt); \draw[thick] (6,8) circle (2pt);
\draw[thick] (8,6) circle (8pt); \draw[thick] (8,6) circle (2pt);
\filldraw[black] (7,5) circle (8pt);

\end{tikzpicture}

&

\begin{tikzpicture}[scale=0.25] 
\tikzset{    
  grid/.style={ draw, step=1cm, black!100, very thin }, 
  cell/.style={ draw, anchor=center, text centered },  
  graycell/.style={ fill=gray!50, draw=none, minimum width=1cm, minimum height=1cm, anchor=south west } 
}  

\draw[grid] (0,0) grid (9,9);

\foreach \x in {1,...,8} \node[anchor=north,font=\tiny] at (\x,-0.2) {\x};
\foreach \y in {1,...,8} \node[anchor=east,font=\tiny] at (-0.2,\y) {\y};

\draw[line width=1pt] (0.5,0.5) rectangle (7.5,5.5);

\draw[thick] (6,8) circle (8pt); \draw[thick] (6,8) circle (2pt);
\draw[thick] (8,6) circle (8pt); \draw[thick] (8,6) circle (2pt);
\filldraw[black] (1,3) circle (8pt);
\filldraw[black] (2,4) circle (8pt);
\filldraw[black] (3,5) circle (8pt);
\filldraw[black] (4,7) circle (8pt);
\filldraw[black] (5,2) circle (8pt);
\filldraw[black] (7,1) circle (8pt);

\end{tikzpicture}
&

\begin{tikzpicture}[scale=0.25] 
\tikzset{    
  grid/.style={ draw, step=1cm, black!100, very thin }, 
  cell/.style={ draw, anchor=center, text centered },  
  graycell/.style={ fill=gray!50, draw=none, minimum width=1cm, minimum height=1cm, anchor=south west } 
}  

\draw[grid] (0,0) grid (9,9);

\foreach \x in {1,...,8} \node[anchor=north,font=\tiny] at (\x,-0.2) {\x};
\foreach \y in {1,...,8} \node[anchor=east,font=\tiny] at (-0.2,\y) {\y};

\draw[line width=1pt] (0.5,0.5) rectangle (5.5,7.5);

\draw[thick] (6,8) circle (8pt); \draw[thick] (6,8) circle (2pt);
\draw[thick] (8,6) circle (8pt); \draw[thick] (8,6) circle (2pt);
\filldraw[black] (1,2) circle (8pt);
\filldraw[black] (2,3) circle (8pt);
\filldraw[black] (3,4) circle (8pt);
\filldraw[black] (4,5) circle (8pt);
\filldraw[black] (5,7) circle (8pt);
\filldraw[black] (7,1) circle (8pt);

\end{tikzpicture}
\end{tabular}
\caption{The steps of implementing the mapping $f$ in the proof of Theorem~\ref{thm:class46} showing that $f(23471856)=23457816$.}\label{thm:class46-example}
\end{center}
\end{figure}

The steps of $f$ are illustrated in the example in Figure~\ref{thm:class46-example}. Note that in that example, the occurrence 56 of $\pattern{scale=0.6}{2}{1/1,2/2}{1/2,2/2,0/1,1/1,1/0}$ (resp., occurrence 18 of $\pattern{scale=0.6}{2}{1/1,2/2}{1/2,2/2,1/1,0/0,1/0}$) becomes the occurrence 16 of $\pattern{scale=0.6}{2}{1/1,2/2}{1/2,2/2,1/1,0/0,1/0}$ (resp., occurrence 78 of $\pattern{scale=0.6}{2}{1/1,2/2}{1/2,2/2,0/1,1/1,1/0}$).

We need to justify that a cyclic shift at each step cannot introduce or destroy any occurrences of the patterns other than those already discussed. This will show that $f$ is an involution since the cyclic shifts described above are inverses of each other.

First note that the transformation $f$ acts strictly on the values within the set $S_i$. 
\begin{itemize}
\item In the case when $x_ir_i$ is an occurrence of $\pattern{scale=0.6}{2}{1/1,2/2}{1/2,2/2,0/1,1/1,1/0}$ but not of $\pattern{scale=0.6}{2}{1/1,2/2}{1/2,2/2,1/1,0/0,1/0}$, there must be a unique element $a_i \in S_i$ such that $ a_1 \leq a_i < x_i$.  After modification, $x_i$ becomes the smallest element in $S_i$, $a_1r_i$ is an occurrence of $\pattern{scale=0.6}{2}{1/1,2/2}{1/2,2/2,0/0,1/1,1/0}$ but not of $\pattern{scale=0.6}{2}{1/1,2/2}{1/2,2/2,0/1,1/1,1/0}$, and there is no occurrence of $\pattern{scale=0.6}{2}{1/1,2/2}{1/2,2/2,0/1,1/1,1/0}$ involving $r_i$ because of the element $a_{i+1}$. 

Now, if $x_j r_j$ is an occurrence of the pattern $\pattern{scale=0.6}{2}{1/1,2/2}{1/2,2/2,0/1,1/1,1/0}$ (resp., $\pattern{scale=0.6}{2}{1/1,2/2}{1/2,2/2,1/1,0/0,1/0}$) to the right of $r_i$, then there are no elements between $x_j$ and $r_j$ (resp., less than $x_j$) to the left of $x_j$. Since the cyclic shift does not change the set $S_i$ (it only reorders the elements of $S_i$ in $\pi$), $x_j r_j$ remains an occurrence of the same pattern.

Finally, suppose that $x_j r_j$ is an occurrence of either of the patterns to the left of $r_i$, that is, $r_j > r_i$. If $x_j > r_i$, then clearly after the cyclic shift $x_j r_j$ remains an occurrence of the same pattern. On the other hand, if $x_j < r_i$, then we must have $x_j < x_i$ (otherwise, $x_i r_i$ cannot be an occurrence of $\pattern{scale=0.6}{2}{1/1,2/2}{1/2,2/2,0/1,1/1,1/0}$). In this case, the cyclic shift moves $x_j$ and all elements to its left, if any, upward, and it does not introduce any elements between $x_j$ and $r_j$ to the left of $x_j$. Moreover, no elements less than $x_j$ can appear to its left if there were none in $\pi$. Hence, $x_j r_j$ remains an occurrence of the same pattern.

\item The case when $x_i r_i$ is an occurrence of $\pattern{scale=0.6}{2}{1/1,2/2}{1/2,2/2,1/1,0/0,1/0}$ but not of $\pattern{scale=0.6}{2}{1/1,2/2}{1/2,2/2,0/1,1/1,1/0}$ can be analyzed similarly to the previous case and is therefore omitted.
\end{itemize}
The theorem is proved.
\end{proof}


\begin{thm}\label{thm:class49} The patterns in Class $49$ are equivalent.
\end{thm}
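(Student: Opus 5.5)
We would prove Theorem~\ref{thm:class49} by showing that the two representatives $p_1=\pattern{scale=0.6}{2}{1/1,2/2}{1/2,2/2,0/1,1/1}$ and $p_2=\pattern{scale=0.6}{2}{1/1,2/2}{2/2,0/1,2/1,0/0}$ are equidistributed. The other patterns in the class are trivial images of these under complement, reverse and inverse. The first step is to translate the shadings into conditions on an occurrence $ab$ (with $a<b$ and $a$ to the left of $b$). For $p_1$, no element to the right of $a$ exceeds $b$, and every element with value strictly between $a$ and $b$ lies to the right of $b$. For $p_2$, every element to the left of $a$ exceeds $b$, and every element to the right of $b$ is smaller than $a$. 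Hence in $p_2$ the element $a$ is a left-to-right minimum and $b$ is a right-to-left maximum. In $p_1$ the element $b$ is a right-to-left maximum, say $b=r_i$, and we expect that $r_i$ determines $a$ uniquely: $a$ should be the largest element smaller than $r_i$ lying strictly between $r_{i-1}$ and $r_i$, subject to the extra requirement that no value in $(a,r_i)$ occurs to the left of $r_i$. So the occurrences of either pattern are indexed by certain right-to-left maxima (for $p_2$, by pairs of records), and we would make these indexing sets explicit first.

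Next we would aim for a bijection $f:S_n\to S_n$ in the spirit of the proof of Theorem~\ref{thm:class46}. We process the right-to-left maxima $r_\ell,r_{\ell-1},\dots,r_1$ in turn. For each $r_i$ we look at the block of elements strictly between $r_{i-1}$ and $r_i$ (or before $r_1$) together with the values below $r_i$. We then apply a local rearrangement of that block, for instance a cyclic shift of values within the set $S_i$ of elements to the left of $r_i$ and smaller than $r_i$, exactly as in Theorem~\ref{thm:class46}. This rearrangement should turn the $p_1$ condition at $r_i$ into the $p_2$ condition at $r_i$, and conversely. The goal is that $f$ is an involution exchanging occurrences of $p_1$ and $p_2$, which would give joint equidistribution.

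As a fallback, in case no such local map works, we would compute both distributions by the decomposition method of Section~\ref{gf-sec}. For $p_2$, cut $\pi$ at its largest element $n$: the occurrences with $b=n$ and those inside the two parts can be counted using Theorem~\ref{short}-type factors. For $p_1$, we would use the analogous decomposition by inserting the new minimum or maximum element, and check that the two resulting functional equations for $F(x,q)$ coincide.

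The main obstacle will be verifying that the local move at $r_i$ does not create or destroy occurrences attached to other right-to-left maxima. For $p_2$ this concerns the condition that everything left of $a$ exceeds $b$, which involves elements far to the left of $r_i$. To handle this we would first check whether occurrences of $p_2$ are nested: if $a_jb_j$ and $a_ib_i$ are occurrences with $b_j>b_i$, then we expect $a_j$ to lie to the left of $a_i$ and $a_j<a_i$. If this nesting holds, we restrict the rearrangement to values in $(a_j,b_i)$ so that the other occurrences are untouched, mirroring the case analysis in the proof of Theorem~\ref{thm:class46}.
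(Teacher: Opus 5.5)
Your translation of the shadings is correct. In particular, for $p_1$ the entry $a$ is the maximum of the factor between $r_{i-1}$ and $r_i$, and $r_i$ carries an occurrence iff no entry left of $r_{i-1}$ has value in $(a,r_i)$. There is a genuine gap after that, though. What follows is a plan rather than a proof: no map $f$ is defined, nothing is verified, and the fallback functional equations are neither derived nor compared. The concrete guidance you do give would also fail.

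\textbf{Where the guidance fails.} First, your nesting guess is backwards. For your $p_2$, take occurrences $a_jb_j$ and $a_ib_i$ with $b_j>b_i$. Then $b_i$ lies to the right of $b_j$, so $b_i<a_j$ and $b_j>a_j>b_i>a_i$ (e.g.\ $3142$, $3412$). Hence the interval $(a_j,b_i)$ you want to restrict to is empty.

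Second, for your $p_2$ an occurrence at $r_i$ is governed by the leftmost entry smaller than $r_i$ and by $r_{i+1}$. It exists iff that entry precedes $r_i$ and exceeds $r_{i+1}$ (the latter condition is void if $i=\ell$). That entry need not lie between $r_{i-1}$ and $r_i$: in $3142$ the occurrence $12$ has $a=1$ to the left of $r_1=4$. So the status at $r_i$ is not a property of a block at $r_i$, and a local move there interferes with neighbouring maxima.

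Third, even for a well-aligned pair, rotating all of $S_i$ as in Theorem~\ref{thm:class46} fails, as shown in the last paragraph.

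\textbf{The missing idea.} Take as second representative the inverse of your $p_2$, namely $p_2'=\pattern{scale=0.6}{2}{1/1,2/2}{1/2,2/2,0/0,1/0}$. It shares the shaded boxes $(1,2),(2,2)$ with $p_1$, so for both patterns $b=r_i$ and $a$ lies in the factor $B_i$ between $r_{i-1}$ and $r_i$. Let $S_i$ be the set of entries left of $r_i$ that are smaller than $r_i$, and $M_i=S_i\setminus B_i$. Then $r_i$ carries a $p_1$-occurrence iff $\max S_i\in B_i$, and a $p_2'$-occurrence iff $\min S_i\in B_i$.

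So at each $r_i$ you must swap the states ``$p_1$ only'' and ``$p_2'$ only'' and fix the other two. Process $r_\ell,\dots,r_1$, and in the ``$p_1$ only'' case cyclically shift values on $M_i\cup\{a\}$ only, where $a=\max S_i$. The position of $a$ receives $\min M_i$, and each element of $M_i$ is replaced by the next larger value of $M_i\cup\{a\}$. In the ``$p_2'$ only'' case apply the inverse shift.

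Shifting all of $S_i$ does not work. For $\pi=15234$ at $r_2=4$ it gives $25314$, where $r_2$ carries both patterns.

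One then checks that the shift preserves the right-to-left maxima and all other states:
\begin{itemize}
\item For $k<i$, the entries left of $r_k$ that are below $r_i$ all lie in $M_i$, and the shift is order-preserving on them. Hence the relative order within $S_k$ is unchanged.
\item For $k>i$, every moved position lies left of $r_{k-1}$, and values are only permuted among these positions. Hence $B_k$ and $M_k$ are unchanged as sets.
\end{itemize}
This is the paper's argument. It gives joint equidistribution of $p_1$ and $p_2'$, and your $p_2$ follows by inverse symmetry.
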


\begin{proof}
We will explain bijectively that  $\pattern{scale=0.6}{2}{1/1,2/2}{1/2,2/2,0/1,1/1} \sim \hspace{-1mm}\pattern{scale=0.6}{2}{1/1,2/2}{1/2,2/2,0/0,1/0}$.

Let $\pi = \pi_1 \pi_2 \cdots \pi_n \in S_n$, and write $\pi = A_1 r_1 A_2 r_2 \cdots A_\ell r_\ell$, where $r_1 > r_2 > \cdots > r_\ell$ is the sequence of right-to-left maxima of $\pi$. Note that if $ab$ is an occurrence of either pattern, then $b = r_i$ for some $i$, and $a$ must lie in $A_i$. Clearly, each $r_i$ can be involved in at most one occurrence of  $\pattern{scale=0.6}{2}{1/1,2/2}{1/2,2/2,0/1,1/1}$ or $\pattern{scale=0.6}{2}{1/1,2/2}{1/2,2/2,0/0,1/0}$.

The transformation $f: \mathcal{S}_n \to \mathcal{S}_n$ is obtained by considering $r_i$, for $i = \ell, \ell - 1, \ldots, 1$, one at a time. If $r_i$, is not involved in an occurrence of either pattern, or if $r_i$ is involved in occurrence of both patterns, do not change anything and proceed with $r_{i-1}$. Otherwise, we define the set $M_i$ as:
\[
M_i := \{ y \mid y \text{ is to the left of } r_{i-1}, \text{ and } y < r_i \}.
\]
By definition, $M_1=\emptyset$. Let the elements of $M_i$ be ordered as $a_1 < a_2 < \cdots < a_j$, where $j = |M_i|$. We perform the following steps if $j \geq 1$ (if $j = 0$, we do nothing):

\begin{itemize}

    \item Suppose that $a r_i$ is an occurrence of $\pattern{scale=0.6}{2}{1/1,2/2}{1/2,2/2,0/1,1/1}$, but there is no occurrence $b r_i$ of $\pattern{scale=0.6}{2}{1/1,2/2}{1/2,2/2,0/0,1/0}$. We modify $\pi$ by performing the following substitutions:
    \[
    a \to a_1,\, a_1 \to a_2,\, a_2 \to a_3,\, \dots,\, a_{j-1} \to a_j,\, a_j \to a.
    \]
     Then $ar_i$ becomes an occurrence of  $\pattern{scale=0.6}{2}{1/1,2/2}{1/2,2/2,0/0,1/0}$.
    \item Suppose that $a{r_i}$ is an occurrence of $\pattern{scale=0.6}{2}{1/1,2/2}{1/2,2/2,0/0,1/0}$, but there is no occurrence $\pattern{scale=0.6}{2}{1/1,2/2}{1/2,2/2,0/1,1/1}$ involving $r_i$. We modify $\pi$ by the following substitution:
    \[
    a \to a_j,\, a_j \to a_{j-1},\, \dots,\, a_2 \to a_1,\, a_1 \to a.
    \]
    Then $ar_i$ becomes an occurrence of $\pattern{scale=0.6}{2}{1/1,2/2}{1/2,2/2,0/1,1/1}$.
\end{itemize}

The steps of $f$ are illustrated in the example in Figure~\ref{thm:class49-example}. Note that in that example, $\pi$ has two occurrence of $\pattern{scale=0.6}{2}{1/1,2/2}{1/2,2/2,0/1,1/1}$ (48 and 67)  and two occurrence of $\pattern{scale=0.6}{2}{1/1,2/2}{1/2,2/2,0/0,1/0}$ (15 and 28),  and $f(\pi)$ has two occurrence of $\pattern{scale=0.6}{2}{1/1,2/2}{1/2,2/2,0/0,1/0}$ (28 and 17) and two occurrence of $\pattern{scale=0.6}{2}{1/1,2/2}{1/2,2/2,0/1,1/1}$ (45 and 68).

\begin{figure}[t]  
\begin{center}  

\begin{tabular}{cccc}

\begin{tikzpicture}[scale=0.25] 
\tikzset{    
  grid/.style={ draw, step=1cm, black!100, very thin }, 
  cell/.style={ draw, anchor=center, text centered },  
  graycell/.style={ fill=gray!50, draw=none, minimum width=1cm, minimum height=1cm, anchor=south west } 
}  

\draw[grid] (0,0) grid (9,9);

\foreach \x in {1,...,8} \node[anchor=north,font=\tiny] at (\x,-0.2) {\x};
\foreach \y in {1,...,8} \node[anchor=east,font=\tiny] at (-0.2,\y) {\y};

\draw[line width=1pt] (0.5,0.5) rectangle (4.5,4.5);
\draw[line width=1pt] (6.5,0.5) rectangle (7.5,4.5);

\draw[thick] (8,5) circle (8pt); \draw[thick] (8,5) circle (2pt);
\draw[thick] (5,7) circle (8pt); \draw[thick] (5,7) circle (2pt);
\draw[thick] (3,8) circle (8pt); \draw[thick] (3,8) circle (2pt);
\filldraw[black] (1,2) circle (8pt);
\filldraw[black] (2,4) circle (8pt);
\filldraw[black] (4,6) circle (8pt);
\filldraw[black] (6,3) circle (8pt);
\filldraw[black] (7,1) circle (8pt);

\end{tikzpicture}
&

\begin{tikzpicture}[scale=0.25] 
\tikzset{    
  grid/.style={ draw, step=1cm, black!100, very thin }, 
  cell/.style={ draw, anchor=center, text centered },  
  graycell/.style={ fill=gray!50, draw=none, minimum width=1cm, minimum height=1cm, anchor=south west } 
}  

\draw[grid] (0,0) grid (9,9);

\foreach \x in {1,...,8} \node[anchor=north,font=\tiny] at (\x,-0.2) {\x};
\foreach \y in {1,...,8} \node[anchor=east,font=\tiny] at (-0.2,\y) {\y};

\draw[line width=1pt] (0.5,0.5) rectangle (4.5,4.5);
\draw[line width=1pt] (6.5,0.5) rectangle (7.5,4.5);

\draw[thick] (8,5) circle (8pt); \draw[thick] (8,5) circle (2pt);
\draw[thick] (5,7) circle (8pt); \draw[thick] (5,7) circle (2pt);
\draw[thick] (3,8) circle (8pt); \draw[thick] (3,8) circle (2pt);
\filldraw[black] (1,1) circle (8pt);
\filldraw[black] (2,2) circle (8pt);
\filldraw[black] (4,6) circle (8pt);
\filldraw[black] (6,3) circle (8pt);
\filldraw[black] (7,4) circle (8pt);

\node[anchor=west] at ([xshift=5pt]current bounding box.east) {$=$};

\end{tikzpicture}
&

\hspace{-0.6cm}
\begin{tikzpicture}[scale=0.25] 
\tikzset{    
  grid/.style={ draw, step=1cm, black!100, very thin }, 
  cell/.style={ draw, anchor=center, text centered },  
  graycell/.style={ fill=gray!50, draw=none, minimum width=1cm, minimum height=1cm, anchor=south west } 
}  

\draw[grid] (0,0) grid (9,9);

\foreach \x in {1,...,8} \node[anchor=north,font=\tiny] at (\x,-0.2) {\x};
\foreach \y in {1,...,8} \node[anchor=east,font=\tiny] at (-0.2,\y) {\y};

\draw[line width=1pt] (0.5,0.5) rectangle (2.5,6.5);
\draw[line width=1pt] (3.5,0.5) rectangle (4.5,6.5);

\draw[thick] (8,5) circle (8pt); \draw[thick] (8,5) circle (2pt);
\draw[thick] (5,7) circle (8pt); \draw[thick] (5,7) circle (2pt);
\draw[thick] (3,8) circle (8pt); \draw[thick] (3,8) circle (2pt);
\filldraw[black] (1,1) circle (8pt);
\filldraw[black] (2,2) circle (8pt);
\filldraw[black] (4,6) circle (8pt);
\filldraw[black] (6,3) circle (8pt);
\filldraw[black] (7,4) circle (8pt);

\end{tikzpicture}
&

\begin{tikzpicture}[scale=0.25] 
\tikzset{    
  grid/.style={ draw, step=1cm, black!100, very thin }, 
  cell/.style={ draw, anchor=center, text centered },  
  graycell/.style={ fill=gray!50, draw=none, minimum width=1cm, minimum height=1cm, anchor=south west } 
}  

\draw[grid] (0,0) grid (9,9);

\foreach \x in {1,...,8} \node[anchor=north,font=\tiny] at (\x,-0.2) {\x};
\foreach \y in {1,...,8} \node[anchor=east,font=\tiny] at (-0.2,\y) {\y};

\draw[line width=1pt] (0.5,0.5) rectangle (2.5,6.5);
\draw[line width=1pt] (3.5,0.5) rectangle (4.5,6.5);

\draw[thick] (8,5) circle (8pt); \draw[thick] (8,5) circle (2pt);
\draw[thick] (5,7) circle (8pt); \draw[thick] (5,7) circle (2pt);
\draw[thick] (3,8) circle (8pt); \draw[thick] (3,8) circle (2pt);
\filldraw[black] (1,2) circle (8pt);
\filldraw[black] (2,6) circle (8pt);
\filldraw[black] (4,1) circle (8pt);
\filldraw[black] (6,3) circle (8pt);
\filldraw[black] (7,4) circle (8pt);

\end{tikzpicture}
\end{tabular}
\caption{The steps in implementing the mapping $f$ in the proof of Theorem~\ref{thm:class49}, showing that $f(24867315)=26817345$. The middle two diagrams represent the same permutation, and the rectangles indicate the regions in which the elements are moved by the cyclic shifts.}
\label{thm:class49-example}
\end{center}
\end{figure}

One can justify that a cyclic shift at each step cannot introduce or destroy any occurrences of the patterns, other than those already discussed, in an analogous way to the proof of Theorem~\ref{thm:class46}. The operations are very similar; the only difference is that, in Theorem~\ref{thm:class46}, the cyclic shift is applied to a larger block extending to the region between $r_{i-1}$ and $r_i$. We omit further details. The proof is complete.
\end{proof}


\begin{thm}\label{thm:class73} The patterns in Class $73$ are equivalent. For any pattern $p$ in the class, 
	\begin{align}\label{thm:class73-rec}
		s_{n,k}(p) = \sum_{i=1}^{n-1} \left[ i \cdot s_{n-1,k-1,i}(p) + (n-i) \cdot s_{n-1,k,i}(p) \right]
\end{align}
with  the initial conditions $s_{2,1}(p) = 1$, $s_{n,0}(p) = 1$ for all $n \in \mathbb{N}$, and $s_{0,k}(p) = 0$ for all  $k \ge 1$, where $s_{n,k,\ell}(p)$ is the number of $n$-permutations with $k$ occurrences of $p$ and $\ell$ right-to-left maxima satisfying
	\begin{align}\label{thm:class73-rec-2}
		s_{n,k,\ell}(p) = s_{n-1,k,\ell-1}(p) + \ell \cdot s_{n-1,k-1,\ell}(p) + (n-1-\ell) \cdot s_{n-1,k,\ell}(p)
	\end{align}
with the initial conditions $s_{0,0,0}(p) = 1$, $s_{n,k,0}(p) = 0$  for all $n,k \in \mathbb{N}$, $s_{0,k,\ell}(p) = 0$ if  $(k,\ell) \neq (0,0)$, and $s_{1,0,1}(p) = 1$.
\end{thm}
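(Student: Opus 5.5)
The plan is to prove the recurrences \eqref{thm:class73-rec-2} and \eqref{thm:class73-rec} directly for one representative of each trivial cell of Class~$73$. Equality of the recurrences will then give the equivalence. It also gives the enumeration, since \eqref{thm:class73-rec} is just \eqref{thm:class73-rec-2} summed over the number $\ell$ of right-to-left maxima.

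\textbf{First representative.} Take $p=\pattern{scale=0.6}{2}{1/1,2/2}{1/2,2/2,1/1}$. An occurrence $ab$ of $p$ in $\pi$ is characterised as follows: $b$ is a right-to-left maximum, $a<b$ lies to the left of $b$, and every element strictly between $a$ and $b$ in position is smaller than $a$. I will build $\pi\in S_n$ from $\pi'\in S_{n-1}$ by inserting a new minimum $1$ into one of the $n$ gaps.

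The new $1$ can never play the role of $b$. It also cannot destroy an existing occurrence, since it is smaller than every $a$ and does not affect the right-to-left maxima except when placed at the end. The $1$ creates a new occurrence $1b$ exactly when it is placed immediately to the left of a right-to-left maximum $b$ of $\pi'$. There are $\ell$ such gaps, all different from the final gap.

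Placing $1$ at the end adds one right-to-left maximum and no occurrence. The remaining $n-1-\ell$ gaps change nothing. This yields \eqref{thm:class73-rec-2} together with its initial conditions, and summing over $\ell$ gives \eqref{thm:class73-rec}. In particular, only the identity-like extreme case avoids $p$, namely the decreasing permutation, which gives $s_{n,0}=1$.

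\textbf{Second representative.} Take $q=\pattern{scale=0.6}{2}{1/1,2/2}{2/2,0/1,2/1}$. Here $ab$ is an occurrence if and only if three conditions hold: $b$ is a right-to-left maximum $r_j$; every element to the right of $b$ is smaller than $a$, so that $r_{j+1}<a<r_j$ when $r_{j+1}$ exists; and no element to the left of $a$ has value in $(a,b)$. I aim to find an insertion (or, dually, a trivially equivalent representative from the same cell, such as $\pattern{scale=0.6}{2}{1/1,2/2}{0/1,2/1,0/0}$) for which the same three-way split occurs: one gap that creates a new ``record'' statistic and no occurrence, some number of gaps each creating exactly one new occurrence, and the rest neutral.

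My first attempt will be to insert the new maximum $n$ and track the statistic ``number of right-to-left maxima'', or, if that fails, a statistic equidistributed with it jointly with occurrences of $q$, such as left-to-right maxima. If a clean insertion is not available, the fallback is a direct bijection with the first pattern. That bijection would fix the right-to-left maxima and, inside each window of values $(r_{j+1},r_j)$, convert the ``leading elements'' counted by $q$ into the single adjacency condition counted by $p$. It would use cyclic shifts in the style of the proofs of Theorems~\ref{thm:class46} and~\ref{thm:class49}.

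The main obstacle is this second pattern. Its occurrences attached to a fixed $r_j$ are not unique: they correspond to the left-to-right-maximum-type elements of the subsequence of values in $(r_{j+1},r_j)$. So I must verify carefully that the chosen insertion changes the occurrence count by exactly $0$ or $1$, and in exactly $\ell$ positions. Once that is checked, the two recurrences coincide and the theorem follows.
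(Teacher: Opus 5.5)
Your treatment of $p=\pattern{scale=0.6}{2}{1/1,2/2}{1/2,2/2,1/1}$ is correct and matches the paper's argument. You insert a new minimum: the final gap adds a right-to-left maximum, each of the $\ell$ gaps immediately left of a right-to-left maximum adds one occurrence, and the other $n-1-\ell$ gaps are neutral. Summing over $\ell$ then gives \eqref{thm:class73-rec}.

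The genuine gap is the first assertion of the theorem, that the two trivial cells of Class~$73$ are equidistributed. For $q=\pattern{scale=0.6}{2}{1/1,2/2}{2/2,0/1,2/1}$ you only list strategies, and the insertion ones do not produce the split in \eqref{thm:class73-rec-2}.

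\begin{itemize}
\item Inserting a new maximum can destroy many right-to-left maxima at once: appending $4$ to $321$ turns three of them into one.
\item Inserting a new minimum into $\pi'=231$ also fails. Here $\ell=2$ and $q$ has the single occurrence $23$. The results $1342,3142,3412,3421$ have $2,2,2,1$ occurrences of $q$, so three gaps create an occurrence rather than $\ell=2$. In general, the creating gaps are those to the left of the leftmost entry of $\pi'$ not exceeding its last entry, which has nothing to do with $\ell$.
\end{itemize}

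Your fallback bijection rests on a misreading of $p$. Its occurrences at a fixed $r_j$ are not unique: in $213$ both $13$ and $23$ are occurrences. They are exactly the right-to-left maxima of the segment strictly between $r_{j-1}$ and $r_j$ (the prefix before $r_1$ when $j=1$). So the cyclic shifts of Theorems~\ref{thm:class46} and~\ref{thm:class49}, which rely on each $r_i$ lying in at most one occurrence, do not carry over. Moreover, $p$ lives on positional segments while $q$ lives on value windows.

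The missing idea is to compare $q$ not with $p$ but with its inverse $\pattern{scale=0.6}{2}{1/1,2/2}{2/2,1/1,2/1}$, which lies in the same trivial cell.

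\begin{itemize}
\item Let $A_j$ be the subsequence of entries with values in $(r_{j+1},r_j)$, where $r_{\ell+1}=0$. All of these entries lie to the left of $r_j$.
\item An occurrence $ab$ of the inverse pattern has $b=r_j$ and $a$ a right-to-left maximum of $A_j$.
\item As you correctly observed, the occurrences of $q$ with $b=r_j$ are the left-to-right maxima of the same $A_j$.
\item Reverse the entries of each $A_j$ within the set of positions it occupies, keeping every $r_i$ fixed. This leaves the right-to-left maxima and all other windows unchanged, and it exchanges the two statistics window by window. This is the paper's bijection.
\end{itemize}

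Both this map and inversion preserve the number of right-to-left maxima. Hence $q$ satisfies \eqref{thm:class73-rec-2} with the same auxiliary statistic, which is exactly what your ``equal recurrences'' plan needed.
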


\begin{figure}
\begin{center}
\begin{tikzpicture}[scale=0.7]
  \tikzset{
    grid/.style={ draw, step=1cm, black!100, thin },
    graycell/.style={ fill=gray!50, draw=none, minimum width=1cm, minimum height=1cm, anchor=south west }
  }

  \fill[graycell] (1,3) rectangle (2,4);
  \fill[graycell] (2,3) rectangle (3,4);
  \fill[graycell] (3,3) rectangle (4,4);
  \fill[graycell] (2,2) rectangle (3,3);
  \fill[graycell] (3,2) rectangle (4,3);
  \fill[graycell] (3,1) rectangle (4,2);

  \draw[grid] (0,0) grid (4,4);

  \draw[thick] (0.1, 3.1) rectangle (0.9, 3.9);
  \node at (0.5, 3.5) {$A_1$};
  \draw[thick] (0.1, 2.1) rectangle (1.9, 2.9);
  \node at (1.0, 2.5) {$A_2$};
  \draw[thick] (0.1, 0.1) rectangle (3.9, 0.9);
  \node at (2.0, 0.5) {\small{$A_\ell$}};

  \filldraw[black] (1,4) circle (2.5pt);
  \node[anchor=west] at (0.9,3.7) {\footnotesize{$r_1$}};
  \filldraw[black] (2,3) circle (2.5pt);
  \node[anchor=west] at (1.9,2.7) {\footnotesize{$r_2$}};
  \filldraw[black] (4,1) circle (2.5pt);
  \node[anchor=west] at (3.9,0.7) {\footnotesize{$r_\ell$}};
\end{tikzpicture}
\caption{Related to the proofs of Theorems~\ref{thm:class73} and~\ref{thm:class75}.}\label{thm:class73-fig-1}
\end{center}
\end{figure}

\begin{proof} 
We begin with explaining bijectively that $\pattern{scale=0.6}{2}{1/1,2/2}{2/2,1/1,2/1}\sim\hspace{-1mm} \pattern{scale=0.6}{2}{1/1,2/2}{2/2,0/1,2/1}$. 

Let \(\pi = \pi_1 \pi_2 \cdots \pi_n \in S_n\) and \(r_1 > r_2 > \cdots > r_\ell\) be the sequence of right-to-left maxima in \(\pi\), which define the areas \(A_1, A_2, \ldots, A_\ell\) as shown in Figure~\ref{thm:class73-fig-1}. Keeping the values and positions of \(r_i\)'s, we modify \(\pi\) by reversing the elements in each block \(A_j\), \(1 \leq j \leq \ell\), while keeping the positions of the elements in \(A_j\). Let $\sigma$ be the resulting permutation. That is, if \(\pi_{i_1} \pi_{i_2} \ldots \pi_{i_m}\), \(1 \leq i_1 < i_2 < \cdots < i_m\) is a subsequence of \(\pi\) constituting the block \(A_j\), then the elements in $\sigma$  in positions \(i_1, i_2, \ldots, i_m\) will be, respectively, \(\pi_{i_m}, \pi_{i_{m-1}}, \ldots, \pi_{i_1}\). See Figure~\ref{thm:class73-example} for an example.

Note that in the example in  Figure~\ref{thm:class73-example}, the occurrences of $\pattern{scale=0.6}{2}{1/1,2/2}{2/2,1/1,2/1}$ (7(10) and 9(10) in the top block, and 15, 35, and 45 in the bottom block) and the occurrences of $\pattern{scale=0.6}{2}{1/1,2/2}{2/2,0/1,2/1}$ (8(10) and 9(10) in the top block, and 25 and 45 in the bottom block) in $\pi$ are mapped to one another. We will next prove that this is not a coincidence; under the bijection, if $ab$ is an occurrence of $\pattern{scale=0.6}{2}{1/1,2/2}{2/2,1/1,2/1}$ in $\pi$ then $ab$ is an occurrence of $\pattern{scale=0.6}{2}{1/1,2/2}{2/2,0/1,2/1}$ in $\sigma$, and vice versa. 

Note that if \( ab \) is an occurrence of either pattern, then \( b = r_i \) for some \( i \). Due to the shading of square (2,1), it follows that \( a \) must belong to \( A_i \) for that \( i \). Moreover, since there is no shading in squares (0,0), (0,2), (1,0), (1,2), and (2,0) in both patterns, the blocks \( A_j \) are independent in the sense that reversing the elements in any block cannot affect (create or destroy an occurrence of either of the patterns) another block. But then, we can consider each block independently, and we are done since clearly reversing elements inside the block interchanges occurrences of \( \pattern{scale=0.6}{1}{1/1}{0/1} \) and \( \pattern{scale=0.6}{1}{1/1}{1/0} \), and hence the respective occurrences of \( \pattern{scale=0.6}{2}{1/1,2/2}{2/2,1/1,2/1} \) and \( \pattern{scale=0.6}{2}{1/1,2/2}{2/2,0/1,2/1} \) in \( \pi \).

\begin{figure}[t]  
\begin{center}  

\begin{tabular}{cc}

\begin{tikzpicture}[scale=0.25] 
\tikzset{    
  grid/.style={ draw, step=1cm, black!100, very thin }, 
  cell/.style={ draw, anchor=center, text centered },  
  graycell/.style={ fill=gray!40, draw=none, minimum width=1cm, minimum height=1cm, anchor=south west } 
}  

\draw[grid] (0,0) grid (11,11);

\foreach \x in {1,...,10} \node[anchor=north,font=\tiny] at (\x,-0.2) {\x};
\foreach \y in {1,...,10} \node[anchor=east,font=\tiny] at (-0.2,\y) {\y};

\draw[line width=1pt] (0.3,6.3) rectangle (4.7,9.7);
\draw[line width=1pt] (0.3,0.3) rectangle (9.7,4.7);

\filldraw[black] (1,8) circle (8pt);
\filldraw[black] (2,2) circle (8pt);
\filldraw[black] (3,9) circle (8pt);
\filldraw[black] (4,7) circle (8pt);
\draw[thick] (5,10) circle (8pt); \draw[thick] (5,10) circle (2pt);
\draw[thick] (6,6) circle (8pt); \draw[thick] (6,6) circle (2pt);
\draw[thick] (10,5) circle (8pt); \draw[thick] (10,5) circle (2pt);
\filldraw[black] (7,4) circle (8pt);
\filldraw[black] (8,3) circle (8pt);
\filldraw[black] (9,1) circle (8pt);

\end{tikzpicture}

&

\begin{tikzpicture}[scale=0.25] 
\tikzset{    
  grid/.style={ draw, step=1cm, black!100, very thin }, 
  cell/.style={ draw, anchor=center, text centered },  
  graycell/.style={ fill=gray!40, draw=none, minimum width=1cm, minimum height=1cm, anchor=south west } 
}  

\draw[grid] (0,0) grid (11,11);

\foreach \x in {1,...,10} \node[anchor=north,font=\tiny] at (\x,-0.2) {\x};
\foreach \y in {1,...,10} \node[anchor=east,font=\tiny] at (-0.2,\y) {\y};

\draw[line width=1pt] (0.3,6.3) rectangle (4.7,9.7);
\draw[line width=1pt] (0.3,0.3) rectangle (9.7,4.7);

\draw[thick] (5,10) circle (8pt); \draw[thick] (5,10) circle (2pt);
\draw[thick] (10,5) circle (8pt); \draw[thick] (10,5) circle (2pt);
\draw[thick] (6,6) circle (8pt); \draw[thick] (6,6) circle (2pt);
\filldraw[black] (1,7) circle (8pt);
\filldraw[black] (3,9) circle (8pt);
\filldraw[black] (4,8) circle (8pt);
\filldraw[black] (2,1) circle (8pt);
\filldraw[black] (7,3) circle (8pt);
\filldraw[black] (8,4) circle (8pt);
\filldraw[black] (9,2) circle (8pt);

\end{tikzpicture}
\end{tabular}
\end{center}
\caption{The permutation $\pi=8297(10)64315$ goes to $\sigma=7198(10)63425$ under the bijection in the proof of Theorem~\ref{thm:class73}.}\label{thm:class73-example}
\end{figure}

We now find the distribution of the pattern \( p = \pattern{scale=0.6}{2}{1/1,2/2}{1/1,1/2,2/2} \), which is equivalent to \( \pattern{scale=0.6}{2}{1/1,2/2}{2/2,1/1,2/1} \) by taking the inverse.  The initial conditions for $s_{n,k}(p)$ and $s_{n,k,\ell}(p)$ are easy to check. To explain (\ref{thm:class73-rec-2}), we consider generating $\pi\in S_n$ by inserting the new smallest element 1 in a permutation $\pi'\in S_{n-1}$. Note that if \( ab \) is an occurrence of \( p \) in $\pi'$, then \( b = r_i \) for some \( i \), where $r_i$ is a right-to-left maximum in $\pi'$. In (\ref{thm:class73-rec-2}):
\begin{itemize}
\item $s_{n-1,k,\ell-1}(p)$ corresponds to inserting 1 in the rightmost position in $\pi'$, thereby creating a new right-to-left maximum. Clearly, no new occurrence of $p$ is created, and no existing occurrence of $p$ in $\pi'$ is affected. 
\item $\ell \cdot s_{n-1,k-1,\ell}(p)$ counts the possibilities of picking one of $r_i$'s in $\pi'$ (in $\ell$ ways) and inserting 1 just before it, that preserves the number of right-to-left maxima, creates an extra occurrence of $p$, and does not affect any other occurrences of $p$ in $\pi'$.
\item \((n-1-\ell) \cdot s_{n-1,k,\ell}(p)\) gives the remaining possibilities to insert 1: not at the end of \(\pi'\) and not next to an \(r_i\) to the left. In each case, because of the shaded square (1,1), no new occurrence of \( p \) can be created (since \( 1r_i \) cannot be an occurrence), and no other occurrence of \( p \) in \(\pi'\) is affected.
\end{itemize}
The recurrence \eqref{thm:class73-rec-2} is explained. The recurrence \eqref{thm:class73-rec} now follows by observing that inserting the new minimal element immediately to the left of a right-to-left maximum creates a new occurrence of \( p \), while inserting it in any other place does not change the number of occurrences of \( p \); clearly, the number \( i \) of right-to-left maxima in $\pi'\in S_{n-1}$ is between 1 and \( n-1 \). Our proof is complete.
\end{proof}


\begin{thm}\label{thm:class75} The patterns in Class $75$ are equivalent.
\end{thm}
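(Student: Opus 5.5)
We plan to prove Class~75 by choosing one representative from each of the three trivial cells and linking them through two block-wise bijections, in the style of the proofs of Lemma~\ref{two-equidistr-lemmas} and Theorem~\ref{thm:class73}. Since patterns in the same cell are already equivalent, this suffices. We take
\[
P=\pattern{scale=0.6}{2}{1/1,2/2}{1/2,2/2,0/1},\qquad
Q=\pattern{scale=0.6}{2}{1/1,2/2}{2/2,0/1,1/1},\qquad
R=\pattern{scale=0.6}{2}{1/1,2/2}{1/2,2/2,0/0}.
\]
All three have the box $(2,2)$ shaded. So in any occurrence $ab$, the element $b$ is a right-to-left maximum $r_i$ of $\pi$, and we use the decomposition of Figure~\ref{thm:class73-fig-1}.

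\textbf{Step 1: $P\sim Q$.} Fix a right-to-left maximum $b=r_i$.
\begin{itemize}
\item For $Q$, shading $(0,1)$ and $(1,1)$ means that no element to the left of $r_i$ has value strictly between $a$ and $r_i$. Hence $a$ must be the largest element to the left of $r_i$ that is smaller than $r_i$, and $Q$ counts the $r_i$ having at least one smaller element to their left.
\item For $P$, shading $(1,2)$ and $(2,2)$ forces $a$ to lie to the right of the last element exceeding $r_i$ before $r_i$, which is essentially $r_{i-1}$ or a larger element. Shading $(0,1)$ then forbids values in $(a,r_i)$ to the left of $a$.
\end{itemize}
In both cases the conditions live inside the region ``left of $r_i$, below $r_i$''. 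The shadings $(0,1)$ and $(1,1)$ are interchanged by reversing positions, exactly as $\pattern{scale=0.6}{1}{1/1}{0/1}\leftrightarrow\pattern{scale=0.6}{1}{1/1}{1/0}$ in Theorem~\ref{thm:class73}. The map we intend to use keeps the $r_i$ fixed in value and position. Inside each horizontal block $A_i$ (values between $r_{i+1}$ and $r_i$), it then permutes the entries in the positions between $r_{i-1}$ and $r_i$ so that the candidate $a$ for $P$ and the candidate for $Q$ are exchanged. We would first test literal reversal of those entries, processing $i=\ell,\ell-1,\dots,1$ as in the proof of Theorem~\ref{thm:class49}. If reversal fails, we would use a cyclic shift as in Theorem~\ref{thm:class46}.

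\textbf{Step 2: $P\sim R$.} These differ only in $(0,1)$ versus $(0,0)$. This is the situation of Lemma~\ref{two-equidistr-lemmas}, except that the top row is not fully shaded. We therefore localize that lemma's complementation. Elements above $r_i$ are frozen, and we complement the values smaller than $r_i$ among the elements to the left of $r_i$, once per right-to-left maximum and again from $i=\ell$ upward. Because $(1,2)$ and $(2,2)$ are shaded in both $P$ and $R$, occurrences attached to $r_i$ are decided entirely by the elements left of $r_i$ below $r_i$. Complementing that set swaps ``nothing in $(a,r_i)$ to the left of $a$'' with ``nothing below $a$ to the left of $a$''.

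\textbf{Main obstacle.} The key difficulty is independence between different $r_i$. The region for $r_i$ contains parts of the regions for $r_{i+1},\dots,r_\ell$ (all values below $r_i$ to its left). A transformation for $r_i$ must therefore not change which elements are candidates $a$ for smaller right-to-left maxima. We expect to handle this by acting only on values within the block $A_i$ (between $r_{i+1}$ and $r_i$), not on all values below $r_i$, and then verifying case by case, as in the proof of Theorem~\ref{thm:class46}, that occurrences at other $r_j$ are preserved. If this block-localized map fails, the fallback is to derive for each of $P,Q,R$ a refined recurrence in $n$, $k$ and the number of right-to-left maxima by inserting a new minimum, as in \eqref{thm:class73-rec-2}, and to show that the three recurrences coincide.
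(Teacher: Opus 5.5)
Your plan has two genuine gaps. First, Step~1 cannot work with the kind of map you describe. A map that fixes every $r_i$ and only rearranges entries lying to the left of and below the corresponding $r_i$ creates no new right-to-left maxima. It therefore maps the set $\{123,213\}$ (the $3$-permutations whose only right-to-left maximum is a final $3$) to itself. On this set $P$ takes the values $2$ and $1$ (occurrences $13,23$ in $123$, and $23$ in $213$), while $Q$ takes $1$ and $1$. So no such map carries $P$ to $Q$. The analogy is also off: $P$ and $Q$ differ in boxes $(1,1)$ versus $(1,2)$, not $(0,1)$ versus $(1,1)$, so the block reversal of Theorem~\ref{thm:class73} is not the relevant move.

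What you are missing is that $Q$ needs no bijection at all. In the cell-$3$ pattern with shaded boxes $(0,0),(1,0),(2,2)$, the entry $a$ is forced to be the minimum of all entries to the left of $b$. So, exactly like $Q$, it has one occurrence per right-to-left maximum that has a smaller entry to its left. The two statistics agree on every permutation. Since $R$ lies in that same trivial cell, linking $P$ to $R$ suffices, and this is how the paper proceeds.

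Second, your reformulation in Step~2 is right, but the map as stated fails. Let $L_i$ be the entries to the left of $r_i$ and below $r_i$. The occurrences of $P$ (resp.\ $R$) at $r_i$ are the left-to-right maxima (resp.\ minima) of $L_i$ lying between $r_{i-1}$ and $r_i$, and complementing $L_i$ swaps them. The problem is that complementing $L_i$ also reverses the relative order of $M_i$, the part of $L_i$ to the left of $r_{i-1}$. This set can contain entries between $r_{i-2}$ and $r_{i-1}$, so it changes the counts at $r_{i-1}$ before that maximum is processed. For example, your map sends $12534\mapsto 32514\mapsto 23514$. But $P(12534)=3$ (namely $15,25,34$), while $R(23514)=2$ (namely $25,14$).

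Your block-localized fix fails too. Complementing only the values in $(r_{i+1},r_i)$ sends $6172345$ to $6473215$, with $P=4$ but $R=5$. The reason is that the counts at $r_1=7$ depend on the entry $1$, whose value lies below $r_2$.

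The missing idea is the paper's second complementation: after complementing $L_i$, complement $M_i$ again.
\begin{itemize}
\item This restores the relative order of everything to the left of $r_{i-1}$, so occurrences at $r_1,\dots,r_{i-1}$ and all later steps are unaffected.
\item It keeps the value set of $M_i$, and that set is all the status of the entries between $r_{i-1}$ and $r_i$ depends on.
\item Values are only permuted among positions to the left of $r_i$, so the counts at $r_{i+1},\dots,r_\ell$ are unchanged.
\end{itemize}
Your recurrence fallback is only named, not carried out, so it does not close either gap.
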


\begin{proof}
The equivalence 
$\pattern{scale=0.6}{2}{1/1,2/2}{0/1,1/1,2/2} \sim \hspace{-1mm} \pattern{scale=0.6}{2}{1/1,2/2}{0/0,1/0,2/2}$, 
and in fact the joint equidistribution of these patterns, follows from the observation that any permutation contains the same number of occurrences of them. Indeed, since the square $(2,2)$ is shaded, the second element in any occurrence of either pattern must be a right-to-left maximum. Now fix a right-to-left maximum $r$. If there are no elements to the left of $r$ that are smaller than $r$, then $r$ does not contribute any occurrences of either pattern. If there is exactly one element $x<r$ to the left of $r$, then $xr$ is an occurrence of both patterns. Finally, if there are at least two elements to the left of $r$ that are smaller than $r$, let $a$ and $b$ be the minimum and maximum among them, respectively. Then $ar$ (resp., $br$) is an occurrence of 
$\hspace{-1mm}\pattern{scale=0.6}{2}{1/1,2/2}{0/0,1/0,2/2}$ 
(resp., 
$\pattern{scale=0.6}{2}{1/1,2/2}{0/1,1/1,2/2}$), 
and $r$ cannot be involved in any other occurrences of these patterns, which completes the proof.

Next, we will explain bijectively that  $\pattern{scale=0.6}{2}{1/1,2/2}{1/2,2/2,0/1}\sim \hspace{-1mm}\pattern{scale=0.6}{2}{1/1,2/2}{1/2,2/2,0/0}$.

Let $\pi = \pi_1 \pi_2 \cdots \pi_n \in S_n$, and let $r_1 > r_2 > \cdots > r_\ell$ be the sequence of right-to-left maxima in $\pi$, which define the areas $A_1, A_2, \ldots, A_\ell$ as shown in Figure~\ref{thm:class73-fig-1}. We define the set $M_i$ as
\[
M_i := \{ y \mid y \text{ is to the left of } r_{i-1} \text{ and } y < r_i \}.
\]
By definition, $M_1 = \emptyset$. Let the elements of $M_i$ be ordered as $a_1 < a_2 < \cdots < a_j$, where $j = |M_i|$. We also define the set $B_i := A_i \setminus M_i$, so that $B_i$ consists of all elements in $\pi$ between $r_{i-1}$ and $r_i$. Note that if $ab$ is an occurrence of either pattern, then $b = r_i$ for some $i$, and $a$ must lie in $B_i$. Moreover, each $r_i$ can be involved in multiple occurrences of the patterns  $\pattern{scale=0.6}{2}{1/1,2/2}{1/2,2/2,0/1}$ and $\pattern{scale=0.6}{2}{1/1,2/2}{1/2,2/2,0/0}$.

The transformation $f: \mathcal{S}_n \to \mathcal{S}_n$ is obtained by considering $r_i$, for $i = \ell, \ell - 1, \ldots, 1$, one at a time. 
 For each $r_i$, we perform the following two steps sequentially:
                   
\begin{itemize}
    \item \textbf{Step 1 (First Complementation):} We apply a complementation to the elements in $A_i$. Specifically, we extract the values of these elements, reverse their relative order (i.e., mapping the minimum to the maximum, the second minimum to the second maximum, etc.), and assign them back to their original positions.
    
    \item \textbf{Step 2 (Second Complementation):} We apply a second complementation to the elements in $M_i$.
\end{itemize}

By sequentially applying Step 1 and Step 2 for each $r_i,r_{i-1}, \dots, r_1$, we obtain the transformed permutation $f(\pi)$.

In the example in Figure~\ref{thm:class75-example}, we consider $\pi = 23784516\in S_8$. We apply the procedure as follows: $\pi = \uline{23}78\uline{451}6 
\xrightarrow{\text{complement } 1,2,3,4,5} 
\uline{43}782156 
\xrightarrow{\text{complement } 3,4} 
\uline{347}82156 
\xrightarrow{\text{complement } 3,4,7} 
74382156 = f(\pi)$. Note that the occurrences $28, 38, 78, 46, 56$ of 
$\pattern{scale=0.6}{2}{1/1,2/2}{1/2,2/2,0/1}$ in $\pi$ are replaced by the occurrences 
$78, 48, 38, 26, 16$ of 
$\pattern{scale=0.6}{2}{1/1,2/2}{1/2,2/2,0/0}$ in $f(\pi)$. 
Similarly, the occurrences $28$ and $16$ of 
$\pattern{scale=0.6}{2}{1/1,2/2}{1/2,2/2,0/0}$ in $\pi$ are replaced by the occurrences 
$78$ and $58$ of 
$\pattern{scale=0.6}{2}{1/1,2/2}{1/2,2/2,0/1}$ in $f(\pi)$.
\begin{figure}[t]  
\begin{center}  

\begin{tabular}{cccc}

\begin{tikzpicture}[scale=0.25] 
\tikzset{    
  grid/.style={ draw, step=1cm, black!100, very thin }, 
  cell/.style={ draw, anchor=center, text centered },  
  graycell/.style={ fill=gray!50, draw=none, minimum width=1cm, minimum height=1cm, anchor=south west } 
}  

\draw[grid] (0,0) grid (9,9);

\foreach \x in {1,...,8} \node[anchor=north,font=\tiny] at (\x,-0.2) {\x};
\foreach \y in {1,...,8} \node[anchor=east,font=\tiny] at (-0.2,\y) {\y};

\filldraw[black] (1,2) circle (8pt);
\filldraw[black] (2,3) circle (8pt);
\filldraw[black] (3,7) circle (8pt);
\draw[thick] (4,8) circle (8pt); \draw[thick] (4,8) circle (2pt);
\filldraw[black] (5,4) circle (8pt);
\filldraw[black] (6,5) circle (8pt);
\filldraw[black] (7,1) circle (8pt);
\draw[thick] (8,6) circle (8pt); \draw[thick] (8,6) circle (2pt);

\end{tikzpicture}

&

\begin{tikzpicture}[scale=0.25] 
\tikzset{    
  grid/.style={ draw, step=1cm, black!100, very thin }, 
  cell/.style={ draw, anchor=center, text centered },  
  graycell/.style={ fill=gray!50, draw=none, minimum width=1cm, minimum height=1cm, anchor=south west } 
}  

\draw[grid] (0,0) grid (9,9);
\draw[thick] (0.5, 0.5) rectangle (7.5, 5.5);
\foreach \x in {1,...,8} \node[anchor=north,font=\tiny] at (\x,-0.2) {\x};
\foreach \y in {1,...,8} \node[anchor=east,font=\tiny] at (-0.2,\y) {\y};

\draw[line width=1.5pt] (4,8) rectangle (4,8);

\draw[thick] (4,8) circle (8pt); \draw[thick] (4,8) circle (2pt);
\draw[thick] (8,6) circle (8pt); \draw[thick] (8,6) circle (2pt);
\filldraw[black] (7,5) circle (8pt);
\filldraw[black] (6,1) circle (8pt);
\filldraw[black] (5,2) circle (8pt);
\filldraw[black] (3,7) circle (8pt);
\filldraw[black] (1,4) circle (8pt);
\filldraw[black] (2,3) circle (8pt);

\end{tikzpicture}

&

\begin{tikzpicture}[scale=0.25] 
\tikzset{    
  grid/.style={ draw, step=1cm, black!100, very thin }, 
  cell/.style={ draw, anchor=center, text centered },  
  graycell/.style={ fill=gray!50, draw=none, minimum width=1cm, minimum height=1cm, anchor=south west } 
}  

\draw[grid] (0,0) grid (9,9);
\draw[thick] (0.5, 0.5) rectangle (3.5, 5.5);
\foreach \x in {1,...,8} \node[anchor=north,font=\tiny] at (\x,-0.2) {\x};
\foreach \y in {1,...,8} \node[anchor=east,font=\tiny] at (-0.2,\y) {\y};

\draw[line width=1.5pt] (4,8) rectangle (4,8);

\draw[thick] (4,8) circle (8pt); \draw[thick] (4,8) circle (2pt);
\draw[thick] (8,6) circle (8pt); \draw[thick] (8,6) circle (2pt);
\filldraw[black] (7,5) circle (8pt);
\filldraw[black] (6,1) circle (8pt);
\filldraw[black] (5,2) circle (8pt);
\filldraw[black] (3,7) circle (8pt);
\filldraw[black] (1,3) circle (8pt);
\filldraw[black] (2,4) circle (8pt);

\end{tikzpicture}

&

\begin{tikzpicture}[scale=0.25] 
\tikzset{    
  grid/.style={ draw, step=1cm, black!100, very thin }, 
  cell/.style={ draw, anchor=center, text centered },  
  graycell/.style={ fill=gray!50, draw=none, minimum width=1cm, minimum height=1cm, anchor=south west } 
}  

\draw[grid] (0,0) grid (9,9);
\draw[thick] (0.5, 0.5) rectangle (3.5, 7.5);

\foreach \x in {1,...,8} \node[anchor=north,font=\tiny] at (\x,-0.2) {\x};
\foreach \y in {1,...,8} \node[anchor=east,font=\tiny] at (-0.2,\y) {\y};

\draw[thick] (4,8) circle (8pt); \draw[thick] (4,8) circle (2pt);
\draw[thick] (8,6) circle (8pt); \draw[thick] (8,6) circle (2pt);
\filldraw[black] (7,5) circle (8pt);
\filldraw[black] (6,1) circle (8pt);
\filldraw[black] (5,2) circle (8pt);
\filldraw[black] (1,7) circle (8pt);
\filldraw[black] (2,4) circle (8pt);
\filldraw[black] (3,3) circle (8pt);

\end{tikzpicture}
\end{tabular}
\caption{The permutation $\pi = 23784516$ is mapped to $f(\pi) = 74382156$ under the bijection in the proof of Theorem~\ref{thm:class75}. The rectangles indicate the areas where complementation occurs.}
\label{thm:class75-example}
\end{center}
\end{figure}

We now justify that $f(\pi) = \sigma_1 \sigma_2 \cdots \sigma_n$ only swaps the occurrences of 
$\pattern{scale=0.6}{2}{1/1,2/2}{1/2,2/2,0/1}$ and 
$\pattern{scale=0.6}{2}{1/1,2/2}{1/2,2/2,0/0}$ involving the set $B_i$.  If $\pi_j r_i$ is an occurrence of 
$\pattern{scale=0.6}{2}{1/1,2/2}{1/2,2/2,0/1}$, then there is no element to the left of $\pi_j$ whose value lies between $\pi_j$ and $r_i$. After the first complementation in $A_i$, the empty region above $\pi_j$ to its left is transferred to the empty region below $\sigma_j$ to its left, thereby turning the occurrence $\pi_j r_i$ of 
$\pattern{scale=0.6}{2}{1/1,2/2}{1/2,2/2,0/1}$ into an occurrence $\sigma_j r_i$ of 
$\pattern{scale=0.6}{2}{1/1,2/2}{1/2,2/2,0/0}$. Similarly, any occurrence of 
$\pattern{scale=0.6}{2}{1/1,2/2}{1/2,2/2,0/0}$ involving elements in $B_i$ is transformed into an occurrence of 
$\pattern{scale=0.6}{2}{1/1,2/2}{1/2,2/2,0/1}$.

On the other hand, the second complementation ensures that the relative order of the elements in $M_i$ in $\sigma$ is the same as that in $M_i$ in $\pi$. This implies that any occurrence of either pattern in $\pi$ to the left of $r_{i-1}$ remains an occurrence in $\sigma$ to the left of $r_{i-1}$, and that no new occurrences of the patterns are created in this region.

Finally, it is clear that the complementation operations do not affect any occurrence of the patterns involving $r_j$ for $j > i$, which completes the proof.
\end{proof}

\section{Additional enumeration of avoiders}\label{avoiders-enum-sec}

In this section, we mainly enumerate avoidance classes corresponding to some equivalence classes arising from trivial bijections, which are presented in Table~\ref{tab-remaining}. However, we begin with two theorems that are not related to Table~\ref{tab-remaining}.

\begin{thm}\label{thm:class47} The g.f.\ for the number of permutations avoiding any pattern in Class $47$ is 
$$A(x)=\frac{F(x)}{1+x\sum_{n\geq 1}n!H_{n}x^n},$$ where $H_n = \sum_{k=1}^{n} \frac{1}{k}$ is the $n$-th Harmonic number. \end{thm}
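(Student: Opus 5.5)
The plan is to prove the identity in the equivalent form
\[
F(x)=A(x)+x\,A(x)\,G(x),\qquad G(x):=\sum_{n\ge 1}n!H_nx^n,
\]
using a first-occurrence decomposition like the one in the proof of Theorem~\ref{thm:class36}. The first step is to give $G(x)$ a combinatorial meaning. The expected number of right-to-left maxima of a uniformly random $n$-permutation is $H_n$, so $n!H_n$ counts pairs $(\tau,r)$ where $\tau\in S_n$ and $r$ is a marked right-to-left maximum of $\tau$. Equivalently, $n!H_n=\sum_{k=1}^n n!/k$, where $n!/k$ counts the permutations whose $k$-th element from the right is a right-to-left maximum. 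So $xG(x)$ should count a distinguished element $a$ followed by a nonempty permutation with one marked right-to-left maximum $b$.

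Next I fix a convenient representative, $p=\pattern{scale=0.6}{2}{1/1,2/2}{1/2,2/2,0/1,1/1,0/0}$. By trivial symmetries every pattern in Class~47 has the same avoiders, so this choice loses nothing. Reading off the shaded boxes, $ab$ is an occurrence exactly when the following hold:
\begin{itemize}
\item $b$ is a right-to-left maximum, from box $(2,2)$;
\item every element strictly between $a$ and $b$ in position is smaller than $a$, from boxes $(1,1)$ and $(1,2)$;
\item every element to the left of $a$ is larger than $b$, from boxes $(0,0)$ and $(0,1)$.
\end{itemize}
For a permutation $\pi$ containing $p$, I choose the occurrence $ab$ with $a$ leftmost, and the appropriate extremal $b$ if several share this $a$. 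I then write $\pi=L\,a\,W$, where $L$ is the prefix before $a$.

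Next I decompose. I want to show three things:
\begin{itemize}
\item $L$, after standardization, avoids $p$, because an occurrence inside $L$ would also be an occurrence in $\pi$ further to the left;
\item $W$ (with $b$ inside it), after suitable standardization relative to $a$, is an arbitrary nonempty permutation with a marked right-to-left maximum playing the role of $b$;
\item conversely, any such triple $(L,a,(W,b))$ reassembles uniquely into a permutation whose leftmost occurrence is exactly $ab$.
\end{itemize}
The values of $L$ all exceed $b$, so they are determined, and this suggests the product $A(x)\cdot x\cdot G(x)$. The element $a$ contributes the factor $x$.

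The main obstacle is the middle item: recovering the value of $a$, and proving that the reconstruction neither creates an earlier occurrence nor breaks the chosen one. In particular, I must check that when the elements of $L$ are all placed above $b$, no element $c\in L$ together with some element to its right forms an occurrence. The value of $a$ relative to $W$ is forced: $a$ must exceed everything between itself and $b$ and must be adjacent in value to $b$ among the elements left of $b$. If the naive choice of representative makes this bookkeeping fail, I would switch to another pattern in the trivial orbit, for instance one whose shading forces $L$ to lie in a single box, and redo the argument. Once the bijection $\pi\leftrightarrow(L,a,(W,b))$ is established, $F=A(1+xG)$ follows, and solving for $A$ gives the stated formula.
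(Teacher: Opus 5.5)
There is a genuine gap in your middle item, which carries the whole count. Since $b$ is a right-to-left maximum and every entry strictly between $a$ and $b$ lies below $a<b$, every entry of $W$ other than $b$ is smaller than $b$. So $b$ is always the maximum of $W$, i.e.\ always its leftmost right-to-left maximum, never an arbitrary one. Moreover, the value of $a$ is not forced. It only has to exceed the entries between $a$ and $b$, and it can interleave freely with the entries after $b$. For instance, $132$ and $231$ both give $L=\varepsilon$ and $W$ standardizing to $21$ with $b$ its maximum, while the pair $(21,1)$ is never produced. Hence $\pi\mapsto(L,a,(W,b))$ is neither injective nor surjective onto the set counted by $xG(x)$, and that factor remains unjustified. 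Switching to another pattern in the trivial orbit does not touch this problem.

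The missing observation is that $aW$ is exactly a permutation with precisely two left-to-right maxima, namely $a$ (its first entry) and $b$ (its maximum). Conversely, any such permutation placed below an avoider $L$ gives a permutation whose leftmost occurrence is $ab$. These permutations of length $n+1$ are counted by $c(n+1,2)=n!H_n$. This is the paper's route; it selects the occurrence with the largest $b$, which is the same occurrence as your leftmost $a$.

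If you prefer to keep your marked-maximum interpretation, use the bijection $a\,U\,b\,V\mapsto(V\,a\,U,\ a)$: delete $b$ and move the suffix $V$ to the front. Because all entries of $U$ are below $a$, the mark sits on the right-to-left maximum $a$, not on $b$. The inverse, for $\tau=X\,r\,Y\in S_n$ with marked right-to-left maximum $r$, is $X\,r\,Y\mapsto r\,Y\,(n+1)\,X$.

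With either fix, the rest of your outline is correct: $L$ occupies the top values and avoids the pattern, and reassembly creates no earlier occurrence. This yields $F=A(1+xG)$.
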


\begin{figure}
\begin{center}
\begin{tikzpicture}[scale=0.9]
  \tikzset{
    grid/.style={ draw, step=1cm, black!100, thin },
    graycell/.style={ fill=gray!50, draw=none, minimum width=1cm, minimum height=1cm, anchor=south west }
  }

  \fill[graycell] (1,2) rectangle (2,3);
  \fill[graycell] (2,2) rectangle (3,3);
  \fill[graycell] (0,1) rectangle (1,2);
  \fill[graycell] (1,1) rectangle (2,2);
  \fill[graycell] (0,0) rectangle (1,1);

  \draw[grid] (0,0) grid (3,3);

  \draw[thick] (0.1, 2.1) rectangle (0.9, 2.9);
  \node at (0.5, 2.5) {$A$};
  \draw[thick] (1.1,0.9) -- (1.1,0.1);
  \draw[thick] (1.1,0.1) -- (2.9,0.1);
  \draw[thick] (2.9,0.1) -- (2.9,1.9);
  \draw[thick] (2.9,1.9) -- (2.1,1.9);
  \draw[thick] (2.1,1.9) -- (2.1,0.9);
  \draw[thick] (2.1,0.9) -- (1.1,0.9);

  \filldraw[black] (1,1) circle (2.5pt);
  \node[anchor=west] at (0.9,1.2) {\footnotesize{$a$}};
  \filldraw[black] (2,2) circle (2.5pt);
  \node[anchor=west] at (2.1,1.7) {\footnotesize{$b$}};
\end{tikzpicture}
\end{center}
\caption{Related to the proof of Theorem~\ref{thm:class47}.}\label{thm:class47-fig-2}
\end{figure}

\begin{proof}
We enumerate permutations avoiding the pattern $\pattern{scale = 0.6}{2}{1/1,2/2}{0/0,0/1,1/1,1/2,2/2}$ in Class 47. We claim that
\begin{equation}\label{class47:A(x)-rec}
F(x)=A(x) + A(x)\sum_{n\geq 2}(n-1)!H_{n-1}x^n.
\end{equation}
Indeed, each permutation (counted by $F(x)$ on the LHS) either avoids the pattern (and hence is counted by $A(x)$) or contains it. Among all occurrences $ab$ of the pattern in $\pi$, select the one with the largest possible $b$, as shown in Figure~\ref{thm:class47-fig-2}. Then the block $A$ must avoid $\pattern{scale = 0.6}{2}{1/1,2/2}{0/0,0/1,1/1,1/2,2/2}$, while the rest of the permutation is an arbitrary permutation with two left-to-right maxima (given by the elements $a$ and $b$). It is well known~\cite{StanleyEC1} that permutations with $k$ left-to-right maxima are in bijection with permutations with $k$ cycles, so these permutations are counted by $c(n,2)$, the unsigned Stirling numbers of the first kind. Their ordinary g.f.\ is $\sum_{n\geq 2}(n-1)!H_{n-1}x^n$. This yields~\eqref{class47:A(x)-rec}, and hence the formula for $A(x)$.
\end{proof}

\begin{thm}\label{thm:class54} The g.f.\ for the number of permutations avoiding any pattern in Class $54$ is 
$$A(x)=F(x)-1-x-\frac{xF(x)}{1+xF(x)}\sum_{n\ge 1} n!H_n x^n,$$ where $H_n = \sum_{k=1}^{n} \frac{1}{k}$ is the $n$-th Harmonic number. \end{thm}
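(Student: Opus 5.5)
I would prove the formula for a single representative of Class~54, say the pattern
$p=\pattern{scale=0.6}{2}{1/1,2/2}{0/0,0/2,1/1,2/1,2/2}$. By Table~\ref{tab-conject}, the other patterns in the same cell are obtained from $p$ by trivial symmetries, so they have the same number of avoiders. I would follow the scheme of Theorems~\ref{thm:class80}, \ref{thm:class36} and~\ref{thm:class47}: split $S_n$ into avoiders and containers, count the containers by singling out a canonical occurrence, and solve for $A(x)$. I expect the proof to reduce to showing
\[
F(x)-A(x)=1+x+\frac{xF(x)}{1+xF(x)}\sum_{n\ge 1}n!H_nx^n .
\]
If the constant and linear terms do not come out this way, I would recheck the small-$n$ conventions against the data in Appendix~\ref{all-distr-appendix}.

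\emph{Structure of an occurrence.} Let $ab$ be an occurrence, with $a<b$ and $a$ to the left of $b$. The shading gives the following constraints:
\begin{itemize}
\item box $(0,0)$: $a$ is a left-to-right minimum;
\item box $(2,1)$: no element to the right of $b$ has value in $(a,b)$;
\item box $(1,1)$: no element between $a$ and $b$ has value in $(a,b)$;
\item boxes $(0,2)$ and $(2,2)$: every element larger than $b$ lies strictly between $a$ and $b$.
\end{itemize}
From the second and third constraints, every value in $(a,b)$ lies to the left of $a$.

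\emph{Canonical occurrence and decomposition.} For a container $\pi$, I would select the occurrence whose $b$ is largest, breaking ties by the extreme choice of $a$, so that the decomposition is unique. I would then cut $\pi$ into two pieces.
\begin{itemize}
\item The first piece consists of the elements between $a$ and $b$, which are larger than $b$ or smaller than $a$. Maximality of the chosen occurrence should force this piece, together with one marked element (giving the factor $x$), to avoid the length-$1$ pattern of Theorem~\ref{short}. That theorem contributes $F(x)/(1+xF(x))$.
\item The second piece consists of the elements to the left of $a$ (which include all values in $(a,b)$), together with $b$ and everything to its right. I claim this piece is an arbitrary permutation with a distinguished structure of two left-to-right maxima, or equivalently, via the cycle bijection used in Theorem~\ref{thm:class47}, two cycles. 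Such permutations are counted by $c(n+1,2)=n!H_n$, which gives the factor $\sum_{n\ge1}n!H_nx^n$.
\end{itemize}
The terms $1+x$ should account for the degenerate small permutations. I would then verify the resulting identity for $n\le 5$ against Appendix~\ref{all-distr-appendix}.

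\emph{Main obstacle.} The hard step is making the canonical choice rigid: I must show that $\pi$ is recovered uniquely from the two pieces, and that every pair of pieces produces a container whose canonical occurrence is the one we marked. The danger is that the second piece could create an occurrence with a larger $b$, or that elements above $b$ could interact with the left-to-right-minimum condition on $a$. My first attempt would be to choose the occurrence with $b$ maximal and then $a$ minimal, and check case by case that the four shading constraints above are exactly what make the two pieces independent. If that fails, I would instead choose $a$ as the leftmost possible left-to-right minimum and redo the bookkeeping.
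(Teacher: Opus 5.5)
\textbf{There is a genuine gap: the decomposition puts the two factors on the wrong blocks.} Your reading of the shading is correct, but no tie-breaking rule can rescue the decomposition you build on it.

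The elements between $a$ and $b$ only have to be $>b$ or $<a$; otherwise they are arbitrary. For example, in $\pi=132$ the unique occurrence is $12$. The block between $a$ and $b$ is the single element $3$, which is itself an occurrence of the length-$1$ pattern of Theorem~\ref{short}. Your second piece is just $2$, which has one left-to-right maximum, not two.

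The constrained block is the prefix $A$ to the left of $a$. By your own observations, $A$ is exactly the set of values $a+1,\dots,b-1$. Two facts are missing.

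(i) All occurrences of $p$ in $\pi$ share the same $b$. Suppose $ab$ and $a'b'$ are occurrences with $b<b'$.
By boxes $(0,2)$ and $(2,2)$ for $ab$, $b'$ lies between $a$ and $b$, so $b$ is to the right of $b'$.
By boxes $(2,1)$ and $(2,2)$ for $a'b'$, this gives $b<a'$.
Since $a<a'$ and $a'$ is a left-to-right minimum, $a$ lies to the right of $a'$.
So $a'>b$ sits to the left of $a$, which violates box $(0,2)$ for $ab$.

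(ii) For this $b$, the occurrences $cb$ with $c>a$ are exactly the $c\in A$ that are both a left-to-right minimum and a right-to-left maximum of $A$.

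Consequently $a$ must be chosen as large as possible (your fallback ``leftmost'' choice), not minimal. With $a$ minimal, $A$ contains the other occurrences: for $\pi=213$ and $a=1$, $A=2$. (The paper's text says ``as small as possible'', but its argument needs the largest $a$.)

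\textbf{How the correct argument goes.} With $b$ unique and $a$ maximal:
\begin{itemize}
\item $A$ avoids the length-$1$ pattern, contributing $F(x)/(1+xF(x))$.
\item $a$ contributes $x$.
\item Removing $A\cup\{a\}$ leaves an arbitrary permutation $\sigma\in S_m$ in which $b$ is a marked right-to-left maximum.
\end{itemize}
Conversely, give $A\cup\{a\}$ the values immediately below the marked maximum, with $a$ smallest, and place $Aa$ in front of $\sigma$. This yields a container whose canonical occurrence is the marked one, by (i) and (ii). Since $\sum_k k\,c(m,k)=m!H_m$ is the total number of right-to-left maxima over $S_m$, this gives the factor $\sum_{m\ge1}m!H_mx^m$ directly, with no need for two left-to-right maxima. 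This is the paper's proof.

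\textbf{The terms $1+x$.} They cannot come from ``degenerate'' permutations: nothing of length $0$ or $1$ contains $p$. The identity you will actually obtain is $F(x)-A(x)=\frac{xF(x)}{1+xF(x)}\sum_{n\ge1}n!H_nx^n$. The displayed formula agrees with this, and with the data in Appendix~\ref{all-distr-appendix}, only for $n\ge2$. At $n=0$ and $n=1$ it gives $0$ avoiders instead of $1$.
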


\begin{figure}
\begin{center}
\begin{tabular}{ccc}
\begin{tikzpicture}[scale=0.8]
  \tikzset{
    grid/.style={ draw, step=1cm, black!100, thin },
    graycell/.style={ fill=gray!50, draw=none, minimum width=1cm, minimum height=1cm, anchor=south west }
  }

  \fill[graycell] (0,2) rectangle (1,3);
  \fill[graycell] (1,1) rectangle (2,2);
  \fill[graycell] (2,1) rectangle (3,2);
  \fill[graycell] (2,2) rectangle (3,3);
  \fill[graycell] (0,0) rectangle (1,1);

  \draw[grid] (0,0) grid (3,3);

  \draw[thick] (0.1, 1.1) rectangle (0.9, 1.9);
  \node at (0.5, 1.5) {$A$};
  \draw[thick] (1.1, 2.1) rectangle (1.9, 2.9);
  \draw[thick] (1.1, 0.1) rectangle (2.9, 0.9);

  \filldraw[black] (1,1) circle (2.5pt);
  \node[anchor=west] at (0.5,0.7) {\footnotesize{$a$}};
  \filldraw[black] (2,2) circle (2.5pt);
  \node[anchor=west] at (1.9,1.7) {\footnotesize{$b$}};
\end{tikzpicture}

&

\ \ \

&

\begin{tikzpicture}[scale=0.8]
  \tikzset{
    grid/.style={ draw, step=1cm, black!100, thin },
    graycell/.style={ fill=gray!50, draw=none, minimum width=1cm, minimum height=1cm, anchor=south west }
  }

  \fill[graycell] (0,2) rectangle (1,3);
  \fill[graycell] (0,1) rectangle (1,2);
  \fill[graycell] (1,1) rectangle (2,2);
  \fill[graycell] (2,2) rectangle (3,3);
  \fill[graycell] (2,0) rectangle (3,1);

  \draw[grid] (0,0) grid (3,3);

  \draw[thick] (2.1, 1.1) rectangle (2.9, 1.9);
  \node at (2.5, 1.5) {$A$};
  \draw[thick] (1.1, 2.1) rectangle (1.9, 2.9);
  \draw[thick] (0.1, 0.1) rectangle (1.9, 0.9);

  \filldraw[black] (1,1) circle (2.5pt);
  \node[anchor=west] at (0.9,0.7) {\footnotesize{$b$}};
  \filldraw[black] (2,2) circle (2.5pt);
  \node[anchor=west] at (1.5,1.7) {\footnotesize{$a$}};
\end{tikzpicture}
\end{tabular}
\end{center}
\caption{Related to the proof of Theorem~\ref{thm:class54}.}
\end{figure}

\begin{proof}
We first enumerate permutations avoiding the pattern $p=\pattern{scale = 0.6}{2}{1/1,2/2}{0/0,0/2,1/1,2/1,2/2}$ in Class 54. We claim that for $n\geq 2$,
\begin{equation}\label{class54:A(x)-rec}
s_{n,0}(p)=n! - \sum_{i=0}^{n-2}[x^i]\frac{F(x)}{1+xF(x)}\sum_{k=1}^{n-1-i}k \cdot c(n-1-i,k).
\end{equation}
Here, $[x^i]$ denotes the coefficient of $x^i$ in the subsequent generating function, and $c(n,k)$ is the unsigned Stirling number of the first kind, counting permutations of length $n$ with $k$ right-to-left maxima.

We compute the number of $n$-permutations containing $p$ and then subtract this from $n!$. Suppose that $ab$ is an occurrence of $p$ in $\pi \in S_n$, as shown in the left-hand diagram of Figure~\ref{thm:class54}, where we assume that $a$ is chosen as small as possible. Then block $A$ must avoid the pattern $\pattern{scale = 0.6}{1}{1/1}{0/0,1/1}$, and if it contains $i$ elements, then by Theorem~\ref{short} it can be formed in $[x^i]\frac{F(x)}{1+xF(x)}$ ways. Since $b$ is a right-to-left maximum in $\pi$, we can think of constructing $\pi$ by appending the block $Aa$ from the left, immediately below $b$, to a permutation of length $n-i-1$ with $k$ right-to-left maxima (where $b$ is one of them). This construction is clearly reversible, so every such $\pi$ is generated exactly once. Since $b$ can be chosen in $k$ ways, this completes the justification of (\ref{class54:A(x)-rec}), and the formula for $A(x)$ follows from algebraic manipulations.

As for the pattern $\pattern{scale = 0.6}{2}{1/1,2/2}{0/1,0/2,1/1,2/0,2/2}$, which is also in Class 54 but is not trivially Wilf-equivalent to the pattern $p$, essentially the same argument can be repeated using the right-hand diagram in Figure~\ref{thm:class54}, with $ba$ now representing an occurrence of $\pattern{scale = 0.6}{2}{1/1,2/2}{0/1,0/2,1/1,2/0,2/2}$ and $b$ playing the role of a left-to-right maximum. We therefore omit the further details.
\end{proof}

\begin{thm}\label{thm:avoidance-1} For a pattern $p$ in Classes $15$, $16$, $17$, $18$, $38$, $39$, $49$, $50$, $51$, $72$--$75$, $87$, $94$, $95$, and $102$, $s_{n,0}(p)=1$ for $n\geq 0$. \end{thm}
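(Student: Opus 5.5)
We reduce the statement to one structural fact per class: for each listed class we pick a convenient representative $p$ and show that exactly one $n$-permutation avoids $p$, namely a monotone one (increasing, or decreasing, according to the representative). Trivial symmetries preserve avoidance counts, so one representative per class suffices. Classes $15$, $49$, $73$, $75$ come from Table~\ref{tab-no-easy}, and for Class $15$ the claim is already in the proof of Theorem~\ref{thm:class15}, where only decreasing permutations avoid the pattern. Class $87$ is the mesh pattern with all boxes unshaded, i.e.\ classical $12$, so only the decreasing permutation avoids it. For the remaining classes ($16$--$18$, $38$, $39$, $50$, $51$, $72$, $74$, $94$, $95$, $102$) we would read off the representatives from Table~\ref{tab-remaining} and the appendix.

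The uniform argument runs as follows. Take a representative whose shaded boxes avoid at least one corner region, so that occurrences are forced to be ``adjacent'' in a suitable sense. Let $\pi$ be non-monotone in the relevant direction. Then $\pi$ contains some classical occurrence of $12$, say. Among all such occurrences choose an extremal one, for instance with positions closest together, or with values closest together, or with $b$ a right-to-left maximum and $a$ maximal below $b$ to its left. Then argue that the shaded boxes of $p$ must be empty, since any point in them would give a more extremal occurrence. We have already seen this template: in Theorem~\ref{thm:class73} any non-decreasing $\pi$ contains $p$, because some element sits immediately left of a larger right-to-left maximum. In Theorem~\ref{thm:class75}, taking $r$ the maximum and $b$ the largest element to its left yields an occurrence.

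For the monotone permutation itself we check that $p$ does not occur. Usually it has no classical occurrence of the underlying $12$ at all. For patterns whose shading forces the opposite monotonicity, the check is instead that every candidate occurrence has a point in a shaded box. Together these give $s_{n,0}(p)=1$, and $n=0$ is trivial.

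The main obstacle is bookkeeping rather than ideas. For each class we must identify a representative where the extremal choice clearly empties every shaded box. Patterns with shading on both sides of a point, such as $(1,1)$ together with $(0,2)$, need a two-step extremality: first minimise the position gap, then the value gap. We would handle these by taking the occurrence minimising $|{\rm pos}(b)-{\rm pos}(a)|+|b-a|$ and arguing that any point in a shaded box yields a strictly smaller such pair.
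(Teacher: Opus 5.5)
Your overall plan, one representative per class and a proof that the decreasing permutation is its only avoider, is what the paper does; the paper simply asserts this for eight representatives. The gap is that you never carry out the check, and the general mechanism you propose for it is false. Choosing the classical occurrence $ab$ that minimises $|\mathrm{pos}(b)-\mathrm{pos}(a)|+|b-a|$ only forces box $(1,1)$ to be empty. A point in any other shaded box gives a pair of equal or larger weight, or no $12$ at all.

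Concretely, take $(2,2)$ shaded (Class $39$), or $(1,1),(2,2)$ shaded (Class $16$), and $\pi=1243$. The unique minimiser is the pair $1\,2$, of weight $2$. Its box $(2,2)$ contains $4$ and $3$, and every pair these produce has weight at least $3$.

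Worse, the case you single out, $(1,1)$ together with $(0,2)$, is Class $42$, which is not in the list. Indeed $312$ avoids it, since the $3$ in box $(0,2)$ creates no $12$ at all; this matches its $(n-1)!$ avoiders. So your ``uniform argument'' would prove false statements. The real content of the theorem is exactly the case-by-case extremal choice you defer as bookkeeping. Also, for Classes $15$, $49$, $73$, $75$ a single representative suffices because of Theorems~\ref{thm:class15}, \ref{thm:class49}, \ref{thm:class73} and~\ref{thm:class75}, not because of trivial symmetries.

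Here is a short way to close the gap. If $R\subseteq R'$, then every occurrence of $(12,R')$ is an occurrence of $(12,R)$, so $\mathrm{Av}_n(12,R)\subseteq\mathrm{Av}_n(12,R')$. The decreasing permutation avoids every pattern on $12$, so no ``opposite monotonicity'' case ever arises. It therefore suffices to exhibit, in every non-decreasing $\pi$, an occurrence for four shadings:

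(i) $\{(1,0),(1,1),(1,2)\}$: any adjacent ascent.

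(ii) $\{(1,0),(1,1),(1,2),(2,2)\}$: the rightmost adjacent ascent, whose top is a right-to-left maximum.

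(iii) $\{(0,1),(1,1),(1,2),(2,2)\}$: take $b$ the largest entry having a smaller entry to its left, and $a$ the largest entry to the left of $b$ that is smaller than $b$.

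(iv) $\{(0,0),(0,1),(1,2),(2,2)\}$: take $a$ the leftmost entry having a larger entry to its right, and $b$ the largest entry to the right of $a$.

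These four shadings are representatives of Classes $74$, $50$, $49$ and $51$. The remaining classes have representatives contained in them: $38$, $95$, $102$ in (i); $15$, $16$, $39$, $73$ in (ii); $17$, $72$, $75$, $94$ in (iii); and $18$ in (iv). Class $87$ is classical $12$.
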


\begin{proof}
It is easy to see that only decreasing permutations avoid the respective representatives of the classes 
$\pattern{scale=0.6}{2}{1/1,2/2}{1/1,2/2}$, 
$\pattern{scale=0.6}{2}{1/1,2/2}{0/1,2/2}$, 
$\pattern{scale=0.6}{2}{1/1,2/2}{1/1}$, 
$\pattern{scale=0.6}{2}{1/1,2/2}{0/0}$, 
$\pattern{scale=0.6}{2}{1/1,2/2}{1/0,1/1,1/2,2/2}$, 
$\pattern{scale=0.6}{2}{1/1,2/2}{0/1,1/1,1/2}$, 
$\pattern{scale=0.6}{2}{1/1,2/2}{0/1,2/1}$, and 
$\pattern{scale=0.6}{2}{1/1,2/2}{1/2}$.
\end{proof}

\begin{thm}\label{thm:avoidance-(n-1)!} For a pattern $p$ in Classes $4$, $6$--$12$, $42$, $43$, $45$, $64$--$66$, $68$, $89$, $90$, $92$, $93$, $100$, and $101$, $s_{n,0}(p)=(n-1)!$ for $n\geq 1$. \end{thm}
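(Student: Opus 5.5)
The plan is to prove the statement class by class, but to reduce the work to a few uniform arguments. For several of the listed classes the avoidance count $(n-1)!$ is already known. Class~45 gives $s_{n,0}=(n-1)!$ by Theorem~\ref{thm:class45}, Class~68 by Theorem~\ref{thm:class68}, and Class~7 by Theorem~\ref{thm:class1}-type reasoning in \cite{HanZeng2021}. For each remaining class I will pick a convenient representative via the trivial symmetries (complement, reverse, inverse), which preserve $s_{n,0}$. I will choose it so that the square $(1,2)$, or more generally the whole top row, forces the second element of any occurrence to be the maximum $n$. The target statement in every case has the same shape: \emph{$\pi\in S_n$ avoids $p$ if and only if a specified element (typically $n$) is in a specified extreme position (typically $\pi_1=n$ or $\pi_n=1$)}. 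This immediately gives $(n-1)!$ avoiders.

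For each representative, the argument has two directions. The easy direction is that if $\pi_1=n$ (or the analogous condition), no occurrence exists. This holds because any occurrence $ab$ of $\tau=12$ needs $b>a$ with $b$ to the right of $a$, and the shading then forces $b$ to lie in a region that is empty. The converse is the substantive direction: if $\pi_1\neq n$, I will exhibit an explicit occurrence. I will do this in the same style as the proofs in Theorem~\ref{thm:class68}, using candidates such as $\pi_1 n$, $x(x+1)$, or the nearest left-to-right maximum to the left of $n$ paired with $n$. When the shading constrains which element can play $a$, I will use an extremal choice, for example the largest element to the left of $n$, or the element immediately left of $n$. The shaded boxes adjacent to $a$ are then empty by extremality.

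The main obstacle is the bookkeeping across the many classes (4, 6--12, 42, 43, 64--66, 89, 90, 92, 93, 100, 101), whose representatives appear only in Table~\ref{tab-remaining} of the appendix. For each one I must find the right extremal witness. The hardest cases will be patterns with shading in both the $a$-row and the $a$-column, e.g. boxes $(0,1)$ and $(1,0)$ together. There I will first try the pair ``largest element to the left of $n$'' together with $n$, and, if that fails, ``$\pi_1$ together with the smallest element larger than $\pi_1$ to its right.'' I expect one of these to work uniformly. To save effort I will also invoke the Shading Lemma of \cite{Hilmarsson2015Wilf}: whenever a representative differs from an already-handled one by a box adjacent to a pattern point that can be shaded without changing the avoidance set, its avoiders coincide with the handled ones. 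I will group the classes into a few families with a common witness, prove one member of each family in detail, and treat the others as analogous.
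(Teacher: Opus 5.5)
Your template works for every listed class except Class~90, and for Class~90 its central claim is false. For each of the other classes you can pick a representative with box $(0,2)$ shaded, and then $\pi$ avoids $p$ iff $\pi_1=n$. If $\pi_1=n$, that entry sits in box $(0,2)$ of any candidate occurrence. If $\pi_1\neq n$, an extremal pair of the kind you describe is an occurrence: $\pi_1(\pi_1+1)$, $\pi_1$ with the first or last larger entry, or the largest entry left of $n$ followed by $n$.

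No pattern in Class~90 has a corner box shaded, and its avoiders admit no characterization of the form $\pi_i=v$ at all. Take $p=(12,\{(0,1),(1,2),(2,1)\})$. For $n=4$ its avoiders are $4321$, $4132$, $1432$, $2431$, $2413$, $2143$, and no position carries the same value in all six. Reverse, complement and inverse send a common value-at-position to another one, so the same holds for the other three representatives.

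With $\pi_1=n$, both halves of your argument break: $4312$ starts with $n$ but contains $p$ (via $12$), while $1432$ avoids $p$. Your fallback witnesses fail too. For $(12,\{(0,1),(1,0),(1,2)\})$, both ``largest entry left of $n$, then $n$'' and $\pi_1(\pi_1+1)$ are occurrences only when the two entries are adjacent.

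So Class~90 needs an argument your proposal does not contain. The Shading Lemma you mention supplies it if applied to box $(1,1)$. Shading that box in $p$ leaves the avoiders unchanged, and $(12,\{(0,1),(1,1),(1,2),(2,1)\})$ belongs to Class~68, so Theorem~\ref{thm:class68} gives $(n-1)!$. The paper itself gives no direct argument: its proof simply cites \cite{Hilmarsson2015Wilf} (symmetries and the Shading Lemma), Theorem~\ref{thm:class45} and \cite{KZ2019}.

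A direct proof is also short. If $ab$ is an occurrence with $b>a+1$, then $b-1$ must lie between $a$ and $b$, and $(b-1)b$ is again an occurrence. Hence $\pi$ contains $p$ iff some $a$ precedes $a+1$ with all entries between them smaller than $a$. Deleting $1$ from an avoider gives an avoider. Conversely, $1$ can be inserted into an avoider of length $n-1$ in each of the $n$ slots except the one immediately before $2$. So $s_{n,0}(p)=(n-1)s_{n-1,0}(p)=(n-1)!$.
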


\begin{proof}
Avoidance for these classes was established in \cite{Hilmarsson2015Wilf}, where various symmetries and the {\em Shading Lemma} were used to obtain all classes enumerated by $(n-1)!$. Also, proofs for Classes 45 and 68 can be found in Theorem~\ref{thm:class45} and \cite{KZ2019}, respectively, while proofs for Classes 8, 10 and 64 appear in \cite[Prop.~16]{Hilmarsson2015Wilf}.
\end{proof}

\begin{thm}\label{thm:avoidance-(n!/2)} For a pattern $p$ in Classes $25$, $28$, $33$, $63$, $85$, and $104$, $s_{n,0}(p)=n!/2$ for $n\geq 2$. \end{thm}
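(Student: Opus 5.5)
Since avoidance counts are invariant under complement, reverse and inverse, it suffices to check one representative in each of Classes 25, 28, 33, 63, 85 and 104. Three of these are already settled earlier in the paper. For Class~25, Theorem~\ref{thm:class25} with $k=0$ gives $s_{n,0}(p)=n!/(1\cdot 2)=n!/2$ for $n\ge 2$. For Class~33, Theorem~\ref{thm:class33} gives $s_{n,0}=n!/2$. For Class~63, the count $n!/2$ is exactly Theorem~\ref{thm:class63}. So all that is needed for these three is a citation.

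For Classes 28, 85 and 104, which have no distribution result in the paper, I would pick in each case the representative whose shading forces the occurrence to be essentially unique and determined by the relative position of two specific values. The model is Class~63: there, avoiding $\pattern{scale=0.6}{2}{1/1,2/2}{0/2,1/2,2/2,1/1,2/1}$ amounts to ``$n-1$ lies to the right of $n$''. The analogous conditions I would look for are of the form ``$n$ precedes $1$'', ``$\pi_1<\pi_n$'', or ``$n-1$ precedes $n$''. The mechanism is as follows. A fully shaded top row forces $b=n$. A shaded middle row together with a shaded box adjacent to $a$ forces $a=n-1$, or forces $a$ to be the first or last entry. The occurrence then exists if and only if a single binary comparison between two fixed entries of $\pi$ goes one particular way. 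The involution that swaps those two entries (swapping the values, or swapping the positions) pairs avoiders with containers, and this gives $n!/2$ for $n\ge2$.

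Where a representative is not directly of this form, I would use the Shading Lemma of~\cite{Hilmarsson2015Wilf}, as was done for $(n-1)!$ in Theorem~\ref{thm:avoidance-(n-1)!}. If a box adjacent to a pattern point can be shaded or unshaded without changing the avoidance set, this lets me pass to a neighbouring pattern whose avoiders are already counted, for instance in Class~63 or Class~33. Alternatively, I would note that \cite{Hilmarsson2015Wilf} already lists these classes among those enumerated by $n!/2$, and cite it for them.

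The main obstacle is the bookkeeping. For each of the three remaining classes I have to identify, from its table entry, the pattern whose shading pins both points down completely, and then verify carefully that occurrence is \emph{equivalent to}, not merely implied by, the chosen two-element comparison. This equivalence is the delicate step, since a shaded box can be emptied by elements placed elsewhere. Once it is established, the halving argument is immediate.
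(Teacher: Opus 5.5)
Your citation route is the paper's proof. The paper cites \cite{Hilmarsson2015Wilf} for all six classes and points to Theorems~\ref{thm:class25}, \ref{thm:class33} and~\ref{thm:class63} for Classes 25, 33 and 63, so the proposal is sound as it stands. Your preferred self-contained treatment of Classes 28, 85 and 104 is a genuinely different route, and it does work, but not through the mechanism you describe. No pattern in these three classes has a fully shaded row or column, so neither point is pinned to a fixed value or position. Occurrences are also far from unique: for instance, $1243$ contains four occurrences of the Class~28 pattern below. Searching the table entries for a pattern that ``pins both points down completely'' will therefore come up empty.

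What works instead is a monotonicity argument using the extreme entries.
\begin{itemize}
\item \textbf{Class~85} (shaded boxes $(0,1),(0,2),(2,0),(2,1)$): an occurrence $ab$ is exactly a left-to-right maximum $a$ followed by a larger right-to-left minimum $b$. Hence $\pi_1\le a<b\le\pi_n$. Conversely, $\pi_1\pi_n$ is an occurrence whenever $\pi_1<\pi_n$.
\item \textbf{Class~28} (shaded boxes $(0,1),(0,2),(2,0),(2,2)$): $a$ is a left-to-right maximum, and every entry after $b$ lies strictly between $a$ and $b$. If $b\ne\pi_n$ this gives $a<\pi_n$, so $\pi_1\le a<\pi_n$. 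Thus containment again holds if and only if $\pi_1<\pi_n$.
\item \textbf{Class~104} (shaded boxes $(0,2),(1,2),(2,0)$): $b$ is a left-to-right maximum, so $n$ is not to the left of $b$. No entry to the right of $b$ is smaller than $a$, so $1$ lies to the left of $b$. Hence containment forces $1$ to precede $n$, and conversely $1n$ is then itself an occurrence.
\end{itemize}
Exchanging the values $\pi_1$ and $\pi_n$ (respectively, the values $1$ and $n$) is an involution swapping avoiders and containers, giving $n!/2$ for $n\ge 2$.

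Compared with the paper's citation, this gives an explicit description of the avoiders in each class, at the cost of one short check per class. That check rests on the extremal inequalities $a\ge\pi_1$ for left-to-right maxima and $b\le\pi_n$ for right-to-left minima, not on uniqueness of the occurrence.
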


\begin{proof}
Avoidance for these classes was established in \cite{Hilmarsson2015Wilf}, where various symmetries and the Shading Lemma were used to obtain all classes enumerated by $n!/2$.  Also, the distributions for Classes 25, 33, and 63 were established by us in Theorems~\ref{thm:class25}, \ref{thm:class33}, and \ref{thm:class63}, respectively.
\end{proof}

\begin{thm}\label{thm:avoidance-32-62-84}
For a pattern $p$ in Classes $32$, $62$, and $84$, and $n\geq 2$,
\[
s_{n,0}(p)=n!-\sum_{i=1}^{n-1}\frac{(n-1)!}{i}.
\]
\end{thm}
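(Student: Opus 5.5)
Class~32 needs no new work: by Theorem~\ref{thm:class32} its distribution on $S_n$ is $n!-\sum_{i=1}^{n-1}\frac{(n-1)!}{i}+q\sum_{i=1}^{n-1}\frac{(n-1)!}{i}$. Setting $q=0$ gives the claimed formula. The plan is to show that Classes~62 and~84 have the same number of avoiders, each using one representative. Symmetries preserve avoidance, and Theorem~\ref{thm:class62} already asserts equivalence within Class~62.

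For Class~62, Theorem~\ref{thm:class62} gives $s_{n,0}(p)=(n-1)!(n-H_{n-1})$. This equals $n!-\sum_{i=1}^{n-1}\frac{(n-1)!}{i}$ because $(n-1)!H_{n-1}=\sum_{i=1}^{n-1}\frac{(n-1)!}{i}$. As a consistency check, this shows in passing that
\[
\sum_{i=0}^{n-2} i!(n-i-2)!\binom{n-1}{i+1}=\sum_{i=1}^{n-1}\frac{(n-1)!}{i}-(n-1)!,
\]
and the left side reduces to the right since each term equals $\frac{(n-1)!}{i+1}$.

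For Class~84, I would take a representative from Table~\ref{tab-remaining} and use one of two routes.

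\begin{itemize}
\item \textbf{Shading Lemma.} If the representative is obtained from a Class~32 or Class~62 pattern by adding or removing a box that the Shading Lemma of \cite{Hilmarsson2015Wilf} allows, the avoidance sets coincide and we are done.
\item \textbf{Direct count.} Otherwise, I would count containers directly as in the proof of Theorem~\ref{thm:class62}. First identify a canonical occurrence, typically formed by $\pi_1$ or by the element $n$ together with an extremal neighbour. Then observe that containment forces a block of values (for instance $\pi_1+1,\dots,n-1$) to lie in a fixed region. Finally, count such permutations as $\sum_i \frac{(n-1)!}{i}$, for example by conditioning on the value $\pi_1=i$ or on the position of $n$.
\end{itemize}

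Each term $\frac{(n-1)!}{i}$ has a natural interpretation: it counts permutations of $[n-1]$ in which a given element is the largest among a specified set of $i$ elements. So in the direct count I would aim to phrase containment as exactly such a condition on $\pi$ with $n$ removed.

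The main obstacle is Class~84, since its pattern is not displayed before this point. I would first try the Shading Lemma reduction to Class~62 or Class~32. Only if that fails would I carry out the direct canonical-occurrence count, and there the delicate point is verifying that the forced-region condition is both necessary and sufficient.
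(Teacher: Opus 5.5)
Your treatment of Classes 32 and 62 is correct. It matches the paper's side remarks; the paper's actual proof simply cites \cite[Prop.~24]{Hilmarsson2015Wilf} for all three classes.

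The genuine gap is Class~84. You never write down its pattern and do not carry out either of your two routes, so for this class you have a conditional plan rather than a proof. Your first route does succeed, and quickly.

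Take $p_{84}=(12,\{(0,1),(0,2),(1,2),(2,0)\})$ from Table~\ref{tab-remaining}. Shading the extra box $(2,2)$ gives $p_{62}=(12,\{(0,1),(0,2),(1,2),(2,2),(2,0)\})$, which is one of the Class~62 patterns in Table~\ref{tab-easy}. The claim $S_n(p_{84})=S_n(p_{62})$ is an instance of the Shading Lemma at the point $(2,2)$, and it can be checked directly:
\begin{itemize}
\item Every occurrence of $p_{62}$ is an occurrence of $p_{84}$, since $p_{62}$ shades more boxes.
\item Conversely, let $ab$ be an occurrence of $p_{84}$. Boxes $(0,1)$ and $(0,2)$ make $a$ a left-to-right maximum. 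Boxes $(0,2)$ and $(1,2)$ mean no entry to the left of $b$ exceeds $b$, so $n$ equals $b$ or lies to the right of $b$.
\item Then $an$ is an occurrence of $p_{62}$. Boxes $(0,1)$ and $(0,2)$ are empty because $a$ is a left-to-right maximum. Boxes $(1,2)$ and $(2,2)$ are empty because nothing exceeds $n$. Box $(2,0)$ concerns entries to the right of $n$ and below $a$; these lie to the right of $b$ and below $a$, so they do not exist.
\end{itemize}
Theorem~\ref{thm:class62} then gives the count. Alternatively, you could cite \cite[Prop.~24]{Hilmarsson2015Wilf} as the paper does.

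Separately, the identity in your ``consistency check'' is false: at $n=2$ the left side is $1$ and the right side is $0$. Your own observation that each term equals $\frac{(n-1)!}{i+1}$ shows the left side is exactly $\sum_{i=1}^{n-1}\frac{(n-1)!}{i}$. In the proof of Theorem~\ref{thm:class62}, the $(n-1)!$ is subtracted and then added back. This check is not load-bearing, but as written it contradicts the formula you are verifying.
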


\begin{proof}
Avoidance for these classes was established in \cite[Prop.~24]{Hilmarsson2015Wilf}. The avoidance for Class~62 is also given in Theorem~\ref{thm:class62}, and the distribution for Class~32 is given in Theorem~\ref{thm:class32}.
\end{proof}

\begin{thm}\label{thm:avoidance-24-61-86-105}
For a pattern $p$ in Classes $24$, $61$, $86$, and $105$ and $n\geq 1$,
\[
s_{n,0}(p)=\sum_{k=0}^{n}(-1)^k(n-k+1)\frac{n!}{k!}.
\]
\end{thm}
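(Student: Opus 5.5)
The plan is to reduce all four classes to one representative, identify its avoiders with a classical family, and verify the closed form by a recurrence.

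\emph{Step 1: reduce to Class~61.} By the Shading Lemma of \cite{Hilmarsson2015Wilf}, Classes 24, 61, 86 and 105 share an avoidance sequence; I will cite that observation rather than reprove it. So it suffices to treat the Class~61 pattern $p$ with shaded boxes $(0,1),(1,0),(1,1),(1,2),(2,1)$. Since column~1 and row~1 are entirely shaded, an occurrence $ab$ has $a$ and $b$ in adjacent positions with $b=a+1$. That is, $p$ counts \emph{successions} $i\,(i+1)$, and $s_{n,0}(p)$ is the number of succession-free $n$-permutations.

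\emph{Step 2: derive a recurrence.} Putting $k=0$ in Theorem~\ref{dis-patterns-61} gives $s_{n,0}=s_{n-1,1}+(n-1)s_{n-1,0}$. To eliminate $s_{n-1,1}$ I will use the standard argument: delete the element $n$ and merge the succession it creates or destroys. This yields the classical recurrence $a_n=(n-1)a_{n-1}+(n-2)a_{n-2}$ for succession-free permutations.

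\emph{Step 3: check the closed form.} Write $g(n)=\sum_{k=0}^{n}(-1)^k(n-k+1)\frac{n!}{k!}$, so that $g(n)=(n+1)D_n+D_{n-1}$ with $D_n$ the derangement numbers. Using $D_n=nD_{n-1}+(-1)^n$, one checks that $g$ satisfies the same recurrence as $a_n$ after an index shift.

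\emph{Step 4: reconcile the indexing.} This is the step I expect to be the real obstacle. We have $g(1)=1$ and $g(2)=3$, while the succession-free counts for $n=1,2,3$ are $1,1,3$. Hence $g(n)$ agrees with $s_{n+1,0}$, and not with $s_{n,0}$ beyond $n=1$. So I will first recheck this against the data in Appendix~\ref{all-distr-appendix} and against the shadings of the Class 24, 86 and 105 representatives. If those patterns do give $g(n)$ at length $n$, I will run Steps~2 and~3 on a representative for which they do. Otherwise I will prove $s_{n,0}(p)=g(n-1)$ by the above argument and flag the shift explicitly. I will not quietly change the statement.
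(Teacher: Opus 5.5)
Your suspicion in Step~4 is correct, and it can be settled now: the statement as printed is false. At $n=2$ the right-hand side is $3\cdot 2-2\cdot 2+1=3>|S_2|$, so no pattern can satisfy it. The appendix gives $s_{n,0}(p)=1,1,3,11,53,309,2119$ for $n=1,\dots,7$ in each of Classes 24, 61, 86 and 105. These are $g(0),\dots,g(6)$, so the printed sum $g(n)$ counts succession-free permutations of $[n+1]$. Drop the conditional branch and prove
\[
s_{n,0}(p)=g(n-1)=\sum_{k=0}^{n-1}(-1)^k(n-k)\frac{(n-1)!}{k!}\quad\text{for } n\ge 1 .
\]
The paper gives no argument of its own here, only citations to \cite{Hilmarsson2015Wilf,Parv2009} and Theorem~\ref{dis-patterns-61}. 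Your recurrence route is therefore a genuinely self-contained alternative, and it is exactly what exposes the off-by-one.

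There is, however, a concrete error in Step~3: $g(n)\neq(n+1)D_n+D_{n-1}$. Splitting $n-k+1=(n+1)-k$ gives $g(n)=(n+1)D_n+nD_{n-1}$, which equals $D_n+D_{n+1}$ by $D_{n+1}=n(D_n+D_{n-1})$. Your expression gives $9$ instead of $g(3)=11$ and does not satisfy the recurrence, so the check would fail as written. With the correct identity the check is one line: for $n\ge 2$,
\[
n\,g(n-1)+(n-1)\,g(n-2)=n(D_{n-1}+D_n)+(n-1)(D_{n-2}+D_{n-1})=D_{n+1}+D_n=g(n).
\]
Together with $g(0)=g(1)=1=s_{1,0}(p)=s_{2,0}(p)$, the recurrence $s_{n,0}=(n-1)s_{n-1,0}+(n-2)s_{n-2,0}$ then yields $s_{n,0}(p)=g(n-1)$.

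Two smaller points.

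In Step~2, what eliminates $s_{n-1,1}$ is the identity $s_{m,1}(p)=(m-1)s_{m-1,0}(p)$. You get it by merging the unique succession $i(i+1)$ into a single entry; the inverse splits any of the $m-1$ entries of a succession-free permutation. Combined with the $k=0$ case of Theorem~\ref{dis-patterns-61}, this gives the recurrence. Deleting $n$ is a separate derivation of the same recurrence that does not need that theorem. Use one argument or the other, not a mixture.

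In Step~1, the Shading Lemma by itself only makes the patterns shaded $\{(0,2),(1,2),(2,1)\}\cup S$ with $S\subseteq\{(0,1),(1,1)\}$ coincident. These are the representatives of Classes 105, 24 and 86 and the first Class~61 pattern. Passing to the plus-shaped succession pattern uses the nontrivial equidistribution within Class~61 from \cite{KZ2019}, which you should invoke explicitly.
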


\begin{proof}
Avoidance for these classes was established in \cite{Hilmarsson2015Wilf,Parv2009}. The distribution for Class~61 is given in Theorem~\ref{dis-patterns-61}.
\end{proof}

\begin{thm}\label{thm:avoidance-48-67}
For a pattern $p$ in Classes $48$ and $67$, we have $s_{n,0}(p)=n\cdot s_{n-1,0}(p)-s_{n-2,0}(p)$, $s_{-1,0}(p)=0$, and $s_{0,0}(p)=1$.
\end{thm}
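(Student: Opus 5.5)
The plan is to follow the pattern of the preceding avoidance theorems (Theorems~\ref{thm:avoidance-(n-1)!}--\ref{thm:avoidance-24-61-86-105}): rely mainly on \cite{Hilmarsson2015Wilf}, and add a self-contained derivation of the recurrence for one representative if the cited argument is not explicit enough.

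\textbf{Step 1: reduce to one pattern.} First I would check that the representatives of Classes~48 and~67 have the same set of avoiders for every $n$. For patterns that are trivially equivalent (same cell of Table~\ref{tab-remaining}), this holds because complement, reverse and inverse preserve avoidance counts. For the two classes themselves, I expect that one pattern is obtained from the other by adding or removing a single shaded box permitted by the Shading Lemma of \cite{Hilmarsson2015Wilf}. In that case the avoidance sets coincide, exactly as for the pairs listed in the caption of Table~\ref{tab:patterns_ref}. If no single Shading Lemma step connects them, I would instead verify the recurrence separately for each representative using Step~2.

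\textbf{Step 2: derive the recurrence.} I would prove $s_{n,0}(p)=n\,s_{n-1,0}(p)-s_{n-2,0}(p)$ by inserting a new extreme element into an avoider $\pi'\in S_{n-1}$. The choice between the largest and the smallest element depends on where the shaded boxes of $p$ lie; I would take the extreme element whose insertion cannot destroy an existing occurrence. The $n$ insertion positions give the term $n\,s_{n-1,0}(p)$, so it remains to show two facts:
\begin{itemize}
\item[(i)] the insertions into avoiders that produce permutations containing $p$, together with the avoiders of length $n$ that do not arise from an avoider of length $n-1$, balance out to exactly $-s_{n-2,0}(p)$;
\item[(ii)] in the usual situation, each avoider $\pi'$ has exactly one bad insertion slot (for example, immediately next to a particular element), except when $\pi'$ arises from an avoider of length $n-2$ in a specific degenerate way, for instance when $\pi'$ ends with its own maximum.
\end{itemize}
The aim is a bijection between the exceptional configurations and $S_{n-2}(p)$, obtained by deleting the two extreme elements involved.

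\textbf{Step 3: initial conditions.} The values $s_{-1,0}=0$ and $s_{0,0}=1$ are conventions. I would check $s_{1,0}=s_{2,0}=1$ directly, which also confirms the boundary terms of the bijection in Step~2.

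\textbf{Main obstacle.} The hard part is the correction term in Step~2: showing that the overcounted and undercounted permutations are in bijection with $S_{n-2}(p)$, rather than only equal in number for small $n$. I would first compute the small cases ($1,1,2,7,33,\dots$) by hand to locate the exceptional configurations, and only then write down the deletion map.
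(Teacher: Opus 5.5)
Relying on \cite{Hilmarsson2015Wilf} is what the paper does: its entire proof is the citation of Propositions~30 and~31 there, and you should name those results rather than the paper as a whole. Your Step~1 is also correct. The Class~67 pattern $\pattern{scale=0.6}{2}{1/1,2/2}{1/2,2/2,0/1,2/1}$ and the Class~48 pattern $\pattern{scale=0.6}{2}{1/1,2/2}{1/2,2/2,0/1,1/1,2/1}$ differ only in the box $(1,1)$, and the Shading Lemma applies to that box. Concretely, if box $(1,1)$ of an occurrence $ab$ of the first pattern is nonempty, replacing $a$ by the largest element in it gives an occurrence of the second. So the two classes have the same avoiders.

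The self-contained Step~2, however, is only a plan and has a genuine gap. You never identify the bad insertion slots or the correction term, and your heuristic (ii) points the wrong way. You insert the extreme element whose insertion cannot destroy occurrences, so every avoider of length $n$ comes from an avoider of length $n-1$. Hence the number of bad insertions must be exactly $s_{n-2,0}(p)$. If a typical avoider had one bad slot, you would get roughly $(n-1)s_{n-1,0}(p)$ instead.

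The missing observation is as follows. For $p=\pattern{scale=0.6}{2}{1/1,2/2}{1/2,2/2,0/1,1/1,2/1}$, an occurrence is a pair $k,k+1$ with $k$ preceding $k+1$ and every element larger than $k+1$ lying to the left of $k$. Inserting a new minimum $1$ into $\pi'\in S_{n-1}(p)$ does not affect the pairs with $k\ge 2$. So the only possible new occurrence is $(1,2)$, which requires $1$ to lie to the right of all of $3,\ldots,n$ and to the left of $2$. Such a slot exists only when $\pi'$ ends with its minimum, and then it is unique (immediately before the last entry); generic avoiders have no bad slot at all.

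Deleting the last entry is a bijection from the avoiders of length $n-1$ ending with their minimum onto $S_{n-2}(p)$. This holds because appending a new minimum at the end never creates an occurrence. Hence $s_{n,0}(p)=n\,s_{n-1,0}(p)-s_{n-2,0}(p)$ for $n\ge 1$, and the case $n=1$ (no such $\pi'$) matches $s_{-1,0}(p)=0$. Equivalently, the bad permutations are exactly those ending in $12$ whose remaining entries form an avoider. Deleting that final $12$ is the ``delete two extreme elements'' map you were aiming for.
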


\begin{proof}
Avoidance for these classes was established in \cite[Prop. 30 and 31]{Hilmarsson2015Wilf}. 
\end{proof}

\begin{thm}\label{thm:avoidance-19-21-80-97-103}
For a pattern $p$ in Classes $19$, $21$, $80$, $97$, and $103$, the g.f.\ is
\[
A(x) = \frac{(1+x)F(x)}{1+xF(x)}.
\]
\end{thm}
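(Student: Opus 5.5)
For Class~80 the statement is immediate: Theorem~\ref{thm:class80} already gives $A(x)=\frac{(1+x)F(x)}{1+xF(x)}$ as the specialization $q=0$ of its distribution formula. So the plan is to transfer this formula to Classes $19$, $21$, $97$, and $103$. For each of these classes I will pick a representative pattern and derive the same functional equation
\[
F(x)=A(x)+x\,(F(x)-1)\,\frac{F(x)}{1+xF(x)}
\]
as in~(\ref{class80:A(x)-rec}). I expect the first place to look is the Shading Lemma of \cite{Hilmarsson2015Wilf}. The representatives of these classes differ from the Class~80 patterns by a few shaded boxes adjacent to a point, and such boxes often do not change the avoidance set. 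Where this applies, the avoidance sets coincide outright with those of a Class~80 pattern, exactly as in the coincidences $\{5,99\}$, $\{30,60\}$, etc.\ noted under Table~\ref{tab:patterns_ref}. Moreover, \cite{Hilmarsson2015Wilf} reports this g.f.\ for several classes, so for some of them I can cite it directly, as the neighbouring avoidance theorems do.

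For any class where no shading-lemma reduction is available, I will argue structurally, mirroring the proof of Theorem~\ref{thm:class80}. Take a permutation $\pi$ containing the pattern. Among all occurrences $ab$, single out a canonical one, for instance the one with the largest (or smallest) non-extremal entry $a$. Then show that the forced shaded region splits $\pi$ into two pieces. The first is a block $A$ that must avoid the length-one pattern of Theorem~\ref{short}, and is therefore counted by $\frac{F(x)}{1+xF(x)}$. The second is an arbitrary nonempty permutation built from the remaining elements together with $b$, counted by $F(x)-1$. The element $a$ contributes the factor $x$. Rearranging the functional equation then gives $A(x)=F(x)-\frac{x(F(x)-1)F(x)}{1+xF(x)}=\frac{(1+x)F(x)}{1+xF(x)}$.

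The main obstacle is to verify, for each representative, that this decomposition is a bijection. Two things must be checked. First, the extremal choice of $a$ must be exactly equivalent to the condition that $A$ avoids the length-one mesh pattern. Second, the reassembly from $A$ and the nonempty permutation must be unique, which requires the unshaded boxes to be genuinely free. I would handle this by drawing the analogue of Figure~\ref{thm:class80-fig} for each pattern. If a direct decomposition fails for some class, I would instead look for a simple bijection (complementing or reversing a block, as in Lemma~\ref{two-equidistr-lemmas}) that maps its avoiders onto those of a Class~80 pattern.
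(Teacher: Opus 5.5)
Your approach is correct and essentially matches the paper's proof, which cites Prop.~23 of \cite{Hilmarsson2015Wilf} (symmetries plus the Shading Lemma) for the common avoidance counts and reads off the g.f.\ from Theorem~\ref{thm:class80} at $q=0$. The Shading Lemma alone covers every class here, so your structural fallback is never needed: adding box $(1,1)$, $(0,0)$, or $(0,1)$ to the Class~103 pattern with shaded boxes $\{(0,2),(1,2),(2,0),(2,1)\}$ gives coincident patterns in Classes 19, 21, and 97, respectively, and adding $(0,0)$ to that Class~97 pattern gives a Class~80 pattern.
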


\begin{proof}
Avoidance for these classes was established in \cite[Prop.~23]{Hilmarsson2015Wilf}, where various symmetries and the Shading Lemma were used to obtain all classes enumerated by this sequence. The generating function was given by us in Theorem~\ref{thm:class80}, along with the full distribution for Class~80. The distribution for Class~103 was given in \cite{KZ2019}.
\end{proof}

\section{Concluding remarks}\label{conclud-sec}

In this paper, we have developed a near-complete classification of equidistribution equivalence classes for mesh patterns of length~2, showing that their number lies between 105 and 108. Based on our results, we conjecture that the exact number is 105.

\begin{conj}
There are exactly $105$ equivalence classes with respect to distribution for mesh patterns of length~$2$.
\end{conj}

As a consequence, we improved the known upper bound on the number of Wilf-classes from 56 to 49, reducing both the determination of the exact number of Wilf-classes and the complete classification of equidistribution classes to only three remaining conjectural cases listed in Table~\ref{tab-conject}, one of which is the following new conjecture:

\begin{conj}
Any two patterns belonging to the union of the following two sets are equivalent: $\left\{
\pattern{scale=0.6}{2}{1/1,2/2}{0/2,2/2,1/1,2/1,0/0},
\pattern{scale=0.6}{2}{1/1,2/2}{0/2,2/2,1/1,0/0,1/0},  
\pattern{scale=0.6}{2}{1/1,2/2}{1/2,2/2,1/1,0/0,2/0},  
\pattern{scale=0.6}{2}{1/1,2/2}{2/2,0/1,1/1,0/0,2/0}
\right\}$
and $\left\{
\pattern{scale=0.6}{2}{1/1,2/2}{0/2,2/2,0/1,1/1,2/0}, 
\pattern{scale=0.6}{2}{1/1,2/2}{0/2,1/2,1/1,0/0,2/0},  
\pattern{scale=0.6}{2}{1/1,2/2}{0/2,1/1,2/1,0/0,2/0},  
\pattern{scale=0.6}{2}{1/1,2/2}{0/2,2/2,1/1,1/0,2/0}
\right\}$.
\end{conj}

All bijective proofs presented in this paper establish the stronger property of joint equidistribution for the patterns under consideration. Nevertheless, a broader systematic study of joint equidistributions for mesh patterns of length~2 remains open and appears to be a promising direction for future research.

Finally, Table~\ref{tab:patterns_ref} highlights 19 remaining open avoidance problems and 72 open distribution problems. Further investigation of these unresolved cases would be both interesting and valuable for advancing our understanding of mesh pattern avoidance and distribution.

\section*{\bf Acknowledgments}

The work of the first and third authors was supported by the Tianjin Municipal Natural Science Foundation (No. 25JCYBJC00430).

\appendix
\section{Equivalence classes explained by trivial bijections}\label{trivial-appendix}
In Table~\ref{tab-remaining}, we present the ``trivial'' classes, namely equivalence classes that can be explained by simple bijections (symmetries).

\begin{table}[!t]
    \renewcommand{\arraystretch}{1.4}
    \begin{center}
        \begin{tabular}{|c|l||c|l||c|l|}
            \hline
            \multicolumn{1}{|c|}{\footnotesize{nr.}} & \multicolumn{1}{c||}{\footnotesize{trivial}} &
            \multicolumn{1}{c|}{\footnotesize{nr.}} & \multicolumn{1}{c||}{\footnotesize{trivial}} &
            \multicolumn{1}{c|}{\footnotesize{nr.}} & \multicolumn{1}{c|}{\footnotesize{trivial}} \\
            \hline
            \footnotesize{3} & \hspace{-4mm} $\pattern{scale=0.5}{2}{1/1,2/2}{1/2, 2/2, 0/1, 1/1, 2/1, 0/0, 1/0}$
            & \footnotesize{4}   & \hspace{-4mm} $\pattern{scale=0.5}{2}{1/1,2/2}{0/2, 1/1, 2/1}\hspace{-1mm}\pattern{scale=0.5}{2}{1/1,2/2}{0/2, 1/1, 1/0}\hspace{-1mm}\pattern{scale=0.5}{2}{1/1,2/2}{1/2, 1/1, 2/0}\hspace{-1mm}\pattern{scale=0.5}{2}{1/1,2/2}{0/1, 1/1, 2/0}$
            & \footnotesize{5}   & \hspace{-4mm} $\pattern{scale=0.5}{2}{1/1,2/2}{1/2, 1/1, 2/1}\hspace{-1mm}\pattern{scale=0.5}{2}{1/1,2/2}{0/1, 1/1, 1/0}$ \\ \hline
            \footnotesize{6} & \hspace{-4mm} $\pattern{scale=0.5}{2}{1/1,2/2}{0/2, 1/2, 1/1}\hspace{-1mm}\pattern{scale=0.5}{2}{1/1,2/2}{0/2, 0/1, 1/1}\hspace{-1mm}\pattern{scale=0.5}{2}{1/1,2/2}{1/1, 2/1, 2/0}\hspace{-1mm}\pattern{scale=0.5}{2}{1/1,2/2}{1/1, 1/0, 2/0}$
            & \footnotesize{8}   & \hspace{-4mm} $\pattern{scale=0.5}{2}{1/1,2/2}{0/2, 1/2, 2/2}\hspace{-1mm}\pattern{scale=0.5}{2}{1/1,2/2}{0/2, 0/1, 0/0}\hspace{-1mm}\pattern{scale=0.5}{2}{1/1,2/2}{2/2, 2/1, 2/0}\hspace{-1mm}\pattern{scale=0.5}{2}{1/1,2/2}{0/0, 1/0, 2/0}$
            & \footnotesize{9}   & \hspace{-4mm} $\pattern{scale=0.5}{2}{1/1,2/2}{0/2, 2/2, 0/0}\hspace{-1mm}\pattern{scale=0.5}{2}{1/1,2/2}{2/2, 0/0, 2/0}$ \\ \hline
            \footnotesize{10} & \hspace{-4mm} $\pattern{scale=0.5}{2}{1/1,2/2}{0/2, 2/2, 1/1}\hspace{-1mm}\pattern{scale=0.5}{2}{1/1,2/2}{0/2, 1/1, 0/0}\hspace{-1mm}\pattern{scale=0.5}{2}{1/1,2/2}{2/2, 1/1, 2/0}\hspace{-1mm}\pattern{scale=0.5}{2}{1/1,2/2}{1/1, 0/0, 2/0}$
            & \footnotesize{11}  & \hspace{-4mm} $\pattern{scale=0.5}{2}{1/1,2/2}{0/2, 2/1, 0/0}\hspace{-1mm}\pattern{scale=0.5}{2}{1/1,2/2}{0/2, 2/2, 1/0}\hspace{-1mm}\pattern{scale=0.5}{2}{1/1,2/2}{2/2, 0/1, 2/0}\hspace{-1mm}\pattern{scale=0.5}{2}{1/1,2/2}{1/2, 0/0, 2/0}$
            & \footnotesize{12}  & \hspace{-4mm} $\pattern{scale=0.5}{2}{1/1,2/2}{0/2, 2/2, 0/1}\hspace{-1mm}\pattern{scale=0.5}{2}{1/1,2/2}{0/2, 1/2, 0/0}\hspace{-1mm}\pattern{scale=0.5}{2}{1/1,2/2}{2/1, 0/0, 2/0}\hspace{-1mm}\pattern{scale=0.5}{2}{1/1,2/2}{2/2, 1/0, 2/0}$ \\ \hline
            \footnotesize{13} & \hspace{-4mm} $\pattern{scale=0.5}{2}{1/1,2/2}{2/2, 1/1, 0/0}$
            & \footnotesize{14}  & \hspace{-4mm} $\pattern{scale=0.5}{2}{1/1,2/2}{1/2, 2/1, 0/0}\hspace{-1mm}\pattern{scale=0.5}{2}{1/1,2/2}{2/2, 0/1, 1/0}$
            & \footnotesize{16}  & \hspace{-4mm} $\pattern{scale=0.5}{2}{1/1,2/2}{2/2, 1/1}\hspace{-1mm}\pattern{scale=0.5}{2}{1/1,2/2}{1/1, 0/0}$ \\ \hline
            \footnotesize{17} & \hspace{-4mm} $\pattern{scale=0.5}{2}{1/1,2/2}{2/2, 0/1}\hspace{-1mm}\pattern{scale=0.5}{2}{1/1,2/2}{1/2, 0/0}\hspace{-1mm}\pattern{scale=0.5}{2}{1/1,2/2}{2/1, 0/0}\hspace{-1mm}\pattern{scale=0.5}{2}{1/1,2/2}{2/2, 1/0}$
            & \footnotesize{18}  & \hspace{-4mm} $\pattern{scale=0.5}{2}{1/1,2/2}{2/2, 0/0}$
            & \footnotesize{19}  & \hspace{-4mm} $\pattern{scale=0.5}{2}{1/1,2/2}{1/2, 0/2, 2/0,2/1,1/1}\hspace{-1mm}\pattern{scale=0.5}{2}{1/1,2/2}{0/2, 1/1, 2/0,1/0,0/1}$ \\ \hline
            \footnotesize{20} & \hspace{-4mm} $\pattern{scale=0.5}{2}{1/1,2/2}{1/2,0/2,2/0,2/1,2/2}\hspace{-1mm}\pattern{scale=0.5}{2}{1/1,2/2}{1/0,0/1,2/0,0/0,0/2}$
            & \footnotesize{21}  & \hspace{-4mm} $\pattern{scale=0.5}{2}{1/1,2/2}{0/2,2/0,2/1,1/2,0/0}\hspace{-1mm}\pattern{scale=0.5}{2}{1/1,2/2}{2/0,0/1,1/0,0/2,2/2}$
            & \footnotesize{22}  & \hspace{-4mm} $\pattern{scale=0.5}{2}{1/1,2/2}{0/2,2/2,1/1,0/0,2/0}$ \\ \hline
            \footnotesize{23} & \hspace{-4mm} $\pattern{scale=0.5}{2}{1/1,2/2}{0/2,1/1,1/2,2/0}\hspace{-1mm}\pattern{scale=0.5}{2}{1/1,2/2}{0/1,0/2,1/1,2/0}\hspace{-1mm}\pattern{scale=0.5}{2}{1/1,2/2}{0/2,1/1,2/0,2/1}\hspace{-1mm}\pattern{scale=0.5}{2}{1/1,2/2}{0/2,1/0,1/1,2/0}$
            & \footnotesize{24}  & \hspace{-4mm} $\pattern{scale=0.5}{2}{1/1,2/2}{0/2,1/2,1/1,2/1}\hspace{-1mm}\pattern{scale=0.5}{2}{1/1,2/2}{0/2,0/1,1/1,1/0}\hspace{-1mm}\pattern{scale=0.5}{2}{1/1,2/2}{1/2,1/1,2/1,2/0}\hspace{-1mm}\pattern{scale=0.5}{2}{1/1,2/2}{0/1,1/1,1/0,2/0}$
            & \footnotesize{26}  & \hspace{-4mm} $\pattern{scale=0.5}{2}{1/1,2/2}{0/2,2/2,1/1,0/0}\hspace{-1mm}\pattern{scale=0.5}{2}{1/1,2/2}{2/2,1/1,0/0,2/0}$ \\ \hline
            \footnotesize{27} & \hspace{-4mm} $\pattern{scale=0.5}{2}{1/1,2/2}{0/2,2/2,1/1,2/0}\hspace{-1mm}\pattern{scale=0.5}{2}{1/1,2/2}{0/2,1/1,0/0,2/0}$
            & \footnotesize{28}  & \hspace{-4mm} $\pattern{scale=0.5}{2}{1/1,2/2}{0/2,2/2,0/1,2/0}\hspace{-1mm}\pattern{scale=0.5}{2}{1/1,2/2}{0/2,1/2,0/0,2/0}\hspace{-1mm}\pattern{scale=0.5}{2}{1/1,2/2}{0/2,2/1,0/0,2/0}\hspace{-1mm}\pattern{scale=0.5}{2}{1/1,2/2}{0/2,2/2,1/0,2/0}$
            & \footnotesize{29}  & \hspace{-4mm} $\pattern{scale=0.5}{2}{1/1,2/2}{0/2,2/2,0/0,2/0}$ \\ \hline
            \footnotesize{30} & \hspace{-4mm} $\pattern{scale=0.5}{2}{1/1,2/2}{0/2,1/2,2/1,0/0}\hspace{-1mm}\pattern{scale=0.5}{2}{1/1,2/2}{0/2,2/2,0/1,1/0}\hspace{-1mm}\pattern{scale=0.5}{2}{1/1,2/2}{1/2,2/1,0/0,2/0}\hspace{-1mm}\pattern{scale=0.5}{2}{1/1,2/2}{2/2,0/1,1/0,2/0}$
            & \footnotesize{34}  & \hspace{-4mm} $\pattern{scale=0.5}{2}{1/1,2/2}{0/2,2/2,1/1,2/1,0/0,1/0}\hspace{-1mm}\pattern{scale=0.5}{2}{1/1,2/2}{1/2,2/2,0/1,1/1,0/0,2/0}$
            & \footnotesize{35}  & \hspace{-4mm} $\pattern{scale=0.5}{2}{1/1,2/2}{1/2,2/2,0/1,2/1,0/0,1/0}$ \\ \hline
            \footnotesize{37} & \hspace{-4mm} $\pattern{scale=0.5}{2}{1/1,2/2}{1/2,2/2,0/1,1/1,2/1,1/0}\hspace{-1mm}\pattern{scale=0.5}{2}{1/1,2/2}{1/2,0/1,1/1,2/1,0/0,1/0}$
            & \footnotesize{38}  & \hspace{-4mm} $\pattern{scale=0.5}{2}{1/1,2/2}{1/1}$
            & \footnotesize{39}  & \hspace{-4mm} $\pattern{scale=0.5}{2}{1/1,2/2}{2/2}\hspace{-1mm}\pattern{scale=0.5}{2}{1/1,2/2}{0/0}$ \\ \hline
            \footnotesize{40} & \hspace{-4mm} $\pattern{scale=0.5}{2}{1/1,2/2}{0/2,1/1,2/0}$
            & \footnotesize{41}  & \hspace{-4mm} $\pattern{scale=0.5}{2}{1/1,2/2}{2/2,0/2,2/0}\hspace{-1mm}\pattern{scale=0.5}{2}{1/1,2/2}{0/2,0/0,2/0}$
            & \footnotesize{42}  & \hspace{-4mm} $\pattern{scale=0.5}{2}{1/1,2/2}{0/2,1/1}\hspace{-1mm}\pattern{scale=0.5}{2}{1/1,2/2}{1/1,2/0}$ \\ \hline
            \footnotesize{43} & \hspace{-4mm} $\pattern{scale=0.5}{2}{1/1,2/2}{0/2,2/2}\hspace{-1mm}\pattern{scale=0.5}{2}{1/1,2/2}{0/2,0/0}\hspace{-1mm}\pattern{scale=0.5}{2}{1/1,2/2}{2/2,2/0}\hspace{-1mm}\pattern{scale=0.5}{2}{1/1,2/2}{0/0,2/0}$
            & \footnotesize{44}  & \hspace{-4mm} $\pattern{scale=0.5}{2}{1/1,2/2}{0/2,2/2,2/1,0/0,1/0}\hspace{-1mm}\pattern{scale=0.5}{2}{1/1,2/2}{1/2,2/2,0/1,0/0,2/0}$
            & \footnotesize{50}  & \hspace{-4mm} $\pattern{scale=0.5}{2}{1/1,2/2}{2/2,0/1,1/1,2/1}\hspace{-1mm}\pattern{scale=0.5}{2}{1/1,2/2}{0/1,1/1,2/1,0/0}\hspace{-1mm}\pattern{scale=0.5}{2}{1/1,2/2}{1/2,2/2,1/1,1/0}\hspace{-1mm}\pattern{scale=0.5}{2}{1/1,2/2}{1/2,1/1,0/0,1/0}$ \\ \hline
            \footnotesize{51} & \hspace{-4mm} $\pattern{scale=0.5}{2}{1/1,2/2}{1/2,2/2,0/1,0/0}\hspace{-1mm}\pattern{scale=0.5}{2}{1/1,2/2}{2/2,2/1,0/0,1/0}$
            & \footnotesize{52}  & \hspace{-4mm} $\pattern{scale=0.5}{2}{1/1,2/2}{0/2,1/2,2/2,0/1,1/1,2/1,0/0,1/0,2/0}$
            & \footnotesize{55}  & \hspace{-4mm} $\pattern{scale=0.5}{2}{1/1,2/2}{1/2,0/2,1/1,2/1,0/0}\hspace{-1mm}\pattern{scale=0.5}{2}{1/1,2/2}{0/2,2/2,0/1,1/1,1/0}\hspace{-1mm}\pattern{scale=0.5}{2}{1/1,2/2}{1/2,1/1,2/1,0/0,2/0}\hspace{-1mm}\pattern{scale=0.5}{2}{1/1,2/2}{2/2,0/1,1/1,1/0,2/0}$ \\ \hline
            \footnotesize{57} & \hspace{-4mm} $\pattern{scale=0.5}{2}{1/1,2/2}{1/2,2/2,0/1,2/1,1/0}\hspace{-1mm}\pattern{scale=0.5}{2}{1/1,2/2}{1/2,0/1,2/1,0/0,1/0}$
            & \footnotesize{58}  & \hspace{-4mm} $\pattern{scale=0.5}{2}{1/1,2/2}{0/2,0/1,1/1,2/1,2/0}\hspace{-1mm}\pattern{scale=0.5}{2}{1/1,2/2}{0/2,1/2,1/1,1/0,2/0}$
            & \footnotesize{59}  & \hspace{-4mm} $\pattern{scale=0.5}{2}{1/1,2/2}{0/2,1/2,1/1,2/1,1/0}\hspace{-1mm}\pattern{scale=0.5}{2}{1/1,2/2}{0/2,0/1,1/1,2/1,1/0}\hspace{-1mm}\pattern{scale=0.5}{2}{1/1,2/2}{1/2,0/1,1/1,2/1,2/0}\hspace{-1mm}\pattern{scale=0.5}{2}{1/1,2/2}{1/2,0/1,1/1,1/0,2/0}$ \\ \hline
            \footnotesize{64} & \hspace{-4mm} $\pattern{scale=0.5}{2}{1/1,2/2}{0/2,1/1,2/1,0/0}\hspace{-1mm}\pattern{scale=0.5}{2}{1/1,2/2}{0/2,2/2,1/1,1/0}\hspace{-1mm}\pattern{scale=0.5}{2}{1/1,2/2}{2/2,0/1,1/1,2/0}\hspace{-1mm}\pattern{scale=0.5}{2}{1/1,2/2}{1/2,1/1,0/0,2/0}$
            & \footnotesize{65}  & \hspace{-4mm} $\pattern{scale=0.5}{2}{1/1,2/2}{0/2,2/2,0/1,1/1}\hspace{-1mm}\pattern{scale=0.5}{2}{1/1,2/2}{0/2,1/2,1/1,0/0}\hspace{-1mm}\pattern{scale=0.5}{2}{1/1,2/2}{1/1,2/1,0/0,2/0}\hspace{-1mm}\pattern{scale=0.5}{2}{1/1,2/2}{2/2,1/1,1/0,2/0}$
            & \footnotesize{66}  & \hspace{-4mm} $\pattern{scale=0.5}{2}{1/1,2/2}{0/2,2/2,2/1,0/0}\hspace{-1mm}\pattern{scale=0.5}{2}{1/1,2/2}{0/2,2/2,0/0,1/0}\hspace{-1mm}\pattern{scale=0.5}{2}{1/1,2/2}{1/2,2/2,0/0,2/0}\hspace{-1mm}\pattern{scale=0.5}{2}{1/1,2/2}{2/2,0/1,0/0,2/0}$ \\ \hline
            \footnotesize{72} & \hspace{-4mm} $\pattern{scale=0.5}{2}{1/1,2/2}{1/2,0/1,1/1}\hspace{-1mm}\pattern{scale=0.5}{2}{1/1,2/2}{1/1,2/1,1/0}$
            & \footnotesize{74}  & \hspace{-4mm} $\pattern{scale=0.5}{2}{1/1,2/2}{0/1,1/1,2/1}\hspace{-1mm}\pattern{scale=0.5}{2}{1/1,2/2}{1/2,1/1,1/0}$
            & \footnotesize{76}  & \hspace{-4mm} $\pattern{scale=0.5}{2}{1/1,2/2}{0/2,1/2,0/1,1/1,2/1,1/0,2/0}$ \\ \hline
            \footnotesize{79} & \hspace{-4mm} $\pattern{scale=0.5}{2}{1/1,2/2}{0/2,1/2,1/1,2/1,0/0,2/0}\hspace{-1mm}\pattern{scale=0.5}{2}{1/1,2/2}{0/2,2/2,0/1,1/1,1/0,2/0}$
            & \footnotesize{81}  & \hspace{-4mm} $\pattern{scale=0.5}{2}{1/1,2/2}{0/2,1/2,0/1,1/1,2/1,1/0}\hspace{-1mm}\pattern{scale=0.5}{2}{1/1,2/2}{1/2,0/1,1/1,2/1,1/0,2/0}$
            & \footnotesize{82}  & \hspace{-4mm} $\pattern{scale=0.5}{2}{1/1,2/2}{0/2,1/2,2/1,1/0}\hspace{-1mm}\pattern{scale=0.5}{2}{1/1,2/2}{0/2,0/1,2/1,1/0}\hspace{-1mm}\pattern{scale=0.5}{2}{1/1,2/2}{1/2,0/1,2/1,2/0}\hspace{-1mm}\pattern{scale=0.5}{2}{1/1,2/2}{1/2,0/1,1/0,2/0}$ \\ \hline
            \footnotesize{83} & \hspace{-4mm} $\pattern{scale=0.5}{2}{1/1,2/2}{1/2,0/1,2/1,1/0}$
            & \footnotesize{84}  & \hspace{-4mm} $\pattern{scale=0.5}{2}{1/1,2/2}{0/2,1/2,0/1,2/0}\hspace{-1mm}\pattern{scale=0.5}{2}{1/1,2/2}{0/2,2/1,1/0,2/0}$
            & \footnotesize{85}  & \hspace{-4mm} $\pattern{scale=0.5}{2}{1/1,2/2}{0/2,0/1,2/1,2/0}\hspace{-1mm}\pattern{scale=0.5}{2}{1/1,2/2}{0/2,1/2,1/0,2/0}$ \\ \hline
            \footnotesize{86} & \hspace{-4mm} $\pattern{scale=0.5}{2}{1/1,2/2}{0/2,1/2,0/1,2/1}\hspace{-1mm}\pattern{scale=0.5}{2}{1/1,2/2}{0/2,1/2,0/1,1/0}\hspace{-1mm}\pattern{scale=0.5}{2}{1/1,2/2}{1/2,2/1,1/0,2/0}\hspace{-1mm}\pattern{scale=0.5}{2}{1/1,2/2}{0/1,2/1,1/0,2/0}$
            & \footnotesize{87}  & \hspace{-4mm}\pattern{scale=0.5}{2}{1/1,2/2}{}
            & \footnotesize{88}  & \hspace{-4mm} $\pattern{scale=0.5}{2}{1/1,2/2}{0/2,2/0}$ \\ \hline
            \footnotesize{89} & \hspace{-4mm} $\pattern{scale=0.5}{2}{1/1,2/2}{0/2}\hspace{-1mm}\pattern{scale=0.5}{2}{1/1,2/2}{2/0}$
            & \footnotesize{90}  & \hspace{-4mm} $\pattern{scale=0.5}{2}{1/1,2/2}{1/2,0/1,2/1}\hspace{-1mm}\pattern{scale=0.5}{2}{1/1,2/2}{1/2,0/1,1/0}\hspace{-1mm}\pattern{scale=0.5}{2}{1/1,2/2}{1/2,2/1,1/0}\hspace{-1mm}\pattern{scale=0.5}{2}{1/1,2/2}{0/1,2/1,1/0}$
            & \footnotesize{91}  & \hspace{-4mm} $\pattern{scale=0.5}{2}{1/1,2/2}{0/2,2/1,1/0}\hspace{-1mm}\pattern{scale=0.5}{2}{1/1,2/2}{1/2,0/1,2/0}$ \\ \hline
            \footnotesize{92} & \hspace{-4mm} $\pattern{scale=0.5}{2}{1/1,2/2}{0/2,0/1,2/1}\hspace{-1mm}\pattern{scale=0.5}{2}{1/1,2/2}{0/2,1/2,1/0}\hspace{-1mm}\pattern{scale=0.5}{2}{1/1,2/2}{0/1,2/1,2/0}\hspace{-1mm}\pattern{scale=0.5}{2}{1/1,2/2}{1/2,1/0,2/0}$
            & \footnotesize{93}  & \hspace{-4mm} $\pattern{scale=0.5}{2}{1/1,2/2}{0/2,1/2,0/1}\hspace{-1mm}\pattern{scale=0.5}{2}{1/1,2/2}{2/1,1/0,2/0}$
            & \footnotesize{94}  & \hspace{-4mm} $\pattern{scale=0.5}{2}{1/1,2/2}{1/2,0/1}\hspace{-1mm}\pattern{scale=0.5}{2}{1/1,2/2}{2/1,1/0}$ \\ \hline
            \footnotesize{95} & \hspace{-4mm} $\pattern{scale=0.5}{2}{1/1,2/2}{0/1,2/1}\hspace{-1mm}\pattern{scale=0.5}{2}{1/1,2/2}{1/2,1/0}$
            & \footnotesize{96}  & \hspace{-4mm} $\pattern{scale=0.5}{2}{1/1,2/2}{0/2,1/2,0/1,2/1,1/0,2/0}$
            & \footnotesize{97}  & \hspace{-4mm} $\pattern{scale=0.5}{2}{1/1,2/2}{0/2,1/2,0/1,2/1,2/0}\hspace{-1mm}\pattern{scale=0.5}{2}{1/1,2/2}{0/2,1/2,0/1,1/0,2/0}\hspace{-1mm}\pattern{scale=0.5}{2}{1/1,2/2}{0/2,1/2,2/1,1/0,2/0}\hspace{-1mm}\pattern{scale=0.5}{2}{1/1,2/2}{0/2,0/1,2/1,1/0,2/0}$ \\ \hline
            \footnotesize{98} & \hspace{-4mm} $\pattern{scale=0.5}{2}{1/1,2/2}{0/2,1/2,0/1,2/1,1/0}\hspace{-1mm}\pattern{scale=0.5}{2}{1/1,2/2}{1/2,0/1,2/1,1/0,2/0}$
            & \footnotesize{99}  & \hspace{-4mm} $\pattern{scale=0.5}{2}{1/1,2/2}{1/2,2/1}\hspace{-1mm}\pattern{scale=0.5}{2}{1/1,2/2}{0/1,1/0}$
            & \footnotesize{100} & \hspace{-4mm} $\pattern{scale=0.5}{2}{1/1,2/2}{0/2,2/1}\hspace{-1mm}\pattern{scale=0.5}{2}{1/1,2/2}{0/2,1/0}\hspace{-1mm}\pattern{scale=0.5}{2}{1/1,2/2}{1/2,2/0}\hspace{-1mm}\pattern{scale=0.5}{2}{1/1,2/2}{0/1,2/0}$ \\ \hline
            \footnotesize{101} & \hspace{-4mm} $\pattern{scale=0.5}{2}{1/1,2/2}{0/2,1/2}\hspace{-1mm}\pattern{scale=0.5}{2}{1/1,2/2}{0/2,0/1}\hspace{-1mm}\pattern{scale=0.5}{2}{1/1,2/2}{2/1,2/0}\hspace{-1mm}\pattern{scale=0.5}{2}{1/1,2/2}{1/0,2/0}$
            & \footnotesize{102} & \hspace{-4mm} $\pattern{scale=0.5}{2}{1/1,2/2}{1/2}\hspace{-1mm}\pattern{scale=0.5}{2}{1/1,2/2}{0/1}\hspace{-1mm}\pattern{scale=0.5}{2}{1/1,2/2}{2/1}\hspace{-1mm}\pattern{scale=0.5}{2}{1/1,2/2}{1/0}$
            & \footnotesize{103} & \hspace{-4mm} $\pattern{scale=0.5}{2}{1/1,2/2}{0/2,1/2,2/1,2/0}\hspace{-1mm}\pattern{scale=0.5}{2}{1/1,2/2}{0/2,0/1,1/0,2/0}$ \\ \hline
            \footnotesize{104} & \hspace{-4mm} $\pattern{scale=0.5}{2}{1/1,2/2}{0/2,1/2,2/0}\hspace{-1mm}\pattern{scale=0.5}{2}{1/1,2/2}{0/2,0/1,2/0}\hspace{-1mm}\pattern{scale=0.5}{2}{1/1,2/2}{0/2,2/1,2/0}\hspace{-1mm}\pattern{scale=0.5}{2}{1/1,2/2}{0/2,1/0,2/0}$
            & \footnotesize{105} & \hspace{-4mm} $\pattern{scale=0.5}{2}{1/1,2/2}{0/2,1/2,2/1}\hspace{-1mm}\pattern{scale=0.5}{2}{1/1,2/2}{0/2,0/1,1/0}\hspace{-1mm}\pattern{scale=0.5}{2}{1/1,2/2}{1/2,2/1,2/0}\hspace{-1mm}\pattern{scale=0.5}{2}{1/1,2/2}{0/1,1/0,2/0}$
            & & \\ \hline
        \end{tabular}
    \end{center}
    \vspace{-0.5cm}
    \caption{The equivalence classes explained by trivial bijections.}
    \label{tab-remaining}
\end{table}

\section{Distributions of all classes}\label{all-distr-appendix}

The table below gives the distribution for each class of patterns on $n$-permutations for $1 \leq n \leq 7$.

\begin{longtable}{c|p{0.80\textwidth}}
\hline
\endfirsthead 

\hline
\endhead 

\hline
\endfoot 

\hline
\endlastfoot 

\tiny 1
&
\tiny $1, q + 1, 2q + 4, 5q + 19, 16q + 104, 64q + 656, 312q + 4728$ \\
\hline
\tiny 2
&
\tiny $1, q + 1, 2q + 4, 6q + 18, 24q + 96, 120q + 600, 720q + 4320$ \\
\hline
\tiny 3
&
\tiny $1, q + 1, 2q + 4, q^2 + 3q + 20, 3q^2 + 10q + 107, q^3 + 6q^2 + 49q + 664, 4q^3 + 19q^2 + 262q + 4755$ \\
\hline
\tiny 4
&
\tiny $1, q + 1, 2q^2 + 2q + 2, 5q^3 + 6q^2 + 7q + 6, 14q^4 + 22q^3 + 28q^2 + 32q + 24$, \\[-2mm]
& \tiny $42q^5 + 88q^4 + 128q^3 + 164q^2 + 178q + 120, 132q^6 + 364q^5 + 632q^4 + 896q^3 + 1132q^2 + 1164q + 720$ \\
\hline
\tiny 5
&
\tiny $1, q + 1, 2q^2 + 2q + 2, 5q^3 + 6q^2 + 8q + 5, 14q^4 + 23q^3 + 34q^2 + 32q + 17,$ \\[-2mm]
& \tiny $42q^5 + 99q^4 + 160q^3 + 190q^2 + 158q + 71, 132q^6 + 444q^5 + 825q^4 + 1156q^3 + 1226q^2 + 900q + 357$ \\
\hline
\tiny 6
&
\tiny $1, q + 1, 2q^2 + 2q + 2, 5q^3 + 7q^2 + 6q + 6, 15q^4 + 27q^3 + 30q^2 + 24q + 24,$ \\[-2mm] 
& \tiny $52q^5 + 116q^4 + 156q^3 + 156q^2 + 120q + 120, 203q^6 + 548q^5 + 867q^4 + 1022q^3 + 960q^2 + 720q + 720$ \\
\hline
\tiny 7
&
\tiny $1, q + 1, 2q^2 + 2q + 2, 6q^3 + 5q^2 + 7q + 6, 24q^4 + 16q^3 + 25q^2 + 31q + 24,$ \\[-2mm] 
& \tiny $120q^5 + 64q^4 + 103q^3 + 146q^2 + 167q + 120, 720q^6 + 312q^5 + 498q^4 + 742q^3 + 986q^2 + 1062q + 720$ \\
\hline
\tiny 8
&
\tiny $1, q + 1, 2q^2 + 2q + 2, 6q^3 + 6q^2 + 6q + 6, 24q^4 + 24q^3 + 24q^2 + 24q + 24,$ \\[-2mm]
& \tiny $120q^5 + 120q^4 + 120q^3 + 120q^2 + 120q + 120, 720q^6 + 720q^5 + 720q^4 + 720q^3 + 720q^2 + 720q + 720$ \\
\hline
\tiny 9
&
\tiny $1, q + 1, 2q^2 + 2q + 2, q^4 + 2q^3 + 10q^2 + 5q + 6, 2q^6 + 11q^4 + 19q^3 + 48q^2 + 16q + 24, q^9 + 2q^8 + 28q^6 + 4q^5 + 98q^4 + 149q^3 + 254q^2 + 64q + 120, 2q^{12} + 2q^{10} + 20q^9 + 41q^8 + 4q^7 + 306q^6 + 81q^5 + 863q^4 + 1169q^3 + 1520q^2 + 312q + 720$ \\
\hline
\tiny 10
&
\tiny $1, q + 1, 2q^2 + 2q + 2, q^4 + 2q^3 + 9q^2 + 6q + 6, 2q^6 + 8q^4 + 19q^3 + 42q^2 + 25q + 24, q^9 + 2q^8 + 17q^6 + 7q^5 + 69q^4 + 138q^3 + 234q^2 + 132q + 120, 2q^{12} + 2q^{10} + 11q^9 + 24q^8 + 11q^7 + 145q^6 + 106q^5 + 596q^4 + 1053q^3 + 1531q^2 + 839q + 720$ \\
\hline
\tiny 11
&
\tiny $1, q + 1, 2q^2 + 2q + 2, q^4 + 3q^3 + 7q^2 + 7q + 6, 2q^6 + 2q^5 + 9q^4 + 18q^3 + 33q^2 + 32q + 24, q^9 + 3q^8 + 4q^7 + 18q^6 + 20q^5 + 70q^4 + 113q^3 + 193q^2 + 178q + 120, 2q^{12} + 2q^{11} + 6q^{10} + 16q^9 + 37q^8 + 49q^7 + 153q^6 + 194q^5 + 552q^4 + 817q^3 + 1328q^2 + 1164q + 720$ \\
\hline
\tiny 12
&
\tiny $1, q + 1, 2q^2 + 2q + 2, q^4 + 3q^3 + 8q^2 + 6q + 6, 2q^6 + 2q^5 + 11q^4 + 21q^3 + 36q^2 + 24q + 24, q^9 + 3q^8 + 4q^7 + 25q^6 + 27q^5 + 90q^4 + 138q^3 + 192q^2 + 120q + 120, 2q^{12} + 2q^{11} + 6q^{10} + 22q^9 + 54q^8 + 68q^7 + 256q^6 + 282q^5 + 730q^4 + 978q^3 + 1200q^2 + 720q + 720$ \\
\hline
\tiny 13
&
\tiny $1, q + 1, 2q^2 + 2q + 2, q^4 + 4q^3 + 5q^2 + 7q + 7, 2q^6 + 4q^5 + 10q^4 + 14q^3 + 25q^2 + 30q + 35, q^9 + 4q^8 + 9q^7 + 16q^6 + 30q^5 + 53q^4 + 89q^3 + 131q^2 + 169q + 218, 2q^{12} + 4q^{11} + 12q^{10} + 22q^9 + 40q^8 + 66q^7 + 131q^6 + 216q^5 + 376q^4 + 560q^3 + 865q^2 + 1148q + 1598$ \\
\hline
\tiny 14
&
\tiny $1, q + 1, 2q^2 + 2q + 2, q^4 + 4q^3 + 5q^2 + 7q + 7, 2q^6 + 4q^5 + 10q^4 + 14q^3 + 25q^2 + 30q + 35, q^9 + 4q^8 + 9q^7 + 16q^6 + 31q^5 + 52q^4 + 87q^3 + 133q^2 + 170q + 217, 2q^{12} + 4q^{11} + 12q^{10} + 22q^9 + 41q^8 + 70q^7 + 132q^6 + 209q^5 + 366q^4 + 557q^3 + 881q^2 + 1158q + 1586$ \\
\hline
\tiny 15
&
\tiny $1, q + 1, 2q^2 + 3q + 1, 6q^3 + 11q^2 + 6q + 1, 24q^4 + 50q^3 + 35q^2 + 10q + 1, 120q^5 + 274q^4 + 225q^3 + 85q^2 + 15q + 1, 720q^6 + 1764q^5 + 1624q^4 + 735q^3 + 175q^2 + 21q + 1$ \\
\hline
\tiny 16
&
\tiny $1, q + 1, 2q^2 + 3q + 1, q^4 + 4q^3 + 11q^2 + 7q + 1, 2q^6 + 4q^5 + 15q^4 + 38q^3 + 43q^2 + 17q + 1, q^9 + 4q^8 + 9q^7 + 31q^6 + 67q^5 + 161q^4 + 228q^3 + 167q^2 + 51q + 1, 2q^{12} + 4q^{11} + 12q^{10} + 32q^9 + 83q^8 + 184q^7 + 421q^6 + 799q^5 + 1284q^4 + 1266q^3 + 747q^2 + 205q + 1$ \\
\hline
\tiny 17
&
\tiny $1, q + 1, 2q^2 + 3q + 1, q^4 + 4q^3 + 11q^2 + 7q + 1, 2q^6 + 4q^5 + 16q^4 + 36q^3 + 44q^2 + 17q + 1, q^9 + 4q^8 + 9q^7 + 36q^6 + 65q^5 + 159q^4 + 218q^3 + 176q^2 + 51q + 1, 2q^{12} + 4q^{11} + 12q^{10} + 37q^9 + 97q^8 + 187q^7 + 456q^6 + 742q^5 + 1244q^4 + 1243q^3 + 810q^2 + 205q + 1$ \\
\hline
\tiny 18
&
\tiny $1, q + 1, 2q^2 + 3q + 1, q^4 + 4q^3 + 11q^2 + 7q + 1, 2q^6 + 4q^5 + 17q^4 + 34q^3 + 45q^2 + 17q + 1, q^9 + 4q^8 + 9q^7 + 40q^6 + 66q^5 + 156q^4 + 205q^3 + 187q^2 + 51q + 1, 2q^{12} + 4q^{11} + 12q^{10} + 41q^9 + 108q^8 + 193q^7 + 510q^6 + 688q^5 + 1178q^4 + 1197q^3 + 901q^2 + 205q + 1$ \\
\hline
\tiny 19
&
\tiny $1, q + 1, 2q^2 + 4, 3q^3 + 3q^2 + q + 17, 4q^4 + 10q^3 + 12q^2 + 3q + 91, 5q^5 + 23q^4 + 49q^3 + 55q^2 + 14q + 574,$ \\[-2mm]
& \tiny $ 6q^6 + 44q^5 + 146q^4 + 291q^3 + 303q^2 + 77q + 4173$ \\
\hline
\tiny 20
&
\tiny $1, q + 1, 2q^2 + 4, 6q^3 + 18, 24q^4 + 96, 120q^5 + 600, 720q^6 + 4320$ \\
\hline
\tiny 21
&
\tiny $1, q + 1, 2q^2 + 4, q^4 + 2q^3 + 2q^2 + 2q + 17, 2q^6 + 4q^4 + 5q^3 + 12q^2 + 6q + 91, q^9 + 2q^8 + 7q^6 + 2q^5 + 19q^4 + 34q^3 + 52q^2 + 29q + 574, 2q^{12} + 2q^{10} + 5q^9 + 11q^8 + 44q^6 + 14q^5 + 129q^4 + 209q^3 + 291q^2 + 160q + 4173$ \\
\hline
\tiny 22
&
\tiny $1, q + 1, 2q^2 + 4, q^4 + 2q^3 + 6q + 15, 2q^6 + 2q^4 + 16q^2 + 20q + 80, q^9 + 2q^8 + 2q^5 + 8q^4 + 24q^3 + 51q^2 + 131q + 501, 2q^{12} + 2q^{10} + 20q^6 + 4q^5 + 60q^4 + 72q^3 + 354q^2 + 860q + 3666$ \\
\hline
\tiny 23
&
\tiny $1, q + 1, 2q^2 + q + 3, 4q^3 + 4q^2 + 4q + 12, 8q^4 + 14q^3 + 16q^2 + 21q + 61, 16q^5 + 42q^4 + 71q^3 + 84q^2 + 131q + 376, 32q^6 + 114q^5 + 271q^4 + 430q^3 + 536q^2 + 942q + 2715$ \\
\hline
\tiny 24
&
\tiny $1, q + 1, 2q^2 + q + 3, 4q^3 + 4q^2 + 5q + 11, 8q^4 + 15q^3 + 20q^2 + 24q + 53, 16q^5 + 50q^4 + 86q^3 + 113q^2 + 146q + 309, 32q^6 + 152q^5 + 355q^4 + 575q^3 + 778q^2 + 1029q + 2119$ \\
\hline
\tiny 25
&
\tiny $1, q + 1, 2q^2 + q + 3, 6q^3 + 2q^2 + 4q + 12, 24q^4 + 6q^3 + 10q^2 + 20q + 60, 120q^5 + 24q^4 + 36q^3 + 60q^2 + 120q + 360, 720q^6 + 120q^5 + 168q^4 + 252q^3 + 420q^2 + 840q + 2520$ \\
\hline
\tiny 26
&
\tiny $1, q + 1, 2q^2 + q + 3, q^4 + 2q^3 + 4q^2 + 6q + 11, 2q^6 + 5q^4 + 7q^3 + 23q^2 + 28q + 55, q^9 + 2q^8 + 8q^6 + 4q^5 + 27q^4 + 47q^3 + 124q^2 + 170q + 337, 2q^{12} + 2q^{10} + 5q^9 + 11q^8 + 4q^7 + 54q^6 + 31q^5 + 182q^4 + 306q^3 + 814q^2 + 1192q + 2437$ \\
\hline
\tiny 27
&
\tiny $1, q + 1, 2q^2 + q + 3, q^4 + 2q^3 + 4q^2 + 6q + 11, 2q^6 + 5q^4 + 7q^3 + 24q^2 + 26q + 56, q^9 + 2q^8 + 9q^6 + 2q^5 + 27q^4 + 55q^3 + 123q^2 + 152q + 349, 2q^{12} + 2q^{10} + 6q^9 + 13q^8 + 59q^6 + 20q^5 + 200q^4 + 369q^3 + 778q^2 + 1031q + 2560$ \\
\hline
\tiny 28
&
\tiny $1, q + 1, 2q^2 + q + 3, q^4 + 2q^3 + 6q^2 + 3q + 12, 2q^6 + 7q^4 + 13q^3 + 24q^2 + 14q + 60, q^9 + 2q^8 + 16q^6 + 4q^5 + 48q^4 + 81q^3 + 126q^2 + 82q + 360, 2q^{12} + 2q^{10} + 11q^9 + 25q^8 + 4q^7 + 118q^6 + 58q^5 + 361q^4 + 557q^3 + 816q^2 + 566q + 2520$ \\
\hline
\tiny 29
&
\tiny $1, q + 1, 2q^2 + q + 3, q^4 + 2q^3 + 6q^2 + 4q + 11, 2q^6 + 7q^4 + 14q^3 + 26q^2 + 20q + 51, q^9 + 2q^8 + 16q^6 + 6q^5 + 52q^4 + 92q^3 + 145q^2 + 119q + 287, 2q^{12} + 2q^{10} + 11q^9 + 26q^8 + 8q^7 + 126q^6 + 88q^5 + 416q^4 + 664q^3 + 973q^2 + 823q + 1901$ \\
\hline
\tiny 30
&
\tiny $1, q + 1, 2q^2 + q + 3, q^4 + 3q^3 + 3q^2 + 5q + 12, 2q^6 + 2q^5 + 6q^4 + 8q^3 + 17q^2 + 23q + 62, q^9 + 3q^8 + 4q^7 + 9q^6 + 12q^5 + 31q^4 + 49q^3 + 90q^2 + 134q + 387, 2q^{12} + 2q^{11} + 6q^{10} + 10q^9 + 19q^8 + 25q^7 + 63q^6 + 83q^5 + 209q^4 + 306q^3 + 584q^2 + 912q + 2819$ \\
\hline
\tiny 31
&
\tiny $1, q + 1, 3q + 3, 10q + 14, 40q + 80, 192q + 528, 1092q + 3948$ \\
\hline
\tiny 32
&
\tiny $1, q + 1, 3q + 3, 11q + 13, 50q + 70, 274q + 446, 1764q + 3276$ \\
\hline
\tiny 33
&
\tiny $1, q + 1, 3q + 3, 12q + 12, 60q + 60, 360q + 360, 2520q + 2520$ \\
\hline
\tiny 34
&
\tiny $1, q + 1, 3q + 3, 9q + 15, 31q + 89, 126q + 594, 606q + 4434$ \\
\hline
\tiny 35
&
\tiny $1, q + 1, 3q + 3, q^2 + 7q + 16, 5q^2 + 21q + 94, q^3 + 17q^2 + 89q + 613, 7q^3 + 59q^2 + 467q + 4507$ \\
\hline
\tiny 36
&
\tiny $1, q + 1, 3q + 3, q^2 + 8q + 15, 5q^2 + 30q + 85, q^3 + 19q^2 + 151q + 549, 7q^3 + 81q^2 + 909q + 4043$ \\
\hline
\tiny 37
&
\tiny $1, q + 1, 3q + 3, q^2 + 9q + 14, 6q^2 + 38q + 76, q^3 + 32q^2 + 207q + 480, 10q^3 + 195q^2 + 1344q + 3491$ \\
\hline
\tiny 38
&
\tiny $1, q + 1, 3q^2 + 2q + 1, 2q^4 + 11q^3 + 7q^2 + 3q + 1, 5q^6 + 22q^5 + 48q^4 + 28q^3 + 12q^2 + 4q + 1, 3q^9 + 17q^8 + 86q^7 + 181q^6 + 232q^5 + 125q^4 + 52q^3 + 18q^2 + 5q + 1, 7q^{12} + 26q^{11} + 129q^{10} + 416q^9 + 978q^8 + 1324q^7 + 1202q^6 + 600q^5 + 242q^4 + 84q^3 + 25q^2 + 6q + 1$ \\
\hline
\tiny 39
&
\tiny $1, q + 1, 3q^2 + 2q + 1, 2q^4 + 11q^3 + 7q^2 + 3q + 1, 8q^6 + 16q^5 + 51q^4 + 28q^3 + 12q^2 + 4q + 1, 6q^9 + 38q^8 + 82q^7 + 120q^6 + 267q^5 + 131q^4 + 52q^3 + 18q^2 + 5q + 1, 30q^{12} + 60q^{11} + 272q^{10} + 484q^9 + 730q^8 + 836q^7 + 1579q^6 + 682q^5 + 251q^4 + 84q^3 + 25q^2 + 6q + 1$ \\
\hline
\tiny 40
&
\tiny $1, q + 1, 3q^2 + 3, 2q^4 + 5q^3 + 4q^2 + 2q + 11, 5q^6 + 2q^5 + 14q^4 + 18q^3 + 16q^2 + 12q + 53, 3q^9 + 7q^8 + 4q^7 + 30q^6 + 30q^5 + 71q^4 + 92q^3 + 91q^2 + 77q + 315, 7q^{12} + 2q^{11} + 11q^{10} + 26q^9 + 68q^8 + 62q^7 + 195q^6 + 210q^5 + 480q^4 + 582q^3 + 602q^2 + 578q + 2217$ \\
\hline
\tiny 41
&
\tiny $1, q + 1, 3q^2 + 3, 2q^4 + 7q^3 + 2q^2 + 2q + 11, 8q^6 + 29q^4 + 9q^3 + 11q^2 + 12q + 51, 6q^9 + 26q^8 + 22q^6 + 137q^5 + 39q^4 + 58q^3 + 70q^2 + 75q + 287, 30q^{12} + 122q^{10} + 30q^9 + 78q^8 + 76q^7 + 867q^6 + 174q^5 + 330q^4 + 398q^3 + 506q^2 + 528q + 1901$ \\
\hline
\tiny 42
&
\tiny $1, q + 1, 3q^2 + q + 2, 2q^4 + 7q^3 + 7q^2 + 2q + 6, 5q^6 + 7q^5 + 28q^4 + 26q^3 + 24q^2 + 6q + 24, 3q^9 + 9q^8 + 21q^7 + 67q^6 + 99q^5 + 146q^4 + 125q^3 + 106q^2 + 24q + 120, 7q^{12} + 6q^{11} + 32q^{10} + 77q^9 + 210q^8 + 322q^7 + 678q^6 + 696q^5 + 873q^4 + 727q^3 + 572q^2 + 120q + 720$ \\
\hline
\tiny 43
&
\tiny $1, q + 1, 3q^2 + q + 2, 2q^4 + 8q^3 + 6q^2 + 2q + 6, 8q^6 + 3q^5 + 39q^4 + 21q^3 + 19q^2 + 6q + 24, 6q^9 + 28q^8 + 13q^7 + 65q^6 + 163q^5 + 135q^4 + 86q^3 + 80q^2 + 24q + 120, 30q^{12} + 8q^{11} + 144q^{10} + 116q^9 + 285q^8 + 168q^7 + 1299q^6 + 601q^5 + 676q^4 + 455q^3 + 418q^2 + 120q + 720$ \\
\hline
\tiny 44
&
\tiny $1, q + 1, 4q + 2, 16q + 8, 73q + 47, 388q + 332, 2396q + 2644$ \\
\hline
\tiny 45
&
\tiny $1, q + 1, 4q + 2, 18q + 6, 96q + 24, 600q + 120, 4320q + 720$ \\
\hline
\tiny 46
&
\tiny $1, q + 1, 4q + 2, 2q^2 + 13q + 9, 16q^2 + 52q + 52, 5q^3 + 95q^2 + 279q + 341, 64q^3 + 588q^2 + 1848q + 2540$ \\
\hline
\tiny 47
&
\tiny $1, q + 1, 4q + 2, q^2 + 14q + 9, 7q^2 + 59q + 54, q^3 + 36q^2 + 313q + 370, 10q^3 + 185q^2 + 1996q + 2849$ \\
\hline
\tiny 48
&
\tiny $1, q + 1, 4q + 2, q^2 + 16q + 7, 9q^2 + 78q + 33, q^3 + 69q^2 + 459q + 191, 16q^3 + 552q^2 + 3168q + 1304$ \\
\hline
\tiny 49
&
\tiny $1, q + 1, 5q + 1, 2q^2 + 21q + 1, 22q^2 + 97q + 1, 5q^3 + 181q^2 + 533q + 1, 93q^3 + 1457q^2 + 3489q + 1$ \\
\hline
\tiny 50
&
\tiny $1, q + 1, 5q + 1, 3q^2 + 20q + 1, 35q^2 + 84q + 1, 15q^3 + 295q^2 + 409q + 1, 315q^3 + 2359q^2 + 2365q + 1$ \\
\hline
\tiny 51
&
\tiny $1, q + 1, 5q + 1, q^2 + 22q + 1, 9q^2 + 110q + 1, q^3 + 59q^2 + 659q + 1, 13q^3 + 371q^2 + 4655q + 1$ \\
\hline
\tiny 52
&
\tiny $1, q + 1, 6, 24, 120, 720, 5040$ \\
\hline
\tiny 53
&
\tiny $1, q + 1, q + 5, 2q + 22, 6q + 114, 24q + 696, 120q + 4920$ \\
\hline
\tiny 54
&
\tiny $1, q + 1, q^2 + 2q + 3, q^3 + 2q^2 + 9q + 12, q^4 + 2q^3 + 10q^2 + 43q + 64, q^5 + 2q^4 + 11q^3 + 48q^2 + 246q + 412,$ \\[-2mm]
& \tiny $ q^6 + 2q^5 + 12q^4 + 53q^3 + 275q^2 + 1623q + 3074$ \\
\hline
\tiny 55
&
\tiny $1, q + 1, q^2 + 2q + 3, q^3 + 3q^2 + 7q + 13, q^4 + 4q^3 + 11q^2 + 35q + 69, q^5 + 5q^4 + 16q^3 + 54q^2 + 207q + 437,$ \\[-2mm]
& \tiny $ q^6 + 6q^5 + 22q^4 + 79q^3 + 312q^2 + 1411q + 3209$ \\
\hline
\tiny 56
&
\tiny $1, q + 1, q^2 + 2q + 3, q^3 + 3q^2 + 7q + 13, q^4 + 4q^3 + 12q^2 + 32q + 71, q^5 + 5q^4 + 18q^3 + 58q^2 + 177q + 461,$ \\[-2mm]
& \tiny $ q^6 + 6q^5 + 25q^4 + 92q^3 + 327q^2 + 1142q + 3447$ \\
\hline
\tiny 57
&
\tiny $1, q + 1, q^2 + 2q + 3, q^3 + 3q^2 + 7q + 13, q^4 + 4q^3 + 12q^2 + 33q + 70, q^5 + 5q^4 + 18q^3 + 61q^2 + 187q + 448, $ \\[-2mm]
& \tiny $q^6 + 6q^5 + 25q^4 + 98q^3 + 363q^2 + 1240q + 3307$ \\
\hline
\tiny 58
&
\tiny $1, q + 1, q^2 + 2q + 3, q^3 + 3q^2 + 8q + 12, q^4 + 4q^3 + 14q^2 + 40q + 61, q^5 + 5q^4 + 21q^3 + 79q^2 + 238q + 376, $ \\[-2mm]
& \tiny $q^6 + 6q^5 + 29q^4 + 128q^3 + 519q^2 + 1642q + 2715$ \\
\hline
\tiny 59
&
\tiny $1, q + 1, q^2 + 2q + 3, q^3 + 3q^2 + 8q + 12, q^4 + 4q^3 + 15q^2 + 38q + 62, q^5 + 5q^4 + 24q^3 + 82q^2 + 223q + 385, $ \\[-2mm]
& \tiny $q^6 + 6q^5 + 35q^4 + 148q^3 + 535q^2 + 1526q + 2789$ \\
\hline
\tiny 60
&
\tiny $1, q + 1, q^2 + 2q + 3, q^3 + 3q^2 + 8q + 12, q^4 + 4q^3 + 15q^2 + 38q + 62, q^5 + 5q^4 + 24q^3 + 84q^2 + 219q + 387, $ \\[-2mm]
& \tiny $q^6 + 6q^5 + 35q^4 + 156q^3 + 549q^2 + 1474q + 2819$ \\
\hline
\tiny 61
&
\tiny $1, q + 1, q^2 + 2q + 3, q^3 + 3q^2 + 9q + 11, q^4 + 4q^3 + 18q^2 + 44q + 53, q^5 + 5q^4 + 30q^3 + 110q^2 + 265q + 309, $ \\[-2mm]
& \tiny $q^6 + 6q^5 + 45q^4 + 220q^3 + 795q^2 + 1854q + 2119$ \\
\hline
\tiny 62
&
\tiny $1, q + 1, q^2 + 2q + 3, q^3 + 4q^2 + 6q + 13, q^4 + 7q^3 + 17q^2 + 25q + 70, q^5 + 11q^4 + 44q^3 + 85q^2 + 133q + 446, $ \\[-2mm]
& \tiny $q^6 + 16q^5 + 98q^4 + 294q^3 + 501q^2 + 854q + 3276$ \\
\hline
\tiny 63
&
\tiny $1, q + 1, q^2 + 2q + 3, q^3 + 4q^2 + 7q + 12, q^4 + 7q^3 + 19q^2 + 33q + 60, q^5 + 11q^4 + 47q^3 + 109q^2 + 192q + 360, $ \\[-2mm]
& \tiny $q^6 + 16q^5 + 102q^4 + 344q^3 + 737q^2 + 1320q + 2520$ \\
\hline
\tiny 64
&
\tiny $1, q + 1, q^2 + 3q + 2, q^3 + 4q^2 + 13q + 6, q^4 + 5q^3 + 19q^2 + 71q + 24, q^5 + 6q^4 + 26q^3 + 106q^2 + 461q + 120, $ \\[-2mm]
& \tiny $q^6 + 7q^5 + 34q^4 + 150q^3 + 681q^2 + 3447q + 720$ \\
\hline
\tiny 65
&
\tiny $1, q + 1, q^2 + 3q + 2, q^3 + 5q^2 + 12q + 6, q^4 + 7q^3 + 28q^2 + 60q + 24, q^5 + 9q^4 + 50q^3 + 179q^2 + 361q + 120, $ \\[-2mm]
& \tiny $q^6 + 11q^5 + 78q^4 + 387q^3 + 1305q^2 + 2538q + 720$ \\
\hline
\tiny 66
&
\tiny $1, q + 1, q^2 + 3q + 2, q^3 + 5q^2 + 12q + 6, q^4 + 8q^3 + 26q^2 + 61q + 24, q^5 + 12q^4 + 57q^3 + 156q^2 + 374q + 120, $ \\[-2mm]
& \tiny $q^6 + 17q^5 + 116q^4 + 429q^3 + 1083q^2 + 2674q + 720$ \\
\hline
\tiny 67
&
\tiny $1, q + 1, q^2 + 3q + 2, q^3 + 6q^2 + 10q + 7, q^4 + 10q^3 + 31q^2 + 45q + 33, q^5 + 15q^4 + 75q^3 + 182q^2 + 256q + 191, $ \\[-2mm]
& \tiny $q^6 + 21q^5 + 155q^4 + 572q^3 + 1248q^2 + 1739q + 1304$ \\
\hline
\tiny 68
&
\tiny $1, q + 1, q^2 + 3q + 2, q^3 + 6q^2 + 11q + 6, q^4 + 10q^3 + 35q^2 + 50q + 24, q^5 + 15q^4 + 85q^3 + 225q^2 + 274q + 120, $ \\[-2mm]
& \tiny $q^6 + 21q^5 + 175q^4 + 735q^3 + 1624q^2 + 1764q + 720$ \\
\hline
\tiny 69
&
\tiny $1, q + 1, q^2 + 3q + 2, q^3 + 6q^2 + 9q + 8, q^4 + 10q^3 + 26q^2 + 41q + 42, q^5 + 15q^4 + 60q^3 + 136q^2 + 243q + 265, $ \\[-2mm]
& \tiny $q^6 + 21q^5 + 120q^4 + 365q^3 + 887q^2 + 1690q + 1956$ \\
\hline
\tiny 70
&
\tiny $1, q + 1, q^2 + 3q + 2, q^3 + 6q^2 + 9q + 8, q^4 + 10q^3 + 27q^2 + 39q + 43, q^5 + 15q^4 + 64q^3 + 140q^2 + 223q + 277, $ \\[-2mm]
& \tiny $q^6 + 21q^5 + 130q^4 + 408q^3 + 885q^2 + 1525q + 2070$ \\
\hline
\tiny 71
&
\tiny $1, q + 1, q^2 + 3q + 2, q^3 + 6q^2 + 9q + 8, q^4 + 10q^3 + 28q^2 + 37q + 44, q^5 + 15q^4 + 69q^3 + 143q^2 + 202q + 290, $ \\[-2mm]
& \tiny $q^6 + 21q^5 + 145q^4 + 464q^3 + 862q^2 + 1343q + 2204$ \\
\hline
\tiny 72
&
\tiny $1, q + 1, q^2 + 4q + 1, q^3 + 10q^2 + 12q + 1, q^4 + 20q^3 + 63q^2 + 35q + 1, q^5 + 35q^4 + 224q^3 + 348q^2 + 111q + 1, $ \\[-2mm]
& \tiny $q^6 + 56q^5 + 630q^4 + 2027q^3 + 1920q^2 + 405q + 1$ \\
\hline
\tiny 73
&
\tiny $1, q + 1, q^2 + 4q + 1, q^3 + 10q^2 + 12q + 1, q^4 + 21q^3 + 61q^2 + 36q + 1, q^5 + 41q^4 + 226q^3 + 326q^2 + 125q + 1, $ \\[-2mm]
& \tiny $q^6 + 78q^5 + 722q^4 + 1912q^3 + 1786q^2 + 540q + 1$ \\
\hline
\tiny 74
&
\tiny $1, q + 1, q^2 + 4q + 1, q^3 + 11q^2 + 11q + 1, q^4 + 26q^3 + 66q^2 + 26q + 1, q^5 + 57q^4 + 302q^3 + 302q^2 + 57q + 1, $ \\[-2mm]
& \tiny $q^6 + 120q^5 + 1191q^4 + 2416q^3 + 1191q^2 + 120q + 1$ \\
\hline
\tiny 75
&
\tiny $1, q + 1, q^2 + 4q + 1, q^3 + 9q^2 + 13q + 1, q^4 + 16q^3 + 56q^2 + 46q + 1, q^5 + 25q^4 + 160q^3 + 334q^2 + 199q + 1, $ \\[-2mm]
& \tiny $q^6 + 36q^5 + 365q^4 + 1408q^3 + 2157q^2 + 1072q + 1$ \\
\hline
\tiny 76
&
\tiny $1, q + 1, q^2 + 5, q^3 + 2q + 21, q^4 + 2q^2 + 8q + 109, q^5 + 2q^3 + 9q^2 + 35q + 673, q^6 + 2q^4 + 10q^3 + 38q^2 + 192q + 4797$ \\
\hline
\tiny 77
&
\tiny $1, q + 1, q^2 + 5, q^3 + 2q + 21, q^4 + 3q^2 + 6q + 110, q^5 + 4q^3 + 9q^2 + 29q + 677, q^6 + 5q^4 + 12q^3 + 45q^2 + 160q + 4817$ \\
\hline
\tiny 78
&
\tiny $1, q + 1, q^2 + q + 4, q^3 + 3q^2 + 2q + 18, q^4 + 6q^3 + 11q^2 + 6q + 96, q^5 + 10q^4 + 35q^3 + 50q^2 + 24q + 600, $ \\[-2mm]
& \tiny $q^6 + 15q^5 + 85q^4 + 225q^3 + 274q^2 + 120q + 4320$ \\
\hline
\tiny 79
&
\tiny $1, q + 1, q^2 + q + 4, q^3 + q^2 + 4q + 18, q^4 + q^3 + 5q^2 + 14q + 99, q^5 + q^4 + 6q^3 + 18q^2 + 63q + 631, $ \\[-2mm]
& \tiny $q^6 + q^5 + 7q^4 + 22q^3 + 84q^2 + 333q + 4592$ \\
\hline
\tiny 80
&
\tiny $1, q + 1, q^2 + q + 4, q^3 + q^2 + 5q + 17, q^4 + q^3 + 6q^2 + 21q + 91, q^5 + q^4 + 7q^3 + 25q^2 + 112q + 574, $ \\[-2mm]
& \tiny $q^6 + q^5 + 8q^4 + 29q^3 + 134q^2 + 694q + 4173$ \\
\hline
\tiny 81
&
\tiny $1, q + 1, q^2 + q + 4, q^3 + q^2 + 6q + 16, q^4 + q^3 + 8q^2 + 27q + 83, q^5 + q^4 + 10q^3 + 39q^2 + 157q + 512, $ \\[-2mm]
& \tiny $q^6 + q^5 + 12q^4 + 52q^3 + 246q^2 + 1057q + 3671$ \\
\hline
\tiny 82
&
\tiny $1, q + 1, q^3 + 2q + 3, q^6 + 2q^3 + 3q^2 + 6q + 12, q^{10} + 2q^6 + 2q^5 + 3q^4 + 10q^3 + 13q^2 + 27q + 62, q^{15} + 2q^{10} + 2q^9 + q^8 + 5q^7 + 9q^6 + 13q^5 + 15q^4 + 66q^3 + 66q^2 + 155q + 385, q^{21} + 2q^{15} + 2q^{14} + q^{13} + 3q^{12} + 3q^{11} + 9q^{10} + 14q^9 + 11q^8 + 33q^7 + 54q^6 + 83q^5 + 141q^4 + 423q^3 + 411q^2 + 1060q + 2789$ \\
\hline
\tiny 83
&
\tiny $1, q + 1, q^3 + 2q + 3, q^6 + 2q^3 + q^2 + 10q + 10, q^{10} + 2q^6 + 2q^4 + 11q^3 + 11q^2 + 43q + 50, q^{15} + 2q^{10} + 2q^7 + 13q^6 + 24q^4 + 50q^3 + 69q^2 + 269q + 290, q^{21} + 2q^{15} + 2q^{11} + 13q^{10} + 2q^9 + 26q^7 + 62q^6 + 13q^5 + 147q^4 + 338q^3 + 578q^2 + 1838q + 2018$ \\
\hline
\tiny 84
&
\tiny $1, q + 1, q^3 + 2q + 3, q^6 + 3q^3 + 2q^2 + 5q + 13, q^{10} + 4q^6 + 3q^5 + 11q^3 + 14q^2 + 17q + 70, q^{15} + 5q^{10} + 4q^9 + 3q^7 + 14q^6 + 24q^5 + 5q^4 + 55q^3 + 86q^2 + 77q + 446, q^{21} + 6q^{15} + 5q^{14} + 4q^{12} + 20q^{10} + 39q^9 + 6q^8 + 39q^7 + 58q^6 + 157q^5 + 75q^4 + 351q^3 + 560q^2 + 443q + 3276$ \\
\hline
\tiny 85
&
\tiny $1, q + 1, q^3 + 2q + 3, q^6 + 3q^3 + 2q^2 + 6q + 12, q^{10} + 4q^6 + 2q^5 + 2q^4 + 12q^3 + 14q^2 + 25q + 60, q^{15} + 5q^{10} + 2q^9 + 2q^8 + 4q^7 + 15q^6 + 20q^5 + 20q^4 + 65q^3 + 92q^2 + 134q + 360, q^{21} + 6q^{15} + 2q^{14} + 2q^{13} + 4q^{12} + 2q^{11} + 23q^{10} + 24q^9 + 21q^8 + 46q^7 + 85q^6 + 158q^5 + 179q^4 + 435q^3 + 658q^2 + 874q + 2520$ \\
\hline
\tiny 86
&
\tiny $1, q + 1, q^3 + 2q + 3, q^6 + 3q^3 + q^2 + 8q + 11, q^{10} + 4q^6 + q^5 + 2q^4 + 14q^3 + 7q^2 + 38q + 53, q^{15} + 5q^{10} + q^9 + 2q^8 + 2q^7 + 23q^6 + 7q^5 + 18q^4 + 81q^3 + 47q^2 + 224q + 309, q^{21} + 6q^{15} + q^{14} + 2q^{13} + 2q^{12} + 3q^{11} + 32q^{10} + 10q^9 + 18q^8 + 25q^7 + 155q^6 + 60q^5 + 146q^4 + 561q^3 + 349q^2 + 1550q + 2119$ \\
\hline
\tiny 87
&
\tiny $1, q + 1, q^3 + 2q^2 + 2q + 1, q^6 + 3q^5 + 5q^4 + 6q^3 + 5q^2 + 3q + 1, q^{10} + 4q^9 + 9q^8 + 15q^7 + 20q^6 + 22q^5 + 20q^4 + 15q^3 + 9q^2 + 4q + 1, q^{15} + 5q^{14} + 14q^{13} + 29q^{12} + 49q^{11} + 71q^{10} + 90q^9 + 101q^8 + 101q^7 + 90q^6 + 71q^5 + 49q^4 + 29q^3 + 14q^2 + 5q + 1, q^{21} + 6q^{20} + 20q^{19} + 49q^{18} + 98q^{17} + 169q^{16} + 259q^{15} + 359q^{14} + 455q^{13} + 531q^{12} + 573q^{11} + 573q^{10} + 531q^9 + 455q^8 + 359q^7 + 259q^6 + 169q^5 + 98q^4 + 49q^3 + 20q^2 + 6q + 1$ \\
\hline
\tiny 88
&
\tiny $1, q + 1, q^3 + 2q^2 + 3, q^6 + 3q^5 + q^4 + 6q^3 + 2q + 11, q^{10} + 4q^9 + 3q^8 + 9q^7 + 6q^6 + 4q^5 + 22q^4 + 2q^3 + 8q^2 + 10q + 51, q^{15} + 5q^{14} + 6q^{13} + 13q^{12} + 18q^{11} + 6q^{10} + 46q^9 + 26q^8 + 16q^7 + 22q^6 + 112q^5 + 20q^4 + 30q^3 + 53q^2 + 59q + 287, q^{21} + 6q^{20} + 10q^{19} + 19q^{18} + 36q^{17} + 17q^{16} + 83q^{15} + 72q^{14} + 40q^{13} + 116q^{12} + 193q^{11} + 144q^{10} + 72q^9 + 136q^8 + 182q^7 + 674q^6 + 98q^5 + 222q^4 + 234q^3 + 379q^2 + 405q + 1901$ \\
\hline
\tiny 89
&
\tiny $1, q + 1, q^3 + 2q^2 + q + 2, q^6 + 3q^5 + 3q^4 + 5q^3 + 4q^2 + 2q + 6, q^{10} + 4q^9 + 6q^8 + 10q^7 + 13q^6 + 10q^5 + 20q^4 + 14q^3 + 12q^2 + 6q + 24, q^{15} + 5q^{14} + 10q^{13} + 18q^{12} + 29q^{11} + 31q^{10} + 50q^9 + 58q^8 + 54q^7 + 62q^6 + 78q^5 + 78q^4 + 54q^3 + 48q^2 + 24q + 120, q^{21} + 6q^{20} + 15q^{19} + 30q^{18} + 55q^{17} + 74q^{16} + 113q^{15} + 160q^{14} + 176q^{13} + 228q^{12} + 268q^{11} + 332q^{10} + 324q^9 + 372q^8 + 264q^7 + 534q^6 + 360q^5 + 384q^4 + 264q^3 + 240q^2 + 120q + 720$ \\
\hline
\tiny 90
&
\tiny $1, q + 1, q^3 + 3q + 2, q^6 + 4q^3 + 3q^2 + 10q + 6, q^{10} + 5q^6 + q^5 + 7q^4 + 17q^3 + 21q^2 + 44q + 24, q^{15} + 6q^{10} + q^9 + 2q^8 + 8q^7 + 29q^6 + 10q^5 + 60q^4 + 105q^3 + 139q^2 + 239q + 120, q^{21} + 7q^{15} + q^{14} + 2q^{13} + 2q^{12} + 10q^{11} + 35q^{10} + 19q^9 + 22q^8 + 88q^7 + 197q^6 + 138q^5 + 477q^4 + 759q^3 + 1024q^2 + 1538q + 720$ \\
\hline
\tiny 91
&
\tiny $1, q + 1, q^3 + 3q + 2, q^6 + 4q^3 + 4q^2 + 7q + 8, q^{10} + 5q^6 + 3q^5 + 5q^4 + 15q^3 + 24q^2 + 26q + 41, q^{15} + 6q^{10} + 4q^9 + 9q^7 + 22q^6 + 26q^5 + 37q^4 + 98q^3 + 116q^2 + 150q + 251, q^{21} + 7q^{15} + 5q^{14} + 4q^{12} + 7q^{11} + 26q^{10} + 46q^9 + 14q^8 + 81q^7 + 137q^6 + 203q^5 + 273q^4 + 681q^3 + 704q^2 + 1042q + 1809$ \\
\hline
\tiny 92
&
\tiny $1, q + 1, q^3 + 3q + 2, q^6 + 5q^3 + 2q^2 + 10q + 6, q^{10} + 7q^6 + 2q^5 + 3q^4 + 27q^3 + 14q^2 + 42q + 24, q^{15} + 9q^{10} + 2q^9 + 3q^8 + 4q^7 + 48q^6 + 24q^5 + 37q^4 + 165q^3 + 91q^2 + 216q + 120, q^{21} + 11q^{15} + 2q^{14} + 3q^{13} + 4q^{12} + 4q^{11} + 74q^{10} + 30q^9 + 54q^8 + 60q^7 + 373q^6 + 246q^5 + 361q^4 + 1139q^3 + 638q^2 + 1320q + 720$ \\
\hline
\tiny 93
&
\tiny $1, q + 1, q^3 + 3q + 2, q^6 + 6q^3 + 11q + 6, q^{10} + 10q^6 + 35q^3 + 50q + 24, q^{15} + 15q^{10} + 85q^6 + 225q^3 + 274q + 120, q^{21} + 21q^{15} + 175q^{10} + 735q^6 + 1624q^3 + 1764q + 720$ \\
\hline
\tiny 94
&
\tiny $1, q + 1, q^3 + 4q + 1, q^6 + 7q^3 + 3q^2 + 12q + 1, q^{10} + 11q^6 + 9q^4 + 36q^3 + 27q^2 + 35q + 1, q^{15} + 16q^{10} + 13q^7 + 92q^6 + 122q^4 + 193q^3 + 171q^2 + 111q + 1, q^{21} + 22q^{15} + 18q^{11} + 176q^{10} + 16q^9 + 252q^7 + 731q^6 + 92q^5 + 1140q^4 + 1182q^3 + 1004q^2 + 405q + 1$ \\
\hline
\tiny 95
&
\tiny $1, q + 1, q^3 + 4q + 1, q^6 + 7q^3 + 4q^2 + 11q + 1, q^{10} + 10q^6 + 2q^5 + 14q^4 + 32q^3 + 34q^2 + 26q + 1, q^{15} + 13q^{10} + 2q^9 + 4q^8 + 24q^7 + 77q^6 + 26q^5 + 179q^4 + 156q^3 + 180q^2 + 57q + 1, q^{21} + 16q^{15} + 2q^{14} + 4q^{13} + 4q^{12} + 38q^{11} + 109q^{10} + 84q^9 + 71q^8 + 406q^7 + 643q^6 + 462q^5 + 1392q^4 + 919q^3 + 768q^2 + 120q + 1$ \\
\hline
\tiny 96
&
\tiny $1, q + 1, q^3 + 5, q^6 + 3q + 20, q^{10} + 4q^3 + 9q + 106, q^{15} + 5q^6 + 12q^3 + 45q + 657, q^{21} + 6q^{10} + 15q^6 + 62q^3 + 249q + 4707$ \\
\hline
\tiny 97
&
\tiny $1, q + 1, q^3 + q + 4, q^6 + q^3 + 2q^2 + 3q + 17, q^{10} + q^6 + 2q^5 + q^4 + 5q^3 + 9q^2 + 10q + 91, q^{15} + q^{10} + 2q^9 + q^8 + 3q^7 + 3q^6 + 10q^5 + 5q^4 + 31q^3 + 46q^2 + 43q + 574, q^{21} + q^{15} + 2q^{14} + q^{13} + 3q^{12} + q^{11} + 4q^{10} + 11q^9 + 4q^8 + 24q^7 + 13q^6 + 56q^5 + 48q^4 + 210q^3 + 265q^2 + 223q + 4173$ \\
\hline
\tiny 98
&
\tiny $1, q + 1, q^3 + q + 4, q^6 + q^3 + 7q + 15, q^{10} + q^6 + 8q^3 + 3q^2 + 27q + 80, q^{15} + q^{10} + 9q^6 + 7q^4 + 30q^3 + 12q^2 + 167q + 493, q^{21} + q^{15} + 10q^{10} + 8q^7 + 37q^6 + 27q^4 + 187q^3 + 109q^2 + 1095q + 3565$ \\
\hline
\tiny 99
&
\tiny $1, q + 1, q^3 + q^2 + 2q + 2, q^6 + q^5 + 2q^4 + 4q^3 + 3q^2 + 8q + 5, q^{10} + q^9 + 2q^8 + 4q^7 + 5q^6 + 10q^5 + 11q^4 + 16q^3 + 21q^2 + 32q + 17, q^{15} + q^{14} + 2q^{13} + 4q^{12} + 5q^{11} + 12q^{10} + 13q^9 + 20q^8 + 31q^7 + 53q^6 + 51q^5 + 67q^4 + 105q^3 + 126q^2 + 158q + 71, q^{21} + q^{20} + 2q^{19} + 4q^{18} + 5q^{17} + 12q^{16} + 15q^{15} + 22q^{14} + 35q^{13} + 61q^{12} + 63q^{11} + 103q^{10} + 147q^9 + 198q^8 + 302q^7 + 321q^6 + 351q^5 + 549q^4 + 731q^3 + 860q^2 + 900q + 357$ \\
\hline
\tiny 100
&
\tiny $1, q + 1, q^3 + q^2 + 2q + 2, q^6 + q^5 + 2q^4 + 4q^3 + 4q^2 + 6q + 6, q^{10} + q^9 + 2q^8 + 4q^7 + 6q^6 + 9q^5 + 13q^4 + 17q^3 + 19q^2 + 24q + 24, q^{15} + q^{14} + 2q^{13} + 4q^{12} + 6q^{11} + 11q^{10} + 16q^9 + 22q^8 + 32q^7 + 49q^6 + 65q^5 + 66q^4 + 102q^3 + 103q^2 + 120q + 120, q^{21} + q^{20} + 2q^{19} + 4q^{18} + 6q^{17} + 11q^{16} + 18q^{15} + 25q^{14} + 37q^{13} + 60q^{12} + 84q^{11} + 104q^{10} + 163q^9 + 203q^8 + 282q^7 + 391q^6 + 387q^5 + 487q^4 + 686q^3 + 648q^2 + 720q + 720$ \\
\hline
\tiny 101
&
\tiny $1, q + 1, q^3 + q^2 + 2q + 2, q^6 + q^5 + 2q^4 + 5q^3 + 3q^2 + 6q + 6, q^{10} + q^9 + 2q^8 + 5q^7 + 7q^6 + 10q^5 + 14q^4 + 20q^3 + 12q^2 + 24q + 24, q^{15} + q^{14} + 2q^{13} + 5q^{12} + 7q^{11} + 15q^{10} + 19q^9 + 30q^8 + 37q^7 + 59q^6 + 74q^5 + 70q^4 + 100q^3 + 60q^2 + 120q + 120, q^{21} + q^{20} + 2q^{19} + 5q^{18} + 7q^{17} + 15q^{16} + 25q^{15} + 36q^{14} + 49q^{13} + 89q^{12} + 116q^{11} + 160q^{10} + 214q^9 + 240q^8 + 342q^7 + 474q^6 + 444q^5 + 420q^4 + 600q^3 + 360q^2 + 720q + 720$ \\
\hline
\tiny 102
 &
\tiny $1, q + 1, q^3 + q^2 + 3q + 1, q^6 + q^5 + 3q^4 + 5q^3 + 7q^2 + 6q + 1, q^{10} + q^9 + 3q^8 + 5q^7 + 12q^6 + 11q^5 + 26q^4 + 25q^3 + 25q^2 + 10q + 1, q^{15} + q^{14} + 3q^{13} + 5q^{12} + 12q^{11} + 17q^{10} + 32q^9 + 43q^8 + 70q^7 + 92q^6 + 117q^5 + 136q^4 + 110q^3 + 65q^2 + 15q + 1, q^{21} + q^{20} + 3q^{19} + 5q^{18} + 12q^{17} + 17q^{16} + 39q^{15} + 50q^{14} + 91q^{13} + 127q^{12} + 222q^{11} + 255q^{10} + 411q^9 + 478q^8 + 687q^7 + 694q^6 + 784q^5 + 616q^4 + 385q^3 + 140q^2 + 21q + 1$ \\
\hline
\tiny 103
&
\tiny $1, q + 1, q^3 + q^2 + 4, q^6 + q^5 + 4q^3 + q + 17, q^{10} + q^9 + 4q^7 + q^5 + 17q^4 + q^3 + q^2 + 3q + 91, q^{15} + q^{14} + 4q^{12} + q^{10} + 17q^9 + q^8 + q^7 + 4q^6 + 92q^5 + 7q^3 + 3q^2 + 14q + 574, q^{21} + q^{20} + 4q^{18} + q^{16} + 17q^{15} + q^{14} + q^{13} + 4q^{12} + 92q^{11} + q^{10} + 8q^9 + 3q^8 + 18q^7 + 577q^6 + 4q^5 + 17q^4 + 26q^3 + 14q^2 + 77q + 4173$ \\
\hline
\tiny 104
&
\tiny $1, q + 1, q^3 + q^2 + q + 3, q^6 + q^5 + q^4 + 4q^3 + 2q^2 + 3q + 12, q^{10} + q^9 + q^8 + 4q^7 + 3q^6 + 5q^5 + 14q^4 + 7q^3 + 11q^2 + 13q + 60, q^{15} + q^{14} + q^{13} + 4q^{12} + 3q^{11} + 6q^{10} + 16q^9 + 9q^8 + 17q^7 + 22q^6 + 73q^5 + 27q^4 + 41q^3 + 67q^2 + 72q + 360, q^{21} + q^{20} + q^{19} + 4q^{18} + 3q^{17} + 6q^{16} + 17q^{15} + 11q^{14} + 19q^{13} + 28q^{12} + 81q^{11} + 38q^{10} + 72q^9 + 99q^8 + 117q^7 + 442q^6 + 143q^5 + 204q^4 + 286q^3 + 467q^2 + 480q + 2520$ \\
\hline
\tiny 105
&
\tiny $1, q + 1, q^3 + q^2 + q + 3, q^6 + q^5 + q^4 + 4q^3 + q^2 + 5q + 11, q^{10} + q^9 + q^8 + 4q^7 + 2q^6 + 6q^5 + 12q^4 + 9q^3 + 7q^2 + 24q + 53, q^{15} + q^{14} + q^{13} + 4q^{12} + 2q^{11} + 7q^{10} + 13q^9 + 10q^8 + 11q^7 + 32q^6 + 65q^5 + 20q^4 + 57q^3 + 41q^2 + 146q + 309, q^{21} + q^{20} + q^{19} + 4q^{18} + 2q^{17} + 7q^{16} + 14q^{15} + 11q^{14} + 12q^{13} + 36q^{12} + 67q^{11} + 34q^{10} + 77q^9 + 58q^8 + 187q^7 + 390q^6 + 143q^5 + 157q^4 + 391q^3 + 299q^2 + 1029q + 2119$ \\
\hline
\end{longtable}
\end{document}